\newtheorem{thm}{Theorem}[section]
\newtheorem{lem}[thm]{Lemma}
\newtheorem{tvrzx}[thm]{Proposition}
\newenvironment{tvrz}{\begin{tvrzx}}{\end{tvrzx}}
\newtheorem{lemmax}[thm]{Lemma}
\newenvironment{lemma}{\begin{lemmax}}{\end{lemmax}}
\newtheorem{theoremx}[thm]{Theorem}
\newenvironment{theorem}{\begin{theoremx}}{\end{theoremx}}
\theoremstyle{definition}
\newtheorem{definicex}[thm]{Definition}
\newenvironment{definice}{\begin{definicex}}{\end{definicex}}
\theoremstyle{remark}
\newtheorem{remx}[thm]{Remark}
\newenvironment{rem}{\begin{remx}}{\medskip\end{remx}}
\theoremstyle{definition}
\newtheorem{examplex}[thm]{Example}
\newenvironment{example}{\begin{examplex}}{\medskip\end{examplex}}
\newcommand\blankpage{%
    \null
    \thispagestyle{empty}%
    \newpage}
\def\R{\mathbb{R}}
\def\N{\mathbb{N}}
\def\T{\mathbb{T}}
\def\<{\langle}
\def\>{\rangle}
\def\~{\widetilde}
\def\^{\wedge}
\def\ddt{\left. \frac{d}{dt}\right|_{t=0} \hspace{-0.5cm}}
\def\dda{\left. \frac{d}{da}\right|_{a=0} \hspace{-0.5cm}}
\def\io{\mathit{i}}
\def\G{\mathcal{G}}
\def\H{\mathcal{H}}
\def\B{\mathcal{B}}
\def\T{\mathcal{T}}
\def\A{\mathcal{A}}
\def\O{\mathcal{O}}
\def\D{\mathcal{D}}
\def\F{\mathcal{F}}
\def\cif{C^{\infty}(M)}
\def\gm{\mathbf{G}}
\def\cD{\nabla}
\def\cDL{\nabla^{LC}}
\def\hcD{\widehat{\nabla}}
\def\fPsi{\mathbf{\Psi}}
\def\ftPsi{\~{\mathbf{\Psi}}}
\def\fPhi{\mathbf{\Phi}}
\def\fUps{\mathbf{\Upsilon}}
\def\fL{\mathbf{L}}
\def\di{\mathsf{d}}
\def\RS{\mathcal{R}}
\def\mydots{\leavevmode\xleaders\hbox to 0.3em{\hfil.\hfil}\hfill\kern3pt}
\newcommand{\TM}[1]{\Lambda^{#1}TM}
\newcommand{\cTM}[1]{\Lambda^{#1}T^{\ast}M}
\newcommand{\vf}[1]{\mathfrak{X}^{#1}(M)}
\newcommand{\df}[1]{\Omega^{#1}(M)}
\newcommand{\vfE}[1]{\mathfrak{X}^{#1}(E)}
\newcommand{\dfE}[1]{\Omega^{#1}(E)}
\newcommand{\bm}[4]{\begin{pmatrix}#1 & #2 \\ #3 & #4 \end{pmatrix}}
\newcommand{\Li}[1]{\mathcal{L}_{#1}}
\newcommand{\defeq}{\mathrel{\mathop:}=}
\DeclareMathOperator{\Img}{Im}
\DeclareMathOperator{\Diff}{Diff}
\DeclareMathOperator{\End}{End}
\DeclareMathOperator{\Hom}{Hom}
\DeclareMathOperator{\Aut}{Aut}
\DeclareMathOperator{\Ann}{Ann}
\DeclareMathOperator{\Der}{Der}
\DeclareMathOperator{\rank}{rank}
\DeclareMathOperator{\EAut}{EAut}
\DeclareMathOperator{\EIsom}{EIsom}
\DeclareMathOperator{\Ric}{Ric}
\DeclareMathOperator{\hRic}{\widehat{R}ic}
\title{PhD Thesis}
\author{Jan Vysoký}
\begin{document}


\thispagestyle{empty}

\begin{center}

\setlength{\unitlength}{1cm}
\begin{picture}(0,0)
\put(1,2){\makebox(0,0)[t]{
\begin{minipage}{15cm}\centering
\Large\bf
CZECH TECHNICAL UNIVERSITY IN PRAGUE
\\[1ex]
FACULTY OF NUCLEAR SCIENCES AND PHYSICAL ENGINEERING
\\[1ex]
\normalsize\bf
Department of Physics
\\[1ex]
\raisebox{2mm}{\includegraphics{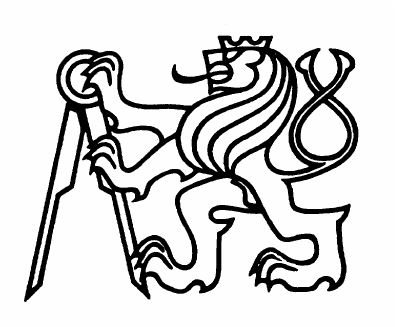}}
\\
\Large\bf
JACOBS UNIVERSITY BREMEN
\\[1ex]
\normalsize\bf
Department of Physics and Earth Sciences
\\[1ex]
\raisebox{2mm}{\includegraphics[scale=0.15]{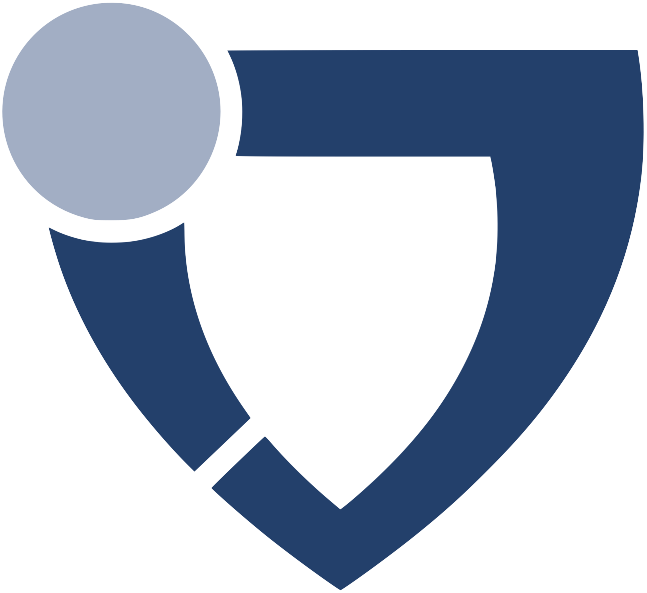}}

\end{minipage}
}}

\put(1,-11){\makebox(0,0)[t]{
\begin{minipage}{15cm}\centering
\huge\bf
Geometry of Membrane Sigma Models
\\[1ex]
\Large\normalfont
DOCTORAL THESIS
\end{minipage}
}}

\put(10,-21){\makebox(0,0)[t]{
\normalsize
\begin{minipage}{15cm}
\begin{tabular}{l@{\quad}l}
Author:
&
\bf
{\large{Jan Vysoký}}
\\
Supervisors:
&
\bf
Branislav Jurčo
\\
& \bf Peter Schupp
\end{tabular}
\end{minipage}
}}

\put(1,-21){\makebox(0,0)[t]{
\large
\begin{minipage}{15cm}
\bf Prague - Bremen 2015
\end{minipage}
}}

\end{picture}
\end{center}
\newpage\thispagestyle{empty}
\section*{Foreword}
This is a part of my doctoral thesis succesfully defended at Czech Technical University in Prague and at Jacobs University Bremen during the summer of 2015. The original thesis comprised two main components. A theoretical introduction, and an appendix containing four published papers. This arXiv preprint consists of the former one only. Despite this, I have kept the parts of the first chapter linking the theoretical part with the papers, in particular \ref{sec_guide}. 

Let me emphasize that this work consists mostly of already known mathematics. Nevertheless, I sincerely believe it can prove useful for everyone trying to learn a new subject. 
\newpage\thispagestyle{empty}

\section*{Bibliographic entry}
\begin{tabular}{ll}
\textit{Title:} & {\bf Geometry of membrane sigma models} \\
\textit{Author:} & Ing. Jan Vysoký \\
& \textbf{Czech Technical University in Prague (CTU)} \\
& Faculty of Nuclear Sciences and Physical Engineering\\
& Department of Physics \\
& \textbf{Jacobs University Bremen (JUB)} \\
& Department of Physics and Earth Sciences \\
\textit{Degree Programme:} & \textbf{Co-directed double Ph.D. (CTU - JUB)} \\
& Application of natural sciences (CTU) \\
& Mathematical sciences (JUB)\\
\textit{Field of Study:} & Methematical engineering (CTU) \\
& Mathematical sciences (JUB) \\
\textit{Supervisors:} & \textbf{Ing. Branislav Jurčo, CSc., DSc.} \\
& Charles University\\
& Faculty of Mathematics and Physics \\
& Mathematical Institute of Charles University \\
& \textbf{Prof. Dr. Peter Schupp} \\
& Jacobs University Bremen \\
& Department of Physics and Earth Sciences \\
\textit{Academic Year:} & 2014/2015 \\
\textit{Number of Pages:} & 209 \\
\textit{Keywords:} & generalized geometry, string theory, sigma models, membranes. 
\end{tabular}
\section*{Abstract}
String theory still remains one of the promising candidates for a unification of the theory of gravity and quantum field theory. One of its essential parts is relativistic description of moving multi-dimensional objects called membranes (or $p$-branes) in a curved spacetime. On the classical field theory level, they are described by an action functional extremalising the volume of a manifold swept by a propagating membrane. This and related field theories are collectively called membrane sigma models. 

Differential geometry is an important mathematical tool in the study of string theory. It turns out that string and membrane backgrounds can be conveniently described using objects defined on a direct sum of tangent and cotangent bundles of the spacetime manifold. 
Mathematical field studying such object is called generalized geometry. Its integral part is the theory of Leibniz algebroids, vector bundles with a Leibniz algebra bracket on its module of smooth sections. Special cases of Leibniz algebroids are better known Lie and Courant algebroids.

This thesis is divided into two main parts. In the first one, we review the foundations of the theory of Leibniz algebroids, generalized geometry, extended generalized  geometry, and Nambu-Poisson structures. The main aim is to provide the reader with a consistent introduction to the mathematics used in the published papers. The text is a combination both of well known results and new ones. We emphasize the notion of a generalized metric and of corresponding orthogonal transformations, which laid the groundwork of our research. The second main part consists of four attached papers using generalized geometry to treat selected topics in string and membrane theory. The articles are presented in the same form as they were published.
\afterpage{\blankpage}

\newpage\thispagestyle{empty}
	

\phantom{x}

\vfill

\section*{Acknowledgements}
I would like to thank my supervisors, Branislav Jurčo and Peter Schupp, for their guidance and inspiration throughout my entire doctorate, unhesitatingly encouraging me in my scientific career. 

I would like to thank my wonderful wife, Kamila, for her endless support and patience with the weird and sometimes rather unpractical mathematical physicist. 

Last but not least, I would like to thank my family and all my friends for all the joy and fun they incessantly bring into my life.

\afterpage{\blankpage}

\newpage\thispagestyle{empty}

\tableofcontents

\newpage

\chapter*{List of used symbols}
$\R$ \mydots field of real numbers \\
$M$ \mydots  smooth finite-dimensional manifold \\
$C^{\infty}(M)$ \mydots module of smooth real-valued functions on $M$ \\
$\vf{p}$ \mydots module of smooth $p$-vector fields on $M$ \\
$\df{p}$ \mydots module of smooth differential $p$-forms on $M$ \\
$\df{\bullet}$ \mydots whole exterior algebra of $M$ \\
$\T_{p}^{q}(M)$ \mydots module of tensors of type $(p,q)$ \\
$\T(M)$ \mydots whole tensor algebra of $M$ \\
$\<\cdot,\cdot\>$ \mydots canonical pairing of two objects \\
$\Gamma(E)$ \mydots module of smooth global sections of vector bundle $E$ over $M$\\
$\Gamma_{U}(E)$ \mydots module of smooth local sections of $E$ defined on $U \subseteq M$ \\
$\vfE{p}$ \mydots module of global smooth sections of $\Lambda^{p}E$, similarly for $\dfE{p}$, $\T_{p}^{q}(E)$ \\
$A \defeq B$ \mydots expression $B$ is used to define the expression $A$ \\
$\text{BDiag}(g,h)$ \mydots block diagonal matrix with $g$ and $h$ on the diagonal \\
$\Li{X}$ \mydots Lie derivative along a vector field $X \in \vf{}$ \\
$\io_{X}$ \mydots insertion operator (or inner product) of a vector field $X \in \vf{}$ \\
$[\cdot,\cdot]_{S}$ \mydots Schouten-Nijenhuis bracket of multivector fields \\
$\Hom(E,E')$ \mydots smooth vector bundle morphisms from $E$ to $E'$ over the identity on the base
$\End(E)$ \mydots shorthand notation for $\Hom(E,E)$ \\
$\Aut(E)$ \mydots subset of $\End(E)$ consisting of fiber-wise bijective maps
\afterpage{\blankpage}
\cleardoublepage
\chapter{Introduction} 
After more then forty years of history, string theory still stands as one of the most promising attempts to unify the gravitational and quantum physics. Originating as a quantum theory of one-dimensional strings moving in the space-time, it evolved throughout the years to include fermionic particles (superstring theory) and extended objects such as D-branes, membranes or $p$-branes. It also grew into a challenging and sophisticated theory. An effort of a single person to review the string theory is as difficult as to ask a crab living on a Normandy beach to describe the Atlantic Ocean. As a welcome side-effect, string theory fueled the development in various, old and new, areas of mathematics. In particular, attempting to be a theory of gravity, it pushed forward many areas of differential geometry. 

Quite recently, one such mathematical theory rose to prominence as a useful tool in string theory. It was pioneered in early 2000s, as it appeared in three subsequent papers \cite{Hitchin:2004ut, 2005math......8618H,2006CMaPh.265..131H} of Nigel Hitchin and in the Ph.D. thesis \cite{Gualtieri:2003dx} of his student Marco Gualtieri. In a nutshell, generalized geometry is a detailed study of the geometry of the generalized tangent bundle $TM \oplus T^{\ast}M$. Such a vector bundle has a surprisingly rich structure, in particular it possesses a canonical indefinite metric and a bracket operation. This allows one to describe various geometrical objects in a new intrinsic way. Note that Hitchin already recognized the possible applications of generalized geometry in string theory and commented on this several times in the above cited papers. 

It turned out that certain symmetries of string theory can be naturally explained in terms of generalized geometry. For example, string $T$-duality can be viewed as an orthogonal transformation, see \cite{2011arXiv1106.1747C, Grana:2008yw, 2013arXiv1310.5121S}, following the work of P. Bouwknegt et al.  \cite{Bouwknegt:2003vb, Bouwknegt:2003zg,Bouwknegt:2010zz}. Recently there has been found a way to describe D-branes as Dirac structures, isotropic and involutive subbundles of $E$, see \cite{Asakawa:2012px,Asakawa:2014eva,Asakawa:2014jaa}.
For a nice overview of further interesting applications in physics and string theory see \cite{Koerber:2010bx}. There are also ways to proceed in the opposite direction, constructing field theories out of generalized geometry mathematical objects. Besides the very well-known Poisson sigma models \cite{Ikeda:1993fh,Schaller:1994es} , there exist Courant sigma models \cite{Ikeda:2002wh, Roytenberg:2006qz} using the AKSZ mechanism to construct actions from a 	given Courant algebroid, Dirac sigma models \cite{Kotov:2004wz, Kotov:2010wr} and Nambu sigma models \cite{Schupp:2012nq, Jurco:2012yv, Bouwknegt:2011vn}. Suitable modifications of generalized geometry can be useful in string related physics. For applications in M-theory, see the work of Hull \cite{Hull:2007zu} and Berman et al. in \cite{Berman:2010is, Berman:2011pe, Berman:2011cg}. A nice modification of generalized geometry was used to describe supergravity effective actions in \cite{Coimbra:2012af, Coimbra:2011nw}. Finally, note that generalized geometry is closely related to recently very popular modification of field theory, called double field theory. See the works of C.M. Hull, B. Zwiebach and O. Hohm \cite{Hull:2009mi, Hull:2009zb, Hohm:2010pp}, and especially the recent review paper \cite{Hohm:2013bwa}. 

By membrane sigma models we mean various field actions emanating from the bosonic part of Polyakov-like action for membrane as introduced by Howe-Tucker in \cite{Howe:1977hp} and for string case independently in \cite{Deser:1976rb} and \cite{Brink:1976sc}, and named after Polyakov who used it to quantize strings in \cite{1981PhLB..103..207P}. We focus in particular on its gauge-fixed version, which can be related to its dual version, non-topological Nambu sigma model as defined in \cite{Jurco:2012yv}. A need to find a suitable mathematics underlying these field actions leads to a necessity to extend the tools of standard generalized geometry to more general vector bundles. Driven by this desire we refer to results of our work and consequently also of this thesis as to \emph{geometry of membrane sigma models}.
\section{Organization}
This thesis is divided in two major parts. 

The first half is an attempt to bring up a consistent introduction to the mathematics used in the papers. The intention is to give definitions and derive important properties in detail as well as enough examples to illustrate sometimes quite abstract theory. The main idea is to provide a self-contained text with a minimal necessity to refer to external literature, which inevitably leads to a rephrasing of old and well known results. We will stress new contributions where needed. 

The second half consists of four published papers which use these mathematical tools to describe and develop several aspects of membrane sigma model theory. These papers are attached in the exactly same form as they were published in the journals. All of them were published as joint work with both my supervisors. 

\section{Overview of the theoretical introduction}
Let us briefly summarize the content of the following chapters. This section is intended to be main navigation guide for the reader. 

Chapter \ref{ch_algebroids} brings up a quick review of elementary properties of Leibniz algebroids. We could take the liberty to disregard the chronological order in which this theory appeared in mathematical world. Generalizations usually develop from more elementary objects, but it is sometimes simpler to provide the original structures as special examples of the more general ones. 

This is why we start with a general definition of Leibniz algebroids in Section \ref{sec_Leibnizalg}. The basic and the most important example is the (higher) Dorfman bracket. Further, an induced Lie derivative on the tensor algebra of Leibniz algebroid is introduced. 

A known subclass of Leibniz algebroids are those with a skew-symmetric bracket, denoted as Lie algebroids. They are described in Section \ref{sec_Liealgebroids}. Most importantly, a Lie algebroid induces an analogue of the exterior differential on the module of its differential forms, allowing for a Cartan calculus on the exterior algebra. In fact, this exterior differential contains the same data as the original Lie algebroid, and can be used as its equivalent definition. The exterior algebra of multivectors on the Lie algebroid bundle can be equipped with an analogue of Schouten-Nijenhuis bracket, turning it into a Gerstenhaber algebra. This observation is essential for the definition of Lie bialgebroid. 

A need of understanding Lie bialgebroids underpins the subject of Section \ref{sec_Courantalgebroids}, Leibniz algebroids with an invariant fiber-wise metric called Courant algebroids. They first appeared in the form of their most important example, the Dorfman bracket with an appropriate anchor and pairing. Finding a suitable set of axioms for this new type of algebroid proved necessary to define a double of Lie bialgebroid. We present this as one of the examples of Courant algebroids. 

The second chapter is concluded by Section \ref{sec_algcon} dealing with linear connections on Leibniz algebroids, and related notions of torsion and curvature. For Lie algebroids, this is pretty straightforward and essentially it can be defined just by mimicking the ordinary theory of linear connections. For Courant and general Leibniz algebroids, this is more involved task. We present a new way how to approach this using a concept of local Leibniz algebroids. For Courant algebroids, one recovers the known definitions of torsion operator. To the best of our knowledge a working way how to define a curvature operator is presented, as well as a Ricci tensor and scalar. We explain this on the example which uses the Dorfman bracket.

Chapter \ref{ch_gg} contains an introduction to the geometry of the generalized tangent bundle, usually called a generalized geometry. This thesis covers only a little portion of this wide branch of mathematics suitable for our needs. These are the reasons why we have completely omitted the original backbone of generalized geometry - generalized complex structures.

In Section \ref{sec_OG} we study an orthogonal group of the direct sum of a vector space and its dual equipped with a natural pairing. Its Lie algebra can conveniently be described in terms of tensors. This also allows one to construct important special examples of orthogonal maps. We present a very useful simple ways of block matrix decompositions. Although almost trivial at first glance, this observation allows one to prove some quite complicated relations later. Since the natural pairing does not have a definite quadratic form, it allows for isotropic subspaces. This is a subject of Section \ref{sec_misubspaces}. Examples of maximally isotropic subspaces are presented. 

Everything can be without any issues generalized from vector spaces to vector bundles, replacing linear maps with vector bundle morphism, et cetera. One can consider a more general group of transformations preserving the fiber-wise pairing on generalized tangent bundle. We call this group the	 extended orthogonal group. Its corresponding Lie algebra can be shown to be a semi-direct product of the ordinary (fiber-wise) orthogonal Lie algebra and the Lie algebra of vector fields. This is what Section \ref{sec_tovectorbundle} is about.

The generalized tangent bundle has a canonical Courant algebroid structure, defined by the Dorfman bracket. It is natural to examine the Lie algebra of its derivations and the group of its automorphisms. This was one of the main reasons why generalized geometry and the Dorfman bracket came to prominence in string theory. We study these structures in Section \ref{sec_dorfmander} and Section \ref{sec_dorfmanaut}. One can also find an explicit formula integrating the Dorfman bracket derivation to obtain a Dorfman bracket automorphism. To our belief this was not worked out in such detail before. It happens that some orthogonal maps do not preserve the bracket, and can be used to "twist" it, to define different (yet isomorphic) Courant algebroids. This defines a class of twisted Dorfman brackets, introduced in Section \ref{sec_twisting}.

Dirac structures are maximally isotropic subbundles of the generalized tangent bundle which are involutive under Dorfman bracket. They constitute a way how to describe presymplectic and Poisson manifolds in terms of generalized geometry. They are described in Section \ref{sec_Dirac}. 

For the purposes of applications of generalized geometry in string theory, the most important concept is the one of a generalized metric. It can be defined in several ways, where some of them make sense in a more general setting then others. In Section \ref{sec_genmetric} we discuss all these possibilities and show when they are equivalent. The orthogonal group acts naturally on the space of generalized metrics. We examine the consequences and properties of this action in Section \ref{sec_Onngen}. In particular, we show that Seiberg-Witten open-closed relations can be interpreted in this way. 

Having a fiber-wise metric, we may study its algebra of Killing sections. This is done in Section \ref{sec_Killing}. We give an explicit way to integrate these infinitesimal symmetries to actual generalized metric isometries. 
A generalized metric is by definition positive definite. We can modify some of its definitions to include also the indefinite case. This up brings certain issues discussed in Section \ref{sec_indefinite}. 
Finally, to any generalized metric we may naturally assign a Courant algebroid metric compatible connection, called a generalized Bismut connection. Its various forms are shown in Section \ref{sec_gbismut}, and its scalar curvature is calculated. 

Chapter \ref{ch_extended} is to be considered as the most important of this thesis. Its main idea is to extend the concepts of generalized geometry to a higher generalized tangent bundle suitable for applications in membrane theory. This effort poses interesting problems to deal with. First note that there is no natural fiber-wise metric present. Instead, one can use a pairing with values in differential forms. However, its orthogonal group structure becomes quite complicated and depends on the rank of involved vector bundles. We examine this in Section \ref{sec_HOnn}. In particular, a set of examples of Dirac structures with respect to this pairing is very limited. 

On the other hand, the Dorfman bracket generalizes into a bracket of very similar properties. We investigate its algebra of derivations and its group of automorphisms in detail in Section \ref{sec_hdorfman}. 

A next important step is to define an extended version of the generalized metric. For these reasons, we start with a description of a way how to use a manifold metric to induce a fiberwise metric on higher wedge powers of the tangent and cotangent bundle. There are several consequences following from the construction. In particular, one can calculate its signature out of the signature of the original metric. Moreover, for Killing equations and metric compatible connections, it is important to examine the way how Lie and covariant derivatives of the induced metric can be calculated using Lie and covariant derivatives of the original manifold metric. Finally, we derive a very important formula proving the relation of their determinants. All of this can be found in Section \ref{sec_induced}. 

We proceed to the actual definition of a generalized metric in Section \ref{sec_hgenmetric}. Similarly to the ordinary generalized metric, one can express it either in terms of a metric and a differential form, or in terms of dual fields. One can show that in this extended case, one of the dual fields is not automatically a multivector, and the two dual fiberwise metrics are not induced from each other. Finally, we generalize the open-closed relations as a particular transformation of the generalized metric. 

As we have already noted, there is no useful orthogonal group to encode the open-closed relations as an example of an orthogonal transformation. However, there is a natural pairing on the "doubled" vector bundle, produced from the original one by adding (in the sense of a direct sum) its dual bundle. In this "doubled formalism", as we call it, one can define a generalized metric in the usual sense. The membrane open-closed relations can be now easily encoded as an orthogonal transformation of a relevant generalized metric. The doubled formalism is a subject of Section \ref{sec_doubled}. 

Having now a larger vector bundle with working orthogonal structure, we would like to find a suitable integrability conditions for its subbundles. One such Leibniz algebroid is examined in Section \ref{sec_dfLeibniz}. We show how closed differential forms and Nambu-Poisson structures can be realized as Dirac structures of this Leibniz algebroid. We conclude by showing how its bracket can be twisted by a certain orthogonal map, obtaining a twist similar to the one of Dorfman bracket. 

One of the reasons to use the induced metric in the definition of generalized metric is the fact that Killing equations are easy to solve in this case, as we show in Section \ref{sec_hkilling}. It also allows us to find a simple example of a generalized metric compatible connection. We show how to interpret this connection in the doubled formalism and we calculate its scalar curvature in Section \ref{sec_hbismut}. 

The final Chapter \ref{ch_NP} of this thesis is devoted to a natural generalization of Poisson manifolds called Nambu-Poisson structures. In Section \ref{sec_NPM} we present and prove various equivalent ways how to express the fundamental identity for a Nambu-Poisson tensor. Interestingly, it can be shown that an algebraic part of the fundamental identity not only forces its decomposability, but also its complete skew-symmetricity of this tensor. Similarly to the ordinary Poisson case, Nambu-Poisson structures can be realized as certain involutive subbundles with respect to the Dorfman bracket. This interpretation allows one to easily define a twisted Nambu-Poisson structure. This and one quite interesting related observation are contained in Section \ref{sec_NPLeibniz}. 

Finally, in Section \ref{sec_SW}, we examine in detail the construction of the Nambu-Poisson structure induced diffeomorphism called a Seiberg-Witten map. It involves the flows of time-dependent vector fields which are  for the sake of clarity recalled there. 
\section{Guide to attached papers} \label{sec_guide}
The second part of this thesis consists of four attached papers. All of them are available for download at arXiv.org archive, three of them can be openly accessed directly through the respective journals. Papers are presented here in their original form, exactly as they were published. We sometimes refer to equations in the theoretical introduction of this thesis to link the introduced mathematical theory with its applications in the papers. 

The first attached paper is {\bfseries{p-Brane Actions and Higher Roytenberg Brackets}}
 \cite{Jurco:2012gc}. 

A main subject of this paper is a study of Nambu sigma model proposed in \cite{Jurco:2012yv, Schupp:2012nq}. We modify its action slightly to include a twist with a $B$-field. Using relations (\ref{eq_nf1} - \ref{eq_nf3}), its equivalence to a $p$-brane sigma model action is shown. We introduce a slightly modified higher analogue of Courant algebroid bracket (\ref{eq_Roytenberg}) which we call in accordance with \cite{Halmagyi:2008dr} a higher Roytenberg bracket. We use the results of \cite{2011JHEP...03..074E} to show that the Poisson algebra of generalized charges of the Nambu sigma model closes and it can be described by the higher Roytenberg bracket. This generalizes the results \cite{Alekseev:2004np, Halmagyi:2008dr}. Next, we turned our attention to the topological Nambu sigma model. It can be viewed as a system with constrains. We have proved that a consistency of the constrains with time evolution forces the tensor $\Pi$ defining the sigma model to be a Nambu-Poisson tensor. See Chapter \ref{ch_NP} of this thesis for details. Using the Darboux coordinates of Theorem \ref{tvrz_NPfund} we were able to explicitly solve the equations of motion. We have concluded the paper by showing that coefficients of the generalized Wess-Zumino term produced out of topological Nambu sigma model are exactly the structure functions of the higher Roytenberg bracket. 
 
{\bfseries{On the Generalized Geometry Origin of Noncommutative
Gauge Theory}} \cite{Jurco:2013upa}.

Idea of this paper was to use the tools of generalized geometry to explain and simplify certain steps derived originally in \cite{Jurco:2000fb}, \cite{Jurco:2000fs} and \cite{Jurco:2001my}. 
A main observation was that different factorizations of the generalized metric correspond to Seiberg-Witten open closed relations (\ref{eq_OCrelations}). For the first time we interpret this as an orthogonal transformation (\ref{eq_etoPi}) of a generalized metric, see Section \ref{sec_Onngen} of this thesis. Moreover, also adding a fluctuation $F$ to the $B$-field background can be interpreted as an orthogonal transformation (\ref{eq_etoB}). These two transformation do not commute (this is in fact a direct consequence of Lemma \ref{lem_ldu-udl}). This immediately leads to the correct definition of non-commutative versions of the respective fields. We were able to use this formalism to quickly re-derive the identities essential for the equivalence of classical DBI action and its semi-classically noncommutative counterpart.  

The third of the appended papers is {\bfseries{Extended generalized geometry and a DBI-type effective action for branes ending on branes}} \cite{Jurco:2014sda}. 

In this article, our intentions were to generalize the approach taken in the previous paper \cite{Jurco:2013upa} to simplify the calculations in \cite{Jurco:2012yv} leading to the proposal of a $p$-brane generalization of DBI effective action. To reach this goal, we were able to explain the $p$-brane open-closed relations (\ref{eq_hoc1} - \ref{eq_hoc4}) in terms of the factorization of the generalized metric (\ref{def_hgenmetric}). Higher generalized tangent bundle does not possess a canonical orthogonal group structure (it does for $p=1)$. This was the reason why we have approached this through the addition of its dual. See Section \ref{sec_doubled} of this thesis for details. Using the extended generalized geometry, we were able to show the equivalence of respective DBI actions very quickly. 

Moreover, using the doubled formalism, we could define an analogue of a so called background independent gauge. Reason for this name comes from the famous paper \cite{Seiberg:1999vs}, and it is related to the actual background independence of the corresponding non-commutative Yang-Mills action. For $p = 1$, this corresponds to the choice $\theta = B^{-1}$ in (\ref{eq_OCrelations}). We generalize this idea to $p \geq 1$ including also the case of a degenerate $2$-form $B$ for $p=1$. A choice of suitable Nambu-Poisson tensor $\Pi$ singles out particular directions on the $p'$-brane, which we call noncommutative directions. This allows us to derive a generalization of a double scaling limit, see \cite{Seiberg:1999vs}, introducing an infinitesimal parameter $\epsilon$ into DBI action. Calculating an expansion of DBI in the first order of $\epsilon$ yields a possible generalization of a matrix model. 

This lead us to the writing of the short paper {\bfseries{Nambu-Poisson Gauge Theory}} \cite{Jurco:2014aza}.

The letter follows the ideas for $p=1$ published in \cite{Madore:2000en}. Nambu-Poisson theory gauge theory was originally invented in \cite{Ho:2008ve} as en effective theory on a M5-brane for a large longitudinal $C$-field in M-theory. Our idea was to start from scratch without any reference to $M$-theory and branes. We define covariant coordinates to be functions of space-time coordinates transforming in a prescribed way under gauge transformations parametrized by a $(p-1)$-form, using a prescribed $(p+1)$-ary Nambu-Poisson bracket. Following \cite{Jurco:2000fs, Jurco:2001my}, we use the covariant coordinates to define a Nambu-Poisson gauge field and a corresponding Nambu-Poisson field strength. The second part of this paper is devoted to an explicit construction of covariant coordinates using the Seiberg-Witten map described in this thesis in Section \ref{sec_SW}. We propose a simple Yang-Mills action for this gauge theory and relate it to the DBI action expansion obtained in the above described paper \cite{Jurco:2014sda}. 
\section{Conventions}
The main aim of this section is to introduce a notation used throughout this entire work. 
We always work with a finite-dimensional smooth second-countable Hausforff manifold, which we usually denote as $M$ and its dimension as $n$. For implications of this definition, I would recommend an excellent book \cite{lee2012introduction}. In particular, one can use the existence of a partition of unity and its implications. We consider all objects to be real, in particular all vector spaces, vector bundles, bilinear forms, etc.

Now, let us clarify our index notations. We reserve the small Latin letters ($i,j,k$ etc.) to label the components corresponding to a set of local coordinates $(y^{1}, \dots, y^{n})$ on $M$, or sometimes to some local frame field on $M$. We reserve small Greek letters $(\alpha,\beta,\gamma$, etc.) to label the components with respect to some local basis of the module of smooth sections of a vector bundle $E$. We use the capital Latin letters $(I,J,K$ etc.) to denote the \emph{strictly ordered} multi-indices, that is $I = (i_{1}, \dots, i_{p})$ for some $p \in \N$, where $1 \leq i_{1} < \dots < i_{p} \leq n$. Particular value of $p$ should be clear from the context. We always hold to the Einstein summation convention, where repeated indices (one upper, one lower) are assumed to be summed over their respective ranges. For example, $v^{I} w_{I}$ stands for the sum $\sum_{1 \leq i_{1} < \dots < i_{p} \leq n} v^{(i_{1} \dots i_{p})} w_{(i_{1} \dots i_{p})}$. 

If $(y^{1}, \dots, y^{n})$ are local coordinates on $M$, by $\partial_{i}$ we denote the corresponding partial derivatives and coordinate vector fields. By $dy^{I}$ and $\partial_{I}$ we denote the wedge products of coordinate $1$-forms and vector fields:
\begin{equation}
dy^{I} = dy^{i_{1}} \^ \dots \^ dy^{i_{p}}, \ \partial_{I} = \partial_{i_{1}} \^ \dots \partial_{i_{p}}. 
\end{equation}
By definition, $dy^{I}$ and $\partial_{I}$ form a local basis of $\df{p}$ and $\vf{p}$ respectively. 

We will often use a generalized Kronecker symbol. We define it as follows
\[
\delta_{i_{1} \dots i_{p}}^{j_{1} \dots j_{p}} = \left\lbrace \hspace{-2mm} \begin{array}{rl}
+1 & \hspace{-4mm} \text{ both $p$-indices are strictly ordered and one is an even permutation of the other,}\\
-1 & \hspace{-4mm} \text{ both $p$-indices are strictly ordered and one is an odd permutaion of the other,}\\
0 & \hspace{-3mm} \text{in all other cases.} 
\end{array}
\right.
\]
It is defined so that $(dy^{J})_{I} = \delta^{J}_{I}$. A Levi-Civita symbol $\epsilon_{i_{1} \dots i_{p}}$ can be then defined as $\epsilon_{i_{1} \dots i_{p}} = \delta^{1 \dots p}_{i_{1} \dots i_{p}}$. 

Let $E,E'$ be two vector bundles over $M$. By $\Hom(E,E')$ we mean a module of smooth vector bundle morphisms from $E$ to $E'$ over the identity map $Id_{M}$. Under our assumptions on $M$, $\Hom(E,E')$ coincides with the module of $\cif$-linear maps from $\Gamma(E)$ to $\Gamma(E')$, and we will thus never distinguish between the vector bundle morphisms and the induced maps of sections. We define $\End(E) \defeq \Hom(E,E)$, and $\Aut(E) \defeq \{ \F \in \End(E),\; \F \text{ is fiber-wise bijective} \}$. 

Now, let $p \geq 0$ be a fixed integer, and $C \in \df{p+1}$ be a differential $(p+1)$-form on $M$. This induces a vector bundle morphism $C_{\flat}: \vf{p} \rightarrow \df{1}$ defined as
\begin{equation}
C_{\flat}(Q^{J} \partial_{J}) \defeq Q^{J} C_{iJ} dy^{i}, 
\end{equation}
for all $Q = Q^{J} \partial_{J} \in \vf{p}$, and $C_{iJ} = C( \partial_{i}, \partial_{j_{1}} \dots, \partial_{j_{p}})$. It is straightforward to check that $C_{\flat}$ is a well-defined $\cif$-linear map of sections (all indices are properly contracted). At each point $m \in M$, $C_{\flat}$ thus defines a linear map from $\Lambda^{p}T_{m}M$ to $T^{\ast}_{m}M$, with the corresponding matrix $(C_{\flat}|_{m})_{i,J} \equiv [C|_{m}]_{iJ}$. Collecting those matrices, we get a matrix of functions $(C|_{\flat})_{i,J} = C_{iJ}$. Here comes our convention. In the whole thesis, we will denote objects $C$, $C_{\flat}$ and $(C_{\flat})_{i,J}$ with the single letter $C$, and the particular interpretation will always be clear from the context. By $C^{T}$ we mean the transpose map from $\vf{1}$ to $\df{p}$. Note that $C^{T}(X) = \io_{X}C$, for all $X \in \vf{}$. 

Similarly, for $\Pi \in \vf{p+1}$, we define the vector bundle morphism $\Pi^{\sharp}: \df{p} \rightarrow \vf{}$ as 
\begin{equation}
\Pi^{\sharp}(\xi_{J} dy^{J}) = \xi_{J} \Pi^{iJ} \partial_{i},
\end{equation}
for all $\xi \in \df{p}$, and $\Pi^{iJ} = \Pi(dy^{i}, dy^{j_{1}}, \dots, dy^{j_{p}})$. We again do not distinguish $\Pi$, $\Pi^{\sharp}$ and the matrix ${(\Pi^{\sharp})}{}^{i,J} = \Pi^{iJ}$. The transpose map $\Pi^{T}$ then maps from $\df{1}$ to $\vf{p}$.

These conventions may seem to be quite unusual when compared to the standard generalized geometry papers. They are however well suited for matrix multiplications, and proved to be the best choice to get rid of unnecessary $(-1)^{p}$ factors in all formulas. 

\chapter{Leibniz algebroids and their special cases} \label{ch_algebroids}
In this chapter, we will introduce a framework useful to describe various algebraical and geometrical aspects of the objects living on vector bundles. Fields arising in string and membrane sigma models theory can be viewed as vector bundle morphisms of various powers of tangent and cotangent bundles, which justifies the efforts to understand the canonical structures coming with those vector bundles. We will proceed in a rather unhistorical direction, starting from the most recent definitions, and arriving to the oldest. 

\section{Leibniz algebroids} \label{sec_Leibnizalg}
The basic idea goes as follows. Let $E \stackrel{\pi}{\rightarrow} M$ be a vector bundle. By definition, $E$ is a collection of isomorphic vector spaces $E_{m}$ at each point $m \in M$. Let us say that every $E_{m}$ can be equipped with a Leibniz algebra bracket $[\cdot,\cdot]_{m}$, that is an $\R$-bilinear map from $E_{m} \times E_{m}$ to $E_{m}$, satisfying the Leibniz identity
\begin{equation}
[v,[v',v'']_{m}]_{m} = [[v,v']_{m},v'']_{m} + [v', [v,v'']_{m}]_{m},
\end{equation}
for all $v,v',v'' \in E_{m}$. This bracket needs not to be skew-symmetric. Leibniz algebras were first introduced by Jean-Luis Loday in \cite{loday1993version}. Now,	 suppose that this bracket changes smoothly from point to point. More precisely, if $e,e' \in \Gamma(E)$ are smooth sections, then formula
\begin{equation} [e,e'](m) \defeq [e(m),e'(m)]_{m}, \end{equation}
defines a smooth section $[e,e'] \in \Gamma(E)$. We have just constructed a simplest example of Leibniz algebroid\footnote{Consider that $E$ has its typical fiber equipped with a Leibniz algebra bracket. If $E$ can be locally trivialized by Leibniz algebra isomorphisms, one calls this example a \emph{Leibniz algebroid bundle}.}. Note that the bracket satisfies $[e,fe'] = f [e,e']$ for all $e,e' \in \Gamma(E)$ and $f \in \cif$, and also the Leibniz identity
\begin{equation} [e,[e',e'']] = [[e,e'],e''] + [e',[e,e'']]. \end{equation}
This special case is too simple in a sense that the bracket depends only on the point-wise values of the incoming sections. In fact, all examples which we will encounter in this thesis are not of this type. Let us now give a formal definition of a general Leibniz algebroid.
\begin{definice} \label{def_leibniz}
Let $E \stackrel{\pi}{\rightarrow} M$ be a vector bundle, and $\rho \in \Hom(E,TM)$. Let $[\cdot,\cdot]_{E}: \Gamma(E) \times \Gamma(E) \rightarrow \Gamma(E)$ be an $\R$-bilinear map. We say that $(E,\rho,[\cdot,\cdot]_{E})$ is a {\bfseries{Leibniz algebroid}}, if 
\begin{itemize}
\item For all $e,e' \in \Gamma(E)$, and $f \in \cif$, there holds the \emph{Leibniz rule}:
\begin{equation} \label{def_bracketLeibniz}
[e,fe']_{E} = f [e,e']_{E} + (\rho(e).f) e'.
\end{equation}
\item The bracket $[\cdot,\cdot]_{E}$ defines a Leibniz algebra on $\Gamma(E)$, it satisfies the Leibniz identity
\begin{equation} \label{def_bracketJI}
[e, [e',e'']_{E}]_{E} = [[e,e']_{E},e'']_{E} + [e', [e,e'']_{E}]_{E},
\end{equation}
for all $e,e',e'' \in \Gamma(E)$. 
\end{itemize}
The map $\rho$ is called the \emph{anchor} of Leibniz algebroid, since it allows the sections of $E$ to act on the module of smooth functions on $M$. 
\end{definice}
Let us make a few remarks to this definition.
\begin{itemize}
\item In the math literature, Leibniz algebroids are often called Loday algebroids. There is an extensive work on this topic by Y. Kosmann-Schwarzbach in \cite{Kosmann1996} and especially by J. Grabowski an his collaborators \cite{2011arXiv1103.5852G}. However, we stick to the name Leibniz algebroid which was used in relation to Nambu-Poisson structures \cite{1999JPhA...32.8129I,hagiwara}, or in relation to generalized geometry, as in \cite{2012JGP....62..903B}.
\item In some literature, there is an another axiom present, namely that $\rho$ is a bracket homomorphism:
\begin{equation} \label{eq_leibnizhom}
\rho([e,e']_{E}) = [\rho(e), \rho(e')],
\end{equation}
for all $e,e' \in \Gamma(E)$. However, it follows directly from the compatibility of Leibniz rule and Leibniz identity. 
\item Our motivating example thus corresponds to the case $\rho = 0$, so called \emph{totally intransitive} Leibniz algebroid. Note that in the general case, there is no way how to consistently induce a Leibniz algebra bracket on the single fibers of $E$. 
\item There is an interesting subtlety in the presented definition of the Leibniz algebroid. The bracket $[\cdot,\cdot]_{E}$ has two inputs, but Leibniz rule controls only the right one. One can prove the following. Let $e' \in \Gamma(E)$ be a section such that $e'|_{U} = 0$ for some open subset $U \subseteq M$. Then $([e,e']_{E})|_{U} = 0$. This can be shown by choosing $f = 1 - \chi$, where $\chi$ is a bump function supported inside $U$, and $\chi(m) = 1$ for chosen $m \in M$. Then $e' = (1 - \chi) e'$ and consequently
\[
[e,e']_{E} = [e,(1-\chi)e']_{E} = (1-\chi)[e,e']_{E} + (\rho(e).(1-\chi))e'. 
\]
Evaluating this at $m$, we get $[e,e']_{E}(m) = 0$. We can repeat this for any $m \in U$, and we get $([e,e']_{E})|_{U} = 0$. This property allows one to restrict the second input of $[\cdot,\cdot]_{E}$ to $\Gamma_{U}(E)$. For a general Leibniz algebroid, there is however no way to do this in the first input. 
\end{itemize}
Let us now examine some structures induced by Leibniz algebroid bracket on $E$. First note that it induces an analogue $\Li{}^{E}$ of Lie derivative on the tensor algebra $\T(E)$. It is defined as follows. Assume $e,e' \in \Gamma(E)$, $\alpha \in \Gamma(E^{\ast})$ and $f \in \cif$. 
\begin{enumerate}
\item On $\T^{0}_{0}(E) \cong \cif$, we define it as $\Li{e}^{E}f = \rho(e).f$.
\item For $\T^{1}_{0}(E) \cong \Gamma(E)$, we set $\Li{e}^{E}e' = [e,e']_{E}$, 
\item For $\T_{1}^{0}(E) \cong \Gamma(E^{\ast})$, we define it by contraction
\begin{equation} \label{def_Lieon1form}
\< \Li{e}^{E}\alpha, e' \> \defeq \rho(e).\<\alpha,e'\> - \< \alpha, [e,e']_{E} \>. 
\end{equation}
It follows from the Leibniz rule that the right-hand side is $\cif$-linear in $e'$, and thus defines an element $\Li{e}\alpha \in \Gamma(E^{\ast})$. 
\item For general $\tau \in \T^{p}_{q}(E)$, $\Li{e}^{E}$ is defined as 
\begin{equation} \label{def_LieEont}
\begin{split}
[\Li{e}^{E}\tau](e_{1}, \dots, e_{q}, \alpha_{1}, \dots, \alpha_{p}) & = \rho(e).\tau(e_{1}, \dots, e_{q}, \alpha_{1}, \dots, \alpha_{p}) \\
& - \tau(\Li{e}^{E}e_{1}, \dots, e_{q},\alpha_{1},\dots,\alpha_{p}) - \dots \\
\dots &  - \tau(e_{1}, \dots, e_{q}, \alpha_{1}, \dots, \Li{e}^{E}\alpha_{p}),
\end{split}
\end{equation}
for all $e_{1}, \dots, e_{q} \in \Gamma(E)$ and $\alpha_{1}, \dots, \alpha_{p} \in \Gamma(E^{\ast})$. 
\end{enumerate}
We can easily prove the following properties of the Lie derivative. 
\begin{lemma}
Lie derivative $\Li{e}^{E}$ satisfies the Leibniz rule:
\begin{equation} \label{lem_LieEleibniz}
\Li{e}^{E}(\tau \otimes \sigma) = \Li{e}^{E}(\tau) \otimes \sigma + \tau \otimes \Li{e}^{E}(\sigma), 
\end{equation}
for all $\tau,\sigma \in \T(E)$. Moreover, it can be restricted to the exterior algebra $\Omega^{\bullet}(E)$, and forms its degree $0$ derivation:
\begin{equation} \label{lem_LieEderivation}
\Li{e}^{E}(\omega \^ \omega') = \Li{e}^{E}(\omega) \^ \omega' + \omega \^ \Li{e}^{E}(\omega'),
\end{equation}
for all $\omega,\omega' \in \Omega^{\bullet}(E)$. Next, the commutator of Lie derivatives is again a Lie derivative:
\begin{equation} \label{lem_LieEcommutator}
\Li{e}^{E} \Li{e'}^{E} - \Li{e'}^{E} \Li{e}^{E} = \Li{[e,e']_{E}}^{E},
\end{equation}
for all $e,e' \in \Gamma(E)$. Note that this also implies $\Li{[e,e]_{E}}^{E} = 0$, although in general $[e,e]_{E} \neq 0$. Finally, there holds also the identity
\begin{equation} \label{lem_comLieio}
\Li{e}^{E} \circ \io_{e'} - \io_{e'} \circ \Li{e}^{E} = \io_{[e,e']_{E}}, 
\end{equation}
where both sides are now considered as operators only on the submodule $\Omega^{\bullet}(E)$. 
\end{lemma}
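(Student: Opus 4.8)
The plan is to exploit a single structural principle: each operator occurring in the statement is a derivation of the tensor algebra $\T(E)$ that is compatible with every contraction, and such an operator is completely determined by its restriction to $\T^{0}_{0}(E) = \cif$ and $\T^{1}_{0}(E) = \Gamma(E)$. This is immediate from $(\ref{def_Lieon1form})$, which fixes the action on $\Gamma(E^{\ast})$ by compatibility with $\<\cdot,\cdot\>$, together with $\cif$-multilinearity and the fact that tensors are generated over $\cif$ by $\Gamma(E)$ and $\Gamma(E^{\ast})$. Granting this, $(\ref{lem_LieEleibniz})$ follows by evaluating both sides on $q+q'$ sections and $p+p'$ covectors: formula $(\ref{def_LieEont})$ says exactly that $\Li{e}^{E}$ is a derivation of the total evaluation pairing into $\cif$, and since $(\tau\otimes\sigma)(\cdots) = \tau(\cdots)\,\sigma(\cdots)$ is a product of functions while $\rho(e).(-)$ is a derivation of $\cif$, the resulting terms regroup into $\Li{e}^{E}\tau \otimes \sigma + \tau \otimes \Li{e}^{E}\sigma$.

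For the restriction to $\Omega^{\bullet}(E)$ and identity $(\ref{lem_LieEderivation})$, I would first observe that the right-hand side of $(\ref{def_LieEont})$ is symmetric under permutations of the covariant slots (and, separately, of the contravariant slots), so $\Li{e}^{E}$ maps skew-symmetric tensors to skew-symmetric tensors and hence restricts to $\Omega^{\bullet}(E)$. The wedge Leibniz rule then follows from $(\ref{lem_LieEleibniz})$ and the fact that $\Li{e}^{E}$ commutes with antisymmetrisation; alternatively, one checks it directly on decomposable forms $\alpha_{1} \^ \cdots \^ \alpha_{k}$ by induction on $k$ using $(\ref{def_Lieon1form})$.

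The commutator identity $(\ref{lem_LieEcommutator})$ is where the actual content sits. The graded commutator of two contraction-compatible derivations is again one, so $\Li{e}^{E}\Li{e'}^{E} - \Li{e'}^{E}\Li{e}^{E}$ and $\Li{[e,e']_{E}}^{E}$ are both contraction-compatible derivations of $\T(E)$, and by the principle above it suffices to match them on $\cif$ and on $\Gamma(E)$. On $\cif$ one has $\rho(e).(\rho(e').f) - \rho(e').(\rho(e).f) = [\rho(e),\rho(e')].f = \rho([e,e']_{E}).f = \Li{[e,e']_{E}}^{E}f$, using that $\rho$ is a bracket homomorphism, equation $(\ref{eq_leibnizhom})$. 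On $\Gamma(E)$ one has $[e,[e',e'']_{E}]_{E} - [e',[e,e'']_{E}]_{E} = [[e,e']_{E},e'']_{E} = \Li{[e,e']_{E}}^{E}e''$, which is precisely the Leibniz identity $(\ref{def_bracketJI})$. This proves $(\ref{lem_LieEcommutator})$, and setting $e' = e$ gives $\Li{[e,e]_{E}}^{E} = 0$.

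Finally, for $(\ref{lem_comLieio})$ one works inside $\Omega^{\bullet}(E)$, where $\io_{e'}$ is a degree $-1$ graded derivation and $\Li{e}^{E}$ a degree $0$ derivation (by $(\ref{lem_LieEderivation})$), so $\Li{e}^{E}\c\io_{e'} - \io_{e'}\c\Li{e}^{E}$ is a degree $-1$ graded derivation; so is $\io_{[e,e']_{E}}$. Such a derivation is determined by its action on $\Omega^{0}(E) = \cif$, where both sides vanish for degree reasons, and on $\Omega^{1}(E) = \Gamma(E^{\ast})$. On $\alpha \in \Gamma(E^{\ast})$ the left-hand side is $\rho(e).\<\alpha,e'\> - \<\Li{e}^{E}\alpha,e'\> = \<\alpha,[e,e']_{E}\> = \io_{[e,e']_{E}}\alpha$ by $(\ref{def_Lieon1form})$, which finishes the argument. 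The one step that needs genuine care rather than bookkeeping is the reduction to generators — checking that the relevant commutators really are contraction-compatible (resp. graded) derivations — after which the only substantive inputs are the homomorphism property of $\rho$ and the Leibniz identity $(\ref{def_bracketJI})$.
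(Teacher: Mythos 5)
Your proposal is correct and follows essentially the same route as the paper's proof: reduce each identity to generators via the derivation/contraction-compatibility properties, then check on functions, sections and $1$-forms, with the homomorphism property (\ref{eq_leibnizhom}) and the Leibniz identity (\ref{def_bracketJI}) as the only substantive inputs, and the degree $-1$ graded-derivation argument for (\ref{lem_comLieio}) matching the paper's verbatim. The only cosmetic point is your phrase ``symmetric under permutations of the covariant slots,'' which should read that the right-hand side of (\ref{def_LieEont}) is equivariant under permutations, hence skew-symmetric whenever $\tau$ is a form — exactly the paper's observation.
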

\begin{proof}
Leibniz rule (\ref{lem_LieEleibniz}) follows from the Leibniz rule (\ref{def_bracketLeibniz}) for the bracket $[\cdot,\cdot]_{E}$ and the definition formula (\ref{def_LieEont}). When $\tau = \omega \in \Omega^{q}(E)$, the expression on the right-hand side of (\ref{def_LieEont}) can be seen to be skew-symmetric in $(e_{1}, \dots, e_{q})$, and the derivation property (\ref{lem_LieEderivation}) then follows from the Leibniz rule (\ref{lem_LieEleibniz}). Finally, the left-hand side of (\ref{lem_LieEcommutator}) is a commutator and thus obeys (\ref{lem_LieEleibniz}). It thus suffices to prove this on tensors of type $(0,0)$, $(1,0)$ and $(0,1)$. For $f \in \T_{0}^{0}(E)$, the condition (\ref{lem_LieEcommutator}) is equivalent to (\ref{eq_leibnizhom}), for $(1,0)$ it reduces to the Leibniz identity (\ref{def_bracketJI}), and for $(0,1)$ it follows from the relation
\begin{equation} \< [\Li{e}^{E},\Li{e'}^{E}]\alpha, e'' \> = [\rho(e),\rho(e')].\<\alpha,e''\> - \<\alpha, [\Li{e}^{E},\Li{e'}^{E}]e''\>. \end{equation}
To finish the proof we have to show (\ref{lem_comLieio}). Because both sides are derivations of degree $-1$ of the exterior algebra $\Omega^{\bullet}(E)$, we have to prove the result on forms of degree $0$ and $1$ only. The first case is trivial, the second gives
\[ \rho(e).\<\alpha,e'\> - \< \Li{e}^{E}\alpha, e'\> = \< \alpha, [e,e']_{E} \>, \]
which is precisely the definition of the Lie derivative on $\Omega^{1}(E)$. Note that Leibniz identity for $[\cdot,\cdot]_{E}$ was not used to prove (\ref{lem_comLieio}). 
\end{proof}
We see that there is still one piece missing in the Cartan puzzle, namely the analogue of the differential: $d_{E}: \Omega^{\bullet}(E) \rightarrow \Omega^{\bullet + 1}(E)$. There is however no way around this for general Leibniz algebroid. There are two reasons - usual Cartan's formula for differential not only fails to define a form of a higher degree, it does not give a tensorial object at all. 

To conclude the subsection on Leibniz algebroids, we now bring up an example, which will play a significant role hereafter. 
\begin{example} \label{ex_dorfman}
Let $E = TM \oplus \cTM{p}$, where $p \geq 0$. We will denote the sections of $E$ as formal sums $X + \xi$, where $X \in \vf{}$ and $\xi \in \Omega^{p}(M)$. We define the anchor $\rho$ simply as the projection onto $TM$: $\rho(X+\xi) = X$. The bracket, which we will denote as $[\cdot,\cdot]_{D}$ is defined as 
\begin{equation} \label{def_dorfman}
[X+\xi,Y+\eta]_{D} \defeq [X,Y] + \Li{X}\eta - \io_{y}d\xi,
\end{equation}
for all $X+\xi, Y+\eta \in \Gamma(E)$. It is a straightforward check to see that $(E,\rho,[\cdot,\cdot]_{D})$ forms a Leibniz algebroid. The bracket $[\cdot,\cdot]_{D}$ is called the {\bfseries{Dorfman bracket}}. For $p=1$, it first appeared in \cite{dorfman1987dirac}, for $p > 1$ it appeared in \cite{Gualtieri:2003dx,Hitchin:2004ut}. It proved to be a useful tool to describe Nambu-Poisson manifolds \cite{hagiwara}. To illustrate the previous definitions, note that on $\Omega^{1}(E)$, the induced Lie derivative $\Li{}^{E}$ has the form
\begin{equation} \label{eq_LieforDorfman}
\Li{X+\xi}^{E}(\alpha + Q) = \Li{X}\alpha + (d\xi)(Q) + \Li{X}Q,
\end{equation}
for all $X+\xi \in \Gamma(E)$ and all $\alpha + Q \in \Omega^{1}(E)$. 
\end{example}
\section{Lie algebroids} \label{sec_Liealgebroids}
Having the concept of Leibniz algebroid defined, it is easy to define a Lie algebroid. This structure is much older, it first appeared in \cite{pradines}. Lie algebroids play the role of an "infinitesimal" object corresponding to Lie groupoids. While a Lie algebra is the tangent space at the group unit with the extra structure coming from the group multiplication, Lie algebroid is a vector bundle over a set of units of a Lie groupoid. However; not every Lie algebroid corresponds to a Lie groupoid, see \cite{2001math......5033C}. For an extensive study of Lie groupoids, Lie algebroids and related subjects, see the book \cite{Mackenzie}. 
\begin{definice}
Let $(L,l,[\cdot,\cdot]_{L})$ be a Leibniz algebroid. We say that $(L,l,[\cdot,\cdot]_{L})$ is a {\bfseries{Lie algebroid}}, if $[\cdot,\cdot]_{L}$ is skew-symmetric, and hence a Lie bracket. Leibniz identity (\ref{def_bracketJI}) is now called the Jacobi identity (note that it can be reordered using the skew-symmetry of the bracket).
\end{definice}
\begin{example} \label{ex_liealg}
There are several standard examples of Lie algebroids
\begin{enumerate}
\item Consider $L = TM$, $l = Id_{TM}$ and let $[\cdot,\cdot]_{L} = [\cdot,\cdot]$ be a vector field commutator. 
\item A generalization of the example in the previous section, where each fiber of $E_{m}$ is equipped with a Lie algebra bracket, with a smooth dependence on $m$. In particular, for $M = \{m\}$, we see that every Lie algebra is an example of Lie algebroid. 
\item \label{ex_lialg3} This is a classical example, which probably first appeared in \cite{fuchssteiner1982lie}. According to \cite{kosmann1990poisson}, it was discovered independently by several authors during 1980s. Look there for a complete list of references. It is sometimes called Koszul or Magri bracket, or simply a cotangent Lie algebroid. 

Let $\Pi \in \vf{2}$ be a bivector on $M$. Choose $L = T^{\ast}M$, and define the anchor $\rho$ as $\rho(\alpha) = \Pi(\alpha)$ for all $\alpha \in \df{1}$. Finally, define the bracket $[\cdot,\cdot]_{\Pi}$ as 
\begin{equation}
[\alpha,\beta]_{\Pi} = \Li{\Pi(\alpha)} \beta - \io_{\Pi(\beta)} d\alpha. 
\end{equation}
This bracket is skew-symmetric when $\Pi$ is, and satisfies the Jacobi identity if and only if $\Pi$ is a Poisson bivector, that is $\{f,g\} = \Pi(df,dg)$ defines a Poisson bracket on $M$. We will investigate this in more detail in Section \ref{ch_NP}.
\item Consider a principal $G$-bundle $P \stackrel{\pi}{\rightarrow} M$. There is a $\cif$-module $\Gamma_{G}(TP)$ of $G$-invariant vector fields on $P$, which turns out to be isomorphic to the module $\Gamma(TP / G)$ of sections of the quotient bundle $TP / G$ (quotient with respect to the right translation of vector fields induced by the principal bundle group action). This isomorphism induces a bracket on $\Gamma(TP/G)$ using the vector field commutator on $\Gamma(TP)$. Finally, the tangent map $T(\pi): TP \rightarrow TM$ descends to the quotient $\widehat{T}(\pi): TP / G \rightarrow TM$, defining an anchor. For details of this construction, see the sections \S 3.1, \S 3.2 of \cite{Mackenzie}. The resulting Lie algebroid is called the \emph{Atiyah-Lie algebroid}.
\end{enumerate}
\end{example}
The newly imposed skew-symmetry of the bracket $[\cdot,\cdot]_{L}$ of a Lie algebroid allows for new structures on the exterior algebra $\Omega^{\bullet}(L)$ and multivector field algebra $\mathfrak{X}^{\bullet}(L)$. First, see that we finally have a differential (absent in general Leibniz algebroid case). We state this as a proposition.
\begin{tvrz}
Let $(L,l,[\cdot,\cdot]_{L})$ be a Lie algebroid. We define an $\R$-linear map $d_{L}: \Omega^{\bullet}(L) \rightarrow \Omega^{\bullet + 1}(L)$ as follows. Let $\omega \in \Omega^{p}(L)$, and $e_{0}, \dots, e_{p} \in \Gamma(L)$. Set
\begin{equation} \label{def_dL}
\begin{split}
(d_{L}\omega)(e_{0}, \dots, e_{p}) & = \sum_{i=0}^{p} (-1)^{i} l(e_{i}).\omega(e_{1}, \dots, \widehat{e}_{i}, \dots, e_{p}) \\
&  + \sum_{i<j} (-1)^{i+j} \omega([e_{i},e_{j}]_{L},e_{0},\dots,\widehat{e}_{i},\dots,\widehat{e}_{j},\dots,e_{p}),
\end{split}
\end{equation}
where $\widehat{e}_{i}$ denotes an omitted term. Then the right-hand side of (\ref{def_dL}) is completely skew-symmetric in $(e_{0}, \dots, e_{p})$, which proves that $d_{L}\omega \in \Omega^{p+1}(L)$. Moreover, $d_{L}$ is a derivation of the exterior algebra of degree $+1$, that is 
\begin{equation} \label{eq_dLderivation}
d_{L}(\omega \^ \omega') = d_{L}\omega \^ \omega' + (-1)^{|\omega|} \omega \^ d_{L}\omega',
\end{equation}
for all $\omega,\omega' \in \Omega^{\bullet}(L)$, such that $|\omega|$ is defined. Moreover, the map $d_{L}$ squares to zero: $d_{L}^{2} = 0$. Finally, let $\Li{}^{L}$ be a Lie derivative defined by Leibniz algebroid structure on $L$. Then the Cartan magic formula holds:
\begin{equation} \label{eq_cartanmagic} 
\Li{e}^{L}\omega = d_{L} \io_{e}\omega + \io_{e} d_{L}\omega,
\end{equation}
for all $e \in \Gamma(L)$ and $\omega \in \Omega^{\bullet}(L)$. 
\end{tvrz}
\begin{proof}
First, one can check the first assertion, which is quite straightforward. Next, one has to prove that the right-hand side is $\cif$-linear in $e_{0}, \dots, e_{p}$ and thus $d_{L}$ is a well-defined operator on tensors of $L$. This follows from the Leibniz rule (\ref{def_bracketLeibniz}). The most difficult step is to show (\ref{eq_dLderivation}), which is a quite tedious but straightforward proof by induction and we skip it here. To show $d_{L}^{2} = 0$, one notes that $d_{L}^{2} = \frac{1}{2}\{ d_{L}, d_{L} \}$, where $\{\cdot,\cdot\}$ is the graded commutator, and thus $d_{L}^{2}$ is a derivation of $\Omega^{\bullet}(M)$ of degree $2$. It thus suffices to check $d_{L}^{2} = 0$ on degrees $0$ and $1$. One obtains
\[ (d_{L}^{2}f)(e,e') = \rho(e).\rho(e').f - \rho(e').\rho(e).f - \rho([e,e']_{L}).f, \]
which is precisely the homomorphism property (\ref{eq_leibnizhom}). For $\alpha \in \dfE{1}$, we get
\begin{equation}
\begin{split}
(d_{L}^{2}\alpha)(e,e',e'') & = ([\rho(e),\rho(e')] - \rho([e,e']_{L}).\alpha(e'') + cyclic \{e,e',e''\} \\
& + \< \alpha, [[e,e']_{L}, e'']_{L} + [e'', [e,e']_{L}]_{L} + [e',[e'',e]_{L}]_{L} \>. 
\end{split}
\end{equation}
The first line again vanishes due to (\ref{eq_leibnizhom}), and the second line due to Jacobi identity (\ref{def_bracketJI}). The last assertion is an equality of two degree $0$ derivations of $\Omega^{\bullet}(L)$, and it thus has to be verified for degree $0$ and $1$ forms, which is easy. 
\end{proof}

Interestingly, $d_{L}$ is not only an induced structure, it contains all the information about the original Lie algebroid. More precisely, having any vector bundle $L$ with a degree $1$ derivation $d_{L}$ of the exterior algebra $\Omega^{\bullet}(L)$ squaring to $0$, we can define the anchor $l$ as 
\begin{equation}
l(e).f \defeq \< d_{L}f, e \>, 
\end{equation}
for all $f \in \cif$ and $e \in \Gamma(L)$, and then a bracket $[\cdot,\cdot]_{L}$ by 
\begin{equation}
\< \alpha, [e,e']_{L} \> = l(e).\<\alpha,e'\> - l(e').\<\alpha,e\> - (d_{L}\alpha)(e,e'),
\end{equation}
for all $e,e' \in \Gamma(L)$ and $\alpha \in \Omega^{1}(L)$. It then follows by simple calculations using just (\ref{eq_dLderivation}) that $(L,l,[\cdot,\cdot]_{L})$ is a Lie algebroid. 

The second object induced by a Lie algebroid is an analogue of Schouten-Nijenhuis bracket, we again define it using a proposition, this time without any proof. For a detailed discussion on this topic, see \cite{kosmann1990poisson}. 

\begin{tvrz}
Let $(L,l,[\cdot,\cdot]_{L})$ be a Lie algebroid. Then there is a unique bracket $[\cdot,\cdot]_{L}$ defined on the multivector field algebra $\mathfrak{X}^{\bullet}(L)$, which has the following properties:
\begin{itemize}
\item For $e \in \mathfrak{X}^{1}(L) \equiv \Gamma(L)$, and $f \in \mathfrak{X}^{0}(L) \equiv \cif$, we have $[e,f]_{L} = \rho(e).f$. 
\item For $e,e' \in \mathfrak{X}^{1}(L) \equiv \Gamma(L)$, $[\cdot,\cdot]_{L}$ coincides with the Lie algebroid bracket. 
\item $(\mathfrak{X}^{\bullet}(L), [\cdot,\cdot]_{L})$ forms a Gerstenhaber algebra, which amounts to the following:
\begin{enumerate}
\item $[\cdot,\cdot]_{L}$ is a degree $-1$ map, that is $|[P,Q]_{L}| = |P| + |Q| - 1$ for $P,Q \in \mathfrak{X}^{\bullet}(L)$. 
\item For each $P \in \mathfrak{X}^{\bullet}(L)$, $[P,\cdot]$ is a derivation of the exterior algebra $\mathfrak{X}^{\bullet}(L)$ of degree $|P| - 1$, that is there holds
\begin{equation}
[P,Q \^ R]_{L} = [P,Q]_{L} \^ R + (-1)^{(|P|-1)|Q|} Q \^ [P,R]_{L}. 
\end{equation}
\item It is graded skew-symmetric, that is 
\begin{equation}
[P,Q]_{L} = - (-1)^{(|P|-1)(|Q|-1)} [Q,P]_{L}. 
\end{equation}
\item It satisfies the graded Jacobi identity
\begin{equation}
[P,[Q,R]_{L}]_{L} = [[P,Q]_{L},R]_{L} + (-1)^{(|P|-1)(|Q|-1)} [Q, [P,R]_{L}]_{L}.
\end{equation}
\end{enumerate}
All properties are assumed to hold for all $P,Q,R \in \mathfrak{X}^{\bullet}(L)$ with a well-defined degree. 
\end{itemize}
The bracket $[\cdot,\cdot]_{L}$ is called a Schouten-Nijenhuis bracket corresponding to the Lie algebroid $(L,l,[\cdot,\cdot]_{L})$. For $L = TM$, it reduces to the original well-known Schouten-Nijenhuis bracket of multivector fields. 
\end{tvrz}
\section{Courant algebroids} \label{sec_Courantalgebroids}
Let us now consider a second special case of Leibniz algebroids, the one most relevant for the generalized geometry. We will stick to the modern and more used definition, which views Courant algebroid as a Leibniz algebroid with an additional structure. Historically, though, it appeared to be a much more complicated object. It appeared in \cite{liu1997manin} as a double corresponding to a pair of compatible Lie algebroids (we will show this as an example) in an attempt to generalize the concept of Manin triple to the Lie algebroid setting. The modern definition accounts to the thesis \cite{1999math.....10078R} of Roytenberg, who proved that the original skew-symmetric bracket can be replaced by a Leibniz algebroid bracket together with a set of simpler axioms. We present this form here. 
\begin{definice}
Let $E \stackrel{\pi}{\rightarrow} M$ be a vector bundle. By {\bfseries{fiber-wise metric}} on $E$, we mean a symmetric $\cif$-billinear non-degenerate form $\<\cdot,\cdot\>_{E}: \Gamma(E) \times \Gamma(E) \rightarrow \cif$. It follows from the $\cif$-bilinearity that for every $m \in M$ it defines a non-degenerate symmetric bilinear form (metric) on the fiber $E_{m}$. 
\end{definice}
\begin{definice}
Let $(E, \rho, [\cdot,\cdot]_{E})$ be a Leibniz algebroid. Let $\<\cdot,\cdot\>_{E}$ be a fiber-wise metric on $E$. We say that $(E,\rho,\<\cdot,\cdot\>_{E},[\cdot,\cdot]_{E})$ forms a {\bfseries{Courant algebroid}}, if 
\begin{enumerate}
\item The form $\<\cdot,\cdot\>_{E}$ is invariant with respect to the bracket:
\begin{equation} \label{eq_gEinvariance}
\rho(e).\<e',e''\>_{E} = \<[e,e']_{E},e''\>_{E} + \<e',[e,e'']_{E}\>_{E},
\end{equation} 
for all $e,e',e'' \in \Gamma(E)$. Equivalently, if $g_{E} \in \T_{2}^{0}(E)$ is a tensor corresponding to $\<\cdot,\cdot\>_{E}$, we require $\Li{e}^{E}g_{E} = 0$ for all $e \in \Gamma(E)$. 
\item For all $e,e' \in \Gamma(E)$, the symmetric part of the bracket is governed by $\rho$ and $\<\cdot,\cdot\>_{E}$ in the sense:
\begin{equation} \label{def_Courantsympart}
\< [e,e]_{E}, e'' \>_{E} = \frac{1}{2} \rho(e'').\<e,e\>_{E}, 
\end{equation}
for all $e,e' \in \Gamma(E)$. Equivalently, let $g_{E}: E \rightarrow E^{\ast}$ be the induced vector bundle isomorphism. Define an $\R$-linear map $\D: \cif \rightarrow \Gamma(E)$ as $\D \defeq g_{E}^{-1} \circ \rho^{T} \circ d$, where $\rho^{T} \in \Hom(T^{\ast}M,E^{\ast})$ is the transpose of the anchor. We can then rewrite the axiom simply as
\begin{equation} \label{eq_Courantsympart} [e,e]_{E} = \frac{1}{2} \D\<e,e\>_{E}, \end{equation}
and this can be polarized to
\begin{equation} [e,e']_{E} = -[e',e]_{E} + \D\<e,e'\>_{E}. \end{equation}
We see that $[\cdot,\cdot]_{D}$ is skew-symmetric up to the $\D$ of the function $\<e,e'\>_{E}$. 
\end{enumerate}
We will call $\<\cdot,\cdot\>_{E}$ or $g_{E}$ the \emph{Courant metric} on $E$. 
\end{definice}
Courant algebroid can be viewed as a generalization of the quadratic Lie algebra, since for $M = \{m\}$, it reduces to a Lie algebra equipped with a non-degenerate $ad$-invariant symmetric bilinear form. It was noted in \cite{Kotov:2010wr} that the invariance of $\<\cdot,\cdot\>_{E}$ cannot be achieved without the sacrifice of the skew-symmetry, i.e. there is no Lie algebroid with an invariant fiber-wise metric $\<\cdot,\cdot\>_{E}$, unless it is totally intransitive, that is $\rho = 0$. Note that the control over the symmetric part of the bracket allows one to derive the Leibniz rule in its first input. We get
\begin{equation} \label{eq_leftLeibniz}
[fe,e']_{E} = f[e,e']_{D} - (\rho(e').f)e + \<e,e'\>_{E} \D{f}. 
\end{equation}
Recall the remark under the Definition \ref{def_leibniz}, where we have noted that general Leibniz bracket $[e,e']_{E}$ depends only on the values of section $e'$ in an arbitrarily small neighborhood, but nothing can be said about $e$. This is not true for Courant algebroids, where we can use (\ref{eq_leftLeibniz}) to prove that $e|_{U} = 0$ implies $([e,e']_{E})|_{U} = 0$. Altogether, we can restrict $[\cdot,\cdot]_{E}$ to the module of the local sections $\Gamma_{U}(E)$, which proves useful when working with a local basis for $\Gamma(E)$. 
\begin{rem}
Interestingly, Kosmann-Schwarzbach has shown in \cite{2003math.....10359K} that the axiom of Leibniz rule (\ref{def_bracketLeibniz}) is superfluous in the definition of Courant algebroid, and can be derived from (\ref{eq_gEinvariance}, \ref{def_Courantsympart}). 
\end{rem}
\begin{example} \label{ex_courant}
Let us now give a few examples of Courant algebroids. 
\begin{enumerate} 
\item \label{ex_courant1} Consider the Leibniz algebroid from Example \ref{ex_dorfman} for $p=1$. Then there is a canonical pairing $\<\cdot,\cdot\>_{E}$ of vector fields and $1$-forms on $\Gamma(E) = \vf{} \oplus \df{1}$. Explicitly, the map $\D$ is then $\D(f) = 0 + df \in \Gamma(E)$. We have
\begin{equation}
[X+\xi,X+\xi]_{D} = d( \io_{X}\xi) = \frac{1}{2} \D \<X+\xi,X+\xi\>_{E}. 
\end{equation}
The invariance of the pairing reduces to showing that
\[ X.(\<\eta,Z\> + \<\zeta,Y\>) = \<[X,Y],\zeta\> + \< \Li{X}\eta - \io_{Y}d\xi, Z\> + \<\eta,[X,Z]\> + \<Y, \Li{X}\zeta - \io_{Z}d\xi\>. \]
This is easy to show after one uses the definitions of $\Li{}$ and the exterior differential $d$. Now, see that $[\cdot,\cdot]_{D}$ can be modified in the following way. Let $H \in \df{3}$ be a $3$-form on $M$, and define a new bracket $[\cdot,\cdot]_{D}^{H}$ as 
\begin{equation} \label{def_dorfmanHtwist}
[X+\xi,Y+\eta]_{D}^{H} = [X+\xi,Y+\eta]_{D} - H(X,Y,\cdot), \end{equation}
for all $X+\xi, Y+\eta \in \Gamma(E)$. Because $H$ is completely skew-symmetric and $\cif$-linear, both axioms (\ref{eq_gEinvariance}, \ref{def_Courantsympart}) remain valid also for $[\cdot,\cdot]_{D}^{H}$. Plugging into Leibniz identity (\ref{def_bracketJI}) shows that it holds if and only if $dH = 0$, that is $H \in \Omega^{3}_{closed}(M)$. The bracket $[\cdot,\cdot]_{D}^{H}$ is called the {\bfseries{$H$-twisted Dorfman  bracket}}. 

\item Let $(L,l,[\cdot,\cdot]_{L})$ and $(L^{\ast},l^{\ast},[\cdot,\cdot]_{L^{\ast}})$ be a pair of Lie algebroids, where $L^{\ast}$ is the dual vector bundle to $L$. One says that $(L,L^{\ast})$ forms a \emph{Lie bialgebroid}, if $d_{L}$ is a derivation of the Schouten-Nijenhuis bracket $[\cdot,\cdot]_{L^{\ast}}$, that is\footnote{This condition is in fact equivalent to the one with $L$ and $L^{\ast}$ interchanged \cite{mackenzie1994}.} 
\begin{equation} \label{def_Liebialgcond}
d_{L}[\omega,\omega']_{L^{\ast}} = [d_{L}\omega, \omega']_{L^{\ast}} + (-1)^{|\omega|-1}[\omega, d_{L}\omega']_{L^{\ast}},
\end{equation}
for all $\omega,\omega' \in \Omega^{\bullet}(L) \cong \mathfrak{X}^{\bullet}(L^{\ast})$. Define $E = L \oplus L^{\ast}$. Denote the sections of $E$ as ordered pairs $(e,\alpha)$, where $e \in \Gamma(L)$ and $\alpha \in \Gamma(L^{\ast})$. The anchor $\rho$ is defined as $\rho(e,\alpha) = l(e) + l^{\ast}(\alpha)$. The bracket $[\cdot,\cdot]_{E}$ is defined as 
\begin{equation} \label{eq_Manintriplebracket}
[(e,\alpha),(e',\alpha')]_{E} = \big( [e,e']_{L} + \Li{\alpha}^{L^{\ast}}e' - \io_{\alpha}(d_{L^{\ast}}e), [\alpha,\alpha']_{L^{\ast}} + \Li{e}^{L}\alpha' - \io_{e'}(d_{L}\alpha) \big), 
\end{equation}
for all $(e,\alpha), (e',\alpha') \in \Gamma(E)$. Finally, let $\<\cdot,\cdot\>_{E}$ be a fiber-wise metric on $E$ induced by the canonical pairing of $L$ and $L^{\ast}$. Then $(E,\rho, \<\cdot,\cdot\>_{E},[\cdot,\cdot]_{E})$ is a Courant algebroid, called the \emph{double of the Lie bialgebroid} $(L,L^{\ast})$. The actual proof of this statement is straightforward but takes quite some time to go through. See \cite{liu1997manin} for details.

Conversely, let $(E,\rho,\<\cdot,\cdot\>_{E},[\cdot,\cdot]_{E})$ be a Courant algebroid, and $L_{1}$ and $L_{2}$ be two complementary Dirac structures, that is $E = L_{1} \oplus L_{2}$. Then $L_{2} \cong L_{1}^{\ast}$ and $(L_{1},L_{2})$ can be equipped with a structure of Lie bialgebroid. $(E,L_{1},L_{2})$ is called a {\emph{Manin triple}}. 
\item \label{ex_courant2} Let us show an example of a double of Lie bialgebroid. Let $(L,l,[\cdot,\cdot]_{L})$ be any Lie algebroid. Choose $L^{\ast}$ to be a trivial Lie algebroid $(L^{\ast},0,0)$. Since $[\cdot,\cdot]_{L^{\ast}} = 0$, the Lie bialgebroid condition (\ref{def_Liebialgcond}) holds trivially. The resulting bracket on $E = L \oplus L^{\ast}$ is then 
\begin{equation}
[(e,\alpha),(e',\alpha')]_{E} = ([e,e'], \Li{e}^{L}\alpha' - \io_{e'} d_{L}\alpha),
\end{equation}
for all $(e,\alpha),(e',\alpha') \in \Gamma(E)$. We will call it the {\bfseries{Dorfman bracket}} corresponding to Lie algebroid $(L,l,[\cdot,\cdot]_{L})$. For $L = (TM,Id_{TM},[\cdot,\cdot])$, one obtains part \ref{ex_courant1}. of this example. 
\item \label{ex_courant3} Let $L = (TM,Id_{TM},[\cdot,\cdot])$ be the canonical tangent bundle Lie algebroid, and set $L^{\ast} = (T^{\ast}M, \Pi, [\cdot,\cdot]_{\Pi})$ to be the cotangent Lie algebroid from Example \ref{ex_liealg}, \ref{ex_lialg3}. One can find in \cite{mackenzie1994} that $d_{L^{\ast}} = -[\Pi,\cdot]_{S}$, where $[\cdot,\cdot]_{S}$ is the ordinary Schouten-Nijenhuis bracket on $\vf{\bullet}$. By definition of Schouten-Nijenhuis bracket, $d_{L^{\ast}}$ is thus a derivation of $[\cdot,\cdot]_{S}$, which is exactly the Lie bialgebroid condition (\ref{def_Liebialgcond}). The pair $(L,L^{\ast})$ therefore forms a Lie bialgebroid, called a \emph{triangular Lie bialgebroid}. The resulting double bracket (\ref{eq_Manintriplebracket}) on $E$ can be after some effort written as 
\begin{equation} \label{eq_Roytenberg}
\begin{split}
[X+\xi,Y+\eta]_{E} & = [X+\Pi(\xi), Y+\Pi(\eta)] - \Pi \big( \Li{(X+\Pi(\xi))}\eta - \io_{(Y+\Pi(\eta))} d\xi \big) \\
& + \Li{(X+\Pi(\xi))} \eta - \io_{(Y+\Pi(\eta))} d\xi,
\end{split}
\end{equation}
for all $X+\xi, Y+\eta \in \Gamma(E)$. Although complicated at first glance, it can be rewritten using the Dorfman bracket (\ref{def_dorfman}). Indeed, define a bundle map $e^{\Pi}: E \rightarrow E$ as $e^{\Pi}(X+\xi) = X + \Pi(\xi) + \xi$ for all $X+\xi \in \Gamma(E)$. We can then write 
\begin{equation} \label{eq_Roytenbergtwist}
[X+\xi,Y+\eta]_{E} = e^{-\Pi} [ e^{\Pi}(X+\xi), e^{\Pi}(Y + \eta) ]_{D}. 
\end{equation}
Moreover, $\rho = pr_{TM} \circ e^{\Pi}$, and $\<e^{\Pi}(X+\xi), e^{\Pi}(Y+\eta)\>_{E} = \<X+\xi, Y+\eta\>_{E}$. This shows that bracket (\ref{eq_Roytenberg}) is in fact just a "twist" of the Dorfman bracket. There is one important remark to be said. The bracket written in the form (\ref{eq_Roytenberg}) in fact does not require $\Pi$ to be a Poisson bivector in order to define a Courant algebroid. However; for general $\Pi$, the bracket (\ref{eq_Manintriplebracket}) is not the same as (\ref{eq_Roytenberg}). 

If one uses $[\cdot,\cdot]_{D}^{H}$ in the formula (\ref{eq_Roytenbergtwist}) instead of $[\cdot,\cdot]_{D}$, one obtains a bracket which is in \cite{Halmagyi:2008dr} called the \emph{Roytenberg bracket}. We also use this name for its higher version. 
\end{enumerate}
\end{example}

There is a famous classification of Ševera of a particular class of Courant algebroids. Let $(E,\rho,\<\cdot,\cdot\>_{E},[\cdot,\cdot]_{E})$ be any Courant algebroid. Define a map $j: T^{\ast}M \rightarrow E$ as $j = g_{E}^{-1} \circ \rho^{T}$. One says that a Courant algebroid is \emph{exact}, if there is a short exact sequence
\begin{equation}
\begin{tikzcd}
0 \arrow[r] & T^{\ast}M \arrow[r,"j"] & E \arrow[r,"\rho"] & TM \arrow[r] & 0.
\end{tikzcd}
\end{equation}
In particular, $\rho$ has to be surjective and $j$ injective, and $\Img{j} =\ker{\rho}$. Note that inclusion $\Img{j} \subseteq \ker{\rho}$ holds for any Courant algebroid, because $\rho \circ \D = 0$. Ševera proved in \cite{severaletters} that up to an isomorphism, exact Courant algebroids are uniquely determined by a class $[H] \in H_{3}(M,\R)$. In particular, there is an isotropic splitting $s: TM \rightarrow E$, $\<s(X),s(Y)\>_{E} = 0$ for all $X,Y \in \vf{}$, such that one can write
\begin{equation}
[s(X) + j(\xi), s(Y) + j(\eta)]_{E} = s([X,Y]) + j( \Li{X}\eta - \io_{Y}d\xi - H(X,Y,\cdot)),
\end{equation}
where $H \in \Omega^{3}_{closed}(M)$. For different splitting $s'$ of the sequence, $H$ changes to $H' = H + dB$ for some $2$-form $B \in \df{2}$, but $[H'] = [H]$. Map $\fPsi: TM \oplus T^{\ast}M \rightarrow E$ defined as $\fPsi(X+\xi) = s(X) + j(\xi)$ then defines a Courant algebroid isomorphism from $(TM \oplus T^{\ast}M, pr_{TM}, [\cdot,\cdot]_{D}^{H})$ to $(E,\rho,[\cdot,\cdot]_{E})$. Every exact Courant algebroid is thus isomorphic to a one equipped with an $H$-twisted Dorfman bracket. 

Dorfman and $H$-twisted Dorfman brackets of Example \ref{ex_courant}, \ref{ex_courant1}. are exact, whereas a Manin triple of Lie bialgebroid in general is not. This can be seen on example of the Dorfman bracket of a Lie algebroid, \ref{ex_courant}, \ref{ex_courant2}. where any Lie algebroid $L$ with a non-surjective anchor will give a non-exact Courant algebroid. On the other hand, the example \ref{ex_courant}, \ref{ex_courant3}. is an example of exact Manin triple, in particular $[H] = [0]$ in this case. 

To conclude, let us briefly note on the older, skew-symmetric version of Courant algebroid brackets. Let $(E, \rho, \<\cdot,\cdot\>_{E},[\cdot,\cdot]_{E})$ be a Courant algebroid. Define $[\cdot,\cdot]'_{E}$ to be its skew-symmetrization:
\begin{equation} \label{def_skewsymbracket}
[e,e']'_{E} \defeq \frac{1}{2}( [e,e']_{E} - [e',e]_{E} ) = [e,e']_{E} - \frac{1}{2} \D \<e,e'\>_{E}.
\end{equation}
Let us examine what happened to the Leibniz rule. Plugging into (\ref{def_skewsymbracket}), we obtain 
\begin{equation} \label{eq_brokenLeibniz}
[e,fe']'_{E} = f [e,e']'_{E} + (\rho(e).f)e' - \frac{1}{2}\<e,e'\> \D{f}.
\end{equation}
Invariance of $\<\cdot,\cdot\>_{E}$ with respect to the bracket $[\cdot,\cdot]'_{E}$ becomes 
\begin{equation} \label{eq_brokenInvariance}
\rho(e).\<e',e''\>_{E} = \<[e,e']'_{E} + \frac{1}{2} \D \<e,e'\>_{E}, e''\>_{E} + \<e', [e,e'']'_{E} + \frac{1}{2} \D\<e,e''\>_{E}\>_{E},
\end{equation}
for all $e,e',e'' \in \Gamma(E)$. Note that Leibniz rule for $[\cdot,\cdot]_{E}$ implies $\rho \circ \D = 0$. This also shows that $[\cdot,\cdot]'_{E}$ also satisfies the homomorphism property (\ref{eq_leibnizhom}):
\begin{equation} \label{eq_newhomprop}
\rho([e,e']'_{E}) = [\rho(e),\rho(e')], 
\end{equation}
for all $e,e' \in \Gamma(E)$. The most complicated calculation is to see that Leibniz identity for $[\cdot,\cdot]'_{E}$ fails in the following sense. Define a map $T: \Gamma(E) \times \Gamma(E) \times \Gamma(E) \rightarrow \cif$ as 
\begin{equation} T(e,e',e'') \defeq \frac{1}{6} \< [e,e']'_{E}, e'' \>_{E} + cyclic\{e,e',e''\}. \end{equation}
Then there holds the following identity:
\begin{equation} \label{eq_brokenJacobi}
[[e,e']'_{E},e'']'_{E} + [[e'',e]]'_{E},e']'_{E} + [[e',e'']'_{E},e]'_{E} = \D T(e,e',e'').
\end{equation}
For the proof of this statement see \cite{1999math.....10078R}. Now let us just say that equations (\ref{eq_brokenLeibniz}, \ref{eq_brokenInvariance}, \ref{eq_newhomprop}, \ref{eq_brokenJacobi}) form a set of axioms of the original definition of Courant algebroid, as proposed in \cite{liu1997manin}. The skew-symmetric version of the bracket has its advantages, in particular in relation to strongly homotopy Lie algebras. 
\section{Algebroid connections, local Leibniz algebroids} \label{sec_algcon}
For Lie algebroids there is a straightforward way to define linear connections \cite{fernandes2002lie}. For Courant algebroids, or even Leibniz algebroids, matters become more complicated, see \cite{alekseevxu}. Let us recall the basic definitions first. 

\begin{definice}
Let $E$ be a vector bundle. Map $\cD: \vf{} \times \Gamma(E) \rightarrow \Gamma(E)$ is called a {\bfseries{linear connection on vector bundle $E$}}, if 
\begin{equation}
\cD(fX,e) = f \cD(X,e), \; \cD(X,fe) = f \cD(X,e) + (X.f) e,
\end{equation}
for all $X \in \vf{}$ and $e \in \Gamma(E)$. We write $\cD_{X}e \defeq \cD(X,e)$.
\end{definice}
\begin{rem} \label{rem_connonE}
Equivalently, we can view $\cD$ as follows. Let $\D(E)$ be a vector bundle over $M$ such that its space of sections $\Gamma(\D(E))$ has the form
\begin{equation} \Gamma(\D(E)) = \{ \F: \Gamma(E) \rightarrow \Gamma(E) \; | \; \F(fe) = f \F(e) + (X.f) e, \forall e \in \Gamma(E), \text{ for } X \in \vf{}\}.
\end{equation}
Define $a: \Gamma(\D(E)) \rightarrow \vf{}$ as $a(\F) = X$, and let $[\F,\G] = \F \circ \G - \G \circ \F$. Then $(\D(E), a, [\cdot,\cdot])$ is a Lie algebroid. We can then view linear connection $\cD$ as a vector bundle morphism $\cD \in \Hom(TM,\D(E))$ defined as $\cD(X) = \cD_{X}$ fitting in the commutative diagram
\begin{equation}
\begin{tikzcd}
TM \arrow[r,"\cD"] \arrow[rd, "1_{TM}"'] & \D(E) \arrow[d,"a"] \\
& TM 
\end{tikzcd}.
\end{equation}
Note that both $TM$ and $\D(E)$ are Lie algebroids. One can easily extend this definition to any Lie algebroid $(L, l, [\cdot,\cdot]_{L})$. For more details concerning the construction of vector bundle $\D(E)$, see \cite{Mackenzie}. 
\end{rem}
Every linear connection on $E$ induces an analogue of the curvature operator. For $X,Y \in \vf{}$ and $e \in \Gamma(E)$ it is defined using the standard formula: 
\begin{equation}
R(X,Y)e = \cD_{X}\cD_{Y}e - \cD_{Y}\cD_{X}e - \cD_{[X,Y]}e.
\end{equation}
It is $\cif$-linear in all inputs, hence $R \in \df{2} \otimes \T_{1}^{1}(E)$. In view of Remark (\ref{rem_connonE}), we may view $R$ as a failure of $\cD$ to be a Lie algebroid morphism. If $g_{E}$ is any fiber-wise metric on $E$, we can say that $\cD$ is metric compatible with $g_{E}$ if 
\begin{equation}
X.g_{E}(e,e') = g_{E}( \cD_{X}e, e') + g_{E}(e, \cD_{X}e'), 
\end{equation}
for all $X \in \vf{}$ and $e,e' \in \Gamma(E)$. Obviously, there is no analogue of torsion for connections on a vector bundle. Now, let $(E, \rho, \<\cdot,\cdot\>_{E}, [\cdot,\cdot]_{E})$ be a Courant algebroid. One can define the Courant algebroid connection according to \cite{alekseevxu} as follows:
\begin{definice} \label{def_courantcon}
Let $(E,\rho, \<\cdot,\cdot\>_{E}, [\cdot,\cdot]_{E})$ be a Courant algebroid. A map $\cD: \Gamma(E) \times \Gamma(E) \rightarrow \Gamma(E)$ is a {\bfseries{Courant algebroid connection}}, if
\begin{equation}
\cD(fe,e') = f \cD(e,e'), \; \cD(e,fe') = f \cD(e,e') + (\rho(e).f) e', 
\end{equation}
for all $e,e' \in \Gamma(E)$, and $\cD$ is metric compatible with Courant metric $\<\cdot,\cdot\>_{E}$ in the sense that
\begin{equation} \label{def_conCourcomp} \rho(e).\<e',e''\>_{E} = \< \cD_{e}e', e'' \>_{E} + \<e', \cD_{e}e''\>_{E}, \end{equation}
for all $e,e',e'' \in \Gamma(E)$. As usual, we will write $\cD_{e}e' \defeq \cD(e,e')$. 
\end{definice}
As before, we can naively define a curvature operator $R$ corresponding to $\cD$ as 
\begin{equation}
R(e,e')e'' = \cD_{e}\cD_{e'}e'' - \cD_{e'}\cD_{e}e'' - \cD_{[e,e']_{E}}e'', 
\end{equation}
for all $e,e',e'' \in \Gamma(E)$. This is $\cif$-linear in $e'$ and $e''$, but not in $e$. Instead, we get
\begin{equation} \label{eq_Rnotlinear}
R(fe,e')e'' = f R(e,e')e'' - \<e,e'\>_{E} \cD_{ \D{f}} e''. 
\end{equation}
Let us remark that there is a class of connections which define a tensorial curvature operator. We say that $\cD$ is an induced Courant algebroid connection, if $\cD_{e} = \cD'_{\rho(e)}$ for some vector bundle connection $\cD'$. Because $\rho \circ D = 0$, the anomalous term in (\ref{eq_Rnotlinear}) disappears, and $R$ is a well defined tensor on $E$. Second possibility is to restrict $R$ to sections of some isotropic involutive subbundle $D \subseteq E$, where $\<e,e'\>_{E} = 0$. 

Unlike for vector bundle connections, there is a well defined analogue of the torsion operator. There are two independent, but essentially equivalent definitions. In \cite{2007arXiv0710.2719G}, a torsion tensor $T \in \T_{3}(E)$ is given as
\begin{equation} \label{def_torsionGualtieri}
T(e,e',e'') = \< \cD_{e}e' - \cD_{e'}e - [e,e']'_{E}, e'' \>_{E} + \frac{1}{2}( \<\cD_{e''}e,e'\>_{E} - \<\cD_{e''}e',e \>_{E},
\end{equation}
for all $e,e',e'' \in \Gamma(E)$. By definition, it is skew-symmetric in $(e,e')$. In fact, the Courant metric compatibility condition (\ref{def_conCourcomp}) can be used to show that $T \in \Omega^{3}(E)$. In \cite{alekseevxu}, a 	Courant algebroid torsion was defined as $C \in \Omega^{3}(E)$ in the form
\begin{equation} \label{def_torsionAlekseevXu}
C(e,e',e'') = \frac{1}{3} \< [e,e']'_{E}, e'' \>_{E} - \frac{1}{2} \< \cD_{e}e' - \cD_{e'}e, e'' \>_{E} + cyclic(e,e',e'').
\end{equation}
This expression in manifestly completely skew-symmetric in all inputs, but at first glance it does not resemble the conventional definition of torsion operator. Interestingly, these two definitions coincide. 
\begin{lemma}
Let $(E,\rho,\<\cdot,\cdot\>_{E},[\cdot,\cdot]_{E})$ be a Courant algebroid, and $\cD$ a Courant algebroid connection. Then 
\begin{equation}
T(e,e',e'') = - C(e,e',e'').
\end{equation}
\end{lemma}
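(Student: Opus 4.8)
I would prove the identity $T(e,e',e'')=-C(e,e',e'')$ by a direct computation, expanding both sides in terms of $[\cdot,\cdot]_E'$, $\cD$, and $\<\cdot,\cdot\>_E$, and then matching terms. The key tool is the metric compatibility condition (\ref{def_conCourcomp}), which lets me trade an expression like $\<\cD_{e''}e,e'\>_E$ for $\rho(e'').\<e,e'\>_E-\<e,\cD_{e''}e'\>_E$, and symmetry/skew-symmetry bookkeeping under the cyclic sum.

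First I would write out $C(e,e',e'')$ fully, i.e. expand the $cyclic(e,e',e'')$ in its definition (\ref{def_torsionAlekseevXu}) into three explicit terms, giving $C = \tfrac13\big(\<[e,e']'_E,e''\>_E + \<[e',e'']'_E,e\>_E + \<[e'',e]'_E,e'\>_E\big) - \tfrac12\big(\<\cD_ee'-\cD_{e'}e,e''\>_E + \<\cD_{e'}e''-\cD_{e''}e',e\>_E + \<\cD_{e''}e-\cD_ee'',e'\>_E\big)$. Similarly I would expand $T(e,e',e'')$ from (\ref{def_torsionGualtieri}): $T = \<\cD_ee'-\cD_{e'}e,e''\>_E - \<[e,e']'_E,e''\>_E + \tfrac12\<\cD_{e''}e,e'\>_E - \tfrac12\<\cD_{e''}e',e\>_E$. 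Now I would apply metric compatibility to the last two $T$-terms: $\<\cD_{e''}e,e'\>_E = \rho(e'').\<e,e'\>_E - \<e,\cD_{e''}e'\>_E$, so that $\tfrac12\<\cD_{e''}e,e'\>_E-\tfrac12\<\cD_{e''}e',e\>_E = \tfrac12\rho(e'').\<e,e'\>_E-\<\cD_{e''}e',e\>_E$.

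Next, the decisive step: I would use the Courant symmetric-part axiom in the form $[e,e']'_E = [e,e']_E - \tfrac12\D\<e,e'\>_E$ together with the relation $\<\D f,e''\>_E = \tfrac12\rho(e'').f$ — which is just (\ref{def_Courantsympart}) restated via $g_E$ — to convert the $\rho(e'').\<e,e'\>_E$ term into something involving $\<\D\<e,e'\>_E,e''\>_E$. Collecting everything, the claim reduces to the purely algebraic identity that, after substituting $[\cdot,\cdot]'_E$ for the skew-symmetrized bracket everywhere and using that the three $\tfrac13\<[\cdot,\cdot]'_E,\cdot\>_E$ terms in $C$ are already cyclic-symmetric, the $\cD$-terms and the bracket-terms of $-C$ reorganize exactly into $T$. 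The skew-symmetry of $[\cdot,\cdot]'_E$ and of $C$ in all arguments (and of $T$ in its first two) is what makes the various $\cD$-terms line up.

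**Expected obstacle.** The main difficulty is purely combinatorial: keeping track of which permutation each term sits in and making sure the coefficients ($\tfrac13$, $\tfrac12$, $\tfrac16$ after the cyclic sums collapse) balance, especially when metric compatibility moves a $\cD$ from one slot to another and flips a sign. There is no conceptual subtlety — every ingredient (metric compatibility, the symmetric-part axiom, $\rho\circ\D=0$) is already available — so the real work is a careful, disciplined expansion. I would organize it by first reducing both $T$ and $-C$ to a common normal form (say, all $\cD$-terms written with the differentiated section in the first argument and all metric derivatives $\rho(\cdot).\<\cdot,\cdot\>_E$ eliminated in favour of $\cD$'s and $\D$'s) and then observe the two normal forms are literally equal.
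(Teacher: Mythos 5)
Your overall strategy---expand both definitions and grind through with the Courant axioms---is the same as the paper's (its proof is exactly such a direct computation citing (\ref{def_conCourcomp}), (\ref{eq_gEinvariance}) and (\ref{eq_Courantsympart})), but your plan as written has a genuine gap: you never invoke the invariance of the pairing, axiom (\ref{eq_gEinvariance}), and you explicitly list your ingredients as metric compatibility, the symmetric-part axiom and $\rho\circ\D=0$, asserting that the rest is ``purely algebraic''. It is not. After you use (\ref{def_conCourcomp}) to collapse all the $\cD$-terms of $T+C$, what survives is
\begin{equation*}
T+C=\tfrac12\,\rho(e).\<e',e''\>_{E}-\tfrac12\,\rho(e').\<e,e''\>_{E}-\tfrac23\<[e,e']'_{E},e''\>_{E}+\tfrac13\<[e',e'']'_{E},e\>_{E}+\tfrac13\<[e'',e]'_{E},e'\>_{E},
\end{equation*}
and the terms $\<[e',e'']'_{E},e\>_{E}$ and $\<[e'',e]'_{E},e'\>_{E}$, where the sections sit in the ``wrong'' slots, cannot be related to $\<[e,e']'_{E},e''\>_{E}$ by the symmetric-part axiom alone: (\ref{eq_Courantsympart}) only controls $[e,e']_{E}+[e',e]_{E}$, i.e.\ the same pair with its arguments swapped, and never moves a section out of the bracket into the pairing slot. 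The only axiom that does this is (\ref{eq_gEinvariance}) (equivalently its skew-symmetrized form (\ref{eq_brokenInvariance})), and it is precisely that identity, written for $\rho(e)$ and for $\rho(e')$ and subtracted, which kills the displayed remainder. Your normal-form reduction stalls without it.

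A second, smaller slip: the relation you quote, $\<\D f,e''\>_{E}=\tfrac12\rho(e'').f$, is off by a factor of two. From $\D=g_{E}^{-1}\circ\rho^{T}\circ d$ one gets $\<\D f,e''\>_{E}=\rho(e'').f$; equivalently, combining (\ref{def_Courantsympart}) with (\ref{eq_Courantsympart}) the two factors of $\tfrac12$ cancel. Carried through literally this would spoil the coefficient bookkeeping in the very step you call decisive. Neither problem needs a new idea to repair---add the invariance axiom to your toolkit and correct the factor---but as written the computation does not close.
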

\begin{proof}
This can be done by using (\ref{def_conCourcomp}) and Courant algebroid axioms (\ref{eq_gEinvariance}, \ref{eq_Courantsympart}). Note that both $T$ and $C$ are well defined based on the Leibniz rule (\ref{def_leibniz}), but equivalent only due to the other Courant algebroid axioms. 
\end{proof}

For general Leibniz algebroids, there is no metric $\<\cdot,\cdot\>_{E}$ present and definitions (\ref{def_torsionGualtieri}, \ref{def_torsionAlekseevXu}) make no sense anymore. There is however a way to define a connection, a torsion and even a curvature operator for a special (and quite wide) class of Leibniz algebroids. 

\begin{definice}
Let $(E,\rho,[\cdot,\cdot]_{E})$ be a Leibniz algebroid. If there exists a $\cif$-trilinear map $\fL: \Gamma(E^{\ast}) \times \Gamma(E) \times \Gamma(E) \rightarrow \Gamma(E)$, such that 
\begin{equation} \label{eq_LocalLAleftLeibniz}
[fe,e']_{E} = f[e,e']_{E} - (\rho(e').f)e + \fL(\di{f},e,e'),
\end{equation}
for all $e,e' \in \Gamma(E)$ and $f \in \cif$, we call $(E, \rho, [\cdot,\cdot]_{E}, \fL)$ a {\bfseries{local Leibniz algebroid}}. Here $\di: \cif \rightarrow \Gamma(E^{\ast})$ is an $\R$-linear map defined by 
\begin{equation} \< \di{f}, e \> = \rho(e).f, \end{equation}
for all $e \in \Gamma(E)$ and $f \in \cif$. 
\end{definice}
Note that $\fL$ is not uniquely determined by equation (\ref{eq_LocalLAleftLeibniz}), and has to be a part of the definition of a local Leibniz algebroid. Moreover, the compatibility of (\ref{eq_LocalLAleftLeibniz}) with the homomorphism property (\ref{eq_leibnizhom}) implies
\begin{equation} \rho( \fL(\di{f},e,e')) = 0. \end{equation}
For a given Leibniz algebroid $(E,\rho,[\cdot,\cdot]_{E})$ with a well-defined subbundle $\ker{\rho}$, one can always find $\fL$ such that this property can be extended to $\rho(\fL(\beta,e,e')) = 0$ for all $e,e' \in \Gamma(E)$ and $\beta \in \Gamma(E^{\ast})$. To achieve this, choose some fiber-wise metric on $E^{\ast}$ and define $\fL(\beta,e,e') \defeq 0$ for all $\beta \in \Gamma(\Ann \ker(\rho)^{\perp})$. 
\begin{example}
In fact, all examples in this thesis can be equipped with the structure of a local Leibniz algebroid.
\begin{itemize}
\item Let $(L,l,[\cdot,\cdot]_{L})$ be a Lie algebroid. The choice of $\fL = 0$ shows that $(L,l,[\cdot,\cdot]_{L},\fL)$ is a local Leibniz algebroid.
\item Let $(E,\rho,\<\cdot,\cdot\>_{E},[\cdot,\cdot]_{E})$ be a Courant algebroid. We see that from (\ref{eq_leftLeibniz}) that 
\begin{equation}
\fL(\di{f},e,e') = \< e, e' \>_{E} g_{E}^{-1}( \di{f}). 
\end{equation}
There is one canonical way to extend $\fL$. Define
\begin{equation} \label{eq_Lchoicecan}
\fL(\beta,e,e') = \<e,e'\>_{E} g_{E}^{-1}(\beta), 
\end{equation}
for all $e,e' \in \Gamma(E)$ and $\beta \in \Gamma(E^{\ast})$. However, note that this choice does not satisfy $\rho(\fL(\beta,e,e')) = 0$. For $E$ with well-defined subbundle $\ker{\rho}$, one can extend $\fL$ trivially to some complement of $\Ann(\ker{\rho})$. Note that different choices of this complement will lead to different extensions. 
\item Let $E = TM \oplus T^{\ast}M$ be equipped with the Dorfman bracket (\ref{def_dorfman}), and $\rho(X+\xi) = X$. The kernel of $\rho$ is the subbundle $T^{\ast}M \subseteq E$. We have a short exact sequence
\begin{equation}
\begin{tikzcd}
0 \arrow[r] & T^{\ast}M \arrow[r,"j"] & E \arrow[r,"\rho"] & TM \arrow[r] & 0,
\end{tikzcd}
\end{equation}
where $j$ is an inclusion. Choosing a complement of $\ker{\rho}$ corresponds to the choice of a splitting $s \in \Hom(TM,E)$ of this sequence. We can restrict ourselves to isotropic splittings, that is $\<s(X),s(Y)\>_{E} = 0$ for all $X,Y \in \vf{}$. The set of such splittings is in fact $\df{2}$, and for any $B \in \df{2}$ the complement to $T^{\ast}M$ is exactly the subbundle
\begin{equation} G_{B} = \{ X + B(X) \ | \ X \in TM \} \subseteq E. \end{equation}
Note that $G_{0} = TM$. This gives us also a splitting of $E^{\ast}$, in particular 
\begin{equation}
E^{\ast} = \Ann(\ker{\rho}) \oplus G_{B}. 
\end{equation}
We can now define $\fL$ to be trivial on $G_{B}$, let us write it with subscript $B$. We get
\begin{equation}
\begin{split} \label{eq_fLB}
\fL_{B}(\alpha + V, e,e') & = \fL_{B}(\alpha - B(V) + (V + B(V)), e, e') \\
& = \< e, e'\>_{E} g_{E}^{-1}( \alpha - B(V)) = \<e,e'\>_{E} ( \alpha - B(V)). 
\end{split}
\end{equation}
We see how $\fL$ can explicitly depend on the choice of the complement. 
\end{itemize}

\end{example}
For an arbitrary Leibniz algebroid $(E,\rho,[\cdot,\cdot]_{E})$ we can define a Leibniz algebroid connection $\cD$ in the same way as in Definition \ref{def_courantcon}, except that we do not require the metric compatibility (\ref{def_conCourcomp}). Assume that this Leibniz algebroid is local. Then we can in fact define a torsion operator!
\begin{tvrz}
Let $(E,\rho,[\cdot,\cdot]_{E},\fL)$ be a local Leibniz algebroid. Define an $\R$-bilinear map $T: \Gamma(E) \times \Gamma(E) \rightarrow \Gamma(E)$ as 
\begin{equation} \label{def_torsion}
T(e,e') = \cD_{e}e' - \cD_{e'}e - [e,e']_{E} + \fL(e^{\lambda}, \cD_{e_{\lambda}}e, e'),
\end{equation}
for all $e,e' \in \Gamma(E)$. Here $(e_{\lambda})_{\lambda=1}^{k}$ is an arbitrary local frame of $E$, and $(e^{\lambda})_{\lambda=1}^{k}$ the corresponding dual one. Then $T$ is $\cif$-linear in $e$ and $e'$ and consequently $T \in \T_{2}^{1}(E)$. We call $T$ a {\bfseries{torsion operator}} corresponding to $\cD$. 
\end{tvrz}
\begin{proof}
Direct calculation. 
\end{proof}
Let us emphasize that $T$ is not in general skew-symmetric in $(e,e')$. This is not a problem since we can always take its skew-symmetric part. Moreover, its definition certainly depends on the choice of the map $\fL$. 

Interestingly, for induced connections $\cD_{e} = \cD'_{\rho(e)}$, this is not the case. To see this, choose the local frame $(e_{\lambda})_{\lambda=1}^{k}$ adapted to the splitting $E = \ker(\rho) \oplus \ker(\rho)^{\perp}$ with respect to some fiber-wise metric $g_{E}$ on $E$. Because $\cD$ is induced, only those $e_{\lambda}$ in $\Gamma(\ker(\rho)^{\perp})$ contribute to the sum in (\ref{def_torsion}). But in this case $e^{\lambda} \in \Ann(\ker{\rho})$, where the map $\fL$ is determined uniquely\footnote{Note that sections of the form $\di{f}$ for some $f \in \cif$ locally generate $\Gamma(\Ann(\ker{\rho}))$. }

For Courant algebroid $(E,\rho,\<\cdot,\cdot\>_{E},[\cdot,\cdot]_{E})$ and $\fL$ in the form (\ref{eq_Lchoicecan}), the torsion operator (\ref{def_torsion}) can be simply related to the one defined by (\ref{def_torsionGualtieri}). 
\begin{tvrz}
Let $(E,\rho,\<\cdot,\cdot\>_{E},[\cdot,\cdot]_{E})$ be a Courant algebroid, and $\fL$ be a map defined by (\ref{eq_Lchoicecan}). Denote the torsion operator  (\ref{def_torsionGualtieri}) as $T_{G}$, and let $T$ be the torsion operator (\ref{def_torsion}). Let $\cD$ be a Courant algebroid connection. Then
\begin{equation}
T_{G}(e,e',e'') = \< T(e,e'), e''\>_{E}.
\end{equation}
\end{tvrz}
\begin{proof}
This can be verified by a direct calculation. Use the fact that
\begin{equation}
\< \fL(e^{\lambda}, \cD_{e_{\lambda}}e,e'), e'' \>_{E} = \< \cD_{e''}e, e'\>_{E}.
\end{equation}
This shows that for Courant algebroid connections, the symmetric part of the map $\mathbf{K}(e,e') = \fL(e^{\lambda}, \cD_{e_{\lambda}}e, e')$ does not depend on $\cD$ at all. Indeed, we have
\[ \mathbf{K}(e,e') + \mathbf{K}(e',e) = \<\cD_{e''}e, e'\>_{E} + \<e, \cD_{e''}e'\>_{E} = \rho(e'').\<e,e'\>_{E}. \]
This in fact proves that $T(e,e')$ is skew-symmetric in $(e,e')$, because 
\[ \<e'', T(e,e') + T(e',e) \>_{E} = \<e'', -[e,e']_{E} + [e',e]_{E} \>_{E} + \rho(e'').\<e,e'\>_{E} = 0. \]
We have used the axiom (\ref{eq_Courantsympart}) in the last step. 
\end{proof}

For more general (local) Leibniz algebroids, there also exists a notion of a generalized torsion introduced for special examples in \cite{Coimbra:2011nw, Coimbra:2012af}. They proceed as follows. Consider a local Leibniz algebroid $(E,\rho,[\cdot,\cdot]_{E},\fL)$. Let $\Li{}^{E}$ be the Lie derivative induced by $[\cdot,\cdot]_{E}$. In their paper this is called the \emph{Dorfman derivative}. Consider a local holonomic frame $(e_{\alpha})_{\alpha=1}^{k}$, that is $[e_{\alpha},e_{\beta}]_{E} = 0$. 
We will use the shorthand notation $f_{,\alpha} \defeq \rho(e_{\alpha}).f$. Let $e = v^{\alpha} e_{\alpha}$ and $e' = w^{\beta} e_{\beta}$. We have
\begin{equation}
\Li{e}^{E}e' = \{ v^{\alpha} {w^{\beta}}_{,\alpha} - w^{\alpha} ( {v^{\beta}}_{,\alpha} - {v^{\lambda}}_{,\mu} {L^{\beta \mu}}_{\lambda \alpha} ) \} e_{\beta}.
\end{equation}
Their idea is to define a "covariantized" Dorfman derivative $\Li{e}^{\cD}$ by replacing commas with semicolons: 
\begin{equation}
\Li{e}^{\cD}e' = \{ v^{\alpha} {w^{\beta}}_{;\alpha} - w^{\alpha} ( {v^{\beta}}_{;\alpha} - {v^{\lambda}}_{;\mu} {L^{\beta \mu}}_{\lambda \alpha} ) \} e_{\beta}.
\end{equation}
Here $\cD_{e_{\alpha}}e = {v^{\beta}}_{;\alpha} e_{\beta}$. This can be rewritten in terms of $\cD$ and $\fL$ as 
\begin{equation} \label{eq_WaldramcovLie}
\Li{e}^{\cD}e' = \cD_{e}e' - \cD_{e'}e + \fL(e^{\mu}, \cD_{e_{\mu}}e, e'). 
\end{equation}
Torsion operator $T$ is in \cite{Coimbra:2011nw, Coimbra:2012af} defined as difference of these two Lie derivatives:
\begin{equation}
T(e,e') = ( \Li{e}^{\cD} - \Li{e}^{E} )e'. 
\end{equation}
Comparing this with (\ref{def_torsion}) we see from (\ref{eq_WaldramcovLie}) that the two definitions coincide. Note the importance of the local frame holonomicity for a validity of this assertion. 

We have shown that any local Leibniz algebroid allows one to define a tensorial torsion operator. We can use a very similar approach to get a well-defined curvature operator. 

\begin{tvrz}
Let $(E,\rho,[\cdot,\cdot]_{E}, \fL)$ be a local Leibniz algebroid, such that $\rho( \fL(\beta,e,e')) = 0$ for all $\beta \in \Gamma(E^{\ast})$ and $e,e' \in \Gamma(E)$. Let $\cD$ be a Leibniz algebroid connection on $E$. Then the map $R$ defined for all $e,e',e'' \in \Gamma(E)$ as
\begin{equation} \label{def_curvature}
R(e,e')e'' = \cD_{e}\cD_{e'}e'' - \cD_{e'}\cD_{e}e'' - \cD_{[e,e']_{E}}e'' + \cD_{\fL(e^{\lambda},\cD_{e_{\lambda}}e,e')}e'',
\end{equation}
is $\cif$-linear in all inputs, and thus $R \in \T_{3}^{1}(E)$. We call $R$ a generalized Riemann tensor. 
\end{tvrz}
\begin{proof}
The statement can be directly verified. One has to use (\ref{eq_leibnizhom}) to show the $\cif$-linearity in $e''$. The additional correction term containing $\fL$ cancels the wrong term coming from the bracket term and its first input. The condition $\rho( \fL(\beta,e,e')) = 0$ is necessary to keep the $\cif$-linearity in $e''$.
\end{proof}

First, note that because of the condition $\rho(\fL(\beta,e,e')) = 0$, the additional term vanishes for induced connections, which in fact shows that in this case the usual curvature operator formula works and defines a tensorial $R$. Next, see that in general $R$ is not skew-symmetric in $(e,e')$, which can be fixed by a skew-symmetrization if necessary. 

Having a curvature operator $R$, we can define the corresponding Ricci tensor $\Ric$ as a contraction of $R$ in two indices. Namely set
\begin{equation}
\Ric(e,e') = \< e^{\lambda}, R(e_{\lambda},e')e \>, 
\end{equation}
for all $e,e' \in \Gamma(E)$, where $(e_{\lambda})_{\lambda=1}^{k}$ is some local frame of $E$, and $(e^{\lambda})_{\lambda=1}^{k}$ the corresponding dual one of $E^{\ast}$. For Courant algebroid connections, $R$ has some remarkable properties. 
\begin{tvrz}
Let $(E,\rho,\<\cdot,\cdot\>_{E},[\cdot,\cdot]_{E})$ be a Courant algebroid with $\ker{\rho} \subseteq E$ being a well defined subbundle. Let $\fL$ be defined trivially on some complement to $\Ann(\ker{\rho})$. Let $\cD$ be a Courant algebroid connection. Then 
$R(e,e')$ is skew-symmetric in $(e,e')$, and 
\begin{equation} \label{eq_Rskewsyminfirst}
\< R(e,e')f, f' \>_{E} + \<R(e,e')f', f\>_{E} = 0, 
\end{equation}
for all $e,e',f,f' \in \Gamma(E)$. 
\end{tvrz}
\begin{proof}
Let $E^{\ast} = \Ann(\ker{\rho}) \oplus V$ for some subbundle $V$, such that $\fL|_{V} = 0$.  
Choose a local frame $e^{\lambda} = (g^{j},f^{k})$, where $(g^{j})_{j=1}^{m}$ is a local frame of $\Ann(\ker{\rho})$, and $(f^{k})_{k=1}^{k-m}$ a local frame of $V$. Let $e_{\lambda} = (g_{j},f_{k})$ be a corresponding dual basis. Then $(f_{k})_{k=1}^{k-m}$ generates $\ker{\rho}$, and $(g_{i})_{i=1}^{m}$ its complement. We have
\[ 
\begin{split}
\< \fL(e^{\lambda},\cD_{e_{\lambda}}e,e), e' \>_{E} & = \< \fL(g^{k}, \cD_{g_{k}}e,e),e'\>_{E} = \< \< \cD_{g_{k}}e,e\>_{E} g_{E}^{-1}(g^{k}), e' \>_{E} \\ 
& = [ \frac{1}{2} \rho(g_{k}).\<e,e\>_{E}] \<g^{k},e'\> = \frac{1}{2}  \rho( \<g^{k},e'\> g_{k} + \<f^{k},e'\> f_{k}). \<e,e\>_{E} \\
& = \frac{1}{2} \rho(e').\<e,e\>_{E}. 
\end{split}
\]
This proves that for Courant algebroid connections, we have $\fL(e^{\lambda}, \cD_{e_{\lambda}}e,e) = [e,e]$, and the two non-trivial contributions in $R(e,e)$ cancel. Note that this shows that also $T(e,e) = 0$ for such an $\fL$. The proof of (\ref{eq_Rskewsyminfirst}) is analogous to the one for ordinary connections, using the metric compatibility (\ref{def_conCourcomp}). Note that in this process one has to use $\rho(\fL(e^{\lambda},\cD_{e_{\lambda}}e,e)) = 0$.
\end{proof}

\begin{example}
Consider $E = TM \oplus T^{\ast}M$ and the usual Dorfman bracket (\ref{def_dorfman}). Extend $\fL$ to all $\gamma + Z \in \Gamma(E^{\ast})$ as 
\begin{equation}
\fL(\gamma + Z, X+\xi, Y+\eta) = \<X+\xi,Y+\eta\>_{E} (0+\gamma). 
\end{equation}
This corresponds to the choice $B = 0$ in (\ref{eq_fLB}). 
Now consider a Courant algebroid connection $\cD$ on $E$. We have
\begin{equation}
\cD_{X+\xi}(Y+\eta) = \cD'_{X}(Y+\eta) + \cD''_{\xi}(Y+\eta), 
\end{equation}
for all $X+\xi,Y+\eta \in \Gamma(E)$, for some vector bundle connection $\cD'$ on $E$, and a map $\cD'': \df{1} \times \Gamma(E) \rightarrow \Gamma(E)$. Note that $\cD''$ must be $\cif$-linear in the second input, and thus in fact $\cD'' \in \vf{1} \otimes \T_{1}^{1}(E)$. We can view $\cD''$ as $\cif$-linear map $\cD'': \df{1} \rightarrow \End(E)$. What are the implications of the Courant metric compatibility (\ref{def_conCourcomp})? We get
\begin{align}
X.\<e,e'\>_{E} &=  \< \cD'_{X}e,e'\>_{E} + \<e, \cD'_{X}e'\>_{E}, \\
0 &= \< \cD''_{\xi}e, e'\>_{E} + \<e, \cD''_{\xi}e'\>_{E}. 
\end{align}
This implies that $\cD'$ and $\cD''$ have to be of the block form
\begin{equation}
\cD'_{X} = \bm{\cD^{M}_{X}}{\Pi_{X}}{B_{X}}{\cD^{M}_{X}}, \;
\cD''_{\xi} = \bm{A_{\xi}}{\theta_{\xi}}{C_{\xi}}{-A_{\xi}^{T}}, 
\end{equation}
where $\Pi_{X},\theta_{\xi} \in \vf{2}$, $B_{X},C_{\xi} \in \df{2}$, $A_{\xi} \in \End(TM)$, and $\cD^{M}$ is an ordinary connection on $M$. $\cD_{X}^{M}$ in the bottom-right corner of $\cD'_{X}$ is the usual extension of $\cD^{M}$ on $1$-forms. All objects are assumed to be $\cif$-linear in $X$ and $\xi$. For the curvature tensor, we get
\begin{align}
pr_{1}( R(X,Y)(Z+\zeta)) & = R^{M}(X,Y)Z + (\cD^{M}_{X}\Pi)_{Y}(\zeta) - (\cD_{Y}^{M}\Pi)_{X}(\zeta) \\
& + \Pi_{T^{M}(X,Y)}(\zeta) + \Pi_{X}(B_{Y}(Z)) - \Pi_{Y}(B_{X}(Z)) \nonumber \\
& - B_{k}(X,Y) (A^{k}(Z) + \theta^{k}(\zeta)), \nonumber \\
pr_{2}( R(X,Y)(Z+\zeta)) & = R^{M}(X,Y)\zeta + (\cD_{X}^{M}B)_{Y}(Z) - (\cD_{Y}^{M}B)_{X}(Z) \\
& + B_{T^{M}(X,Y)}(Z) + B_{X}(\Pi_{Y}(\zeta)) - B_{Y}(\Pi_{X}(\zeta)) \nonumber \\
& - B_{k}(X,Y)( C^{k}(Z) - (A^{k})^{T}(\zeta)), \nonumber \\
pr_{1}( R(\xi,\eta)(Z+\zeta)) & = (A_{\xi}A_{\eta} + \theta_{\xi}C_{\eta})(Z) + (A_{\xi}\theta_{\eta} - \theta_{\xi} A^{T}_{\eta})(\zeta) - (\xi \leftrightarrow \eta) \\
& - \Pi_{k}(\xi,\eta) ( A^{k}(Z) + \theta^{k}(\zeta)), \nonumber \\
pr_{2}( R(\xi,\eta)(Z+\zeta)) & = (C_{\xi}A_{\eta} - A^{T}_{\xi} C_{\eta})(Z) + (C_{\xi} \theta_{\eta} + A^{T}_{\xi} A^{T}_{\eta})(\zeta) - (\xi \leftrightarrow \eta)\\
& - \Pi_{k}(\xi,\eta)( C^{k}(Z) - (A^{k})^{T}(\zeta)), \nonumber \\
pr_{1}( R(X,\eta)(Z+\zeta)) &= (\cD_{X}^{M}A)_{\eta}(Z) + A_{\<\eta, T^{M}(\cdot,X)\>}(Z) \\
& + (\cD_{X}^{M}\theta)_{\eta}(\zeta) + \theta_{\<\eta,T^{M}(\cdot,X)\>}(\zeta) \nonumber \\
& + (\Pi_{X}C_{\eta} - \theta_{\eta}B_{X})(Z) - (\Pi_{X}A^{T}_{\eta} + A_{\eta} \Pi_{X})(\zeta), \nonumber \\
pr_{2}(R(X,\eta)(Z+\zeta)) & = (\cD_{X}^{M}C)_{\eta}(Z) + C_{\<\eta,T(\cdot,X)\>}(Z) \\
& - (\cD_{X}^{M}A)_{\eta}^{T}(\zeta) - A^{T}_{\< \eta,T(\cdot,X)\>}(\zeta) \nonumber \\
& + (B_{X} A_{\eta} + A_{\eta}^{T}B_{X})(Z) + (B_{X} \theta_{\eta} - C_{\eta} \Pi_{X})(\zeta).
\end{align}  
By $(\cD_{X}^{M}\Pi)_{Y}$ we mean the following. Bivector $\Pi_{Y}$ depends $\cif$-linearly on $Y$, and thus defines a tensor $\Pi \in \T_{1}^{2}(M)$. One can then calculate its covariant derivative $(\cD_{X}^{M}\Pi) \in \T_{1}^{2}(M)$, and finally for each $Y \in \vf{}$ the tensor $(\cD_{X}\Pi)_{Y} \in \T^{2}_{0}(M)$. Similarly for the other objects. $T^{M}$ denotes the torsion operator of the connection $\cD^{M}$. 
\end{example}

\chapter{Excerpts from the standard generalized geometry} \label{ch_gg}
In this chapter, we will recall some basic facts about the standard generalized geometry, that is the geometry of the vector bundle $E = TM \oplus T^{\ast}M$. We will focus only on the topics relevant for the following chapters (including the papers). In the previous chapter, we have shown that $E$ is equipped with the Courant algebroid bracket (\ref{def_dorfman}), and the natural pairing $\<\cdot,\cdot\>_{E}$. Note that $E$ is sometimes called the \emph{generalized tangent bundle}. Pioneering works in generalized geometry are those of Hitchin \cite{Hitchin:2004ut, 2005math......8618H} and especially the Ph.D. thesis of Gualtieri \cite{Gualtieri:2003dx}. We will focus on a very detailed explicit analysis of the involved objects, which eventually will prove to be useful for physical applications. In the section describing the generalized metric, we have used the approach taken in \cite{Kotov:2010wr}. 
\section{Orthogonal group} \label{sec_OG}
First assume that $V$ is an $n$-dimensional real vector space. The direct sum $W = V \oplus V^{\ast}$ is equipped with the canonical pairing $\<\cdot,\cdot\>_{W}$, which defines a symmetric non-degenerate bilinear form on $W$. If $\mathcal{E} = (e_{i})_{i=1}^{n}$ is any basis of $V$, there is a canonical basis $(e_{1},\dots,e_{n},e^{1},\dots,e^{n})$ of $W$, where $(e^{i})_{i=1}^{n}$ is the basis of $V^{\ast}$ dual to $\mathcal{E}$. In this basis, the pairing $\<\cdot,\cdot\>_{W}$ has the matrix 
\begin{equation} \label{def_JWmatrix}
g_{W} = \bm{0}{1}{1}{0}, 
\end{equation}
where $1$ denotes the $n \times n$ unit matrix. This proves that $\<\cdot,\cdot\>_{W}$ has the signature $(n,n)$. The group of operators on $W$ preserving $\<\cdot,\cdot\>_{W}$ is thus $O(n,n)$. Let $\O \in O(n,n)$. We will often use the formal block decomposition of linear maps on $W$, that is we will write
\begin{equation} \label{eq_Onnparam}
\O = \bm{O_{1}}{O_{2}}{O_{3}}{O_{4}}, 
\end{equation}
where $O_{1} \in \End(V)$, $O_{4} \in \End(V^{\ast})$ and $O_{2} \in \Hom(V^{\ast},V)$, $O_{3} \in \Hom(V,V^{\ast})$. The matrix $g_{W}$ can be thus also viewed as a formal block decomposition of the isomorphism $g_{W}: V \oplus V^{\ast} \rightarrow V^{\ast} \oplus V$ induced by $\<\cdot,\cdot\>_{W}$. 
The orthogonality condition 
\begin{equation} \label{eq_orthogonality} \< \O(v+\alpha), \O(v' + \alpha') \>_{W} = \<v+\alpha,v'+\alpha'\> \end{equation}
can be now rewritten in terms of $O_{i}$ by expanding the $2 \times 2$ block matrix equation $\O^{T} g_{W} \O = g_{W}$. One obtains a set of three relations:
\begin{align}
\label{eq_orthogonality1} O_{3}^{T}O_{1} + O_{1}^{T}O_{3} & = 0 \\
O_{4}^{T}O_{2} + O_{2}^{T}O_{4} & = 0 \\
O_{3}^{T}O_{2} + O_{1}^{T}O_{4} & = 1 
\end{align}
We can get more equations. First note that $\O^{-1} = g_{W} \O g_{W}$, which explicitly gives 
\begin{equation}
\O^{-1} = \bm{O_{4}^{T}}{O_{2}^{T}}{O_{3}^{T}}{O_{1}^{T}}. 
\end{equation}
The map $\O^{-1}$ is again orthogonal, there holds $\O^{-T} g_{W} \O^{-1} = g_{W}$. We have three more (of course not independent) equations:
\begin{align}
O_{4}O_{3}^{T} + O_{3}O_{4}^{T} & = 0, \\
\label{eq_orthogonality2} O_{2}O_{1}^{T} + O_{1}O_{2}^{T} & = 0, \\
O_{2}O_{3}^{T} + O_{1}O_{4}^{T} & = 1.
\end{align}
We will now focus on the maps of the form $\O = \exp{\A}$, where $\A \in o(n,n)$. Lie algebra $o(n,n)$ is defined as the space of linear endomorphisms of $W$, which are skew-symmetric with respect to $\<\cdot,\cdot\>_{W}$. Thus, every $\A \in o(n,n)$ thus has to satisfy the condition
\begin{equation} \label{def_onn} \< \A(v+\alpha), v'+\alpha' \>_{W} + \<v+\alpha, \A(v'+\alpha') \>_{W} = 0, \end{equation}
for all $v+\alpha,v'+\alpha' \in W$. Let us write $\A$ in a formal block matrix form
\begin{equation}
\A = \bm{N}{\Pi}{B}{N'}, 
\end{equation}
where $N \in \End(V)$, $N' \in \End(V^{\ast})$, $\Pi \in \Hom(V^{\ast},V)$ and $B \in \Hom(V,V^{\ast})$. Plugging into (\ref{def_onn}) gives a block matrix equation $\A^{T} g_{W} + g_{W} \A = 0$, expansion of which yields a set of three equations
\begin{equation}
B + B^{T} = 0, \; \Pi + \Pi^{T} = 0, \; N' + N^{T} = 0.
\end{equation}
This gives an easy way to interpret the conditions for the respective blocks. We see that map $B$ has to be induced by a $2$-form $B \in \Lambda^{2}V^{\ast}$, $\Pi$ by a bivector $\Pi \in \Lambda^{2}V$, and $N' = -N^{T}$. The conclusion is that (as a vector space) the Lie algebra $o(n,n)$ can be decomposed as
\begin{equation} \label{eq_veconn} o(n,n) \cong \End(V) \oplus \Lambda^{2}V^{\ast} \oplus \Lambda^{2}V, \end{equation}
where each $\A \in o(n,n)$ has a block form
\begin{equation} \label{eq_onngen}
\A = \bm{N}{\Pi}{B}{-N^{T}} 
\end{equation}
for $N \in \End(V)$, $B \in \Lambda^{2}V^{\ast}$ and $\Pi \in \Lambda^{2}V$. Note that this simply reflects the fact that for any two finite-dimensional vector spaces $V,W$, one has $\Lambda^{2}(V \oplus W) \cong \bigoplus_{i=0}^{2} \Lambda^{i}V \otimes \Lambda^{2-i}W$. We can now proceed with the examples of $O(n,n)$ transformations. 

\begin{example}
Let us now show three main classes of $O(n,n)$ transformations.
\begin{enumerate}
\item {\bfseries{$B$-transform}}: Choose $N = \Pi = 0$ in $\A$ of the form (\ref{eq_onngen}). 
Denote by $e^{B}$ its exponential, that is $e^{B} = \exp{\A}$. Explicitly,
\begin{equation} \label{eq_etoB}
e^{B} = \bm{1}{0}{B}{1}. 
\end{equation}
As a map, it has the form $e^{B}(v+\alpha) = (v, \alpha + B(v))$, for all $v+\alpha \in W$. By construction, $e^{B} \in O(n,n)$. Below, it will play an important role in relation to the Dorfman bracket.

\item {\bfseries{$\Pi$-transform}}: Now, choose $\A$ so that $N = B = 0$. Denote by $e^{\Pi}$ its exponential, that is $e^{\Pi} = \exp{\A}$. Explicitly,
\begin{equation} \label{eq_etoPi}
e^{\Pi} = \bm{1}{\Pi}{0}{1}. 
\end{equation}
As a map, it has the form $e^{\Pi}(v+\alpha) = (v + \Pi(\alpha), \alpha)$, for all $v + \alpha \in W$. It will play an important role in the description of (Nambu-)Poisson structures. 
\item {\bfseries{Group $\Aut(V)$}}: Every invertible map $A \in \Aut(V)$ defines an $O(n,n)$ transformation $\O_{A}$ in the form
\begin{equation}
\O_{A} = \bm{A}{0}{0}{A^{-T}}. 
\end{equation}
As a map, it works as $\O_{A}(v+\alpha) = A(v) + A^{-T}(\alpha)$. 
\end{enumerate}
\end{example}
$O(n,n)$ as a Lie group has four connected components, and the above three examples generate its identity component. We will often make use of the following simple observation:
\begin{lemma} \label{lem_ldu-udl} 
Let $\mathcal{M}$ be a block $2 \times 2$ matrix in the form 
\begin{equation}
\mathcal{M} = \bm{A}{B}{C}{D}. 
\end{equation}
The matrices $A$ and $D$ have to be square, but can be of different dimensions. Then
\begin{itemize}
\item If $A$ is invertible, there exists a unique decomposition 
\begin{equation} \label{eq_LDU}
\mathcal{M} = \bm{1}{0}{CA^{-1}}{1} \bm{A}{0}{0}{D - CA^{-1}B} \bm{1}{A^{-1}B}{0}{1}
\end{equation}
of $\mathcal{M}$ into a product of block lower unitriangular, block diagonal, and block upper unitriangular matrices. 
\item If $D$ is invertible, there exists a unique decomposition
\begin{equation} \label{eq_UDL}
\mathcal{M} = \bm{1}{BD^{-1}}{0}{1} \bm{A - BD^{-1}C}{0}{0}{D} \bm{1}{0}{D^{-1}C}{1}
\end{equation}
of $\mathcal{M}$ into a product of block upper unitriangular, block diagonal, and block lower unitriangular matrices. 
\item If $\mathcal{M}$ is invertible, and there exists some decomposition of $\mathcal{M}$ into a product of block lower unitriangular, block diagonal, and block upper unitriangular matrices, then $A$ is invertible, and the decomposition is precisely of the form (\ref{eq_LDU}). 
\item If $\mathcal{M}$ is invertible, and there exists some decomposition of $\mathcal{M}$ onto a product of block upper unitriangular, block diagonal, and block lower unitriangular matrices, then $D$ is invertible, and the decomposition is precisely of the form (\ref{eq_UDL}). 
\end{itemize}
\end{lemma}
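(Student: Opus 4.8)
The statement is an exercise in block matrix multiplication together with a bookkeeping argument, so the plan is: (i) verify (\ref{eq_LDU}) by multiplying out, (ii) extract uniqueness from a matching of blocks, (iii) deduce (\ref{eq_UDL}) from (\ref{eq_LDU}) by a symmetry, and (iv) obtain the last two bullets from the same matching once invertibility of $\mathcal{M}$ is used to control the diagonal blocks.

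First I would prove existence of (\ref{eq_LDU}): multiply the three matrices on its right-hand side and use $AA^{-1}=1$; the $(1,1)$, $(1,2)$ and $(2,1)$ blocks come out as $A$, $B$, $C$ immediately, and the $(2,2)$ block is $CA^{-1}B+(D-CA^{-1}B)=D$. For uniqueness, I would suppose $\mathcal{M}=\bm{1}{0}{X}{1}\bm{P}{0}{0}{Q}\bm{1}{Y}{0}{1}$ for blocks $X,Y,P,Q$ of the appropriate sizes, expand the product to
\begin{equation}
\mathcal{M}=\bm{P}{PY}{XP}{XPY+Q},
\end{equation}
and compare with $\bm{A}{B}{C}{D}$. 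This gives $P=A$, and since $A$ is assumed invertible, $Y=A^{-1}B$, $X=CA^{-1}$ and $Q=D-CA^{-1}B$; hence the decomposition is exactly (\ref{eq_LDU}). The second bullet I would deduce by conjugating with the swap matrix $J=\bm{0}{1}{1}{0}$: since $J\mathcal{M}J=\bm{D}{C}{B}{A}$ has invertible top-left block, the first bullet applies to it, and $J$-conjugation turns block lower unitriangular matrices into block upper unitriangular ones and back, so transporting the resulting decomposition through $J$ yields precisely (\ref{eq_UDL}).

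For the third bullet, suppose $\mathcal{M}$ is invertible and $\mathcal{M}=LDU$ with $L$ block lower unitriangular, $D$ block diagonal, $U$ block upper unitriangular. Block unitriangular matrices are always invertible (for instance $\bm{1}{0}{X}{1}$ has inverse $\bm{1}{0}{-X}{1}$), so $D=L^{-1}\mathcal{M}U^{-1}$ is invertible and therefore both of its diagonal blocks are invertible. Writing $L$, $D$, $U$ in block form and expanding exactly as above gives $A=P$ with $P$ invertible, hence $A$ is invertible, and the remaining matching identities force $L$, $D$, $U$ to be the three factors in (\ref{eq_LDU}). The fourth bullet follows in the same way, either by repeating the argument with the roles of the corners exchanged or by the $J$-conjugation trick. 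The only point requiring attention — and the closest thing to an obstacle — is that invertibility of $\mathcal{M}$ is genuinely needed in the last two bullets: the block-matching identities by themselves only give $A=P$, not that $P$ is invertible, and it is precisely invertibility of $\mathcal{M}$ together with that of the unitriangular factors that supplies it.
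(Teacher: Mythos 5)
Your proposal is correct and follows essentially the same route as the paper: verify (\ref{eq_LDU}) by direct block multiplication, get uniqueness by expanding a general lower-unitriangular times block-diagonal times upper-unitriangular product and matching blocks, and in the last two bullets use invertibility of $\mathcal{M}$ to force the diagonal factor, hence its top-left block $A$, to be invertible. The only cosmetic differences are that you obtain the invertible-$D$ case by conjugating with the block swap matrix rather than repeating the computation, and you argue via the explicit inverses of the unitriangular factors where the paper uses the determinant identity $\det\mathcal{M}=\det S\cdot\det T$; both variants are sound.
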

\begin{proof}
If $A$ is invertible, we can construct the right-hand side of (\ref{eq_LDU}) and verify by direct calculation that the product gives $\mathcal{M}$. Now assume that 
\begin{equation} \label{lem_ldu-udl1} \mathcal{M} = \bm{1}{0}{U}{1} \bm{S}{0}{0}{T} \bm{1}{V}{0}{1} \end{equation}
for some matrices $U,V,S,T$. Expanding the right-hand side gives 
\[ \bm{A}{B}{C}{D} = \bm{S}{SV}{US}{T + USV}. \]
This shows that $S = A$, and since $A$ is invertible, we get $U = CA^{-1}$, $V = A^{-1}B$ and $T = D - CA^{-1}B$, which are exactly the blocks in (\ref{eq_LDU}). This proves the uniqueness assertion. 

The proof of the invertible $D$ case is analogous. 

Now assume that $\mathcal{M}$ is invertible and there exists some its decomposition of the form (\ref{lem_ldu-udl1}). We see that $\det{\mathcal{M}} = \det{S} \cdot \det{T}$, which forces both $S$ and $T$ to be invertible. But we have shown that $S$ has to be $A$, and thus $A$ is invertible, and we have shown that the blocks $U,V,T$ are uniquely determined by $\mathcal{M}$. The proof for the other decomposition is analogous. 
\end{proof}

Consider now a general orthogonal transformation $\O$, parametrized as in (\ref{eq_Onnparam}). If one assumes that either $O_{1}$ or $O_{4}$ is invertible, there always exists one of the decompositions in Lemma \ref{lem_ldu-udl}. Are the maps in the decomposition orthogonal? The answer is given by the following proposition. 

\begin{tvrz} \label{tvrz_Onnblockdecomp}
Let $\O \in O(n,n)$ be parametrized as in (\ref{eq_Onnparam}). 
\begin{itemize}
\item Let $O_{1} \in \End(TM)$ be an invertible map. Then there exist $B \in \Lambda^{2}V^{\ast}$ and $\Pi \in \Lambda^{2}V$, such that 
\begin{equation} \label{eq_OLDU}
\mathcal{O} = \bm{1}{0}{B}{1} \bm{O_{1}}{0}{0}{O_{1}^{-T}} \bm{1}{\Pi}{0}{1}. 
\end{equation}
Moreover, any such $B$ and $\Pi$ are unique. 

\item Let $O_{4} \in \End(T^{\ast}M)$ be an invertible map. Then there exist $B' \in \Lambda^{2}V^{\ast}$ and $\Pi' \in \Lambda^{2}V$, such that
\begin{equation}
\mathcal{O} = \bm{1}{\Pi'}{0}{1} \bm{O_{4}^{-T}}{0}{0}{O_{4}} \bm{1}{0}{B'}{1}. 
\end{equation}
Moreover, any such $B'$ and $\Pi'$ are unique. 
\end{itemize}
\end{tvrz}
\begin{proof}
Let $O_{1}$ be an invertible map. By Lemma \ref{lem_ldu-udl}, there exists a decomposition (\ref{eq_LDU}), and we get $B = O_{3} O_{1}^{-1}$, $A = O_{1}$, and $\Pi = O_{1}^{-1} O_{2}$. We have to show that $B$ is induced by a $2$-form on $V$, that is $B + B^{T} = 0$. This reduces to $O_{3} O_{1}^{-1} + O_{1}^{-T} O_{3}^{T} = 0$. Multiply this equation by $O_{1}$ from the right, and by $O_{1}^{T}$ from the left. This gives $O_{1}^{T}O_{3} + O_{3}^{T}O_{1} = 0$. But this is exactly the equation (\ref{eq_orthogonality1}). To show that $\Pi \in \Lambda^{2}V$, we are required to prove that $O_{1}^{-1}O_{2} + O_{2}^{T} O_{1}^{-T} = 0$. This reduces precisely to (\ref{eq_orthogonality2}). At this point we know that 
\[ \O = e^{B} \bm{O_{1}}{0}{0}{O_{4} - O_{3}O_{1}^{-1}O_{2}} e^{\Pi}. \] 
Because $e^{B}$ and $e^{\Pi}$ are in $O(n,n)$, so has to be the middle block. This requires $O_{4} - O_{3}O_{1}^{-1}O_{2} = O_{1}^{-T}$. The proof of the second part is  analogous. 
\end{proof}
Note that there are elements of $O(n,n)$ which can be decomposed in both ways. However, not every orthogonal map, not even from the identity component of $O(n,n)$, can be written in this form. Consider for example $n=2$ and $\O = e^{B}e^{\Pi} e^{-B} e^{2\Pi}$, where 
\begin{equation}
B = \bm{0}{1}{-1}{0}, \; \Pi = \bm{0}{-1}{1}{0}.  
\end{equation}
The resulting orthogonal map has the form
\begin{equation}
\O = \begin{pmatrix}
0 & 0 & 0 & -1 \\
0 & 0 & 1 & 0 \\
0 & -1 & 0 & 0 \\
1 & 0 & 0 & 0 
\end{pmatrix}.
\end{equation}
It is a product of exponentials, hence it lies in the identity component of $O(n,n)$. On the other hand, it clearly cannot be decomposed in any way of Proposition \ref{tvrz_Onnblockdecomp}.
\section{Maximally isotropic subspaces} \label{sec_misubspaces}
Having the pairing $\<\cdot,\cdot\>_{W}$ with the signature $(n,n)$, it is natural to study its isotropic subspaces. We say that subspace $P \subseteq W$ is isotropic, iff $\<p,q\> = 0$ for all $p,q \in P$. In particular, we will be interested in {\emph{maximally isotropic}} subspaces. Let us recall a well-known fact from the theory of quadratic forms. For the proof, see for example \cite{lamintroduction}. Note that maximally isotropic subspaces are sometimes called Lagrangian subspaces. 

\begin{lemma}
All maximally isotropic subspaces of $(W,\<\cdot,\cdot\>_{W})$ are $n$-dimensional. 
\end{lemma}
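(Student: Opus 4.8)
The plan is to prove two complementary facts: (i) every isotropic subspace of $(W,\<\cdot,\cdot\>_{W})$ has dimension at most $n$, and (ii) every isotropic subspace of dimension strictly less than $n$ is properly contained in a larger isotropic subspace, hence is not maximal. Together these force every maximally isotropic subspace to have dimension exactly $n$; such subspaces do exist (for instance $V \oplus 0$ or $0 \oplus V^{\ast}$, which are isotropic by the block form (\ref{def_JWmatrix}) of the pairing), so the statement is not vacuous.

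For (i), let $P \subseteq W$ be isotropic and let $P^{\perp}$ denote its orthogonal complement with respect to $\<\cdot,\cdot\>_{W}$. Isotropy of $P$ means exactly $P \subseteq P^{\perp}$, and since $\<\cdot,\cdot\>_{W}$ is non-degenerate we have $\dim P + \dim P^{\perp} = \dim W = 2n$. Hence $\dim P \leq \dim P^{\perp} = 2n - \dim P$, i.e. $\dim P \leq n$.

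For (ii), suppose $P$ is isotropic with $k \defeq \dim P < n$. Consider the symmetric bilinear form induced by $\<\cdot,\cdot\>_{W}$ on the quotient $P^{\perp}/P$: it is well defined precisely because $P \subseteq P^{\perp}$, and its radical is $P^{\perp} \cap (P^{\perp})^{\perp} = P^{\perp} \cap P = P$, so the induced form is non-degenerate on the $(2n-2k)$-dimensional space $P^{\perp}/P$. The crucial point is to pin down its signature. Choose an isotropic complement $Q$ to $P$ inside $W$, so that $P \oplus Q$ is a hyperbolic subspace of signature $(k,k)$ and $W = (P \oplus Q) \oplus (P\oplus Q)^{\perp}$ orthogonally; one checks $P^{\perp} = P \oplus (P\oplus Q)^{\perp}$ (a dimension count plus the observation that $(P\oplus Q)^{\perp}$ meets $P$ trivially), whence $P^{\perp}/P \cong (P\oplus Q)^{\perp}$ isometrically. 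By Sylvester's law of inertia the latter has signature $(n-k,n-k)$. Since $k < n$ this is an indefinite form on a nonzero space, so it admits a nonzero isotropic vector $\overline{w}$. Lifting to $w \in P^{\perp}\setminus P$, the subspace $P + \R w$ is isotropic: for $p,p' \in P$ and $a,b \in \R$ the cross terms $b\<p,w\>_{W}$ and $a\<w,p'\>_{W}$ vanish because $w \in P^{\perp}$, $\<p,p'\>_{W} = 0$ because $P$ is isotropic, and $\<w,w\>_{W} = \<\overline{w},\overline{w}\>_{P^{\perp}/P} = 0$. It has dimension $k+1$, contradicting maximality of $P$.

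The main obstacle is step (ii), and specifically the signature bookkeeping: one must exploit that $\<\cdot,\cdot\>_{W}$ has the \emph{balanced} signature $(n,n)$ to guarantee that $P^{\perp}/P$ is indefinite rather than merely nonzero — a positive-definite space, though nonzero, carries no isotropic vectors, so the argument would fail for a general non-degenerate form. The cleanest route is the Witt-type orthogonal decomposition $W = (P\oplus Q) \oplus (P\oplus Q)^{\perp}$ used above; alternatively one can argue directly in the canonical basis of (\ref{def_JWmatrix}) by extending a basis of $P$ to isotropic partners and reducing to the standard split form, but the invariant argument is shorter and avoids index juggling.
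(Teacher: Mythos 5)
Your proof is correct. Note that the paper itself gives no argument for this lemma at all: it simply records it as a well-known fact from the theory of quadratic forms and points the reader to the literature (Lam's book). So your contribution is a self-contained proof, and it is the standard one: part (i) is the dimension bound $\dim P \leq n$ coming from $P \subseteq P^{\perp}$ and non-degeneracy, and part (ii) is the Witt-style extension argument, passing to the non-degenerate induced form on $P^{\perp}/P$, identifying it isometrically with $(P \oplus Q)^{\perp}$ for a hyperbolic completion $P \oplus Q$, reading off the signature $(n-k,n-k)$, and lifting an isotropic vector to enlarge $P$. All the individual steps check out: the identification $P^{\perp} = P \oplus (P\oplus Q)^{\perp}$ follows from the dimension count you indicate, the radical computation $P^{\perp} \cap (P^{\perp})^{\perp} = P$ is right, and your remark that the balanced signature $(n,n)$ is what guarantees indefiniteness of $P^{\perp}/P$ is exactly the point where a general non-degenerate form would fail. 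The only step you assert rather than prove is the existence of an isotropic subspace $Q$ with $P \oplus Q$ hyperbolic of signature $(k,k)$; this is standard (extend a basis $p_{1},\dots,p_{k}$ of $P$ by vectors dual to it via non-degeneracy and correct them to be isotropic, inductively), but since it is the one genuinely structural input of the argument, a sentence to that effect, or an explicit citation of Witt decomposition, would make the proof fully airtight. Alternatively, as you note, one can bypass it by working in the canonical basis of the split form $\left(\begin{smallmatrix} 0 & 1 \\ 1 & 0 \end{smallmatrix}\right)$, at the cost of some index bookkeeping.
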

Because $\<\cdot,\cdot\>_{W}$ is induced by the canonical pairing, there are two obvious maximally isotropic subspaces, namely $V$ and $V^{\ast}$, viewed as subspaces of $W$. By definition, every orthogonal transformation $\O \in O(n,n)$ applied on $V$ or $V^{\ast}$ induces an isotropic subspace. 
\begin{example} \label{ex_maxisotropic}
Let us recall some standard examples of maximally isotropic subspaces of $(W,\<\cdot,\cdot\>_{W})$.
\begin{itemize}
\item Let $B \in \Lambda^{2}V^{\ast}$, and define the subspace $G_{B} \defeq e^{B}(V)$. Explicitly,
\begin{equation} \label{eq_GB}
G_{B} = \{ v + B(v) \ | \ v \in V \} \subseteq V \oplus V^{\ast}. 
\end{equation}
We can thus view $G_{B}$ as a graph of the linear map $B \in \Hom(V,V^{\ast})$. Conversely, let $B \in \Hom(V,V^{\ast})$ be any linear map. One can always construct the subspace (\ref{eq_GB}). It is always an $n$-dimensional subspace of $W$. One readily checks that $G_{B}$ is isotropic if and only if $B \in \Lambda^{2}V^{\ast}$. 
\item Let $\Pi \in \Lambda^{2}V$, and define the subspace $G_{\Pi} \defeq e^{\Pi}(V^{\ast})$. Explicitly,
\begin{equation} \label{eq_GPi}
G_{\Pi} = \{ \alpha + \Pi(\alpha) \ | \ \alpha \in V^{\ast} \} \subseteq V \oplus V^{\ast}. 
\end{equation}
We can thus view $G_{\Pi}$ as a graph of the map $\Pi \in \Hom(V^{\ast},V)$. Conversely, let $\Pi \in \Hom(V^{\ast},V)$ be any linear map. Onc can always construct the subspace (\ref{eq_GPi}). It is always an $n$-dimensional subspace of $W$. One readily checks that $G_{\Pi}$ is isotropic if and only if $\Pi \in \Lambda^{2}V$. 
\item Let $\Delta \subseteq V$ be any subspace of $V$. Let $\Ann(\Delta) \subseteq V^{\ast}$ be the annihilator subspace of $V^{\ast}$, that is the vector space defined as
\begin{equation}
\Ann(\Delta) = \{ \alpha \in V^{\ast} \; | \; \forall v \in \Delta, \; \alpha(v) = 0 \}.
\end{equation}
Then $\Delta \oplus \Ann(\Delta) \subseteq W$ forms a maximally isotropic subspace. 
\item Let $E \subseteq V$ be a vector subspace of $V$, and let $\theta \in \Lambda^{2}E^{\ast}$. Define the vector space
\begin{equation} L(E,\theta) = \{ v + \alpha \in V \oplus V^{\ast} \; | \; v \in E, \text{ and } \alpha = \theta(v) \} \end{equation}
Then $L(E,\theta)$ is a maximally isotropic subspace. Moreover, every maximally isotropic subspace is of this form for some $E$ and $\theta$, see \cite{Gualtieri:2003dx}. 
\end{itemize}
\end{example}
\section{Vector bundle, extended group and Lie algebra} \label{sec_tovectorbundle}
We can generalize everything from the previous two sections to the vector bundle $E = TM \oplus T^{\ast}M$. Subspaces will be replaced by subbundles, and linear maps are promoted to vector bundle morphisms over the identity map on $M$. For example, we define
\begin{equation}
O(n,n) = \{ \F \in \Aut(E) \; | \; \<\F(e),\F(e')\>_{E} = \<e,e'\>_{E} \text{ for all $e \in \Gamma(E)$} \}. 
\end{equation}
Similarly for the orthogonal Lie algebra:
\begin{equation}
o(n,n) = \{ \F \in \End(E) \; | \; \<\F(e),e'\>_{E} + \<e,\F(e')\>_{E} = 0 \text{ for all $e \in \Gamma(E)$} \}. 
\end{equation}
By a direct generalization of (\ref{eq_veconn}) we would arrive to 
\begin{equation} \label{eq_onnassum}
o(n,n) \cong \End(TM) \oplus \df{2} \oplus \vf{2}. 
\end{equation}
We now have $O(n,n)$ transformations of the form of $B$-transforms, $\Pi$-transforms and $\Aut(TM)$ at our disposal, for $B \in \df{2}$ and $\Pi \in \vf{2}$. Finally, instead of maximally isotropic subspaces, we will talk about maximally isotropic subbundles of $E$. All examples from the previous subsection generalize naturally.

Of course, we can study also slightly more general objects. In particular, define extended automorphism group $\EAut(E)$ of $E$ to be a group of fiber-wise bijective vector bundle morphisms over diffeomorphisms. Note that any $(\F,\varphi)$, where $\varphi \in \Diff(M)$, induces an automorphism $\F$ (denoted by the same letter) of $\Gamma(E)$. Indeed, let $e \in \Gamma(E)$. Define $\F(e) \in \Gamma(E)$ as $(\F(e))(\varphi(m)) = \F(e(m))$ for all $m \in M$. 

Using this notation, we can define the extended orthogonal group $EO(n,n)$ as 
\begin{equation}
EO(n,n) = \{ (\F,\varphi) \in \EAut(E) \; | \; \<\F(e),\F(e')\>_{E} \circ \varphi = \<e,e'\>_{E} \}. 
\end{equation}
Its structure is in fact very simple, as the following lemma proves.
\begin{lemma} \label{lem_EOnn}
Let $\Diff(M)$ be the group of diffeomorphisms of $M$. Then 
\begin{equation} \label{eq_EOasDiffO}
EO(n,n) = O(n,n) \rtimes \Diff(M), 
\end{equation}
where $\Diff(M)$ acts on $O(n,n)$ by conjugation: $\varphi \blacktriangleright \F_{0} \defeq T(\varphi) \circ \F_{0} \circ T(\varphi)^{-1}$, for all $\varphi \in \Diff(M)$. Define the map $(T(\varphi),\varphi) \in \EAut(E)$ by putting $T(\varphi)(X+\xi) = \varphi_{\ast}(X) + (\varphi^{-1})^{\ast}(\xi)$, for all $X+\xi \in \Gamma(E)$. 
\end{lemma}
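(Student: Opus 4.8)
The statement identifies $EO(n,n)$ with the semidirect product $O(n,n)\rtimes \Diff(M)$ via the action given by conjugation with the lifted diffeomorphisms $T(\varphi)$. The natural strategy is to exhibit explicitly the group homomorphism in both directions. First I would verify that for each $\varphi \in \Diff(M)$ the prescribed lift $T(\varphi)(X+\xi) = \varphi_{\ast}X + (\varphi^{-1})^{\ast}\xi$ is indeed an element of $EO(n,n)$, i.e. that it is a fiber-wise bijective vector bundle morphism over $\varphi$ and that it preserves the pairing in the stated twisted sense $\langle T(\varphi)e, T(\varphi)e'\rangle_E \circ \varphi = \langle e,e'\rangle_E$; this is immediate from the fact that the canonical pairing of $TM\oplus T^\ast M$ is the tautological contraction, together with $(\varphi^{-1})^\ast\xi(\varphi_\ast X) = \xi(X)\circ\varphi^{-1}$. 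I would also note $T(\varphi_1\varphi_2) = T(\varphi_1)T(\varphi_2)$ and $T(\mathrm{id}) = \mathrm{id}_E$, so $\varphi\mapsto T(\varphi)$ is a group homomorphism $\Diff(M)\to EO(n,n)$ splitting the projection $(\mathcal F,\varphi)\mapsto\varphi$.

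**Key steps.** Next I would show that the projection $\pi: EO(n,n)\to \Diff(M)$, $(\mathcal F,\varphi)\mapsto\varphi$, is a surjective group homomorphism whose kernel is exactly $O(n,n)$ (the fiber-wise orthogonal maps over $\mathrm{id}_M$): surjectivity is witnessed by the splitting $T$, and an element of the kernel is by definition a fiber-wise bijective morphism over $\mathrm{id}_M$ preserving $\langle\cdot,\cdot\rangle_E$, which is precisely the $O(n,n)$ defined in Section~\ref{sec_tovectorbundle}. Given a normal subgroup (the kernel is automatically normal) and a splitting of the quotient, the general theory of group extensions yields $EO(n,n) = O(n,n)\rtimes \Diff(M)$, with the action of $\Diff(M)$ on $O(n,n)$ being conjugation inside $EO(n,n)$ by the image of the splitting, i.e. $\varphi\blacktriangleright\mathcal F_0 = T(\varphi)\circ\mathcal F_0\circ T(\varphi)^{-1}$. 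One should check that this conjugate indeed lands in $O(n,n)$: it is a composition of fiber-wise bijective morphisms, it sits over $\varphi\circ\mathrm{id}_M\circ\varphi^{-1} = \mathrm{id}_M$, and it preserves $\langle\cdot,\cdot\rangle_E$ because each factor preserves it in the appropriate twisted sense and the twists cancel.

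**Main obstacle.** The only genuinely non-formal point is the factorization: given an arbitrary $(\mathcal F,\varphi)\in EO(n,n)$, one must write $\mathcal F = \mathcal F_0\circ T(\varphi)$ with $\mathcal F_0 := \mathcal F\circ T(\varphi)^{-1}\in O(n,n)$, and check this $\mathcal F_0$ really is orthogonal over the identity — but this is exactly the kernel computation above, so no new work is needed. I expect the bookkeeping of the twisted invariance condition (keeping track of where functions are evaluated, $m$ versus $\varphi(m)$) to be the most error-prone part, but it is routine; the conceptual content is entirely captured by "split short exact sequence of groups $\Rightarrow$ semidirect product," and I would present the proof in that order: (i) $T$ is a homomorphic section into $EO(n,n)$, (ii) the kernel of the base-point projection is $O(n,n)$, (iii) invoke the standard semidirect-product recognition criterion and read off the action.
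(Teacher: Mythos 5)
Your proposal is correct and follows essentially the same route as the paper: both hinge on the decomposition $\F = \F_{0} \circ T(\varphi)$ with $T(\varphi) \in EO(n,n)$ and $\F_{0} \in O(n,n)$, with the action given by conjugation by $T(\varphi)$. The only cosmetic difference is that you package the last step as the abstract split-exact-sequence criterion, whereas the paper verifies the semidirect structure by explicitly computing the multiplication rule $\F \circ \G = \F_{0} \circ [T(\varphi) \circ \G_{0} \circ T(\varphi)^{-1}] \circ T(\varphi \circ \psi)$ — the content is the same.
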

\begin{proof}
Let $(F,\varphi) \in EO(n,n)$. It is not difficult to show that $(T(\varphi),\varphi) \in EO(n,n)$. Define $\F_{0} \in \Aut(A)$ as $\F = \F_{0} \circ T(\varphi)$. Then, by definition, $\F_{0} \in O(n,n)$. Moreover, $O(n,n)$ forms a normal subgroup of $EO(n,n)$. It only remains to determine the multiplication rule. Let $(\G,\psi) \in 	EO(n,n)$ and $\G = \G_{0} \circ T(\psi)$. We get 
\[ \F \circ \G = \F_{0} \circ [ T(\varphi) \circ \G_{0} \circ T(\varphi)^{-1} ] \circ T(\varphi \circ \psi). \]
This proves the semi-direct structure assertion (\ref{eq_EOasDiffO}). 
\end{proof}

What is the Lie algebra $Eo(n,n)$ corresponding to the group $EO(n,n)$? First, recall the vector bundle $\D(E)$ defined in Remark \ref{rem_connonE}. 
For any Lie algebroid $(L,l,[\cdot,\cdot]_{L})$, any vector bundle morphism $R: L \rightarrow \D(E)$ preserving the brackets is called a \emph{representation} of Lie algebroid $L$ on the vector bundle $E$. See \cite{Mackenzie} for details. 

We claim that $\Gamma(\D(E))$ is exactly the Lie algebra corresponding to $\EAut(E)$. To see this, assume that $(\F_{t},\varphi_{t})$ is a $1$-parameter subgroup of automorphisms in $\EAut(E)$. In particular, $\varphi_{t}$ is a $1$-parameter subgroup of $\Diff(M)$, hence a flow of some vector field $X \in \vf{}$. Define $\F: \Gamma(E) \rightarrow \Gamma(E)$ as $\F(e) \defeq \ddt \hspace{5mm} \F_{-t}(e)$ for all $e \in \Gamma(E)$. Note that for $f \in \cif$, we have $\F_{-t}(fe) = (f \circ \varphi_{t}) \F_{-t}(e)$. Differentiating this condition with respect to $t$ at $t=0$ gives 
\begin{equation}
\F(fe) = f \F(e) + [ \ddt f(\varphi_{t}) ] e = f \F(e) + (X.f) e. 
\end{equation}
This proves that $\F \in \Gamma(\D(E))$, and moreover $a(\F) = \ddt \hspace{5mm} \varphi_{t}$. We can also write the relation of $\F$ and $\F_{t}$ as $\exp{(t\F)} \defeq \F_{-t}$.

Let us now return to the Lie algebra $Eo(n,n)$. Let $(\F_{t},\varphi_{t})$ be a $1$-parameter subgroup of $EO(n,n)$. The corresponding element of $Eo(n,n)$ will be $\F = \ddt \hspace{5mm} \F_{-t}$. Since $\F_{t} \in EO(n,n)$, we have 
\begin{equation} \< \F_{-t}(e), \F_{-t}(e) \>_{E} = \<e,e'\>_{E} \circ \varphi_{t}. \end{equation}
Differentiating this with respect to $t$ at $t=0$ leads us to the definition
\begin{equation}
Eo(n,n) \defeq \{ \F \in \Gamma(\D(E)) \; | \; a(\F).\<e,e'\>_{E} = \<\F(e),e'\>_{E} + \<e, \F(e')\>_{E}, \; \forall e,e' \in \Gamma(E) \}. 
\end{equation}
Recall that $(\D(E),a,[\cdot,\cdot])$ is the Lie algebroid defined in Remark \ref{rem_connonE}. Lemma \ref{lem_EOnn} suggests that $Eo(n,n)$ can be also written as a semi-direct product, this time of Lie algebras. Before proceeding to the lemma, note that there is a Lie algebroid representation $R: TM \rightarrow \D(E)$ of the Lie algebroid $(TM,Id_{M},[\cdot,\cdot])$ which takes values in $Eo(n,n)$. Indeed, let $X \in \vf{}$. Define $R(X) \in \Gamma(\D(E))$ as $R(X)(e) = [X+0,e]_{D}$. It follows from (\ref{eq_gEinvariance}) that $R(X) \in Eo(n,n)$. We can now state 
\begin{lemma} \label{lem_Eonndecomp}
Lie algebra $Eo(n,n)$ can be decomposed as 
\begin{equation} \label{eq_Eonndecomp}
Eo(n,n) = \vf{} \ltimes o(n,n), 
\end{equation}
where $\vf{}$ acts on $o(n,n)$ by Lie derivatives.  
\end{lemma}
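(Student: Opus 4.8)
The plan is to produce the semidirect product decomposition by exhibiting $Eo(n,n)$ as the direct sum (as vector spaces/modules) of the image of the representation $R:\vf{}\to Eo(n,n)$ and the subalgebra $o(n,n)\subseteq Eo(n,n)$, then checking that $o(n,n)$ is an ideal and that $R(\vf{})$ acts on it by Lie derivatives. First I would recall that any $\F\in Eo(n,n)\subseteq\Gamma(\D(E))$ has a well-defined anchor $a(\F)=X\in\vf{}$. Given such an $\F$, form $\F_{0}\defeq\F-R(X)$. Since both $\F$ and $R(X)$ lie over the same vector field $X$ (note $a(R(X))=X$ because $\rho(X+0)=X$ for the Dorfman bracket), the difference $\F_{0}$ satisfies $a(\F_{0})=0$, hence $\F_{0}(fe)=f\F_{0}(e)$ for all $f\in\cif$, i.e. $\F_{0}\in\End(E)$. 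The metric-compatibility condition defining $Eo(n,n)$ together with the fact that both $\F$ and $R(X)$ satisfy it (for $R(X)$ this is exactly (\ref{eq_gEinvariance}) applied to the Dorfman bracket) forces $\<\F_{0}(e),e'\>_{E}+\<e,\F_{0}(e')\>_{E}=0$, so $\F_{0}\in o(n,n)$. This gives the decomposition $\F=R(X)+\F_{0}$ as a sum of an element of $R(\vf{})$ and an element of $o(n,n)$; uniqueness follows because $R(\vf{})\cap o(n,n)=0$ (an element of $R(\vf{})$ in the kernel of $a$ must be $R(0)=0$, since $R$ is injective — $R(X)(e)=[X,e]_{D}$ determines $X$ via the $TM$-component).

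Next I would verify the bracket structure. The Lie bracket on $\Gamma(\D(E))$ is the commutator of operators. One computes $[R(X),R(Y)]=R([X,Y])$, which is precisely the Leibniz identity (\ref{def_bracketJI}) for the Dorfman bracket with entries of the form $X+0$, $Y+0$ — equivalently the statement that $R$ is a representation of $(TM,Id_M,[\cdot,\cdot])$. Then I would check that $o(n,n)$ is an ideal: for $\F_0\in o(n,n)$ and $X\in\vf{}$, the commutator $[R(X),\F_0]$ has anchor $[X,0]=0$ (commutators of operators over $X$ and over $0$ lie over $[X,0]=0$), hence lies in $\End(E)$, and a short computation using the defining conditions shows it is again skew with respect to $\<\cdot,\cdot\>_{E}$, so $[R(X),\F_0]\in o(n,n)$; and $[o(n,n),o(n,n)]\subseteq o(n,n)$ since $o(n,n)$ is a Lie subalgebra. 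Finally, to identify the action as "Lie derivatives" I would unwind $[R(X),\F_0](e)=[X,\F_0(e)]_D-\F_0([X,e]_D)$ and recognise the right-hand side, using the formula (\ref{def_LieEont}) / (\ref{eq_LieforDorfman}) for the Dorfman Lie derivative $\Li{}^{E}$ restricted to the appropriate tensor bundle, as $\Li{X}\F_0$ where $\F_0\in o(n,n)\cong\End(TM)\oplus\df{2}\oplus\vf{2}$ is acted on componentwise by the ordinary Lie derivative $\Li{X}$; this uses the isomorphism (\ref{eq_onnassum}) and the fact that the Dorfman bracket with a pure vector field $X+0$ reduces to $\Li{X}$ on each summand.

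I expect the main obstacle to be the last step: cleanly matching the abstract commutator $[R(X),\F_0]$ with the componentwise Lie derivative on $\End(TM)\oplus\df{2}\oplus\vf{2}$. This requires tracking how an element of $o(n,n)$ acts as a block $2\times2$ operator on $\vf{}\oplus\df{1}$, computing the commutator block by block, and observing that each block transforms exactly as the Lie derivative of the corresponding tensor ($N\in\End(TM)$, $B\in\df{2}$, $\Pi\in\vf{2}$). The verifications that $a(\F_0)=0$ forces $\F_0\in\End(E)$, that skew-symmetry is inherited, and that $R$ is a representation are all routine consequences of the axioms already established in the excerpt, in particular (\ref{eq_gEinvariance}), (\ref{def_bracketJI}) and the block description (\ref{eq_onngen}).
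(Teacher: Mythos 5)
Your proposal is correct and follows essentially the same route as the paper: decompose $\F = R(a(\F)) + \F_{0}$, use that $R$ is a Lie algebroid representation to get $[R(X),R(Y)] = R([X,Y])$, and identify $[R(X),\F_{0}]$ with the componentwise Lie derivative $(\Li{X}N,\Li{X}B,\Li{X}\Pi)$ under the isomorphism $o(n,n)\cong\End(TM)\oplus\df{2}\oplus\vf{2}$. You simply spell out in more detail the steps the paper leaves implicit (that $a(\F_{0})=0$ forces $\cif$-linearity, that skew-symmetry is inherited from the two metric-compatibility identities, and uniqueness of the splitting), which is fine.
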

\begin{proof}
Let $\F \in Eo(n,n)$. Define $\F_{0} \in o(n,n)$ as $\F = R(a(\F)) + \F_{0}$. This proves the assertion on the level of vector spaces. First note that $[R(X),R(Y)] = R([X,Y])$ because $R$ is a Lie algebroid representation. From (\ref{eq_onnassum}) we see that every $\F_{0} \in o(n,n)$ corresponds to a triplet $(N,B,\Pi) \in \End(TM) \oplus \df{2} \oplus \vf{2}$. Going through the construction of this correspondence above (\ref{eq_veconn}) it is straightforward to show that if $\F_{0} \approx (N,B,\Pi)$, then $[R(X),\F_{0}] \approx ( \Li{X}N, \Li{X}B, \Li{X}\Pi)$ where $N$ is viewed as $(1,1)$-tensor on $M$. This proves the assertion (\ref{eq_Eonndecomp}) on the level of Lie algebras. 
\end{proof}
\section{Derivations algebra of the Dorfman bracket} \label{sec_dorfmander}
Let us now focus on the Dorfman bracket (\ref{def_dorfman}). It satisfies the Leibniz rule (\ref{def_bracketLeibniz}) in the right input, and Courant algebroid induced Leibniz rule (\ref{eq_leftLeibniz}) in the left input. We will now examine the Lie algebra $\Der(E)$ of its derivations defined as 
\begin{equation} \label{def_derivation}
\Der(E) = \{ \F \in \Gamma(\D(E)) \; | \; \F([e,e']_{D}) = [\F(e),e']_{D} + [e,\F(e')]_{D} , \; \forall e,e' \in \Gamma(E)\}. 
\end{equation}
Let us emphasize that $\Der(E)$ is not a $\cif$-module. Recall the map $R$ defined just before Lemma \ref{lem_Eonndecomp}. It follows from the Leibniz identity (\ref{def_bracketJI}) that for every $X \in \vf{}$, we have $R(X) \in \Der(E)$. Now observe that any $\F \in \Der(E)$ can be decomposed as 
\begin{equation} \F = R(a(\F)) + \F_{0}. \end{equation}
Note that $\F_{0}$ is now $\cif$-linear, or equivalently $\F_{0} \in \End(E)$. Moreover, it is a difference of two derivations, hence itself a derivation. We can now focus on finding all $\F_{0} \in \Der(E) \cap \End(E)$. First, there are now certain restrictions forced by the compatibility of the Leibniz rule (\ref{def_bracketLeibniz}) and the derivation property (\ref{def_derivation}). Indeed, evaluating the derivation $\F_{0}$ on $[e,fe']$ in two ways, we obtain 
\begin{equation} \rho(\F_{0}(e)) = 0, \end{equation}
for all $e \in \Gamma(E)$. This shows that $\F_{0}$ must have a formal block form
\begin{equation}
\F_{0} = \bm{0}{0}{F_{21}}{F_{22}}, 
\end{equation}
where $F_{21} \in \Hom(TM,T^{\ast}M)$ and $F_{22} \in \End(T^{\ast}M)$. Next, there comes the compatibility with the left Leibniz rule (\ref{eq_leftLeibniz}). We obtain the condition
\begin{equation} \label{eq_skewsymmnec}
\<e,e'\>_{E} \F_{0}(\D{f}) = \{ \<\F_{0}(e),e'\>_{E} + \<e, \F_{0}(e')\>_{E} \} \D{f}, 
\end{equation}
which has to hold for all $e,e' \in \Gamma(E)$ and $f \in \cif$. In particular, for $\<e,e'\>_{E} = 0$ this implies $\<\F_{0}(e),e'\>_{E} + \<e,\F_{0}(e')\>_{E} = 0$. This immediately implies that $\<F_{21}(X),Y\> + \<X,F_{21}(Y)\> = 0$, and thus $F_{21}(X) = B(X)$ for $B \in \df{2}$. Choosing $e = X \in \vf{}$ and $e' = \xi \in \df{}$, we get
\begin{equation} \<\xi,X\> F_{22}(df) = \< F_{22}(\xi), X\> df. \end{equation}
This has to hold for any $(f,X,\xi)$, which is possible only if $F_{22}(\xi) = \lambda \xi$ for some $\lambda \in \cif$. We see that $\F_{0}$ has to have the form
\begin{equation}
\F_{0} = \bm{0}{0}{B}{\lambda \cdot 1}. 
\end{equation}
It remains to plug this into condition (\ref{def_derivation}) to find the conditions on $B$ and $\lambda$. We have
\begin{align}
\F_{0}[X+\xi,Y+\eta]_{D} & = B([X,Y]) + \lambda ( \Li{X}\eta - \io_{Y}d\xi ), \\
[\F_{0}(X+\xi), Y+\eta]_{D} & = -\io_{Y} d(B(X) + \lambda \xi), \\
[X+\xi,\F_{0}(Y+\eta)]_{D} & = \Li{X}( B(Y) + \lambda \eta). 
\end{align}
Inserting this into (\ref{def_derivation}) yields two independent equations
\begin{align}
\label{eq_der1} B([X,Y]) & = \Li{X}(B(Y)) - \io_{Y}d(B(X)), \\
\lambda \Li{X}\eta & = \Li{X}(\lambda \eta). 
\end{align}
Recall that $B(X) = -\io_{X}B$. We can use the usual Cartan formulas to rewrite (\ref{eq_der1}) as a condition $dB = 0$, that is $B \in \Omega^{2}_{closed}(M)$. 
Second equation forces $\lambda$ to be locally constant, that is $\lambda \in \Omega^{0}_{closed}(M)$. We have just proved the following proposition. 
\begin{tvrz} \label{tvrz_DerE}
Let $\Der(E)$ be the space of derivations of the Dorfman bracket $[\cdot,\cdot]_{D}$, that is (\ref{def_derivation}) holds. Then as a vector space, it decomposes as 
\begin{equation} \Der(E) \doteq \vf{} \oplus \Omega^{0}_{closed}(M) \oplus \Omega^{2}_{closed}(M). \end{equation}
Every $\F \in \Der(E)$ decomposes uniquely as $\F = R(X) + \F_{\lambda}+ \F_{B}$, where $R(X)(Y+\eta) = ([X,Y], \Li{X}\eta)$ for all $X,Y \in \vf{}$ and $\eta \in \df{1}$. Vector bundle endomorphisms $\F_{\lambda}$ and $\F_{B}$ are defined as 
\begin{equation}
\F_{\lambda} = \bm{0}{0}{0}{\lambda \cdot 1}, \; \F_{B} = \bm{0}{0}{B}{0}, 
\end{equation}
where $\lambda \in \Omega^{0}_{closed}(M)$ and $B \in \Omega^{2}_{closed}(M)$. Nontrivial commutation relations are	
\begin{align} [R(X),R(Y)] & = R([X,Y]), \\
  [R(X), F_{B}] & = F_{\Li{X}B}, \\  
  [\F_{\lambda},\F_{B}] & = \F_{\lambda B}. 
\end{align}
On the Lie algebra level, we thus have
\begin{equation} 
\Der(E) = \vf{} \ltimes ( \Omega^{0}_{closed}(M) \ltimes \Omega^{2}_{closed}(M)),
\end{equation}
where $\Omega^{0}_{closed}(M)$, $\Omega^{2}_{closed}(M)$ are viewed as Abelian Lie algebras, $\Omega^{0}_{closed}(M)$ acts on $2$-forms in $\Omega^{2}_{closed}(M)$ by multiplication, and $\vf{}$ acts on $\Omega^{0}_{closed} \ltimes \Omega^{2}_{closed}(M)$ by Lie derivatives. 
Finally, when we restrict to the subalgebra $Eo(n,n)$, we have
\begin{equation}
\Der(E) \cap Eo(n,n) = \vf{} \ltimes \Omega^{2}_{closed}(M). 
\end{equation}
\end{tvrz}
\begin{proof}
We have proved the first part in the text above. The commutation relations can be directly calculated. 
\end{proof}
\section{Automorphism group of the Dorfman bracket} \label{sec_dorfmanaut}
Let us now examine the group of Dorfman bracket automorphisms. Its subgroup of orthogonal automorphisms is well-known for a long time and it is in fact one of the main reasons why the Dorfman bracket and generalized geometry play such an important role in string theory. We roughly follow the proof of Gualtieri in \cite{Gualtieri:2003dx}. From the Courant algebroid perspective is makes sense to restrict to $EO(n,n)$, because in this case one obtains an automorphism of the whole Courant algebroid structure. However, for the sake of generalization to Leibniz algebroids where there is no pairing anymore, we will discuss the whole automorphism group. We define the Dorfman bracket automorphism group $\Aut_{D}(E)$ as 
\begin{equation} \label{def_autom}
\Aut_{D}(E) \defeq \{ (\F,\varphi) \in \EAut(E) \; | \; [\F(e), \F(e')]_{D} = \F[e,e']_{D}, \; \forall e,e' \in \Gamma(E) \}. 
\end{equation}
This group decomposes similarly as $EO(n,n)$ in Lemma \ref{lem_EOnn}. Indeed, let $(\F,\varphi)$ in $\Aut_{D}(E)$. Then recall the map $(T(\varphi),\varphi)$, defined as $T(\varphi)(X+\xi) = \varphi_{\ast}(X) + (\varphi^{-1})^{\ast}(\xi)$ for all $X+\xi \in \Gamma(E)$. It follows from the usual properties of the Lie derivative and the exterior differential that $(T(\varphi),\varphi) \in \Aut_{D}(E)$. Define $\F_{0} \in \Aut(E)$ as $\F = \F_{0} \circ T(\varphi)$. It follows that $\F_{0} \in \Aut_{D}(E)$, and we can thus focus on finding all vector bundle morphisms $\F_{0}$ over the identity preserving the bracket.

First note that it follows from the compatibility of (\ref{def_autom}) and Leibniz rule (\ref{def_bracketLeibniz}) that its projection using $\rho$ satisfies $\rho(\F_{0}(e)) = \rho(e)$ for all $e \in \Gamma(E)$. This shows that $\F_{0}$ has to be of the block form
\begin{equation} \label{eq_autblockform}
\F_{0} = \bm{1}{0}{F_{21}}{F_{22}}. 
\end{equation}
The Leibniz rule compatibility in the left input (\ref{eq_leftLeibniz}) gives the condition
\begin{equation} \<e,e'\>_{E} \F_{0}( \D{f}) = \< \F_{0}(e), \F_{0}(e')\>_{E} \D{f}, \end{equation}
for all $e,e' \in \Gamma(E)$. Very similarly to the equation (\ref{eq_skewsymmnec}), this proves that $F_{21}(X) = B(X)$ for $B \in \df{2}$, and $F_{22}(\xi) = \lambda \cdot \xi$ for some $\lambda \in \cif$. We require $\F_{0}$ to be fiber-wise bijective, and thus $\lambda(m) \neq 0$ for all $m \in M$. This restricts $\F_{0}$ to have the block form
\begin{equation} \F_{0} = \bm{1}{0}{B}{\lambda \cdot 1}. \end{equation}
Finally, we have to plug $\F_{0}$ into the condition (\ref{def_autom}). We have 
\begin{align}
\F_{0}[X+\xi,Y+\eta]_{D} & = [X,Y] + B([X,Y]) + \lambda \{ \Li{X}\eta - \io_{Y}d\xi \}, \\
[\F_{0}(X+\xi),\F_{0}(Y+\eta)]_{D} & = [X,Y] + \Li{X}(B(Y) + \lambda \eta) - \io_{Y}d(B(X) + \lambda \xi). 
\end{align}
Combining these two expressions gives the same conditions on $B$ and $\lambda$ as (\ref{def_derivation}). We thus get $B \in \Omega^{2}_{closed}(M)$, and $\lambda \in \Omega^{0}_{closed}(M)$. Let $G(\Omega^{0}_{closed}(M))$ denote the Abelian group of everywhere non-zero closed $0$-forms on $M$. Note that $G(\Omega^{0}_{closed}(M))$ is in fact isomorphic to a direct product of $k$ copies of $(\R \setminus \{0\}, \cdot)$, where $k$ is a number of connected components of $M$. We have just proved the following proposition:
\begin{tvrz} \label{tvrz_AutDdecomp}
Let $\Aut_{D}(E)$ be the group of automorphisms (\ref{def_autom}) of the Dorfman bracket (\ref{def_dorfman}). Then it has the following group structure:
\begin{equation} \label{eq_AutDdecomp}
\Aut_{D}(E) = ( \Omega^{2}_{closed}(M) \rtimes G(\Omega^{0}_{closed}(M)) ) \rtimes \Diff(M),
\end{equation}
where $\Omega^{2}_{closed}(M)$ is viewed as an Abelian group with respect to addition, $G(\Omega^{0}_{closed}(M))$ acts on $\Omega^{2}_{closed}(M)$ by multiplication, and $\Diff(M)$ acts on $\Omega^{2}_{closed}(M) \rtimes G(\Omega^{0}_{closed}(M))$ by inverse pullbacks. 
Every $(\F,\varphi) \in \Aut_{D}(E)$ can be uniquely decomposed as 
\begin{equation}
\F = e^{B} \circ \mathcal{S}_{\lambda} \circ T(\varphi), 
\end{equation}
where $e^{B} = \exp{\F_{B}}$, and $S_{\lambda}(X+\xi) = X + \lambda \xi$, for unique $B \in \Omega^{2}_{closed}(M)$ and locally constant everywhere non-zero function $\lambda \in G(\Omega^{0}_{closed}(M))$. Finally, the subgroup of $Aut_{D}(E)$ consisting of (extended) orthogonal transformations is 
\begin{equation}
\Aut_{D}(E) \cap EO(n,n) = \Omega^{2}_{closed} \rtimes \Diff(M). 
\end{equation}
\end{tvrz}
\begin{proof}
Only the multiplication rules remain to be proved. Let $\G = e^{B'} \circ S_{\lambda'} \circ T(\varphi')$. By the direct calculation, one obtains 
\begin{equation} \label{eq_AutDmultiplication}
\F \circ \G = e^{B+ \lambda (\varphi^{-1})^{\ast}B' } \circ S_{\lambda (\varphi^{-1})^{\ast}\lambda'} \circ T(\varphi \circ \varphi'). 
\end{equation}
Symbolically, this yields the multiplication rule 
\begin{equation} (B,\lambda,\varphi) \ast (B', \lambda',\varphi') = (B + \lambda (\varphi^{-1})^{\ast}B', \lambda (\varphi^{-1})^{\ast}\lambda', \varphi \circ \varphi'), \end{equation}
which is exactly the double semi-direct product (\ref{eq_AutDdecomp}). 
\end{proof}

Finally, let us show that every Dorfman bracket derivation $\F \in \Der(E)$ can explicitly be integrated to a $1$-parameter subgroup $\exp{(t\F)} \subseteq \Aut_{D}(E)$ of the group of Dorfman bracket automorphisms. Note that $t \in (-\epsilon,\epsilon)$ for some $\epsilon > 0$, and in general, $t$ cannot be expanded to $\R$. $\exp{(t\F)}$ is thus a $1$-parameter subgroup with "certain conditions on parameters". 

We have shown in Proposition \ref{tvrz_DerE} that every $\F \in \Der{E}$ can uniquely be written as $\F = R(X) + F_{\lambda} + \F_{B}$ for $X \in \vf{}$, $\lambda \in \Omega^{0}_{closed}(M)$, and $B \in \Omega^{2}_{closed}(M)$. Let $\phi_{t}^{X}$ be the flow corresponding to $X$, for $t \in (-\epsilon,\epsilon)$. There lies the reason of $t$ limitations: $X$ may not be a complete vector field. We will now look for $\exp{(t\F)}$ in the form 
\begin{equation}
\exp{(t\F)} = e^{B(t)} \circ S_{\mu(t)} \circ T(\phi_{-t}^{X}), 
\end{equation}
where $B(t) \in \Omega^{2}_{closed}(M)$, and $\mu(t) \in G(\Omega^{0}_{closed}(M))$ for every $t \in (-\epsilon,\epsilon)$. We have 
\begin{equation}
\exp{(t\F)}(Y + \eta) = \phi_{-t \ast}^{X}(Y) + B(t)( \phi_{-t \ast}^{X}(Y)) + \mu(t) \phi_{t}^{X \ast}(\eta), 
\end{equation}
for all $Y + \eta \in \Gamma(E)$. Differentiating with respect to $t$ at $t=0$ gives the condition
\begin{equation}
\F(Y+\eta) = (1+B(0))[X,Y] + [\ddt B(t)](Y) + [\ddt \mu(t)] \eta + \mu(0) \Li{x}\eta. 
\end{equation}
Comparing this with our parametrization of $\F$ gives the conditions on $B(t)$, and $\mu(t)$: 
\begin{equation} \label{eq_integrationinitial}
\ddt \mu(t) = \lambda, \; \mu(0) = 1, \; \ddt B(t) = B, \; B(0) = 0. 
\end{equation}
First two conditions give $\mu(t) = \exp{t\lambda}$. To find the solution for $B$, we will use the $1$-parameter subgroup property of $\exp{(t\F)}$. Note that we have $\phi_{t}^{X \ast}(\lambda) = \lambda$. This follows from the fact that flows cannot flow outside of the single connected component. Using the multiplication rule (\ref{eq_AutDmultiplication}), we get
\begin{equation} 
\exp{(t\F)} \circ \exp{(s\F)} = e^{B(t) + \exp{t\lambda} \cdot \phi_{t}^{X \ast}(B(s))} \circ S_{ e^{(t+s)\lambda}} \circ T( \phi_{-(t+s)}^{X}).
\end{equation}
Comparing this to $\exp{((t+s)\F)}$ gives the condition
\begin{equation} \label{eq_integrationBtps}
B(t+s) = B(t) + \exp{t\lambda}  \cdot \phi_{t}^{X \ast}(B(s)). 
\end{equation}
Differentiate both sides with respect to $s$ at $s=0$. This yields 
\begin{equation}
\dot{B}(t) = \exp{t\lambda} \cdot \phi_{t}^{X \ast}B, 
\end{equation}
and consequently 
\begin{equation}
B(t) = \int_{0}^{t} \{ \exp{k \lambda} \cdot \phi_{k}^{X \ast}B \} dk. 
\end{equation}
It is straightforward to check that such $B(t)$ indeed satisfies (\ref{eq_integrationBtps}) and the two initial conditions (\ref{eq_integrationinitial}). We have thus made our way to the following proposition:
\begin{tvrz} \label{tvrz_derintegration}
Let $\F \in \Der(E)$ be a derivation of the Dorfman bracket. Then there is an $\epsilon > 0$ and a $1$-parameter subgroup $\exp{(t\F)} \subseteq \Aut_{D}(E)$, where $t \in (-\epsilon,\epsilon)$, such that $\F = \ddt \hspace{5mm} \exp{(t \F)}$. Explicitly, if $\F = R(X) + \F_{\lambda} + \F_{B}$ for $X \in \vf{}$, $\lambda \in \Omega^{0}_{closed}(M)$, and $B \in \Omega^{2}_{closed}(M)$, we have
\begin{equation}
\exp{(t\F)} = e^{B(t)} \circ S_{\mu(t)} \circ T(\phi_{-t}^{X}), 
\end{equation}
where $\mu(t) = \exp{t\lambda}$, $\phi_{t}^{X}$ is the flow of $X$, and
\begin{equation}
B(t) = \int_{0}^{t} \{ \exp{k\lambda} \cdot \phi_{k}^{X \ast}B \} dk. 
\end{equation}
\end{tvrz}
\begin{proof}
We have shown above that $\exp{(t\F)}$ integrates $\F$. We just have to show that for each $t$, $\exp{(t\F)} \in \Aut_{D}(E)$. According to Proposition \ref{tvrz_AutDdecomp}, this happens if and only if $B(t) \in \Omega^{2}_{closed}(M)$, and $\mu(t) \in G(\Omega^{0}_{closed}(M))$. But this clearly holds because $d$ commutes with the integration and pullbacks. 
\end{proof}
\section{Twisting of the Dorfman bracket} \label{sec_twisting}
We have shown in Proposition \ref{tvrz_AutDdecomp} that $\F \in \Aut(E)$ preserving the bracket must be of the form (\ref{eq_autblockform}) for $B \in \Omega^{2}_{closed}(M)$ and $\lambda \in \Omega^{0}_{closed}(M)$. Let us now focus only on $O(n,n)$ transformations, and thus set $\lambda \equiv 1$. In this case simply $\F = e^{B}$. What happens with the bracket for $dB \neq 0$? This is what we will examine in this section. Define a new bracket $[\cdot,\cdot]'_{D}$ as 
\begin{equation}
[e,e']'_{D} = e^{-B}[e^{B}(e),e^{B}(e')]_{D}, 
\end{equation}
for all $e,e' \in \Gamma(E)$. Rewriting this bracket explicitly gives 
\begin{equation} \label{eq_dorfmantwisting}
\begin{split}
[X+\xi,Y+\eta]'_{D} & = [X,Y] + \Li{X}(\eta + B(Y)) - \io_{Y}d(\xi + B(X)) - B([X,Y]) \\
& = [X+\xi,Y+\eta]_{D} + \Li{X}(B(Y)) - \io_{Y}d(B(X)) - B([X,Y]) \\
& = [X+\xi,Y+\eta]_{D} - dB(X,Y,\cdot).  
\end{split}
\end{equation}
This proves that $[\cdot,\cdot]'_{D}$ is precisely the $H$-twisted Dorfman bracket (\ref{def_dorfmanHtwist}), where $H = dB$. One can in fact show something more general: Twisted Dorfman brackets corresponding to the different representatives of the same cohomology class $[H] \in H_{3}(M,\R)$ are related precisely by a $B$-transform. 

\begin{tvrz} \label{tvrz_twistoftwisted}
Let $H \in \Omega^{3}_{closed}(M)$, and $B \in \df{2}$. Then 
\begin{equation} \label{eq_twistoftwisted}
[e^{B}(e),e^{B}(e')]_{D}^{H} = e^{B}([e,e']_{D}^{H+dB}).
\end{equation}
\end{tvrz}
\begin{proof}
Just repeat the calculation (\ref{eq_dorfmantwisting}). 
\end{proof}

\section{Dirac structures} \label{sec_Dirac}
In Section \ref{sec_misubspaces}, we have introduced maximally isotropic subspaces and their examples. Generalizing this to the vector bundle $E = TM \oplus T^{\ast}M$, we have an additional structure at our disposal, namely the Dorfman bracket. It is a well known fact that subbundles of $TM$ which are involutive with respect to the vector field commutator bracket are of an additional geometrical significance - they are tangent bundles to integral submanifolds of distributions. This justifies why it is interesting to study the subbundles of $E$ involutive with respect to the Dorfman bracket $[\cdot,\cdot]_{D}$. In particular, one is interested in involutive subbundles, where the skew-symmetry "anomaly" (\ref{eq_Courantsympart}) disappears. This is precisely the main idea leading to the definition of Dirac structures. Let us remark that study of Dirac structures was in fact the origin of all Courant algebroid brackets, see \cite{courant}. 

\begin{definice}
Let $(E,\rho,\<\cdot,\cdot\>_{E},[\cdot,\cdot]_{E})$ be a Courant algebroid. A subbundle $L \subseteq E$ is called an {\bfseries{almost Dirac structure}}, if for all $e,e' \in \Gamma(L)$, we have $\<e,e'\>_{E} = 0$. 

An almost Dirac structure $L$ is called a {\bfseries{Dirac structure}}, if $L$ is involutive with respect to $[\cdot,\cdot]_{E}$, that is $[e,e']_{E} \in \Gamma(L)$ for all $e,e' \in \Gamma(L)$. 

Note that $[\cdot,\cdot]_{E}|_{\Gamma(L) \times \Gamma(L)}$ together with $\rho|_{\Gamma(L)}$ forms a Lie algebroid structure on $L$. 
\end{definice}

We can identify several examples of almost Dirac structures of $E = TM \oplus T^{\ast}M$ in Example \ref{ex_maxisotropic}, if we just replace subspaces with subbundles, and elements of exterior powers of vector spaces by sections of corresponding vector bundles. We will now show under which conditions these become Dirac structures. 
\begin{example}
Let $E = TM \oplus T^{\ast}M$ be equipped with the Dorfman bracket (\ref{def_dorfman}). 
\begin{itemize}
\item Let $G_{B}$ be the graph (\ref{eq_GB}) of a $2$-form $B \in \df{2}$. We can examine the involutivity. Let $X + B(X)$, $Y + B(Y) \in \Gamma(G_{B})$. Then 
\[ [X + B(X), Y + B(Y)]_{D} = [X,Y] + \Li{X}(B(Y)) - \io_{Y}d(B(X)). \]
We see that $[X+B(X),Y+B(Y)]_{D} \in \Gamma(G_{B})$ iff 
\begin{equation}
\Li{X}(B(Y)) - \io_{Y}d(B(X)) = B([X,Y]), 
\end{equation}
for all $X,Y \in \vf{}$. This is once more the condition (\ref{eq_der1}), equivalent to $dB = 0$. We conclude that $G_{B}$ is a Dirac structure iff $B \in \Omega^{2}_{closed}(M)$. 
\item Let $G_{\Pi}$ be a graph (\ref{eq_GPi}) of a bivector $\Pi \in \vf{2}$. Involutivity condition implies the equation
\begin{equation}
[\Pi(\xi),\Pi(\eta)] = \Pi( \Li{\Pi(\xi)}\eta - \io_{\Pi(\eta)}d\xi),
\end{equation}
for all $\xi,\eta \in \df{1}$. We will show in Chapter \ref{ch_NP} that this is equivalent to the Jacobi identity for $\{f,g\} \defeq \Pi(df,dg)$. We conclude that $G_{\Pi}$ is a Dirac structure if and only if $\Pi \in \vf{2}$ is a Poisson bivector. 
\item Let $\Delta \subseteq TM$ be a subbundle (that is in fact a smooth distribution on $M$). Let $L = \Delta \oplus \Ann(\Delta) \subseteq E$. Examining the involutivity condition shows that $L$ is a Dirac structure, iff $[\Delta,\Delta] \subseteq \Delta$, that is $\Delta$ is an integrable distribution. 
\end{itemize}
\end{example}

\section{Generalized metric} \label{sec_genmetric}
We will now introduce a key concept for the applications of generalized geometry in string theory. In the context of generalized geometry, it appeared first in \cite{Gualtieri:2003dx}. The name generalized metric was probably used for the first time by Hitchin in \cite{2005math......8618H}. Generalized metric has several equivalent formulations, which we all present here. 

\begin{definice} \label{def_genmetric}
Let $E$ be a vector bundle with a fiber-wise metric $\<\cdot,\cdot\>_{E}$. Let $\tau \in \End(E)$ be an involution of $E$, that is $\tau^{2} = 1$. We say that $\tau$ is a {\bfseries{generalized metric}}, if the formula
\begin{equation} \label{def_gmetric}
\gm_{\tau}(e,e') \defeq \<e,\tau(e')\>_{E}, 
\end{equation}
for all $e,e' \in \Gamma(E)$, defines a positive definite fiber-wise metric $\gm_{\tau}$ on $E$. When talking about generalized metric, we will not distinguish between $\tau$ and $\gm_{\tau}$. 
\end{definice}

There are some remarks to be made about $\tau$. It follows from the definition of a generalized metric, that $\tau$ must be symmetric with respect to $\<\cdot,\cdot\>_{E}$, and consequently also orthogonal. 

There is one nice property of involutive maps. 
\begin{lemma}
Let $V$ be a finite-dimensional real vector space, and $A \in \End(V)$ satisfies $A^{2} = 1$. Then $A$ has eigenvalues $\pm 1$ and it is diagonalizable, that is $V = V_{+} \oplus V_{-}$, and $A(v_{+} + v_{-}) = v_{+} - v_{-}$. 
\end{lemma}
\begin{proof}
Let $p(z) = z^{2} - 1$. Then $p(A) = 0$, and a minimal polynomial $m_{A}$ of $A$ thus must divide $p$. For $m_{A}(z) = z \pm 1$, this would imply $A = \pm 1$. In all other cases $m_{A}(z) = (z+1)(z-1)$. This shows that all roots of $m_{A}$ have multiplicity $1$, which is equivalent to $A$ being diagonalizable. Moreover, its roots are precisely the eigenvalues of $A$. 
\end{proof}

We can now use this result to reformulate the definition of generalized metric in case when $\<\cdot,\cdot\>_{E}$ has a constant signature (the same at each fiber). 

\begin{tvrz} \label{tvrz_genmetricaspossub}
Let $E$ be a vector bundle with a fiber-wise metric $\<\cdot,\cdot\>_{E}$ of constant signature $(p,q)$. Definition \ref{def_genmetric} of generalized metric $\tau$ is then equivalent to a definition of a positive subbundle $V_{+} \subseteq E$ of maximal possible rank $p$. 
\end{tvrz}
\begin{proof}
First, let $\tau \in \End(E)$ be a generalized metric. It induces an involution $\tau_{m}$ in each fiber $E_{m}$. By previous Lemma, there exist its  $\pm1$ eigenspaces $V_{m+}$ and $V_{m-}$, such that $E_{m} = V_{m+} \oplus V_{m-}$. By definition of generalized metric $\tau$, $V_{m+}$ and $V_{m-}$ are positive definite and negative definite subspaces respectively. By our assumption, this implies $\dim{V_{m+}} = p$, and $\dim{V_{m-}} = q$. Now define 
\begin{equation} V_{\pm} \defeq \ker{ (\tau \mp 1)}. \end{equation}
We have just proved that vector bundle morphisms $\tau \mp 1$ have both constant rank, and $V_{\pm}$ are thus well-defined subbundles of $E$. Moreover, $\rank{V_{+}} = p$, and $V_{+}$ is a positive definite subbundle. Note that $E = V_{+} \oplus V_{-}$, and $V_{-} = (V_{+})^{\perp}$, where the orthogonal complement $\perp$ is taken with respect to $\<\cdot,\cdot\>_{E}$. 

Conversely, let $V_{+} \subseteq E$ be a positive-definite subbundle or rank $p$. 

First, having a vector space $W$ with positive definite subspace $W_{+} \subseteq W$ of dimension $p$ with respect to signature $(p,q)$ metric $\<\cdot,\cdot\>_{W}$, define $W_{-} \defeq (W_{+})^{\perp}$. Clearly $W = W_{+} \oplus W_{-}$. Is $W_{-}$ a negative definite subspace? If there would be a non-zero strictly positive vector $w \in W_{-}$, we could define $W'_{+} = W \oplus \R\{w\}$, which would be a positive definite subspace of $W$ of dimension $p+1$. This cannot happen. If $w \in W_{-}$ would be a non-zero isotropic vector, we can take any nonzero $v \in W_{+}$, and define $w' = v + w$. Then $\<w',w'\>_{W} = \<v,v\>_{W} > 0$, and $W'_{+} = W_{+} \oplus \R\{w'\}$ would be a positive definite subspace of $W$ of dimension $p+1$, which is again impossible. We conclude that necessarily $\<w,w\>_{W} < 0$. 

To a positive definite subbundle $V_{+}$ of rank $p$, we can define $V_{-} = (V_{+})^{\perp}$, where $\perp$ is taken with respect to $\<\cdot,\cdot\>_{E}$. This is a well-defined rank $q$ subbundle, which is by the previous discussion negative definite, and $V = V_{+} \oplus V_{-}$. We can now define $\tau \in \End(E)$ as $\tau(e_{+} + e_{-}) \defeq e_{+} - e_{-}$, for $e_{\pm} \in V_{\pm}$. One checks easily that $\tau$ satisfies all properties required by Definition \ref{def_genmetric}.  
\end{proof}
To get back to $E = TM \oplus T^{\ast}M$, we now bring an interpretation of the generalized metric most useful for actual calculations. 
\begin{tvrz} \label{tvrz_genmetricasg+B}
Let $E$ be a vector bundle with a fiber-wise metric $\<\cdot,\cdot\>_{E}$ of signature $(n,n)$, and let $E = L \oplus L^{\ast}$, where $L$ and $L^{\ast}$ are isotropic subbundles with respect to $\<\cdot,\cdot\>_{E}$. Note that $L^{\ast}$ can be identified with the vector bundle dual to $L$. 

Generalized metric $\tau$ on $E$ is then equivalent to a unique pair $(g,B)$, where $g \in \Gamma(S^{2}L^{\ast})$ is a positive definite fiber-wise metric on $L$, and $B \in \Omega^{2}(L)$ is a $2$-form on $L$.
\end{tvrz}
\begin{proof}
Let $\tau$ be a generalized metric. We meet the requirements of Proposition \ref{tvrz_genmetricaspossub}, and thus $E = V_{+} \oplus V_{-}$, where $\rank{V_{\pm}} = n$, and $V_{+}$ and $V_{-}$ form the positive and negative definite subbundles with respect to $\<\cdot,\cdot\>_{E}$. Now, because $V_{+} \cap L = V_{+} \cap L^{\ast} = \{0\}$, we see that $V_{+}$ must be a graph of some vector bundle morphism $A \in \Hom(L,L^{\ast})$. $A$ can uniquely be decomposed as $A = g + B$, where $g \in \Gamma(S^{2}L^{\ast})$, and $B \in \Omega^{2}(L)$. Every section $e \in \Gamma(V_{+})$ can be thus written as $e = X + (g+B)(X)$, where $X \in \Gamma(L)$.  We obtain
\begin{equation} \label{eq_positivitygpB}
\<e,e\>_{E} = \< X + (g+B)(X), X + (g+B)(X) \>_{E} = 2 \<g(X),X\>_{E} = 2 g(X,X). 
\end{equation}
Note that the canonical pairing between $L$ and $L^{\ast}$ is provided by $\<\cdot,\cdot\>_{E}$. Because $V_{+}$ is the positive definite subbundle, we see that $g(X,X) > 0$ for all nonzero $X \in \Gamma(L)$, proving the positivity of the metric $g$. See that $A$ is always a vector bundle isomorphism. Also note that $V_{-}$ must be for the same reasons the graph of some vector bundle morphism $\~A \in \Hom(L,L)$. From $\<V_{+},V_{-}\>_{E} = 0$ it follows that $\~A = -g + B = -A^{T}$. 

Conversely, let $(g,B)$ be a pair, where $g \in \Gamma(S^{2}L^{\ast})$ is a positive definite metric and $B \in \Omega^{2}(L)$. We can define $V_{+}$ to be the graph of $A = g + B$. Repeating the calculation (\ref{eq_positivitygpB}) shows that $V_{+}$ is a positive definite subbundle of rank $n$. This by Proposition \ref{tvrz_genmetricaspossub} defines a generalized metric on the vector bundle $E$. 
\end{proof}

In the rest of this section, we will assume $E = TM \oplus T^{\ast}M$, and $L = TM$, $L^{\ast} = T^{\ast}M$. These satisfy the requirements of the previous proposition. However, keep in mind that everything works also for general $L$ and $L^{\ast}$. 

We will now rewrite the map $\tau$ and the corresponding fiber-wise metric $\gm_{\tau}$ in terms of $g$ and $B$. First, note that we can explicitly construct the projectors $P_{\pm}: E \rightarrow V_{\pm}$. Define two isomorphisms $\fPsi_{\pm}: TM \rightarrow V_{\pm}$ as 
\begin{equation} \label{eq_fPsi}
\fPsi_{\pm}(X) = X + (\pm g + B)(X), 
\end{equation}
for all $X \in \vf{}$. Next, note that we can rewrite $X + \xi \in \Gamma(E)$ as
\begin{equation}
\begin{split}
 X + \xi & = \frac{1}{2}(X + (g+B)(X)) + \frac{1}{2}(X + (-g+B)(X)) \\
 & + \frac{1}{2}( g^{-1}(\xi) + (g+B)(g^{-1}(\xi))) - \frac{1}{2}(g^{-1}(\xi) + (-g+B)(g^{-1}(\xi))) \\
 & - \frac{1}{2}( g^{-1}B(X) + (g+B)(g^{-1}B(X))) + \frac{1}{2}(g^{-1}B(X) + (-g + B)(g^{-1}B(X))) \\
 & = \frac{1}{2} \fPsi_{+}\big( X + g^{-1}(\xi) - g^{-1}B(X) \big) + \frac{1}{2} \fPsi_{-}\big( X - g^{-1}(\xi) + g^{-1}B(X) \big). 
\end{split}
\end{equation}
We thus obtain 
\begin{equation} \label{eq_pmProjectors}
P_{\pm}(X + \xi) = \frac{1}{2} \fPsi_{\pm}\big( X \pm g^{-1}(\xi) \mp g^{-1}B(X)\big). 
\end{equation}
We have defined $V_{\pm}$ as $\pm 1$ eigenbundles of $\tau$. Hence
\begin{equation} 
\begin{split}
\tau(X + \xi) & = \frac{1}{2} \fPsi_{+}(X + g^{-1}(\xi) - g^{-1}B(X)) - \frac{1}{2} \fPsi_{-}(X - g^{-1}(\xi) + g^{-1}B(X)) \\
& = g^{-1}(\xi) - g^{-1}B(X) + (g - Bg^{-1}B)(X) + Bg^{-1}(\xi).
\end{split}
\end{equation}
This proves that $\tau$ has a formal block form
\begin{equation}
\tau = \bm{-g^{-1}B}{g^{-1}}{g - Bg^{-1}B}{Bg^{-1}}. 
\end{equation}
Fiber-wise metric $\gm_{\tau}$ in the block form is obtained from $\tau$ by multiplying it by matrix $g_{E}$, which is the same as (\ref{def_JWmatrix}). Thus 
\begin{equation} \label{eq_gmblockform}
\gm_{\tau} = \bm{g - Bg^{-1}B}{Bg^{-1}}{-g^{-1}B}{g^{-1}}. 
\end{equation}
Now recall Lemma \ref{lem_ldu-udl}. We see that the top-left and bottom-right blocks are invertible, and thus both decompositions of $\gm_{\tau}$ exist. We find
\begin{equation}
\gm_{\tau} = \bm{1}{B}{0}{1} \bm{g}{0}{0}{g^{-1}} \bm{1}{0}{-B}{1}. 
\end{equation} 
This proves that $\gm_{\tau} = (e^{-B})^{T} \G_{E} e^{-B}$, where $\G_{E}$ is the block diagonal metric 
\begin{equation}\G_{E} = \text{BDiag}(g,g^{-1}). \end{equation}

This observation gives us two interesting facts. First, note that the blocks in the decomposition are unique, which re-proves the uniqueness assertions of the preceding proposition. Next, this helps us to prove that not every positive definite fiber-wise metric on $E$ is a generalized metric. We see that $\det{(\gm_{\tau})} = 1$. Define $\gm \defeq \lambda \gm_{\tau}$, where $\lambda \neq 1$ is a positive real constant. $\gm$ is clearly a positive definite fiber-wise metric on $E$, but $\det{(\gm)} = \lambda^{2n} \neq 1$. We can now give the last equivalent definition of the generalized metric.
\begin{tvrz} \label{tvrz_gmasOGmap}
Let $E$ be a vector bundle, and $\<\cdot,\cdot\>_{E}$ be a fiber-wise metric on $E$. 
We say that fiber-wise metric $\gm$ is a generalized metric, if $\gm$ is positive definite and $\gm \in \Hom(E,E^{\ast})$ defines an orthogonal map. We use the fact that the dual vector bundle $E^{\ast}$ is naturally equipped with an induced fiber-wise metric $\<\cdot,\cdot\>_{E^{\ast}} = g_{E}^{-1}$. 

We claim that this definition coincides with Definition \ref{def_genmetric}. 
\end{tvrz}
\begin{proof}
Denote by $g_{E} \in \Hom(E,E^{\ast})$ the vector bundle isomorphism induced by a fiber-wise metric $\<\cdot,\cdot\>_{E}$. Let $\tau \in \End(E)$ be a generalized metric according to Definition \ref{def_genmetric}. The properties of $\tau$ can be now written as 
\begin{equation}
g_{E} \tau = \tau^{T}  g_{E}, \; \tau^{T} g_{E} \tau = g_{E}, \; \tau^{2} = 1. 
\end{equation}
The metric $\gm_{\tau}$ and $\tau$ are related simply as $\gm_{\tau} = g_{E} \tau$. We have to show that $ \<e,e'\>_{E} = \<\gm_{\tau}(e), \gm_{\tau}(e')\>_{E^{\ast}}$. This can be rewritten as the condition
\begin{equation} \label{eq_gmtauOGmap} \gm_{\tau} g_{E}^{-1} \gm_{\tau} = g_{E}. \end{equation}
Plugging in for $\gm_{\tau}$ translates into $\tau^{T} g_{E} \tau = g_{E}$. Conversely, let $\gm$ be a generalized metric according to the definition in \ref{tvrz_gmasOGmap}. This implies that $\gm$ satisfies (\ref{eq_gmtauOGmap}). Define $\tau \defeq g_{E}^{-1} \circ \gm$. We have 
\[ g_{E} \tau - \tau^{T} g_{E} = g_{E} (g_{E} \gm ) - (\gm g_{E}^{-1}) g_{E} = \gm - \gm = 0. \]
This proves that $\tau$ is symmetric with respect to $\<\cdot,\cdot\>_{E}$. Then 
\[ \tau^{T} g_{E} \tau = (\mathbf{G} g_{E}^{-1}) g_{E} (g_{E}^{-1} \gm ) = \gm g_{E}^{-1} \gm = g_{E}, \]
where we have used (\ref{eq_gmtauOGmap}) in the last step. This proves that $\tau$ is orthogonal with respect to $\<\cdot,\cdot\>_{E}$. The property $\tau^{2} = 1$ follows automatically (any map which is both symmetric and orthogonal is an involution). 
\end{proof}
To conclude this section, note that there is no actual reason to choose the map $A \in \Hom(TM,T^{\ast}M)$ in order to describe $V_{+}$ in the proof of Proposition \ref{tvrz_genmetricasg+B}. One can as well describe $V_{+}$ using the map $A^{-1} \in \Hom(T^{\ast}M,TM)$, which decomposes as $A^{-1} = G^{-1} + \Pi$, where $G$ is positive definite metric on $M$, and $\Pi \in \vf{2}$. The two descriptions are related as
\begin{equation} \label{def_dualfields}
(g + B)^{-1} = G^{-1} + \Pi. 
\end{equation}
One can find $G$ and $\Pi$ explicitly in terms of $(g,B)$ as 
\begin{align}
\label{eq_GgB} G & = g - Bg^{-1}B, \\
\label{eq_PigB} \Pi &= -g^{-1}B(g - Bg^{-1}B)^{-1}. 
\end{align}
To obtain this use the decomposition (\ref{eq_UDL}) for $\gm_{\tau}$ in the form (\ref{eq_gmblockform}), and then note that $\gm_{\tau}$ can be also decomposed as 
\begin{equation}
\gm_{\tau} = \bm{1}{0}{\Pi}{1} \bm{G}{0}{0}{G^{-1}} \bm{1}{-\Pi}{0}{1}. 
\end{equation}
A comparison of the blocks gives exactly the relations (\ref{eq_GgB}, \ref{eq_PigB}). 
\section{Orthogonal transformations of the generalized metric} \label{sec_Onngen}
There is a natural action of the orthogonal group on the space of generalized metrics. We will analyze this action mainly in terms of the corresponding fields $(g,B)$. 
We consider $E = TM \oplus T^{\ast}M$. Let $\tau$ be a generalized metric, and $\O \in O(n,n)$. Define $\tau' \in \End(E)$ as 
\begin{equation}
\tau' \defeq \O^{-1} \tau \O. 
\end{equation}
Then $\gm_{\tau'} = g_{E} \tau' = O^{T} g_{E} \tau \O = \O^{T} \gm_{\tau} \O$. Clearly $\tau'^{2} = 1$. This means that $\tau'$ is also a generalized metric. Corresponding eigenbundles are related as 
\begin{equation}
V^{\tau'}_{\pm} = \O^{-1} V^{\tau}_{\pm}. 
\end{equation}
We have also proved that there is always a unique pair $(g,B)$ corresponding to $\tau$. Let $(g',B')$ be a pair corresponding to $\tau'$. How are $(g',B')$ and $(g,B)$ related? Let $A = g + B$, and $A' = g' + B'$. By definition, $V^{\tau}_{+} = G_{A}$, and $V^{\tau'}_{+} = G_{A'}$, where $G_{A}$ and $G_{A'}$ are the graphs of the respective vector bundle morphisms.  We will use the notation introduced in Section \ref{sec_OG}. Let $X \in \vf{}$. We have
\[ \O^{-1}( X + A(X)) = ( O_{4}^{T} + O_{2}^{T}A)(X) + (O_{3}^{T} + O_{1}^{T}A)(X). \]
Define $Y = (O_{4}^{T} + O_{2}^{T}A)(X)$. Then 
\[ \O^{-1}( X + A(X)) = Y + (O_{3}^{T} + O_{1}^{T}A)(O_{4}^{T} + O_{2}^{T}A)^{-1}(Y). \]
If the inverse of $O_{4}^{T} + O_{2}^{T}A$ exists, we get the following formula for $A'$:
\begin{equation} \label{eq_A'asAandO} A' = (O_{3}^{T} + O_{1}^{T}A)(O_{4}^{T} + O_{2}^{T}A)^{-1}. \end{equation}
Recall the isomorphisms $\fPsi_{\pm} \in \Hom(TM,V_{\pm})$ defined by (\ref{eq_fPsi}), and let $\ftPsi_{\pm} \in \Hom(T^{\ast}M,V_{\pm})$ be similarly induced isomorphisms: $\ftPsi_{\pm}(\xi) = \xi + (\pm G^{-1} + \Pi)(\xi)$, for all $\xi \in \df{1}$. Define new vector bundle morphisms $\fPhi_{\pm}$, $\fUps_{\pm}$ by the following commutative diagram (in fact there are two independent diagrams, one for $+$, one for $-$):
\begin{equation}
\begin{tikzcd}
T^{\ast}M \arrow[r,"\fUps_{\pm}"] \arrow[d, "\ftPsi_{\pm}^{\tau}"'] & T^{\ast}M \arrow[d, "\ftPsi_{\pm}^{\tau'}"] \\
V_{\pm}^{\tau} \arrow[r,"\O^{-1}"] & V_{\pm}^{\tau'} \\
TM \arrow[u,"\fPsi^{\tau}_{\pm}"] \arrow[r,"\fPhi_{\pm}"] \arrow[uu,"A", bend left=80] & TM \arrow[u,"\fPsi_{\pm}^{\tau'}"']  \arrow[uu, "A'"', bend right=80]
\end{tikzcd}
\end{equation}
All involved maps are vector bundle isomorphisms, and so have to be $\fPhi_{\pm}$, $\fUps_{\pm}$. We can find explicit formulas:
\begin{align}
\fPhi_{+} = O_{4}^{T} + O_{2}^{T}A, \; \fPhi_{-} = O_{4}^{T} - O_{2}^{T} A^{T}, \\
\fUps_{+} = O_{1}^{T} + O_{3}^{T}A^{-1}, \; \fUps_{-} = O_{1}^{T} - O_{3}^{T} A^{-T}. 
\end{align}
This proves that the inverse in (\ref{eq_A'asAandO}) exists. These maps in fact transform between the involved Riemannian metrics. 
\begin{tvrz}
There hold the following conjugation relations:
\begin{align} 
\label{eq_conjugation} g'^{-1} & = \fPhi_{\pm} g^{-1} \fPhi_{\pm}^{T}, \\
g' - B'g'^{-1}B' & = \fUps_{\pm} (g - Bg^{-1}B) \fUps_{\pm}^{T}.
\end{align}
\end{tvrz}
\begin{proof}
We will prove only one of the four equations, because the other ones follow in the same way. First note that $\< \fPsi^{\tau}_{+}(X), \fPsi^{\tau}_{+}(Y)\>_{E} = 2g(X,Y)$, and similarly for $\tau'$. Hence
\[ 2g(X,Y) = \< \fPsi_{+}^{\tau}(X), \fPsi_{+}^{\tau}(Y)\>_{E} = \< \O \fPsi_{+}^{\tau'}( \fPhi_{+}(X)), \O \fPsi_{+}^{\tau'}(\fPhi_{+}(Y))\>_{E} = 2g'(\fPhi_{+}(X), \fPhi_{+}(Y)). \]
Thus $g = \fPhi_{+}^{T} g' \fPhi_{+}$, which is precisely (\ref{eq_conjugation}) with the $+$ sign. 
\end{proof}
Now, note that formula $(\ref{eq_A'asAandO})$ can be rewritten as 
\begin{equation}
A' = \fUps_{+} A \fPhi_{+}^{-1} = \fPhi_{-}^{-T} A \fUps_{-}^{T}. 
\end{equation}
The latter expression can be found as the analogue of (\ref{eq_A'asAandO}) derived using the subbundle $V_{-}$. Together with the previous proposition, we can find transformation rules for $B'$. 
\begin{equation}
B' = ((\fUps_{+} - \fPhi_{+}^{-T})g + \fUps_{+}B) \fPhi_{+}^{-1} = (-(\fUps_{-} - \fPhi_{-}^{-T})g + \fUps_{-}B) \fPhi_{-}^{-1}. 
\end{equation}
Using the orthogonal group, we can describe the set of all generalized metrics in a more intrinsic way. To do so, we first need to prove two following lemmas. 
\begin{lemma} \label{lem_transitivity}
The action of $O(n,n)$ on the set of generalized metrics is transitive. 
\end{lemma}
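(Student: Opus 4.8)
The plan is to exploit the explicit factorization of a generalized metric obtained in Proposition~\ref{tvrz_genmetricasg+B} and the discussion following it. On $E = TM \oplus T^\ast M$ every generalized metric $\tau$ corresponds to a unique pair $(g,B)$, with $g$ a Riemannian metric on $M$ and $B \in \df{2}$, and the associated fiber-wise metric splits as $\gm_\tau = (e^{-B})^T\,\text{BDiag}(g,g^{-1})\,e^{-B}$. Since the $O(n,n)$-action on $\tau$ reads $\gm_{\tau'} = \O^T\gm_\tau\O$, it suffices, given two generalized metrics $\tau_1\leftrightarrow(g_1,B_1)$ and $\tau_2\leftrightarrow(g_2,B_2)$, to produce $\O\in O(n,n)$ with $\gm_{\tau_2} = \O^T\gm_{\tau_1}\O$, which is equivalent to $\tau_2 = \O^{-1}\tau_1\O$.

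The first step is to peel off the $B$-fields: using $e^{B_i}\in O(n,n)$ together with $(e^{B_i})^{-1}=e^{-B_i}$, a one-line cancellation shows that if $\O_0\in O(n,n)$ satisfies $\text{BDiag}(g_2,g_2^{-1}) = \O_0^T\,\text{BDiag}(g_1,g_1^{-1})\,\O_0$, then $\O \defeq e^{B_1}\O_0 e^{-B_2}$ works. The second step is to find such an $\O_0$ among the $\Aut(TM)$-transformations $\O_A = \text{BDiag}(A,A^{-T})$, which are orthogonal by the Example in Section~\ref{sec_OG}. One computes $\O_A^T\,\text{BDiag}(g_1,g_1^{-1})\,\O_A = \text{BDiag}(A^Tg_1A,(A^Tg_1A)^{-1})$, so it is enough to find $A\in\Aut(TM)$ with $A^Tg_1A = g_2$, i.e. $g_1(AX,AY)=g_2(X,Y)$. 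Setting $h \defeq g_1^{-1}g_2 \in \End(TM)$ (the composite of $g_2\colon TM\to T^\ast M$ with $g_1^{-1}\colon T^\ast M\to TM$), a direct check gives that $h$ is self-adjoint and positive-definite with respect to $g_1$; taking $A \defeq h^{1/2}$, the $g_1$-self-adjoint positive square root, one gets $g_1(AX,AY) = g_1(X,A^2Y) = g_1(X,hY) = g_2(X,Y)$, as desired.

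The point that needs care — and the main obstacle — is that $A = (g_1^{-1}g_2)^{1/2}$ must be a genuine \emph{smooth} vector bundle automorphism, not merely a fiber-wise one. This holds because $h$ is a smooth endomorphism that is fiber-wise diagonalizable with spectrum contained in a compact subset of $(0,\infty)$ locally on the base, so its positive square root is given by a smooth functional-calculus expression (for instance a contour integral $\frac{1}{2\pi i}\oint \sqrt{z}\,(z\cdot 1 - h)^{-1}\,dz$ around the spectrum), which depends smoothly on the base point and is fiber-wise invertible. Granting this, $\O = e^{B_1}\O_A e^{-B_2}\in O(n,n)$ satisfies $\gm_{\tau_2} = \O^T\gm_{\tau_1}\O$, proving transitivity. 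Equivalently, one may fix once and for all a Riemannian metric $g_0$ on $M$ (which exists by the standing assumptions on $M$, via a partition of unity) and show that every generalized metric lies in the $O(n,n)$-orbit of the $\tau_0$ corresponding to $(g_0,0)$.
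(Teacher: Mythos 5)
Your proof is correct and takes essentially the same route as the paper: factor each generalized metric as $(e^{-B})^{T}\,\mathrm{BDiag}(g,g^{-1})\,e^{-B}$, strip the $B$-fields using the orthogonal maps $e^{B}$, and relate the block-diagonal pieces by a transformation $\O_{A}=\mathrm{BDiag}(A,A^{-T})$ with $A^{T}g_{1}A=g_{2}$. The only difference is that you also justify the existence of such an $A$ via the smooth positive square root of $g_{1}^{-1}g_{2}$, a step the paper asserts without proof, and your argument for it (including smoothness via functional calculus) is sound.
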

\begin{proof}
Let $\gm$ and $\gm'$ be two generalized metrics on $E$. Then $\gm = [e^{-B}]^{T} \G_{E} e^{-B}$, and $\gm' = [e^{-B'}]^{T} \G'_{E} e^{-B'}$. Because $e^{-B}$ and $e^{-B'}$ are $O(n,n)$ transformations, it suffices to show that there exists $\O \in O(n,n)$, such that $\G'_{E} = \O^{T} \G_{E} \O$. For any two Riemannian metrics $g$ and $g'$ on $M$, there is a vector bundle isomorphism $N \in \Aut(TM)$, such that $g' = N^{T} g N$. Define $\O \defeq \O_{N}$, where $\O_{N}$ is a block diagonal map 
\begin{equation} \O_{N} = \bm{N}{0}{0}{N^{-T}}. \end{equation}
Obviously $\O_{N} \in O(n,n)$ and $\G'_{E} = \O_{N}^{T} \G_{E} \O_{N}$. This finishes the proof. 
\end{proof}
On the other hand, the action of $O(n,n)$ is not free, as the next lemma shows.
\begin{lemma} \label{lem_stabilizer}
Let $\gm$ be a generalized metric. Let $O(n,n)_{\gm} \subseteq O(n,n)$ be its stabilizer subgroup. Then 
\begin{equation} O(n,n)_{\gm} \cong O(n) \times O(n). \end{equation}
\end{lemma}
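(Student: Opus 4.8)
The plan is to compute the stabilizer directly using the normal form for generalized metrics established earlier in this section. By Lemma~\ref{lem_transitivity} the action of $O(n,n)$ on the space of generalized metrics is transitive, so all stabilizers are conjugate; hence it suffices to compute the stabilizer of one convenient generalized metric. I would pick the "round" one: take $g = 1$ (the identity fiber-wise metric in some fixed frame) and $B = 0$, so that $\gm = \G_{E} = \text{BDiag}(1,1)$, the identity matrix. This is legitimate since we showed $\gm_{\tau} = (e^{-B})^{T}\G_{E}e^{-B}$, so every generalized metric is $O(n,n)$-conjugate to a block-diagonal $\text{BDiag}(g,g^{-1})$, and these in turn are all conjugate by the block-diagonal maps $\O_{N}$.

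First I would reduce the stabilizer condition $\O^{T}\gm\O = \gm$, for $\gm$ the identity, to the two equations $\O^{T}\O = 1$ and $\O^{T}g_{W}\O = g_{W}$ (the latter being orthogonality, automatic for $\O \in O(n,n)$). Combining $\O^{T}\O = 1$ with $\O^{T}g_{W}\O = g_{W}$ gives $g_{W}\O = \O g_{W}$, i.e. $\O$ commutes with $g_{W} = \bm{0}{1}{1}{0}$. Writing $\O = \bm{O_1}{O_2}{O_3}{O_4}$ as in (\ref{eq_Onnparam}) and imposing both $\O^{T}\O = 1$ and $g_{W}\O = \O g_{W}$, the commutation with $g_W$ forces $O_1 = O_4$ and $O_2 = O_3$, and then $\O^{T}\O = 1$ becomes the conditions $O_1^{T}O_1 + O_2^{T}O_2 = 1$ and $O_1^{T}O_2 + O_2^{T}O_1 = 0$. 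Equivalently, setting $P_{\pm} = O_1 \pm O_2$, these decouple into $P_{+}^{T}P_{+} = 1$ and $P_{-}^{T}P_{-} = 1$, i.e. $P_{\pm} \in O(n)$ independently. This gives the isomorphism $O(n,n)_{\gm} \cong O(n)\times O(n)$ via $\O \mapsto (P_{+},P_{-})$; I would check it is a group homomorphism (the change of variables $\O = \tfrac12\bm{P_+ + P_-}{P_+ - P_-}{P_+ - P_-}{P_+ + P_-}$ diagonalizes the action, conjugating by the fixed matrix $\tfrac{1}{\sqrt2}\bm{1}{1}{1}{-1}$ which swaps $g_W$ and the identity).

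Conceptually the cleaner phrasing, which I would present, is via eigenbundles: a generalized metric $\gm$ corresponds by Proposition~\ref{tvrz_genmetricaspossub} to a splitting $E = V_{+}\oplus V_{-}$ into a positive-definite and a negative-definite subbundle, orthogonal under $\<\cdot,\cdot\>_{E}$. An orthogonal map $\O$ stabilizes $\gm$ if and only if it preserves this splitting, i.e. $\O(V_{\pm}) = V_{\pm}$. The restriction $\<\cdot,\cdot\>_{E}|_{V_{+}}$ is a positive-definite fiber-wise metric (of rank $n$) preserved by $\O|_{V_{+}}$, so $\O|_{V_{+}}$ lies in the corresponding $O(n)$; likewise $-\<\cdot,\cdot\>_{E}|_{V_{-}}$ is positive-definite and preserved, giving a second independent $O(n)$. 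Since $E = V_{+}\oplus V_{-}$, the pair $(\O|_{V_+}, \O|_{V_-})$ determines $\O$, and conversely any such pair assembles to an element of $O(n,n)_{\gm}$; this yields the claimed isomorphism.

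The main obstacle is bookkeeping rather than anything deep: one must be careful that the two $n$-dimensional "orthogonal groups" appearing are genuinely $O(n)$ (definite orthogonal groups) and not something twisted, which is why the eigenbundle formulation is worth using—there $V_{\pm}$ carry honest positive-definite metrics so the groups are manifestly $O(n)$. A secondary subtlety worth a sentence is frame-independence: a priori $O(n)\times O(n)$ could mean the automorphism group of a rank-$n$ positive-definite bundle, but since any two positive-definite fiber-wise metrics on a rank-$n$ bundle are related by a bundle isomorphism (as used in Lemma~\ref{lem_transitivity}), this group is (non-canonically) the constant group $O(n)$, so the statement $O(n,n)_{\gm}\cong O(n)\times O(n)$ is unambiguous. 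I would also remark that transitivity plus this computation shows the space of generalized metrics is the homogeneous space $O(n,n)/(O(n)\times O(n))$.
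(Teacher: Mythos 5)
Your proposal is correct, and the eigenbundle argument you say you would present is essentially the paper's own proof: an orthogonal map stabilizing $\gm$ must preserve the splitting $E = V_{+} \oplus V_{-}$, and since $\<\cdot,\cdot\>_{E}$ restricts to $\pm$ a positive definite fiber-wise metric on $V_{\pm}$, the two diagonal blocks lie in $O(n)$ each, giving $O(n,n)_{\gm} \cong O(n) \times O(n)$. Your auxiliary matrix computation is also fine as a fiber-wise argument, but note that reducing to the ``round'' metric $g = 1$, $B = 0$ presupposes a global frame of $TM$; on a general $M$ one should either work pointwise or stop the reduction at $\G_{E} = \text{BDiag}(g,g^{-1})$, which is why the eigenbundle formulation (and the paper's loose but consistent use of $O(n)$ for the gauge group of a rank-$n$ positive definite bundle) is the cleaner route.
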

\begin{proof}
Any morphism $\O$ stabilizing $\gm$ must preserve the subbundles $V_{+}$ and $V_{-}$, it is thus block diagonal with respect to decomposition $E = V_{+} \oplus V_{-}$. Moreover, $\<\cdot,\cdot\>_{E}$ has the form
\begin{equation} \<e_{+} + e_{-},e'_{+} + e'_{-}\>_{E} = \<e_{+},e'_{+}\>_{+} - \<e_{-},e'_{-}\>_{-}, \end{equation}
for all $e_{\pm}, e'_{\pm} \in V_{\pm}$, and $\<\cdot,\cdot\>_{\pm}$ are positive definite fiber-wise metrics on $V_{\pm}$. This proves that $\O \in O(n,n)$ iff both its diagonal blocks are in $O(n)$. We conclude that $O(n,n)_{\gm} = O(n) \times O(n)$. 
\end{proof}
These two observations are sufficient to describe the set of all generalized metrics on $E$. 
\begin{tvrz}
The set of all generalized metrics is the coset space $O(n,n) / (O(n) \times O(n))$. 
\end{tvrz}
\begin{proof}
There always exists at least one generalized metric, for example $\G_{E}$ for some Riemannian metric $g$. On every manifold $M$, there exists some $g$ (it is constructed using the partition of unity). The space of all generalized metrics is its orbit by Lemma \ref{lem_transitivity}. But every orbit is isomorphic to $O(n,n) / O(n,n)_{\gm}$, and thus by Lemma \ref{lem_stabilizer} to $O(n,n) / (O(n) \times O(n))$. 
\end{proof}
\begin{example} \label{ex_Onntransf}
Let us conclude this section with a few examples of the $O(n,n)$ actions on the generalized metric $\gm$ described by a pair of fields $(g,B)$. 
\begin{itemize}
\item Let $Z \in \df{2}$ be a $2$-form. Set $\O = e^{-Z}$. In particular, we have
\begin{equation}
O_{1} = 1, \; O_{2} = 0, \; O_{3} = -Z, \; O_{4} = 1. 
\end{equation}
Hence $\fPhi_{\pm} = 1$, $\fUps_{\pm} = 1 + Z(\pm g + B)^{-1}$. We get $g' = g$, and 
\begin{equation} B' = (\mp (\fUps_{\pm} - 1)g + \fUps_{\pm}B ) =  B + Z. \end{equation}
Of course, we could have seen this directly from $\gm' = (e^{-Z})^{T} [ (e^{-B})^{T} \G_{E} e^{-B} ] e^{-Z}.$
In particular, if $B$ and $B'$ are related by a gauge transformation, $B' = B + da$ for $a \in \df{1}$, we can interpret the gauge transformation as the $O(n,n)$ transformation of the generalized metric $(g,B) \mapsto (g,B + da)$. 
\item Let $\theta \in \vf{2}$ be a $2$-vector. Set $\O = e^{\theta}$. In particular, we have
\begin{equation}
O_{1} = 1, \; O_{2} = \theta, \; O_{3} = 0, \; O_{4} = 1. 
\end{equation}
This implies $\fPhi_{\pm} = 1 + \theta(\pm g + B)$, $\fUps_{\pm} = 1$. The resulting relations are more clear in terms of dual fields $(G,\Pi)$ and $(G',\Pi')$ defined by (\ref{def_dualfields}). We get
\begin{equation}
G' = G, \; \Pi' = \Pi - \theta.
\end{equation}
One can thus write the relations as
\begin{equation} \label{eq_OCrelations}
\frac{1}{g + B} = \frac{1}{g' + B'} + \theta.
\end{equation}
These are precisely the open-closed relations of Seiberg-Witten as they appeared in \cite{Seiberg:1999vs}.
\item Let $N \in \Aut(E)$, and let $\O = \O_{N}$. Then 
\begin{equation} \label{eq_ON}
O_{1} = N, \; O_{2} = 0, \; O_{3} = 0, \; O_{4} = N^{-T},
 \end{equation}
and consequently $\fPhi_{\pm} = N^{-1}$, and $\fUps_{+} = N^{T}$. We get $g' = N^{T}gN$, and $B' = N^{T}BN$. This proves that a change of frame in $TM$ and its consequences for $g$ and $B$ can be incorporated as a special case of $O(n,n)$ transformation. 
\item Consider $M = \R^{d+1}$ for $d > 0$, and coordinates $(x^{\mu},x^{\bullet})$, $\mu \in \{1, \dots, d\}$. Define the vector bundle morphism $T \in \Aut(E)$ as 
\begin{equation}
T(\partial_{\mu}) = \partial_{\mu}, \; T(dx^{\mu}) = dx^{\mu}, \; T(\partial_{\bullet}) = dx^{\bullet}, \; T(dx^{\bullet}) = \partial_{\bullet}. 
\end{equation}
It is easy to see that $T \in O(n,n)$, where now $n = d+1$. We can write the matrix of $T$ in block form as 
\begin{equation}
T = \begin{pmatrix}
1_{d} & 0 & 0 & 0 \\
0 & 0 & 0 & 1 \\
0 & 0 & 1_{d} & 0 \\
0 & 1 & 0 & 0 
\end{pmatrix},
\end{equation}
where $1_{d}$ is a $d \times d$ identity matrix. We can write the matrix of metric $g$, and matrix of $B \in \df{2}$ in block forms as
\begin{equation}
g = \begin{pmatrix}
\hat{g} & g_{\bullet} \\
g_{\bullet}^{T} & g_{\bullet \bullet}
\end{pmatrix}, \;
B = \begin{pmatrix}
\hat{B} & B_{\bullet} \\
-B_{\bullet}^{T} & 0 
\end{pmatrix},
\end{equation}
where $\hat{g}$ is a $d \times d$ matrix $ (\hat{g})_{\mu \nu} = g_{\mu \nu}$, and similarly with other components. We thus have
\begin{equation}
O_{1} = \bm{1_{d}}{0}{0}{0}, \; O_{2} = \bm{0}{0}{0}{1}, \; O_{3} = \bm{0}{0}{0}{1}, \; O_{4} = \bm{1_{d}}{0}{0}{0}.
\end{equation} 
\begin{equation}
\fPhi_{\pm} = \bm{1_{d}}{0}{\pm g_{\bullet}^{T} - B_{\bullet}^{T}}{\pm g_{\bullet \bullet}} = \bm{1_{d}}{0}{\pm g_{\bullet}^{T} - B_{\bullet}^{T}}{1} \bm{1_{d}}{0}{0}{\pm g_{\bullet \bullet}}. 
\end{equation}
These maps are invertible, because $g_{\bullet \bullet} > 0$. We get 
\begin{equation}
\fPhi_{\pm}^{-1} = \bm{1_{d}}{0}{0}{\pm \frac{1}{g_{\bullet \bullet}}} \bm{1_{d}}{0}{\mp g_{\bullet}^{T} + B_{\bullet}^{T}}{1} = \bm{1_{d}}{0}{\pm \frac{1}{g_{\bullet \bullet}} ( \mp g_{\bullet}^{T} + B_{\bullet}^{T} )}{ \pm \frac{1}{g_{\bullet \bullet}}}
\end{equation}
The simplest way to $(g',B')$ is now through (\ref{eq_A'asAandO}), because we have already calculated $\fPhi_{+}^{-1} \equiv (O_{4}^{T} + O_{2}^{T}A)^{-1}$. We have
\begin{equation}
O_{3}^{T} + O_{1}^{T}A = \bm{ \hat{g} + \hat{B}}{g_{\bullet} + B_{\bullet}}{0}{1}.
\end{equation}
Plugging into (\ref{eq_A'asAandO}) now gives
\begin{equation}
A' = \bm{ \vspace{2mm} \hat{g} + \hat{B} + \frac{1}{g_{\bullet \bullet}}(g_{\bullet} + B_{\bullet})(-g_{\bullet}^{T} + B_{\bullet}^{T})}{\frac{1}{g_{\bullet \bullet}}(g_{\bullet} + B_{\bullet})}{\frac{1}{g_{\bullet \bullet}}(-g_{\bullet}^{T} + B_{\bullet}^{T})}{\frac{1}{g_{\bullet \bullet}}}
\end{equation}
Reading off the symmetric and skew-symmetric part, we obtain
\begin{align}
\hat{g}' = \hat{g} + \frac{1}{g_{\bullet \bullet}}( B_{\bullet} B_{\bullet}^{T} - g_{\bullet} g_{\bullet}^{T}), \; g_{\bullet}' = \frac{1}{g_{\bullet \bullet}} B_{\bullet}, \; g'_{\bullet \bullet} = \frac{1}{g_{\bullet \bullet}}, \\
\hat{B}' = \hat{B} + \frac{1}{g_{\bullet \bullet}}( g_{\bullet} B_{\bullet}^{T} - B_{\bullet}g_{\bullet}^{T}), \; B'_{\bullet} = \frac{1}{g_{\bullet \bullet}} g_{\bullet}. 
\end{align}
But these are exactly the well-known Buscher rules \cite{Buscher:1987sk} emerging from string theory's $T$-duality. The generalized geometry thus allows to describe $T$-duality as an orthogonal transformation of the generalized metric. 
\end{itemize}
\end{example}
\section{Killing sections and corresponding isometries} \label{sec_Killing}
Let $\gm$ be a given generalized metric on $E$. Up to now, we have discussed only the isometries formed from $O(n,n)$ maps, concluding that $O(n) \times O(n)$ is the subgroup  preserving $\gm$. We can generalize the notion of isometry as follows. 
\begin{definice}
We define the group $\EIsom(\gm)$ of extended isometries  as 
\begin{equation} \label{def_Isom} \EIsom(\gm) = \{ (\F,\varphi) \in \EAut(E) \; | \; \gm( \F(e), \F(e') ) \circ \varphi = \gm(e,e') \}. 
\end{equation}
\end{definice}

We have shown that $\EIsom(\gm) \cap O(n,n) \cong O(n) \times O(n)$. We do not intend to find the whole group $\EIsom(\gm)$. Instead, we will find an important class of examples - solutions to the Killing equation. To find it, let us assume that $(\F_{t}, \varphi_{t}) \subseteq \EIsom(\gm)$ is a one-parameter subgroup and define $\F \in \Gamma(\D(E))$ as $\F = \ddt \hspace{5mm} \F_{-t}$. By differentiating (\ref{def_Isom}) with respect to $t$ at $t=0$, we obtain 
\begin{equation} \label{def_IIsom}
a(\F).\gm(e,e') = \gm(\F(e),e') + \gm(e,\F(e')),
\end{equation} 
for all $e,e' \in \Gamma(E)$. This is still a way too complicated equation to solve, and we therefore restrict to $\F$ in the form $\F(e') = [e,e']_{D}$ for fixed $e \in \Gamma(E)$, and all $e' \in \Gamma(E)$. Note that $\F \in \Der(E)$, and $a(\F) = \rho(e)$. Requiring $\F$ to satisfy (\ref{def_IIsom}) leads to the definition of Killing equation.

\begin{definice}
Let $\gm$ be a generalized metric. We say that $e \in \Gamma(E)$ is a {\bfseries{Killing section}} of $\gm$, if it satisfies the {\bfseries{Killing equation}}
\begin{equation} \label{def_killing}
\rho(e).\gm(e',e'') = \gm([e,e']_{D},e'') + \gm(e',[e,e'']_{D}), 
\end{equation}
for all $e',e'' \in \Gamma(E)$. 
\end{definice}

Now assume that $\gm \approx (g,B)$. We will examine the condition (\ref{def_killing}) in terms of the fields $g$ and $B$. Recall that $\gm = (e^{-B})^{T} \G_{E} e^{-B}$, where $\G_{E} = \text{BDiag}(g,g^{-1})$. Moreover, we can use the fact that $e^{-B}[e,e']_{D} = [e^{-B}(e),e^{-B}(e')]_{D}^{dB}$, following from (\ref{eq_twistoftwisted}). Finally, note that $\rho(e^{-B}(e)) = \rho(e)$. These observations allow us to rewrite (\ref{def_killing}) as 
\begin{equation} \label{eq_killing2}
\rho(e^{-B}(e)).\G_{E}(f', f'') = \G_{E}([e^{-B}(e),f']_{D}^{dB},f'') + \G_{E}(f',[e^{-B}(e),f'']_{D}^{dB}), 
\end{equation}
for all $f',f'' \in \Gamma(E)$. This proves that $e$ is a Killing section of $\gm$, iff $f \defeq e^{-B}(e)$ is a Killing section of $\G_{E}$. We will thus find all Killing sections of simpler generalized metric $\G_{E}$, but using $dB$-twisted Dorfman bracket instead. Write $f = X' + \xi'$ for $X' \in \vf{}$ and $\xi' \in \df{1}$. Plugging into (\ref{eq_killing2}) for $f$, and writing $f' = Y + \eta$, $f'' = Z + \zeta$, we obtain
\begin{equation}
\begin{split}
X'.\{ g(Y,Z) + g^{-1}(\eta,\zeta) \} & = g([X',Y],Z) + g(Y,[X',Z]) \\
& + g^{-1}(\Li{X'}\eta - \io_{Y}d\xi', \zeta) + g^{-1}(\eta, \Li{X'}\zeta - \io_{Z}d\xi') \\
& - g^{-1}( dB(X',Y,\cdot), \zeta) - g^{-1}( \eta, dB(X',Z,\cdot)).
\end{split}
\end{equation}
This gives three equations 
\begin{align}
X'.g(Y,Z) &= g([X',Y],Z) + g(Y,[X',Z]), \\
X'.g^{-1}(\eta,\zeta) &= g^{-1}(\Li{X'}\eta,\zeta) + g^{-1}(\eta, \Li{X'}\zeta), \\
0 &= g^{-1}(\io_{Y}d\xi', \zeta) + g^{-1}(dB(X',Y,\cdot),\zeta)
\end{align}
all valid for all $Y,Z \in \vf{}$ and $\eta,\zeta \in \df{1}$. They are equivalent to 
\begin{equation}
\Li{X'}g = 0, \; d\xi' = -\io_{X'}dB 
\end{equation}
Now let $X' + \xi' = e^{-B}(X+\xi) = X + \xi - B(X)$. We arrive to the following proposition:
\begin{tvrz}
Let $\gm$ be a generalized metric corresponding to fields $(g,B)$. A section $e = X + \xi$ is a Killing section of $\gm$, iff
\begin{equation} \label{eq_killingconsequences}
\Li{X}g = 0, \; d\xi = -\Li{X}B. 
\end{equation}
\end{tvrz}
Since $\F = [X+\xi,\cdot] \in \Der(E)$, we can use the result of Proposition (\ref{tvrz_derintegration}) to find the corresponding automorphism of Dorfman bracket. By construction, we expect $\exp{(t\F)}$ to be an element of $\EIsom(\gm)$. Also note that $\F \in Eo(n,n)$, and thus $\exp{(t\F)} \in EO(n,n)$. Note that $\F = R(X) + \F_{d\xi}$. Using the Killing equation, we evaluate the integral defining the $2$-form $B(t)$: 
\[ B(t) = \int_{0}^{t} \{ \phi_{k}^{X \ast}(d\xi) \} dk = -\int_{0}^{t} \{ \phi_{k}^{X \ast}( \Li{X}B) \} dk = B - \phi_{t}^{X \ast}B. \]
The corresponding $1$-parameter subgroup of $\Aut_{D}(E)$ then has the form
\begin{equation}
\exp{(t\F)} = e^{B(t)} \circ T(\phi_{-t}^{X}). 
\end{equation}
Finally, we are able to prove the following statement:
\begin{tvrz} \label{tvrz_infisoint}
Let $\F = [e,\cdot]_{D}$, where $e \in \Gamma(E)$ is a Killing section of $\gm$. Then $\exp{(t\F)} \in \Aut_{D}(E)$ is an extended isometry of $\gm$, $\exp{(t\F)} \in \EIsom(\gm)$.
\end{tvrz}
\begin{proof}
This is a direct calculation. Note that for $\gm = (e^{-B})^{T} \G_{E} e^{-B}$, we get
\[
\gm( \exp{(t\F)}(e), \exp{(t\F)}(e')) = \G_{E}\big( e^{\phi_{t}^{X\ast}B}(T(\phi_{-t}^{X})(e)), e^{\phi_{t}^{X\ast}B}(T(\phi_{-t}^{X})(e'))\big). 
\]
Now note that $e^{\phi_{t}^{X \ast}B} \circ T(\phi_{-t}^{X}) = T(\phi_{-t}^{X}) \circ e^{B}$. Condition (\ref{def_Isom}) then becomes
\begin{equation} \G_{E}( T(\phi_{-t}^{X})(f), T(\phi_{-t}^{X})(f')) = \G_{E}(f,f') \circ \phi_{t}^{X}. \end{equation}
Rewriting this using the definition of $\G_{E}$, we obtain the condition $\phi_{-t}^{X \ast}g = g$. Here we use the second of the conditions (\ref{eq_killingconsequences}) and the fact that Killing vector field $X$ generates a flow preserving the metric $g$. 
\end{proof}
\section{Indefinite case} \label{sec_indefinite}
The concept of generalized metric has proved to be a useful tool to encode a Riemannian metric $g$ and a $2$-form $B$ into a single object $\gm$, or its equivalents $\tau$ and $V_{+}$. For applications of this tool in physics, we should discuss also the case when $g$ is an indefinite metric. We clearly have to abandon the interpretation using the definite subbundles $V_{\pm}$. The obvious candidate for indefinite generalized metric is $\gm$ in the block form (\ref{eq_gmblockform}), since all expressions make sense. For given metric $g$ and $2$-form $B$, we \emph{define} generalized metric $\gm$ to be a fiber-wise metric
\begin{equation} \label{def_genmetind}
\gm = \bm{1}{B}{0}{1} \bm{g}{0}{0}{g^{-1}} \bm{1}{0}{-B}{1} = \bm{g - Bg^{-1}B}{Bg^{-1}}{-g^{-1}B}{g^{-1}}. 
\end{equation}
A first question comes with the invertibility of the map $g - Bg^{-1}B$. To answer it, we prove the following lemma:
\begin{lem}
Let $V$ be a finite-dimensional vector space, $g \in S^{2}V^{\ast}$ be a non-degenerate bilinear form on $V$, and $B \in \Lambda^{2}V^{\ast}$. Let $A \in \Hom(V,V^{\ast})$ be a linear map defined as $A = g + B$. 

Then the map $A$ is invertible if and only if the bilinear form $g - Bg^{-1}B$ is non-degenerate. 
\end{lem}
\begin{proof}
First, assume that $A$ is invertible. Then so is $A^{T} = g - B$. Next, note that we can write $g - Bg^{-1}B = (g + B) g^{-1} (g - B)$. This proves that $g - Bg^{-1}B$ is non-degenerate. In fact, we can take the determinant of this formula to get 
\begin{equation} \label{eq_detAsq} [\det{(g+B)}]^{2}  = \det{(g)} \det{(g - Bg^{-1}B)}. \end{equation}
This proves the converse statement.
\end{proof}
For a positive definite $g$, the map $g + B$ is always invertible. However, for indefinite $g$, this is no more true. Consider for example
\begin{equation}
g = \bm{1}{0}{0}{-1}, \; B = \bm{0}{1}{-1}{0}.
\end{equation}
The map $A = g + B$ is then certainly singular. 

Then comes a question of the signature of $G = g - Bg^{-1}B$. But this in fact follows from the observation in the proof above, because $G = A^{T} g^{-1} A$, and the signature of $G$ thus must be same as the one of $g$. If the signature of $g$ is $(p,q)$, we can take the square root of (\ref{eq_detAsq}) to obtain 
\begin{equation}
\det{(g+B)} = \pm [(-1)^{q} \det{(g)}]^{\frac{1}{2}} [(-1)^{q} \det{(g - Bg^{-1}B)}]^{\frac{1}{2}}. 
\end{equation}
Note the $\pm$ sign in the formula. For $g > 0$, determinant on the left-hand side is always positive. Indeed, define $f(t) = \det{(g + tB)}$. This is a continuous nonzero function of $t$, and $f(0) > 0$. This proves that $f(1) > 0$. 

For indefinite $g$, the signs for $\det{(g)}$ and $\det{(g+B)}$ can be different. Consider for example
\begin{equation}
g = \bm{1}{0}{0}{-1}, \; B = \bm{0}{\lambda}{-\lambda}{0}. 
\end{equation}
Then $\det{(g+B)} = -1 + \lambda^{2}$, and $\det{(g)} = -1$. We can thus choose $\lambda > 1$ and the two signs differ (the reason is of course the singularity at $\lambda = 1$). 

We can still consider an orthogonal transformation of generalized metric $\gm$. Consider $\O \in O(n,n)$ and simply define $\gm' \defeq \O^{T} \gm \O$. Is there always metric $g'$ and $2$-form $B'$ such that
\begin{equation} \label{eq_tGdecomp}
\gm' = \bm{1}{B'}{0}{1} \bm{g'}{0}{0}{g'^{-1}} \bm{1}{0}{-B'}{1} 
\end{equation}
holds true? A partial answer is given by the following proposition. We use the notation of sections \ref{sec_OG} and \ref{sec_Onngen}. 
\begin{tvrz}
Let $\gm$ be a generalized metric in a sense of (\ref{def_genmetind}), and define $\gm' = \O^{T} \gm \O$ for $\O \in O(n,n)$. 

There exist metric $g'$ and $2$-form $B'$ such that $\gm'$ has the form (\ref{def_genmetind}), if and only if the map $\fPhi_{+} \equiv O_{4}^{T} + O_{2}^{T}(g+B)$ is invertible. 
\end{tvrz}
\begin{proof}
First note that bottom-right corner of $\gm'$ defines a fiber-wise bilinear form $h'$ on $T^{\ast}M$  given by formula
\begin{equation}
h' = O_{4}^{T}g^{-1}O_{4} + O_{2}^{T}Bg^{-1}O_{4} - O_{4}^{T} g^{-1}BO_{2} + O_{4}^{T}(g - Bg^{-1}B)O_{4}. 
\end{equation}
Next see that $h'$ can be written as 
\begin{equation} 
h' = \fPhi_{+} g^{-1} \fPhi_{+}^{T}. 
\end{equation}
This can be verified directly, using the orthogonality property $O_{4}^{T}O_{2} + O_{2}^{T}O_{4} = 0$ for $\O$. Taking the determinant of this relation gives 
\begin{equation} \label{eq_htilde} \det{(h')} = [\det{(\fPhi_{+})}]^{2} / \det{(g)}. \end{equation}
This proves that $h'$ is invertible if and only if $\fPhi_{+}$ is. 

Now assume that $\gm'$ has the form (\ref{eq_tGdecomp}) for metric $g'$ and $B' \in \df{2}$. In this case $h' = g'^{-1}$, and (\ref{eq_htilde}) proves that $\fPhi_{+}$ is invertible. 

Conversely, let $\fPhi_{+}$ be an invertible map. Formula (\ref{eq_htilde}) proves that $h'$ is invertible (and thus a fiber-wise metric on $T^{\ast}M$). Define $g' \defeq h'^{-1}$. Now recall Proposition \ref{tvrz_gmasOGmap}. Even in indefinite case, $\gm \in \Hom(E,E^{\ast})$ is an orthogonal map, and so is $\gm' = \O^{T} \gm \O$. Moreover, this map has invertible bottom-right corner $h'$. We can now use an analogue of Proposition \ref{tvrz_Onnblockdecomp} to show that $\gm'$ can be decomposed as (\ref{eq_tGdecomp}), proving that $B' \in \df{2}$. 
\end{proof}

Note that in indefinite case, $\fPhi_{+}$ is not always invertible, not even in the case when $g+B$ is. Consider $\O = e^{-\Theta}$ of Example \ref{ex_Onntransf} for $n = 2$. Define 
\begin{equation}
g = \bm{1}{0}{0}{-1}, \; B = \bm{0}{b}{-b}{0}, \; \Theta = \bm{0}{t}{-t}{0}. 
\end{equation}
Then $\det{(g+B)} = b^{2} - 1$, and for $b \neq \pm 1$, $g+B$ is invertible. The map $\fPhi_{+} = 1 - \Theta(g+B)$ has the explicit form
\begin{equation}
\fPhi_{+} = \bm{1 + tb}{t}{t}{1 + tb}. 
\end{equation}
Now $\det{(\fPhi_{+})} = (1 + tb)^{2} - t^{2}$. Equation $\det{(\fPhi_{+})} = 0$ now has two roots: $t = -1 / (b \pm 1)$. For every $b$, we can thus find $\Theta$ making $\fPhi_{+}$ a singular matrix, and consequently $\gm'$ is indecomposable in the sense of (\ref{eq_tGdecomp}). 

We see that everything in principle carries out to the indefinite case, but in every step one has to make assumptions concerning the invertibility of involved maps. In order to avoid unnecessary discussions here and there, we thus usually stick to the positive definite case. 

\section{Generalized Bismut connection} \label{sec_gbismut}
Let $\gm$ be a generalized metric on the Courant algebroid $E = TM \oplus T^{\ast}M$. We may look for Courant algebroid connections compatible with $\gm$. Recall that by Definition \ref{def_courantcon}, $\cD$ is a Courant algebroid connection if it preserves the Courant metric $\<\cdot,\cdot\>_{E}$. There exists a simple example of such connection. It first appeared in \cite{Ellwood:2006ya} and was studied from the perspective of Courant algebroids in \cite{2007arXiv0710.2719G}. Before we introduce its definition, recall that $\gm$ induces an operator $C: V_{\pm} \rightarrow V_{\mp}$ defined as 
\begin{equation} C(\fPsi_{\pm}(X)) = \fPsi_{\mp}(X), \end{equation}
for all $X \in \vf{}$. By direct calculation using the projectors (\ref{eq_pmProjectors}), one arrives to an explicit formula 
\begin{equation} C(X+\xi) = X - \xi + 2B(X), \end{equation}
for all $X + \xi \in \Gamma(E)$. Note that by definition $C^{2} = 1$, and $C$ is an anti-orthogonal map with respect to $\<\cdot,\cdot\>_{E}$. For any $e \in \Gamma(E)$, denote $e_{\pm} \equiv P_{\pm}(e)$ to simplify the notation. 
\begin{definice}
The {\bfseries{generalized Bismut connection}} $\cD$ is for $e,e' \in \Gamma(E)$ defined as
\begin{equation} \label{def_gbismut}
\cD_{e}e' = ([e_{+},e'_{-}]_{D})_{-} + ([e_{-},e'_{+}]_{D})_{+} + ([C(e_{+}),e'_{+}]_{D})_{+} + ([C(e_{-}),e'_{-}]_{D})_{-}. 
\end{equation}
\end{definice}
We have used a more abstract definition as it appeared in \cite{2007arXiv0710.2719G}. It is easy to see that $\cD_{fe}e' = f \cD_{e}e'$. This follows from the fact that $\<V_{+},V_{-}\>_{E} = 0$, and $(e_{+})_{-} = (e_{-})_{+} = 0$ for all $e \in \Gamma(E)$. To prove the second property note that $\rho \circ C = \rho$, and we get
\[
\begin{split}
\cD_{e}(fe') & = f \cD_{e}e' + (\rho(e_{+}).f) e'_{-} +(\rho(e_{-}).f)e'_{+} + (\rho(e_{+}).f)e'_{+} + (\rho(e_{-}).f)e'_{-} \\
& = f \cD_{e}e' + (\rho(e).f)e'. 
\end{split}
\]
We will prove the metric compatibility with $\<\cdot,\cdot\>_{E}$ later. It is easier to determine the action of the connection on the sections of the special form. Let $e = \fPsi_{+}(X)$, and $e' = \fPsi_{-}(Y)$. Only one of the four terms contributes, namely the first one. Next note that $\fPsi_{\pm} = e^{B} \fPsi_{\pm}^{0}$, where $\fPsi_{\pm}^{0}(X) = X \mp g(X)$. Hence 
\[ [\fPsi_{+}(X),\fPsi_{-}(Y)]_{D} = e^{B}[\fPsi_{+}^{0}(X),\fPsi_{-}^{0}(Y)]_{D}^{H}. \]
Here $H = dB$. This simplifies the calculation, because $P_{-}(e^{B}(Y+\eta)) = \frac{1}{2} \fPsi_{-}(Y - g^{-1}(\eta))$. Then
\[ [\fPsi_{+}^{0}(X), \fPsi_{-}^{0}(Y)]_{D}^{H} = [X,Y] - \Li{X}(g(Y)) - \io_{Y}d(g(X)) - H(X,Y,\cdot). \]
Combining all the observations gives 
\begin{equation}
\cD_{e} e' =  \fPsi_{-}\big[ \frac{1}{2} \{ [X,Y] + g^{-1}(\Li{X}(g(Y)) + \io_{Y}d(g(X))) \} + \frac{1}{2} g^{-1}H(X,Y,\cdot)) \big]. 
\end{equation}
The terms not containing $B$ form a well known object. Indeed, we have
\begin{equation}
\cDL_{X}Y = \frac{1}{2}\{ [X,Y] + g^{-1}(\Li{X}(g(Y)) + \io_{y}d(g(X))) \}, 
\end{equation}
where $\cDL$ is the Levi-Civita connection on $M$ corresponding to the metric $g$. We can thus write 
\begin{equation}
\cD_{e}e' = \fPsi_{-}( \cDL_{X}Y + \frac{1}{2} g^{-1}H(X,Y,\cdot)). 
\end{equation}
Repeating the same procedure for all the possible combination of $\pm$ signs, we would prove the following lemma. 
\begin{lemma} \label{lem_gbismutplusminus}
Let $\cD$ be the generalized Bismut connection defined by (\ref{def_gbismut}), and $H = dB$. Then
\begin{align}
\label{eq_gbismut1} \cD_{\fPsi_{\pm}(X)}(\fPsi_{+}(Y)) & = \fPsi_{+}( \cD^{+}_{X}Y ), \\
\label{eq_gbismut2} \cD_{\fPsi_{\pm}(X)}(\fPsi_{-}(Y)) & = \fPsi_{-}( \cD^{-}_{X}Y ), 
\end{align} 
for all $X,Y \in \vf{}$. $\cD^{\pm}$ is a couple of connections on $M$ defined as 
\begin{equation} \label{eq_gbismut3}
\cD^{\pm}_{X}Y = \cDL_{X}Y \mp \frac{1}{2} g^{-1}H(X,Y,\cdot). 
\end{equation}
Connections $\cD^{\pm}$ are metric compatible with $g$, and equations (\ref{eq_gbismut1} - \ref{eq_gbismut3}) can be considered as an equivalent definition of the generalized Bismut connection. 
\end{lemma}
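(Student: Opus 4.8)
The plan is to exploit that the map (\ref{def_gbismut}) is $\cif$-linear in its first argument and obeys the Leibniz rule in its second (both already verified just above), and that $\fPsi_{\pm}\colon TM\to V_{\pm}$ are $\cif$-linear isomorphisms onto the $\pm1$ eigenbundles of $\gm$, so that every section of $E$ is uniquely of the form $\fPsi_+(X)+\fPsi_-(X')$ with $X,X'\in\vf{}$. Hence $\cD$ is completely determined by the four expressions $\cD_{\fPsi_{\epsilon}(X)}\fPsi_{\epsilon'}(Y)$, $\epsilon,\epsilon'\in\{+,-\}$. Once these are shown to equal $\fPsi_{\epsilon'}(\cD^{\epsilon'}_XY)$, the ``equivalent definition'' clause is immediate: for $f\in\cif$ one gets $\cD_{\fPsi_{\epsilon}(X)}(f\fPsi_+(Y))=\cD_{\fPsi_{\epsilon}(X)}(\fPsi_+(fY))=\fPsi_+(\cD^+_X(fY))=f\fPsi_+(\cD^+_XY)+(X.f)\fPsi_+(Y)$, which is exactly the Leibniz rule with $\rho(\fPsi_{\epsilon}(X))=X$, and additivity in both slots handles general sections.

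First I would plug $e=\fPsi_{\epsilon}(X)$, $e'=\fPsi_{\epsilon'}(Y)$ into (\ref{def_gbismut}) and use $P_{\pm}\fPsi_{\pm}=\fPsi_{\pm}$, $P_{\pm}\fPsi_{\mp}=0$ and $C\fPsi_{\pm}=\fPsi_{\mp}$. In each of the four cases exactly one of the four summands survives, and a short sign count shows that the four cases collapse to the two formulas
\[
\cD_{\fPsi_{\pm}(X)}\fPsi_+(Y)=\big([\fPsi_-(X),\fPsi_+(Y)]_D\big)_+,\qquad
\cD_{\fPsi_{\pm}(X)}\fPsi_-(Y)=\big([\fPsi_+(X),\fPsi_-(Y)]_D\big)_-,
\]
in particular the answer does not depend on the sign carried by the first slot, which is already the statement of (\ref{eq_gbismut1})--(\ref{eq_gbismut2}) once the right-hand sides are computed.

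Next I would evaluate these two brackets. Writing $\fPsi_{\pm}=e^{B}\circ\fPsi^{0}_{\pm}$ with $\fPsi^{0}_{\pm}(X)=X\mp g(X)$, Proposition \ref{tvrz_twistoftwisted} (equation (\ref{eq_twistoftwisted}) with vanishing twist) gives $[\fPsi_{\epsilon}(X),\fPsi_{\epsilon'}(Y)]_D=e^{B}\big([\fPsi^{0}_{\epsilon}(X),\fPsi^{0}_{\epsilon'}(Y)]^{H}_D\big)$ with $H=dB$, and (\ref{eq_pmProjectors}) reduces the projector to $P_{\pm}(e^{B}(Z+\zeta))=\tfrac12\fPsi_{\pm}(Z\pm g^{-1}(\zeta))$. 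Expanding $[\fPsi^{0}_{\mp}(X),\fPsi^{0}_{\pm}(Y)]^{H}_D$ with the definition (\ref{def_dorfmanHtwist}) of the $H$-twisted Dorfman bracket (Example \ref{ex_courant}) and separating the vector and $1$-form parts, one recognises the Koszul expression $\cDL_XY=\tfrac12\{[X,Y]+g^{-1}(\Li{X}(g(Y))+\io_Yd(g(X)))\}$ plus the correction $\mp\tfrac12 g^{-1}H(X,Y,\cdot)$, which is (\ref{eq_gbismut3}); the case $\cD_{\fPsi_+(X)}\fPsi_-(Y)$ is exactly the computation already displayed above the statement.

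Finally, metric compatibility of $\cD^{\pm}$ with $g$ is a one-liner: $\cDL$ is metric, and for fixed $X$ the correction $Y\mapsto\mp\tfrac12 g^{-1}H(X,Y,\cdot)$ is $g$-skew-adjoint because $g(g^{-1}H(X,Y,\cdot),Z)=H(X,Y,Z)=-H(X,Z,Y)=-g(g^{-1}H(X,Z,\cdot),Y)$ by total antisymmetry of $H$. I expect the only real obstacle to be the sign and projector bookkeeping in the two bracket evaluations of the third step — in particular making sure the $\mp$ in (\ref{eq_gbismut3}) is attached to the correct eigenbundle — while everything else is purely formal.
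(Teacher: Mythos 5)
Your proposal is correct and takes essentially the same route as the paper: reduce the four sign cases of (\ref{def_gbismut}) to the two brackets $([\fPsi_-(X),\fPsi_+(Y)]_D)_+$ and $([\fPsi_+(X),\fPsi_-(Y)]_D)_-$, push them through $e^{B}$ to the $H$-twisted Dorfman bracket, project with (\ref{eq_pmProjectors}), and recognise the Koszul formula for $\cDL$ plus the $\mp\tfrac12 g^{-1}H$ correction, with metric compatibility from the antisymmetry of $H$. One small sign nit (which the paper's own text shares): the untwisted isomorphisms are $\fPsi^{0}_{\pm}(X)=X\pm g(X)$, not $X\mp g(X)$; with that convention the bracket computation indeed yields (\ref{eq_gbismut1})--(\ref{eq_gbismut3}) as you claim.
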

This lemma has one immediate consequence. We can now prove that $\cD$ is in fact induced by an ordinary vector bundle connection. To show this, let $e = 0+\alpha$ for $\alpha \in \df{1}$. It suffices to show that $\cD_{e} = 0$. But in this case $e = \fPsi_{+}( \frac{1}{2} g^{-1}(\alpha)) - \fPsi_{-}( \frac{1}{2} g^{-1}(\alpha))$, and the statement follows from (\ref{eq_gbismut1}, \ref{eq_gbismut2}). 

Our aim now is to find an expression for the generalized Bismut connection acting on a section in a general form $e = Y + \eta$. To do so, introduce an auxiliary connection $\hcD_{e} = e^{-B} \cD_{e^{B}(e)} e^{B}$. We have 
\[ e^{B}(Y+\eta) = \frac{1}{2} \fPsi_{+}(Y + g^{-1}(\eta)) + \frac{1}{2} \fPsi_{-}(Y - g^{-1}(\eta)). \]
In particular $e^{B}(X) = \frac{1}{2} \fPsi_{+}(X) + \frac{1}{2} \fPsi_{-}(X)$. This shows that 
\[
\begin{split}
e^{B} \hcD_{X}(Y+\eta) &= \frac{1}{2 }\cD_{\fPsi_{+}(X)} \fPsi_{+}(Y + g^{-1}(\eta)) + \frac{1}{2 }\cD_{\fPsi_{+}(X)} \fPsi_{-}(Y - g^{-1}(\eta)) \\
& = \frac{1}{2} \fPsi_{+}( \cD^{+}_{X}(Y + g^{-1}(\eta))) + \frac{1}{2} \fPsi_{-}( \cD^{-}_{X}(Y - g^{-1}(\eta))) \\
& =  \frac{1}{2} e^{B} \fPsi^{0}_{+}( \cD^{+}_{X}(Y + g^{-1}(\eta))) + \frac{1}{2} e^{B} \fPsi_{-}^{0}( \cD^{-}_{X}(Y - g^{-1}(\eta))) \\
\end{split}
\]
Hence 
\begin{equation}
\hcD_{X}(Y+\eta) = \frac{1}{2} \fPsi^{0}_{+}( \cD^{+}_{X}(Y + g^{-1}(\eta))) + \frac{1}{2} \fPsi_{-}^{0}( \cD^{-}_{X}(Y - g^{-1}(\eta))).
\end{equation}
It is now simple to plug in from (\ref{eq_gbismut3}) for $\cD^{\pm}$. This results in the formal block form of $\hcD$:
\begin{equation} \label{eq_hcDblock}
\hcD_{X} = \bm{\cDL_{X}}{-\frac{1}{2}g^{-1}H(X,g^{-1}(\star),\cdot)}{-\frac{1}{2}H(X,\star,\cdot)}{\cDL_{X}}.
\end{equation}
The $\star$ symbol indicates where the input enters. In other words, we have 
\begin{equation}
\hcD_{X}(Y+\eta) = (\cDL_{X}Y - \frac{1}{2}g^{-1}H(X,g^{-1}(\eta),\cdot)) + (\cDL_{X}\eta - \frac{1}{2}H(X,Y,\cdot)). 
\end{equation}
Now it is easy to write the original $\gm$-compatible connection $\cD$. One obtains
\begin{equation} \label{eq_gBismutblock}
\cD_{X} = \bm{1}{0}{B}{1} \bm{\cDL_{X}}{-\frac{1}{2}g^{-1}H(X,g^{-1}(\star),\cdot)}{-\frac{1}{2}H(X,\star,\cdot)}{\cDL_{X}} \bm{1}{0}{-B}{1}.
\end{equation}
This is the form of generalized Bismut connection as it appeared in \cite{Ellwood:2006ya}. 

\begin{tvrz}
Generalized Bismut connection $\cD$ is compatible with the pairing $\<\cdot,\cdot\>_{E}$, and with the generalized metric $\gm$. 
\end{tvrz}
\begin{proof}
Directly from the definition of $\hcD$, it is easy to see that $\cD$ is compatible with $\<\cdot,\cdot\>_{E}$ and $\gm$, if and only if $\hcD$ is compatible with $\<\cdot,\cdot\>_{E}$ and $\G_{E}$. The latter property can be easily checked explicitly using the block form (\ref{eq_hcDblock}) of $\hcD$. 
\end{proof}

Now recall the definition of the torsion operator (\ref{def_torsion}). We will use $\fL$ suitable for Courant algebroids, namely 
\begin{equation}
\fL(\beta,e,e') = \<e,e'\>_{E} g_{E}^{-1}(\beta).
\end{equation}
We will make advantage of the simpler connection $\hcD$ to calculate $T$. Indeed, we have
\[
\begin{split}
T(e^{B}(e),e^{B}(e')) = e^{B}\big( \hcD_{e}e' - \hcD_{e'}e - [e,e']_{D}^{H} + \fL(e^{\lambda},\hcD_{e_{\lambda}}e,e') \big).
\end{split}
\]
Thus $T(e^{B}(e),e^{B}(e')) = e^{B} \widehat{T}(e,e')$, where $\widehat{T}$ is the torsion of connection $\hcD$ with $H$-twisted Dorfman bracket. Let us now calculate $\widehat{T}$ explicitly. One gets
\begin{equation}
\begin{split}
\widehat{T}(X+\xi,Y+\eta) = & - \frac{1}{2}g^{-1}H(X,g^{-1}(\eta),\cdot) + \frac{1}{2} g^{-1}H(Y,g^{-1}(\xi),\cdot) \\
& - \frac{1}{2} H(X,Y,\cdot) - \frac{1}{2}H(g^{-1}(\xi),g^{-1}(\eta),\cdot).
\end{split}
\end{equation}
This proves that $\hcD$ and consequently $\cD$ is torsion-free if and only if $H = 0$. Torsion $T$ can be now calculated in a straightforward manner using $e^{B}$ and $\widehat{T}$.

According to the remark under (\ref{def_curvature}), we may define the curvature operator of $\cD$ using the usual formula:
\begin{equation}
R(e,e')e'' = \cD_{e}\cD_{e'}e'' - \cD_{e'}\cD_{e}e'' - \cD_{[e,e']_{D}}e'',
\end{equation}
for all $e,e',e'' \in \Gamma(E)$. Using the relation of $\cD$ to $\hcD$, we get the expression
\begin{equation} \label{eq_GBriemannrel}
R(e^{B}(e),e^{B}(e')) e^{B}(e'') = e^{B} \big( \hcD_{e}\hcD_{e'}e'' - \hcD_{e'} \hcD_{e}e'' - \hcD_{[e,e']_{D}^{H}}e'' \big).
\end{equation}
Hence $R(e^{B}(e),e^{B}(e'))e^{B}(e'') = e^{B}(\widehat{R}(e,e')e'')$, where $\widehat{R}$ is the curvature operator of $\hcD$ using the $H$-twisted Dorfman bracket $[\cdot,\cdot]_{D}^{H}$. It is not difficult to calculate $\widehat{R}$ explicitly. We get
\begin{align}
\widehat{R}_{1}(X,Y)(Z+\zeta) &= R^{LC}(X,Y)Z - \frac{1}{2}g^{-1}( (\cDL_{X}H)(Y,g^{-1}(\zeta),\cdot) - (\cDL_{Y}H)(X,g^{-1}(\zeta),\cdot)) \nonumber \\
& + \frac{1}{4}g^{-1}H(X,g^{-1}H(Y,Z,\cdot),\cdot) - \frac{1}{4}g^{-1}H(Y,g^{-1}H(X,Z,\cdot),\cdot) \\
\widehat{R}_{2}(X,Y)(Z+\zeta) &= R^{LC}(X,Y)\zeta -\frac{1}{2}(\cDL_{X}H)(Y,Z,\cdot) + \frac{1}{2} (\cDL_{Y}H)(X,Z,\cdot) \\
& + \frac{1}{4}H(X, g^{-1}H(Y,g^{-1}(\zeta),\cdot),\cdot) - \frac{1}{4}H(Y, g^{-1}H(X,g^{-1}(\zeta),\cdot),\cdot). \nonumber 
\end{align}
We have used $\widehat{R}_{1}$ and $\widehat{R}_{2}$ to denote the $TM$ and $T^{\ast}M$ components of $\widehat{R}$ respectively. One can now calculate the corresponding Ricci tensor $\hRic$, defined as $\hRic(e,e') = \< e^{\lambda}, \widehat{R}(e_{\lambda},e')e \>$. Note that it has only two non-trivial components (with respect to the block decomposition). One obtains
\begin{align}
\hRic(X,Y) & = \Ric^{LC}(X,Y) - \frac{1}{4} H(Y,g^{-1}H(\partial_{k},X,\cdot),g^{-1}(dy^{k})), \\
\hRic(\xi,Y) & = -\frac{1}{2} (\cDL_{\partial_{k}}H)(Y,g^{-1}(\xi),g^{-1}(dy^{k})).
\end{align}
Finally, we may use the generalized metric $\G_{E}$ to calculate the trace of $\hRic$ and obtain the corresponding scalar curvature $\widehat{\RS}$. One gets
\begin{equation}
\widehat{\RS} = \hRic(\G_{E}^{-1}(e^{\lambda}),e_{\lambda}) = \RS(g) - \frac{1}{4} H_{ijk} H^{ijk}. 
\end{equation}
To conclude this section, note that we can use this result to calculate the scalar curvature $\RS$ of the generalized Bismut connection.
\begin{tvrz}
Let $\cD$ be the generalized Bismut connection corresponding to the generalized metric $\gm$. Let $\Ric$ be its Ricci tensor, and let $\RS$ be the scalar function on $M$ defined as 
\begin{equation}
\RS = \Ric(\gm^{-1}(e^{\lambda}), e_{\lambda}),
\end{equation}
where $(e_{\lambda})_{\lambda=1}^{2n}$ is some local frame on $E$. Then $\RS = \widehat{\RS}$, that is 
\begin{equation}
\RS = \RS(g) - \frac{1}{4}H_{ijk} H^{ijk}. 
\end{equation}
\end{tvrz}
\begin{proof}
The result follows from the definition of the connection $\hcD$, the relation (\ref{eq_GBriemannrel}), and the fact that $\gm = (e^{-B})^{T} \G_{E} e^{-B}$. 
\end{proof}
We see that the scalar curvature of the generalized Bismut connection does not depend on $B$, but only on a cohomology class $[H]$ of its exterior derivative $H = dB$. 
\chapter{Extended generalized geometry} \label{ch_extended}
The main aim of this chapter is to generalize the objects of the standard generalized geometry in order to work also on the vector bundle $E = TM \oplus \cTM{p}$. The main issue is that the most straightforward generalization of the orthogonal group $O(n,n)$ suitable for $E$ does not seem to be useful for a description of the generalized metric. This lead us to the idea of embedding the generalized geometry of $E$ into the larger vector bundle $E \oplus E^{\ast}$, already equipped with the canonical $O(d,d)$ structure. 
\section{Pairing, Orthogonal group} \label{sec_HOnn}
Let $E = TM \oplus \cTM{p}$. We have $\Gamma(E) = \vf{} \oplus \df{p}$. Define a non-degenerate $\cif$-bilinear symmetric form $\<\cdot,\cdot\>_{E}: \Gamma(E) \rightarrow \Gamma(E) \rightarrow \df{p-1}$ as 
\begin{equation} \label{def_hpairing}
\< X + \xi, Y + \eta \>_{E} = \io_{X}\eta + \io_{Y}\xi, 
\end{equation}
for all $X+\xi, Y+ \eta \in \Gamma(E)$. Although it is not an ordinary $\cif$-valued pairing, one can still define its orthogonal group $O(E)$ as usual, that is
\begin{equation}
O(E) = \{ \F \in \Aut(E) \ | \ \<\F(e), \F(e')\>_{E} = \<e,e'\>_{E}, \forall e,e' \in \Gamma(E) \}. 
\end{equation}
We will now examine its Lie algebra $o(E)$, defined as 
\begin{equation} \label{def_oEhigher}
o(E) = \{ \F \in \End(E) \ | \ \<\F(e),e'\>_{E} + \<e, \F(e')\>_{E} = 0, \forall e,e' \in \Gamma(E) \}. 
\end{equation}
In fact, the structure of this algebra greatly depends on $p$ and the dimension $n$ of the manifold $M$. Write $\F$ in the formal block form as 
\begin{equation} \label{def_FoEblockform}
\F = \bm{A}{\Pi}{-C^{T}}{A'}.
\end{equation}
Plugging $\F$ into the condition (\ref{def_oEhigher}) gives the following set of equations:
\begin{align}
\label{eq_oE1} \io_{Y}C^{T}(X) + \io_{X}C^{T}(Y) & = 0, \\
\label{eq_oE2} \io_{Y}A'(\xi) + \io_{A(Y)}\xi & = 0, \\
\label{eq_oE3} \io_{\Pi(\xi)}\eta + \io_{\Pi(\eta)}\xi & = 0.
\end{align}
One can now discuss the consequences of these equations. This is a straightforward but a little bit technical linear algebra. We present only the results in the form of a proposition. 
\begin{tvrz} \label{tvrz_oEgeneral}
Let $\F \in \End(E)$ have a formal block form (\ref{def_FoEblockform}). Then, depending on $p$, we have the following conditions for $\F \in o(E)$.
\begin{enumerate}
\item $p = 0$: All fields are arbitrary, that is 
\begin{equation}
o(E) = \End(TM) \oplus \vf{} \oplus \df{1} \oplus \cif. 
\end{equation}
\item $p = 1$: In this case $o(E) = o(n,n)$, and thus $A' = -A^{T}$, $\Pi \in \vf{2}$, $C \in \df{2}$, and 
\begin{equation}
o(E) = \End(TM) \oplus \vf{2} \oplus \df{2}.
\end{equation}
\item $1 < p < n-1$: In this case $A = \lambda \cdot 1$, $A' = -\lambda \cdot 1$, where $\lambda \in \cif$, $\Pi = 0$, and $C \in \df{p+1}$. Hence,
\begin{equation}
o(E) = \vf{p+1} \oplus \cif. 
\end{equation}
\item $p = n-1$: $A = \lambda \cdot 1$, $A' = -\lambda \cdot 1$, for $\lambda \in \cif$. $\Pi \in \vf{n}$, and $C \in \df{n}$. Thus,
\begin{equation}
o(E) = \vf{n} \oplus \df{n} \oplus \cif. 
\end{equation}
\item $p = n$: In this case $A = \lambda \cdot 1$, $A' = - \lambda \cdot 1$, for any $\lambda \in \cif$, $C = \Pi = 0$, and therefore
\begin{equation}
o(E) = \cif. 
\end{equation}
\end{enumerate}
\end{tvrz}
We see that possible choices for $o(E)$ are very different for different values of $p$. In particular note that for $1 < p < n-1$, there is no $(p+1)$-vector $\Pi$ defining a skew-symmetric transformation, and thus no $e^{\Pi}$ defining an orthogonal transformation. This proves that for general $p$, Nambu-Poisson manifolds cannot be realized as Dirac structures. 
This was proved by Zambon in \cite{2010arXiv1003.1004Z}, and it has in fact lead to the theory of Nambu-Dirac manifolds examined by Hagiwara in \cite{hagiwara}. 
\begin{example}
For a general $p$, there are thus fewer generic examples of orthogonal transformations, let us recall them here

\begin{itemize}
\item $C$-transform: Let $C \in \df{p+1}$. It defines a map $C \in \Hom(\TM{p},T^{\ast}M)$, and we will define $e^{C} \in \Aut(E)$ by its formal block form
\begin{equation} \label{def_Ctransform}
e^{C} = \bm{1}{0}{-C^{T}}{1}.
\end{equation}
Note that for $p=1$, we have $C = -C^{T}$. It follows from Proposition \ref{tvrz_oEgeneral} that $e^{C} \in O(E)$. 
\item Let $\lambda \in \cif$ be everywhere non-zero smooth function. Define the map $\O_{\lambda}$ as 
\begin{equation}
\O_{\lambda}(X+\xi) = \lambda X + \frac{1}{\lambda} \xi,
\end{equation}
for all $X+\xi \in \Gamma(E)$. Obviously $O_{\lambda} \in O(E)$. 
\end{itemize}
\end{example}
\section{Higher Dorfman bracket and its symmetries} \label{sec_hdorfman}
Let us now examine the Dorfman bracket from Example \ref{ex_dorfman}. Recall that it is defined as 
\begin{equation} \label{def_dorfman2}
[X+\xi,Y+\eta]_{D} = [X,Y] + \Li{X}\eta - \io_{Y}d\xi,
\end{equation}
for all $X+\xi$, $Y+\eta \in \Gamma(E)$. To distinguish it from its $p=1$ version, we sometimes refer to it as the \emph{higher} Dorfman bracket. We have shown that for $\rho = pr_{TM}$, the triplet $(E,\rho,[\cdot,\cdot]_{E})$ forms a Leibniz algebroid. Due to its structure similar to $p=1$ Dorfman bracket, it also has some properties similar to Courant algebroid axioms:
\begin{lemma}
Let $[\cdot,\cdot]_{D}$ be the Dorfman bracket (\ref{def_dorfman2}), and $\<\cdot,\cdot\>_{E}$ be the pairing (\ref{def_hpairing}). Then
\begin{equation}
[X + \xi, X + \xi]_{D} = \frac{1}{2} d \< X+ \xi, X + \xi \>_{E},
\end{equation}
for all $X + \xi \in \Gamma(E)$, and the pairing $\<\cdot,\cdot\>_{E}$ is invariant with respect to $[\cdot,\cdot]_{D}$ in the sense that
\begin{equation}
\Li{\rho(X+\xi)} \<Y + \eta, Z + \zeta \>_{E} = \< [X+\xi, Y+\eta]_{D}, Z+\zeta \>_{E} + \< Y + \eta, [X+\xi,Z+\zeta]_{D}\>_{E},
\end{equation}
for all $X,Y,Z \in \vf{}$, and $\xi,\eta,\zeta \in \df{p}$. 
\end{lemma}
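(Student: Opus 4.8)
The plan is to verify the two identities directly from the definitions of the higher Dorfman bracket (\ref{def_dorfman2}) and the pairing (\ref{def_hpairing}), using only elementary Cartan calculus on $\df{\bullet}$ — namely the graded Leibniz rules for $\io_X$ and $\Li{X}$, the identity $\Li{X} = \io_X d + d \io_X$, the commutator $[\Li{X},\io_Y] = \io_{[X,Y]}$, and $\Li{[X,Y]} = [\Li{X},\Li{Y}]$. These are standard and were all recalled for the abstract setting earlier in the excerpt, so nothing new is needed.

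For the first identity I would just compute: $\<X+\xi,X+\xi\>_E = 2\io_X\xi$, so the right-hand side is $\tfrac12 d(2\io_X\xi) = d\io_X\xi$. On the left-hand side, $[X+\xi,X+\xi]_D = [X,X] + \Li{X}\xi - \io_X d\xi = 0 + \Li{X}\xi - \io_X d\xi = d\io_X\xi$ by Cartan's magic formula. So the two sides agree. This step is immediate.

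For the invariance identity, I would expand all three terms. The left side is $\Li{X}(\io_Y\zeta + \io_Z\eta)$. The first bracket term on the right is $\<[X,Y]+\Li{X}\eta-\io_Yd\xi,\, Z+\zeta\>_E = \io_{[X,Y]}\zeta + \io_Z(\Li{X}\eta) - \io_Z\io_Y d\xi$, and the second is, symmetrically, $\io_Y(\Li{X}\zeta) + \io_{[X,Z]}\eta - \io_Y\io_Z d\xi$. Now I collect: $\io_{[X,Y]}\zeta + \io_Z\Li{X}\eta$ — using $\io_Z\Li{X} = \Li{X}\io_Z - \io_{[X,Z]}$ this is $\io_{[X,Y]}\zeta + \Li{X}\io_Z\eta - \io_{[X,Z]}\eta$; similarly the other pair gives $\io_{[X,Z]}\eta + \Li{X}\io_Y\zeta - \io_{[X,Y]}\zeta$. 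Adding them, the $\io_{[X,Y]}\zeta$ and $\io_{[X,Z]}\eta$ terms cancel in pairs and we are left with $\Li{X}(\io_Y\zeta + \io_Z\eta)$ plus the leftover $-\io_Z\io_Y d\xi - \io_Y\io_Z d\xi$. Since $\io_Y$ and $\io_Z$ anticommute on forms, $\io_Z\io_Y d\xi + \io_Y\io_Z d\xi = 0$, so this leftover vanishes and the right side equals $\Li{X}(\io_Y\zeta+\io_Z\eta)$, which is exactly the left side.

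The only mild subtlety — and the step I would be most careful about — is bookkeeping the sign conventions in $[\Li{X},\io_Z] = \io_{[X,Z]}$ versus the paper's $C_\flat$/insertion conventions, and making sure the $-\io_Y d\xi$ piece of the bracket is handled with the correct degree/sign (the paper explicitly chose its conventions to kill $(-1)^p$ factors, so I would trust that but double-check the one place where two insertion operators get swapped). There is no genuine obstacle: both identities reduce to the $p=1$ Courant-algebroid computations of Example \ref{ex_courant}, part \ref{ex_courant1}, verbatim, because the only operations used ($d$, $\Li{X}$, $\io_X$, commutators) behave identically on $\df{p}$ for every $p$. So I would close the proof by remarking that the computation is formally identical to the one already carried out for the $p=1$ Dorfman bracket.
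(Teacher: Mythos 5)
Your proof is correct and is exactly the "direct calculation from the definitions" that the paper's proof consists of: the first identity via Cartan's magic formula, the second by expanding the pairing and using $[\Li{X},\io_{Z}]=\io_{[X,Z]}$ together with the anticommutativity of insertions on $d\xi$. Your sign bookkeeping checks out, so nothing further is needed.
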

\begin{proof}
Direct calculation and definitions. 
\end{proof}
We can directly generalize the derivations algebra and the automorphism group of the Dorfman bracket. The derivation of the results is completely analogous to the $p=1$ case provided in Section \ref{sec_dorfmander} and Section \ref{sec_dorfmanaut}. We thus omit proofs of following propositions. 

\begin{tvrz}
Define the Lie algebra $\Der{E}$ of derivations of the Dorfman bracket (\ref{def_dorfman2}) as in (\ref{def_derivation}). Then as a vector space, it decomposes as 
\begin{equation} \Der(E) \doteq \vf{} \oplus \Omega^{0}_{closed}(M) \oplus \Omega^{p+1}_{closed}(M). \end{equation}
Every $\F \in \Der(E)$ decomposes uniquely as $\F = R(X) + \F_{\lambda}+ \F_{C}$, where $R(X)(Y+\eta) = ([X,Y], \Li{X}\eta)$, for all $X,Y \in \vf{}$ and $\eta \in \df{p}$. Vector bundle endomorphisms $\F_{\lambda}$ and $\F_{C}$ are defined as 
\begin{equation}
\F_{\lambda} = \bm{0}{0}{0}{\lambda \cdot 1}, \; \F_{C} = \bm{0}{0}{-C^{T}}{0}, 
\end{equation}
where $\lambda \in \Omega^{0}_{closed}(M)$ and $C \in \Omega^{p+1}_{closed}(M)$. Nontrivial commutation relations are	
\begin{align} [R(X),R(Y)] & = R([X,Y]), \\
  [R(X), F_{C}] & = F_{\Li{X}B}, \\  
  [\F_{\lambda},\F_{B}] & = \F_{\lambda C}. 
\end{align}
On the Lie algebra level, we thus have
\begin{equation} 
\Der(E) = \vf{} \ltimes ( \Omega^{0}_{closed}(M) \ltimes \Omega^{p+1}_{closed}(M)),
\end{equation}
where $\Omega^{0}_{closed}(M)$, $\Omega^{p+1}_{closed}(M)$ are viewed as Abelian Lie algebras, $\Omega^{0}_{closed}(M)$ acts on forms in $\Omega^{p+1}_{closed}(M)$ by multiplication, and $\vf{}$ acts on $\Omega^{0}_{closed} \ltimes \Omega^{p+1}_{closed}(M)$ by Lie derivatives. 
\end{tvrz}
\begin{tvrz} \label{tvrz_Autgrouphdorfman}
Let $\Aut_{D}(E)$ be the group of automorphisms (\ref{def_autom}) of the Dorfman bracket (\ref{def_dorfman2}). Then it has the following group structure:
\begin{equation} 
\Aut_{D}(E) = ( \Omega^{p+1}_{closed}(M) \rtimes G(\Omega^{0}_{closed}(M)) ) \rtimes \Diff(M),
\end{equation}
where $\Omega^{p+1}_{closed}(M)$ is viewed as an Abelian group with respect to addition, $G(\Omega^{0}_{closed}(M))$ acts on $\Omega^{p+1}_{closed}(M)$ by multiplication, and $\Diff(M)$ acts on $\Omega^{p+1}_{closed}(M) \rtimes G(\Omega^{0}_{closed}(M))$ by inverse pullbacks. 
Every $(\F,\varphi) \in \Aut_{D}(E)$ can uniquely be decomposed as 
\begin{equation}
\F = e^{C} \circ \mathcal{S}_{\lambda} \circ T(\varphi), 
\end{equation}
where $e^{C} = \exp{\F_{C}}$, and $S_{\lambda}(X+\xi) = X + \lambda \xi$ for the unique $C \in \Omega^{p+1}_{closed}(M)$ and $\lambda \in G(\Omega^{0}_{closed}(M))$. By $G(\Omega^{0}_{closed}(M))$ we mean the multiplicative group of everywhere non-zero closed $0$-forms (locally constant functions). 
\end{tvrz}
Similarly to the $p=1$ Dorfman bracket, we expect something interesting to happen when we twist it with a non-trivial $C$-transformation. Let $C \in \df{p+1}$, and in general $dC \neq 0$. Define a new bracket $[\cdot,\cdot]'_{D}$ as 
\begin{equation} \label{eq_hdorfmantwisting} [e,e']_{D}' = e^{-C} [e^{C}(e), e^{C}(e')]_{D}, \end{equation}
for all $e,e' \in \Gamma(E)$. It turns out, due calculations similar to Section \ref{sec_twisting}, that $[\cdot,\cdot]'_{D} = [\cdot,\cdot]_{D}^{dC}$, where the $H$-twisted higher Dorfman bracket $[\cdot,\cdot]_{D}^{H}$ is for given $H \in \Omega^{p+2}_{closed}(M)$ defined as
\begin{equation} \label{def_twistedhDorfman}
[X+\xi, Y+\eta]_{D}^{H} = [X+\xi,Y+\eta]_{D} - H(X,Y,\cdot),
\end{equation}
for all $X+\xi$, $Y+\eta \in \Gamma(E)$. There holds also a complete analogue of Proposition \ref{tvrz_twistoftwisted}, where all objects can straightforwardly be replaced by their $p>1$ generalizations. 
\section{Induced metric} \label{sec_induced}
Before proceeding to an analogue of the generalized metric suitable for $E= TM \oplus \cTM{p}$, let us examine in detail the following construction. Let $g \in \Gamma(S^{2}T^{\ast}M)$ be an arbitrary metric on $M$. Our intention is to define an induced fiber-wise metric $\~g$ on $\TM{p}$. First, let us define a type $(0,2p)$ tensor $\~g$ on $M$ as 
\begin{equation} \label{def_tensorofg}
\~g(V_{1}, \dots, V_{p}, W_{1}, \dots, W_{p}) = \sum_{\sigma \in S_{p}} (-1)^{|\sigma|} g(V_{\sigma(1)},W_{1}) \times \cdots \times g(V_{\sigma(p)}, W_{p} ),
\end{equation}
for all $V_{1}, \dots, V_{p}, W_{1}, \dots, W_{p} \in \vf{}$. First note that $\~g$ is skew-symmetric in first and last $p$ inputs. Moreover, one can interchange $(V_{1}, \dots, V_{p})$ and $(W_{1}, \dots, W_{p})$: 
\begin{equation}
\~g(V_{1}, \dots, V_{p},W_{1}, \dots, W_{p}) = \~g(W_{1}, \dots, W_{p},V_{1}, \dots, V_{p}).
\end{equation}
This proves that $\~g$ defines a fiber-wise symmetric bilinear form on $\cTM{p}$. 
In local coordinates, it has the form
\begin{equation} \label{eq_tensorofg}
\~g_{i_{1}\dots i_{p}j_{1} \dots j_{p}} = \delta_{i_{1} \dots i_{p}}^{k_{1} \dots k_{p}} g_{k_{1}j_{1}} \dots g_{k_{p}j_{p}}. 
\end{equation}
To prove that $\~g$ is a fiber-wise metric, define a type $(2p,0)$ tensor $\~g^{-1}$ on $M$ as 
\begin{equation}
\~g^{-1}(\alpha_{1},\dots,\alpha_{p},\beta_{1},\dots,\beta_{p}) = \sum_{\sigma \in S_{p}} g^{-1}( \alpha_{\sigma(1)}, \beta_{1}) \times \dots \times g^{-1}( \alpha_{\sigma(p)}, \beta_{p}),
\end{equation}
for all $\alpha_{1}, \dots, \alpha_{p},\beta_{1}, \dots, \beta_{p} \in \df{1}$. Now note that we can view $\~g$ as a vector bundle morphism $\~g$ from $\TM{p}$ to $\cTM{p}$, and $\~g^{-1}$ as a vector bundle morphism from $\cTM{p}$ to $\TM{p}$. It is straightforward to check that $\~g^{-1} \circ \~g = 1$. This proves that $\~g$ is non-degenerate and thus a fiber-wise metric on $\TM{p}$. 

There is now one interesting question to pose. What is the signature of $\~g$ for given signature $(r,s)$ of $g$? The answer is given by the following lemma
\begin{lemma} \label{lem_signature}
Let $g$ be a metric of signature $(r,s)$, and let $\~g$ be the fiber-wise metric on $\TM{p}$ defined by (\ref{def_tensorofg}). Then $\~g$ has the signature $(\binom{n}{p} - N(r,s,p), N(r,s,p))$, where the number $N(r,s,p)$ is given by a formula
\begin{equation} \label{eq_Nrsp}
N(r,s,p) \defeq \sum_{k=1}^{\lceil p/2 \rceil} \binom{s}{2k-1} \binom{r}{p - 2k +1 }.
\end{equation}
\end{lemma}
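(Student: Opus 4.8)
The statement is a purely pointwise linear-algebra fact, so the plan is to fix a point $m \in M$, work in the fiber $V = T_m M$, and choose an orthonormal basis $(e_1,\dots,e_n)$ of $V$ adapted to $g$: say $g(e_a,e_a) = -1$ for $a = 1,\dots,s$ and $g(e_a,e_a) = +1$ for $a = s+1,\dots,n$ (so $r = n-s$). The induced basis $(e_I)$ indexed by strictly ordered $p$-multi-indices $I = (i_1 < \dots < i_p)$ is then a basis of $\Lambda^p V$, and the first step is to observe that it is in fact \emph{orthogonal} with respect to $\~g$: from the coordinate formula (\ref{eq_tensorofg}), $\~g_{IJ} = \delta_I^{K} g_{k_1 j_1}\cdots g_{k_p j_p}$ vanishes unless $I$ and $J$ are the same multi-index (since $g$ is diagonal in this basis), and $\~g(e_I,e_I) = \prod_{a \in I} g(e_a,e_a) = (-1)^{\#\{a \in I : a \le s\}}$. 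Hence $\~g$ is automatically diagonalized by the induced basis, and its signature is read off immediately: the number of negative eigenvalues is the number of $p$-subsets $I \subseteq \{1,\dots,n\}$ that contain an \emph{odd} number of elements from the ``negative block'' $\{1,\dots,s\}$.

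The second step is the combinatorial count. A $p$-subset with exactly $2k-1$ elements among the first $s$ indices and the remaining $p-(2k-1)$ elements among the last $r$ indices can be chosen in $\binom{s}{2k-1}\binom{r}{p-2k+1}$ ways; summing over all $k \ge 1$ with $2k-1 \le p$, i.e. $k$ from $1$ to $\lceil p/2 \rceil$, gives exactly $N(r,s,p)$ as in (\ref{eq_Nrsp}). (Terms where $2k-1 > s$ or $p-2k+1 > r$ contribute zero via the convention on binomial coefficients, so the upper limit $\lceil p/2\rceil$ is harmless.) Therefore $\~g$ has $N(r,s,p)$ negative eigenvalues and $\binom{n}{p} - N(r,s,p)$ positive ones, which is the claimed signature $(\binom{n}{p} - N(r,s,p),\, N(r,s,p))$. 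Since the construction of $\~g$ is tensorial and $g$ has constant signature, this holds at every point, giving a well-defined fiber-wise metric of the stated signature on $\Lambda^p T^\ast M$ (equivalently on $\Lambda^p TM$).

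\textbf{Main obstacle.} There is essentially no hard analytic or geometric obstacle here; the only place requiring care is bookkeeping. One must check that the induced basis really is orthogonal for $\~g$ and not merely that $\~g$ is nondegenerate (already established in the text via $\~g^{-1}\circ\~g = 1$) — this is where the generalized Kronecker symbol $\delta^K_I$ forcing $I = J$ is used. A secondary subtlety is the role of the \emph{sign} $(-1)^{|\sigma|}$ in (\ref{def_tensorofg}) versus its absence in $\~g^{-1}$: in the orthonormal basis the only surviving permutation is the identity, so the sign does not affect the diagonal entries, but one should remark on why the off-diagonal entries still vanish (again because $g_{ab} = 0$ for $a \ne b$ kills every term before the permutation sum matters). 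Finally, one should double-check the edge cases $p = 0$ (trivially signature $(1,0)$, and $N = 0$ since the sum is empty) and $p = n$ ($\~g$ is $1$-dimensional with sign $(-1)^s$, matching $N(r,s,n) = s \bmod 2$ appropriately), to confirm the formula degrades correctly.
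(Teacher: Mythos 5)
Your argument is correct and follows essentially the same route as the paper: choose a $g$-orthonormal frame, note that the induced basis $E_I = E_{i_1}\wedge\dots\wedge E_{i_p}$ is $\~g$-orthonormal with $\~g(E_I,E_I)$ equal to the product of the signs of its factors, and count the strictly ordered $p$-indices containing an odd number of negative-norm basis vectors, which yields exactly $N(r,s,p)$. Your extra remarks on why off-diagonal entries vanish and on the edge cases $p=0$, $p=n$ are harmless additions but do not change the substance.
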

\begin{proof}
Choose an orthonormal frame $(E_{i})_{i=1}^{n} = (e_{1}, \dots, e_{r}, f_{1}, \dots, f_{s})$ for $g$: \begin{equation}g(e_{i},e_{j}) = \delta_{ij}, \; g(f_{i},f_{j}) = -\delta_{ij}, \; g(e_{i},f_{j}) = 0. 
\end{equation}
We can calculate $\~g$ in the basis $E_{I} = E_{i_{1}} \^ \dots \^ E_{i_{p}}$. One gets $\~g(E_{I},E_{J}) = \pm \delta_{I}^{J}$, proving that $E_{I}$ form an orthonormal basis for $\~g$. It thus remains to track the $\pm$ sign. 
For given odd $j \in \{1, \dots, p\}$ there is $N_{j}(r,s,p) \defeq \binom{s}{j} \binom{r}{p-j}$ different strictly ordered $p$-indices $I$, such that exactly $j$ indices in $I$ correspond to negative norm orthonormal basis vectors. These are precisely the $p$-indices $I$ where $\~g(E_{I},E_{I}) = -1$. Resulting $N(r,s,p)$ is just a sum of $N_{j}(r,s,p)$ over all odd $j$:
\[ N(r,s,p) = \sum_{j \text{ odd}, \; 1 \leq j \leq p} \binom{s}{j} \binom{r}{p-j}. \]
This is exactly the formula (\ref{eq_Nrsp}). 
\end{proof}
We can now calculate some relevant examples. For a positive definite $g$, we have $(r,s) = (n,0)$, and thus $N(n,0,p) = 0$. This means that also $\~g$ is positive definite. For Lorentzian $g$, we have two possibilities: $(r,s) = (1,d)$ or $(r,s) = (d,1)$. Note that for $p$ even, one gets $N(r,s,p) = N(s,r,p)$. 
\begin{itemize}
\item $(r,s) = (d,1)$: We get
\begin{equation}
N(d,1,p) = \sum_{k=1}^{\lceil p/2 \rceil} \binom{1}{2k-1} \binom{d}{p-2k+1} = \binom{d}{p-1}. 
\end{equation}
\item $(r,s) = (1,d)$: For even $p$, we get $N(d,1,p) = N(1,d,p)$. For odd $p$, we obtain
\begin{equation} N(1,d,p) = \sum_{k=1}^{\lceil p/2 \rceil} \binom{d}{2k-1} \binom{1}{p-2k+1} = \binom{d}{p}. 
\end{equation}
\end{itemize}
By construction of $\~g$, it is clear that geometrical properties of $\~g$ will follow from those of $g$. As an example, we can calculate its Lie derivative. 
\begin{lemma}\label{lem_Ligtilde}
Let $\~g$ be the fiber-wise metric (\ref{def_tensorofg}). Then we have
\begin{equation} \label{eq_Litgdifferently}
(\Li{X}\~g)(P,Q) = X.\~g(P,Q) - \~g(\Li{X}P,Q) - \~g(P,\Li{X}Q), 
\end{equation}
for all $P,Q \in \vf{p}$, where on the left-hand side $\~g$ is viewed as a tensor $\~g \in \T_{2p}^{0}(M)$. Moreover, this Lie derivative can be calculated as
\begin{equation} \label{eq_LitasLieofg}
\begin{split}
(\Li{X}\~g)(V_{1}, \dots, V_{p},W_{1}, \dots, W_{p}) & = \\
\sum_{\sigma \in S_{p}} (-1)^{|\sigma|} \sum_{k=1}^{p} g(V_{\sigma(1)},W_{1}) \times \dots & \times (\Li{X}g)(V_{\sigma(k)},W_{k}) \times \dots \times g(V_{\sigma(p)},W_{p}). 
\end{split}
\end{equation}
In particular, $\Li{X}g = 0$ implies $\Li{X}\~g = 0$. 
\begin{proof}
Equation (\ref{eq_Litgdifferently}) follows from the definition of Lie derivative and the fact that it commutes with contractions. Equation (\ref{eq_LitasLieofg}) follows from the fact that 
\begin{equation} \label{eq_phitongt}
(\phi_{t}^{X \ast}\~g)(V_{1},\dots,V_{p},W_{1},\dots,W_{p}) = \sum_{\sigma \in S_{p}} (-1)^{|\sigma|} (\phi_{t}^{X \ast}g)(V_{\sigma_{1}},W_{1}) \times \dots \times (\phi_{t}^{X \ast}g)(V_{\sigma(p)},W_{p}). 
\end{equation}
Now differentiate this at $t=0$ to obtain (\ref{eq_LitasLieofg}). 
\end{proof}
\end{lemma}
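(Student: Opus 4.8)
The plan is to prove the two displayed identities in turn and then read off the corollary. For (\ref{eq_Litgdifferently}) I would start from the standard derivation formula for the Lie derivative of a covariant tensor: for $\tau\in\T^{0}_{2p}(M)$ and vector fields $Z_{1},\dots,Z_{2p}$ one has $(\Li{X}\tau)(Z_{1},\dots,Z_{2p})=X.\tau(Z_{1},\dots,Z_{2p})-\sum_{i}\tau(Z_{1},\dots,\Li{X}Z_{i},\dots,Z_{2p})$. On the other side, $\Li{X}$ acts on $\vf{p}=\Gamma(\Lambda^{p}TM)$ (the $E=TM$ instance of the Lie derivative of the previous chapter, cf.\ Lemma with (\ref{lem_LieEleibniz})) as the degree-$0$ derivation of the multivector algebra extending the commutator $[X,\cdot]$, so on a decomposable $P=V_{1}\^\dots\^V_{p}$ it gives $\Li{X}P=\sum_{k}V_{1}\^\dots\^[X,V_{k}]\^\dots\^V_{p}$. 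Both sides of (\ref{eq_Litgdifferently}) are $\cif$-bilinear in $(P,Q)$ — the left side because $\Li{X}\~g$ is an honest tensor, the right side because the anomalous $(X.f)$ terms generated by $\Li{X}(fP)$ cancel against those coming from $X.(f\,\~g(P,Q))$ — so it suffices to verify the identity on decomposable $P,Q$. There, since the $(0,2p)$-tensor $\~g$ is skew in its first and last $p$ slots it descends to, and is identified with, the fiber-wise metric on $\Lambda^{p}TM$; expanding $\~g(\Li{X}P,Q)$ and $\~g(P,\Li{X}Q)$ by multilinearity and substituting into the tensor derivation formula yields exactly (\ref{eq_Litgdifferently}).

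For (\ref{eq_LitasLieofg}) I would pass to flows. The structural observation is that the assignment $g\mapsto\~g$ of (\ref{def_tensorofg}) is natural under diffeomorphisms: since $\~g$ is obtained from $g$ purely by forming products of evaluations and summing over $S_{p}$ with signs, pullback commutes with the construction, i.e.\ $\varphi^{\ast}\~g=\widetilde{\varphi^{\ast}g}$ for every $\varphi\in\Diff(M)$. Applied to the flow $\phi^{X}_{t}$ this is precisely (\ref{eq_phitongt}). Now I differentiate (\ref{eq_phitongt}) at $t=0$, using $\phi_{0}^{X\ast}g=g$ and $\frac{d}{dt}\big|_{t=0}\phi_{t}^{X\ast}g=\Li{X}g$: the product rule applied to each factor $\prod_{i=1}^{p}(\phi_{t}^{X\ast}g)(V_{\sigma(i)},W_{i})$ produces the sum $\sum_{k=1}^{p}g(V_{\sigma(1)},W_{1})\times\dots\times(\Li{X}g)(V_{\sigma(k)},W_{k})\times\dots\times g(V_{\sigma(p)},W_{p})$, and reinstating the sign $(-1)^{|\sigma|}$ and summing over $\sigma\in S_{p}$ gives (\ref{eq_LitasLieofg}). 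The final claim then follows at once: if $\Li{X}g=0$ every summand on the right of (\ref{eq_LitasLieofg}) vanishes, whence $\Li{X}\~g=0$ as a tensor and, by (\ref{eq_Litgdifferently}), as a fiber-wise metric on $\Lambda^{p}TM$.

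The two derivations above are mechanical; the only real care is bookkeeping. The step I expect to be the main obstacle is keeping straight the two roles of $\~g$ — the $(0,2p)$-tensor that eats $2p$ vector fields versus the fiber-wise metric on $\Lambda^{p}TM$ that eats two $p$-vectors — and verifying that the former descends to the latter on decomposables, which is where the skew-symmetry in the first and last $p$ arguments established just below (\ref{def_tensorofg}) is used, together with confirming that $\Li{X}$ on multivectors is indeed the degree-$0$ derivation extending $[X,\cdot]$. Everything beyond that is the Leibniz rule and the naturality under pullback of a construction that is polynomial in $g$.
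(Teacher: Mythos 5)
Your proof is correct and follows essentially the same route as the paper: the first identity via the standard derivation/contraction property of the Lie derivative, and the second by establishing the naturality relation (\ref{eq_phitongt}) under the flow $\phi^{X}_{t}$ and differentiating at $t=0$. The extra detail you supply (reduction to decomposables, cancellation of the $(X.f)$ terms) merely fills in what the paper leaves implicit, so nothing further is needed.
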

\begin{rem}
The converse statement is not true. Consider $M = \R^{2}$, and the Minkowski metric 
\begin{equation}
g = \bm{-1}{0}{0}{1}. 
\end{equation}
Its algebra of Killing vectors is spanned by generators of two translations and a Lorentz boost. For $p = 2$, the metric $\~g$ is given by single component $\~g_{(12)(12)} = -1$. If $X = X^{1}\partial_{1} + X^{2} \partial_{2}$, the Killing equation for $\~g$ gives 
\begin{equation} \label{eq_Killingtgex}
{X^{1}}_{,1} + {X^{2}}_{,2} = 0. 
\end{equation}
To be in Killing algebra of $g$, $X^{1}$ and $X^{2}$ have to have the form
\[ X^{1} = cx^{2} + a, \; X^{2} = cx^{1} + b, \]
for $a,b,c \in \R$. Note that such $X^{1}$, $X^{2}$ indeed solve (\ref{eq_Killingtgex}). On the other hand, (\ref{eq_Killingtgex}) has many more solutions, for example $X^{1} = f(x^{2})$, $X^{2} = 0$ for an arbitrary smooth function $f$. This shows that the Killing algebra of $\~g$ can be strictly larger than the one of $g$. Also, note that Killing algebra for $\~g$ does not have to be finite-dimensional. 
\end{rem}
Now, see that in chosen coordinates, one can view $\~g_{IJ}$ as a square $\binom{n}{p} \times \binom{n}{p}$ matrix, and $g_{ij}$ as a square $n \times n$ matrix. Are determinants of these matrices related?
\begin{lemma} \label{lem_detformula}
Let $A$ be a square $n \times n$ matrix, denote its components as ${A^{i}}_{j}$. Define an $\binom{n}{p} \times \binom{n}{p}$ matrix $B$ labeled by strictly ordered $p$-indices $I$ and $J$ as
\begin{equation}
{B^{I}}_{J} = \delta^{I}_{k_{1} \dots k_{p}} {A^{k_{1}}}_{j_{1}} \dots {A^{k_{p}}}_{j_{p}}. 
\end{equation}
Then $A$ is invertible if and only if $B$ is invertible. Moreover, there holds a determinant formula:
\begin{equation} \label{eq_detformula}
\det{(B)} = [ \det{(A)}]^{\binom{n-1}{p-1}}. 
\end{equation}
\end{lemma}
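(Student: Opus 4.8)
The plan is to recognize $B$ as the matrix of the $p$-th exterior power $\Lambda^{p}A$ of $A$, acting on $\Lambda^{p}\R^{n}$ in the basis $\{e_{I}\}$ of strictly ordered wedges $e_{I} = e_{i_{1}} \wedge \dots \wedge e_{i_{p}}$. First I would unwind the generalized Kronecker symbol: since $\delta^{i_{1} \dots i_{p}}_{k_{1} \dots k_{p}} = \det(\delta^{i_{a}}_{k_{b}})$ is supported on permutations, the defining formula for $B$ becomes
\[
{B^{I}}_{J} = \sum_{\sigma \in S_{p}} \mathrm{sgn}(\sigma)\, {A^{i_{\sigma(1)}}}_{j_{1}} \cdots {A^{i_{\sigma(p)}}}_{j_{p}} = \det\big( {A^{i_{a}}}_{j_{b}} \big)_{a,b=1}^{p},
\]
so ${B^{I}}_{J}$ is exactly the $p\times p$ minor of $A$ on rows $I$ and columns $J$. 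Expanding $(\Lambda^{p}A)(e_{j_{1}} \wedge \dots \wedge e_{j_{p}}) = (A e_{j_{1}}) \wedge \dots \wedge (A e_{j_{p}}) = \sum_{I} \det({A^{i_{a}}}_{j_{b}})\, e_{I}$ then shows that $B$ is precisely the matrix of $\Lambda^{p}A$.

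With this identification, functoriality of $\Lambda^{p}$ handles the equivalence of invertibility: $\Lambda^{p}(AA') = \Lambda^{p}(A)\,\Lambda^{p}(A')$ (which is nothing but the Cauchy--Binet formula read in coordinates) and $\Lambda^{p}(1) = 1$. Hence if $A$ is invertible, $\Lambda^{p}(A^{-1})$ is a two-sided inverse of $B$; conversely, once the determinant formula is established, invertibility of $B$ gives $\det B \neq 0$, hence $(\det A)^{\binom{n-1}{p-1}} \neq 0$, hence $\det A \neq 0$ (here $1 \le p \le n$, so the exponent $\binom{n-1}{p-1}$ is at least $1$), so $A$ is invertible.

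For the determinant formula I would first treat $A = \diag(\lambda_{1}, \dots, \lambda_{n})$. Then $\Lambda^{p}A$ is diagonal with entries $\lambda_{I} := \lambda_{i_{1}} \cdots \lambda_{i_{p}}$, so $\det(\Lambda^{p}A) = \prod_{I} \lambda_{I}$; the total exponent of a fixed $\lambda_{i}$ in this product equals the number of strictly ordered $p$-indices containing $i$, namely $\binom{n-1}{p-1}$, giving $\det(\Lambda^{p}A) = \big(\prod_{i}\lambda_{i}\big)^{\binom{n-1}{p-1}} = (\det A)^{\binom{n-1}{p-1}}$. To promote this to arbitrary $A$, I would observe that both sides of $\det(\Lambda^{p}A) = (\det A)^{\binom{n-1}{p-1}}$ are polynomials in the entries ${A^{i}}_{j}$, and both are conjugation-invariant — the right side obviously, the left because $\Lambda^{p}(PAP^{-1}) = \Lambda^{p}(P)\,\Lambda^{p}(A)\,\Lambda^{p}(P)^{-1}$. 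Thus the identity holds on every matrix diagonalizable over $\C$; since such matrices are dense in $M_{n}(\C)$ and the difference of the two sides is polynomial, it vanishes identically, in particular on all real $A$.

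The one genuinely non-routine point is this last reduction — the rigorous passage from diagonal to general matrices — which I expect to settle by the density/polynomial-identity argument above. If one prefers to avoid complexification, an equivalent route uses upper-triangular $A$: in the lexicographically ordered basis $\{e_{I}\}$ the matrix of $\Lambda^{p}A$ is again upper triangular with diagonal entries $\lambda_{I}$ (since $A e_{j} \in \mathrm{span}\{e_{l} : l \le j\}$ forces any surviving wedge $e_{L}$ in the expansion to satisfy $L \le I$ componentwise after sorting), and the same multiplicity count applies — but triangularizability still requires complex eigenvalues in general, so in the end the polynomial-identity argument is the cleanest way to close the proof.
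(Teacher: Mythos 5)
Your proof is correct, but it follows a genuinely different route from the one in the paper. You identify $B$ with the $p$-th compound matrix, i.e.\ the matrix of $\Lambda^{p}A$ in the basis of ordered wedges, get the invertibility equivalence from functoriality (Cauchy--Binet), prove the determinant formula for diagonal $A$ by counting how often each $\lambda_{i}$ occurs, and then extend to all $A$ by conjugation invariance plus density of diagonalizable matrices in $M_{n}(\C)$ and a polynomial-identity argument; this is essentially the classical Sylvester--Franke theorem, and the passage through $\C$ that you flag as the only delicate point is indeed handled correctly by your density argument. The paper instead stays over $\R$ and argues analytically: restricting to $GL(n,\R)$, it uses Jacobi's formula $\partial \det(A)/\partial {A^{i}}_{j} = \det(A)\,{(A^{-1})^{j}}_{i}$ to show that $\ln|\det B|$ and $\binom{n-1}{p-1}\ln|\det A|$ have the same gradient with respect to the entries ${A^{i}}_{j}$, concludes that $\det B = K\,[\det A]^{\binom{n-1}{p-1}}$ with $K$ locally constant on $GL(n,\R)$, fixes $K=1$ separately on the two connected components (evaluating at the identity and at a Minkowski metric, reusing the signature Lemma \ref{lem_signature}), and finally extends to singular $A$ by continuity. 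Your approach buys a structural explanation (the exponent $\binom{n-1}{p-1}$ appears as a multiplicity count, and multiplicativity of compounds gives the inverse for free) and avoids calculus, at the price of a brief excursion into $\C$; the paper's approach is self-contained over $\R$ and matches its coordinate-computational style, at the price of a differential argument and a component-by-component normalization of the constant.
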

\begin{proof}
Let $A$ be invertible. Define $B^{-1}$ as 
\begin{equation}
{(B^{-1})^{I}}_{J} = \delta^{I}_{k_{1} \dots k_{p}} {(A^{-1})^{k_{1}}}_{j_{1}} \dots {(A^{-1})^{k_{p}}}_{j_{p}}. 
\end{equation}
It is straightforward to check that ${B^{I}}_{J} {(B^{-1})^{J}}_{K} = \delta^{I}_{K}$. The opposite statement follows from the formula (\ref{eq_detformula}). Its proof is more complicated and can be found in the Appendix \ref{ap_proofs}. 
\end{proof}
To conclude this section, we shall examine how a covariant derivative acts on $\~g$. This will be important for the generalized Bismut connection in one of the following sections. 
\begin{lemma} \label{lem_covginduced}
Let $\cD$ be any connection on $M$. Let $\~g$ be the metric (\ref{def_tensorofg}) viewed as $(0,2p)$-tensor. Then we can write its covariant derivative as 
\begin{equation} \label{eq_cdoftgjinak}
(\cD_{X}\~g)(P,Q) = X.\~g(P,Q) - \~g(\cD_{X}P,Q) - \~g(P,\cD_{X}Q), 
\end{equation}
for all $P,Q \in \vf{p}$. Moreover, one has 
\begin{equation}
\begin{split}
(\cD_{X}\~g)(V_{1}, \dots, V_{p},W_{1},\dots,W_{p}) & =\\
\sum_{\sigma \in S_{p}} (-1)^{|\sigma|} \sum_{k=1}^{p} g(V_{\sigma(1)},W_{1}) \times \dots \times & (\cD_{X}g)(V_{\sigma(k)},W_{k})  \times \dots \times g(V_{\sigma(p)},W_{p}). 
\end{split}
\end{equation}
In particular, if $\cD_{X}g = 0$, then $\cD_{X}\~g = 0$. 
\end{lemma}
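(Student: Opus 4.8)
The plan is to mirror exactly the proof of Lemma~\ref{lem_Ligtilde}, replacing the Lie derivative $\Li{X}$ with the covariant derivative $\cD_{X}$ throughout. The key point is that both operations share the two structural features that made that proof work: they act as derivations compatible with tensor contractions, and they can be computed on the coordinate components of a tensor via the appropriate (Christoffel-symbol-corrected) product rule. So the strategy is to first establish the contraction-type identity \eqref{eq_cdoftgjinak}, then deduce the explicit Leibniz-type expansion, and finally read off the claimed consequence about parallel metrics.

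First I would prove \eqref{eq_cdoftgjinak}. Viewing $\~g$ as a $(0,2p)$-tensor and $P,Q \in \vf{p}$ as elements of $\Lambda^{p}TM \subseteq T^{p}_{0}(M)$, the quantity $\~g(P,Q)$ is a full contraction of $\~g \otimes P \otimes Q$. Since any linear connection $\cD$ extends to the whole tensor algebra as a derivation commuting with contractions (this is the standard extension, exactly analogous to the one used for $\Li{}^{E}$ on $\T(E)$ earlier in the text), applying $\cD_{X}$ to that contraction and distributing over the three factors yields
\begin{equation}
X.\~g(P,Q) = (\cD_{X}\~g)(P,Q) + \~g(\cD_{X}P,Q) + \~g(P,\cD_{X}Q),
\end{equation}
which rearranges to \eqref{eq_cdoftgjinak}. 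Here one only needs that $\cD_{X}P$ again lies in $\vf{p}$, i.e. that the induced connection on $\T^{p}_{0}(M)$ preserves the antisymmetric part — true because $\cD_{X}$ commutes with the antisymmetrizer.

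Next I would establish the explicit formula. Because $\cD_{X}$ is a derivation of the tensor product, applying it to the defining expression \eqref{def_tensorofg}, which writes $\~g$ as a signed sum of $p$-fold products $g(V_{\sigma(1)},W_{1})\times\cdots\times g(V_{\sigma(p)},W_{p})$, produces by the Leibniz rule a sum over which factor is differentiated; each differentiated factor $X.g(V_{\sigma(k)},W_{k})$ is then turned into $(\cD_{X}g)(V_{\sigma(k)},W_{k})$ plus terms $g(\cD_{X}V_{\sigma(k)},W_{k})$ and $g(V_{\sigma(k)},\cD_{X}W_{k})$. Evaluating \eqref{eq_cdoftgjinak} on $P = V_{1}\wedge\cdots\wedge V_{p}$, $Q = W_{1}\wedge\cdots\wedge W_{p}$ and using that $\cD_{X}$ acts on a decomposable multivector again by Leibniz, the terms involving $\cD_{X}V_{i}$ and $\cD_{X}W_{j}$ cancel between the two sides, leaving precisely the stated sum
\begin{equation}
(\cD_{X}\~g)(V_{1},\dots,V_{p},W_{1},\dots,W_{p}) = \sum_{\sigma\in S_{p}}(-1)^{|\sigma|}\sum_{k=1}^{p} g(V_{\sigma(1)},W_{1})\times\cdots\times(\cD_{X}g)(V_{\sigma(k)},W_{k})\times\cdots\times g(V_{\sigma(p)},W_{p}).
\end{equation}
Alternatively, and perhaps more cleanly, one can do this in local coordinates using \eqref{eq_tensorofg}: write $\~g_{IJ} = \delta_{I}^{K}g_{k_{1}j_{1}}\cdots g_{k_{p}j_{p}}$, apply the coordinate expression of $\cD_{X}$ (partial derivative plus Christoffel terms in each lower index), and observe the Christoffel contributions reorganize via the generalized Kronecker symbol into the derivative acting on each $g_{k_{\ell}j_{\ell}}$ separately. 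The final consequence $\cD_{X}g = 0 \implies \cD_{X}\~g = 0$ is then immediate from the explicit formula, since every term in the sum contains a factor $(\cD_{X}g)(\cdot,\cdot) = 0$. The main obstacle is purely bookkeeping: being careful that the induced connection on $\vf{p}$ is the restriction of the tensor-algebra connection and that the cancellation of the $\cD_{X}V_{i}$, $\cD_{X}W_{j}$ terms is exact; there is no conceptual difficulty, which is why the authors will state only ``in particular if $\cD_{X}g=0$ then $\cD_{X}\~g=0$'' and leave the computation to the reader, exactly as in Lemma~\ref{lem_Ligtilde}.
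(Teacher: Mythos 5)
Your proposal is correct; the first identity is proved exactly as in the paper (the covariant derivative extends to the tensor algebra as a derivation commuting with contractions), but for the explicit Leibniz-type formula you take a different route. The paper's proof is the precise covariant analogue of its proof of Lemma \ref{lem_Ligtilde}: it introduces the parallel transport $\tau^{\gamma}_{t}$ along the integral curve of $X$, observes that $\tau^{\gamma}_{-t}\~g$ is the metric induced (via (\ref{def_tensorofg})) from $\tau^{\gamma}_{-t}g$ — because parallel transport acts multiplicatively on tensor products — and differentiates this identity at $t=0$. You instead expand $X.\~g(V_{1},\dots,V_{p},W_{1},\dots,W_{p})$ through the defining sum, replace each $X.g(V_{\sigma(k)},W_{k})$ by $(\cD_{X}g)(V_{\sigma(k)},W_{k}) + g(\cD_{X}V_{\sigma(k)},W_{k}) + g(V_{\sigma(k)},\cD_{X}W_{k})$, and cancel the latter two groups of terms against $\~g(\cD_{X}P,Q) + \~g(P,\cD_{X}Q)$ using the Leibniz rule for $\cD_{X}$ on decomposable multivectors; that cancellation is exact, so your argument goes through. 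What each approach buys: yours is purely algebraic and avoids parallel transport altogether, at the cost of the bookkeeping you acknowledge (checking that the induced connection on $\vf{p}$ is the restriction of the tensor-algebra connection and that the cancellation matches term by term); the paper's one-line differentiation sidesteps that bookkeeping but requires knowing the characterization of $\cD_{X}$ via parallel transport and the multiplicativity of $\tau^{\gamma}_{-t}$. Your coordinate alternative via (\ref{eq_tensorofg}) is also fine, and the final implication $\cD_{X}g=0 \Rightarrow \cD_{X}\~g=0$ is immediate in either version.
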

\begin{proof}
Equation (\ref{eq_cdoftgjinak}) follows from the definition of covariant derivative and the fact that it commutes with tensor contractions. Next, let $m \in M$, and $\gamma$ be the integral curve of $X \in \vf{}$ starting at $m$. Let $\tau^{\gamma}_{t}$ be the parallel transport from $m \equiv \gamma(0)$ to $\gamma(t)$ induced by connection $\cD$. Then 
\[ (\tau_{-t}^{\gamma} \~g)(V_{1}, \dots, V_{p},W_{1},\dots,W_{p}) = \sum_{\sigma \in S_{p}} (-1)^{|\sigma|} \sum_{k=1}^{p} (\tau^{\gamma}_{-t}g)(V_{\sigma(1)},W_{1}) \times \dots \times (\tau_{-t}^{\gamma}g)(V_{\sigma(p)},W_{p}). \]
Differentiation of this equation with respect to $t$ at $t = 0$ gives the assertion of the lemma. 
\end{proof}
\section{Generalized metric} \label{sec_hgenmetric}
We would like to define a positive definite fiber-wise metric $\gm$ on $E$ of similar properties as the generalized metric on $TM \oplus T^{\ast}M$ defined in Section \ref{sec_genmetric}. There is no canonical fiber-wise metric on $E$, except for the $\df{p-1}$-valued pairing $\< \cdot,\cdot \>_{E}$. The definition suitable for the generalization to $E$ is the formal block form (\ref{eq_gmblockform}), in particular its form $\gm = (e^{-C})^{T} \G_{E} e^{-C}$. For any metric $g$, define 
\begin{equation}
\G_{E} = \bm{g}{0}{0}{\~g^{-1}},
\end{equation}
where $\~g$ is the induced metric on $\TM{p}$ defined by (\ref{def_tensorofg}). Let $C \in \df{p+1}$, and let $e^{-C}$ be the $O(E)$ map defined by (\ref{def_Ctransform}). Then set
\begin{equation} \label{def_hgenmetric}
\gm \defeq \bm{1}{C}{0}{1} \bm{g}{0}{0}{\~g^{-1}} \bm{1}{0}{C^{T}}{1} = \bm{g + C\~g^{-1}C^{T}}{C\~g^{-1}}{\~g^{-1}C^{T}}{\~g^{-1}}. 
\end{equation}
If $g$ is positive definite, then so is $\gm$. For $g$ of a general signature $(r,s)$, one can determine the signature of $\gm$ using Lemma \ref{lem_signature}. The reason why we first consider only $\~g$ induced by $g$ and $C \in \df{p+1}$ follows from the physics - the inverse of $\gm$ naturally appears in the Hamiltonian density of gauge-fixed Polyakov-like action for a $p$-brane. It appeared in exactly this form in the paper of Duff and Lu \cite{dufflu}. See also related concepts in \cite{Hull:2007zu} suitable for various M-geometries. 

There is still one characterization, which could possibly survive the generalization, because the dual bundle $E^{\ast}$ can be equipped with a $\vf{p-1}$-valued pairing $\<\cdot,\cdot\>_{E^{\ast}}$, defined similarly to (\ref{def_hpairing}). We can then ask if $\gm$ viewed as $\gm \in \Hom(E,E^{\ast})$ defines an orthogonal map with respect to $\<\cdot,\cdot\>_{E}$ and $\<\cdot,\cdot\>_{E^{\ast}}$. But this is not true for $p > 1$. This can be most easily seen from the fact that $\G_{E}$ itself is not orthogonal for $p > 1$, and thus even in the $C = 0$ case, $\gm$ is not orthogonal. 

Note that in order to introduce the orthogonal transformations, we have to allow for more general fields in generalized metric. In particular, for $p>1$ we would not assume that $\~g$ is induced from $g$ via (\ref{def_tensorofg}). Moreover, the vector bundle morphism $C \in \Hom(\TM{p},T^{\ast}M)$ does not have to be induced by a $(p+1)$-form $C$. 
Note that every positive definite fiber-wise metric $\gm$ on $E$ can then be decomposed as (\ref{def_hgenmetric}). 
It will turn out that a generalized metric $\gm'$ related to $\gm$ by open-closed relations will not have $\~G$ and $G$ related by (\ref{def_tensorofg}).

For positive definite $g$, the symmetric bilinear form $g + C\~g^{-1}C^{T}$ is invertible, and thus defines a metric on $M$. Using the decomposition Lemma (\ref{lem_ldu-udl}), we see that there exists a unique decomposition
\begin{equation} \label{eq_gminnambufields}
\gm = \bm{1}{0}{-\Pi_{N}^{T}}{1} \bm{G_{N}}{0}{0}{\~G_{N}^{-1}} \bm{1}{-\Pi_{N}}{0}{1},
\end{equation}
where the fields $G_{N}$, $\~G_{N}$ and $\Pi_{N}$ have the form 
\begin{align}
\label{eq_nf1} G_{N} & = g + C\~g^{-1}C^{T}, \\
\label{eq_nf2} \~G_{N} & = \~g + C^{T}g^{-1}C, \\
\label{eq_nf3} \Pi_{N} & = - (g + C\~g^{-1}C^{T})^{-1}C\~g^{-1} = -g^{-1}C(\~g + C^{T}g^{-1}C)^{-1}.
\end{align}
There is a historical reason behind the $N$ subscript. Fields $(G_{N},\~G_{N},\Pi_{N})$ correspond to the Nambu sigma model dual to the membrane sigma model described by fields $(g,\~g,C)$. Note that in general, $\~G_{N}$ is not induced from $G_{N}$ via (\ref{def_tensorofg}), and $\Pi_{N} \in \Hom(\cTM{p},TM)$ is not induced by $\Pi_{N} \in \vf{p+1}$. 
\begin{example}
Let us show an example proving the preceding assertion. Consider $M = \R^{3}$, and let $g$ be the Euclidean metric on $\R^{3}$. Consider $p = 2$. Let $(\partial_{(12)}, \partial_{(13)}, \partial_{(23)})$ be a local basis of $\vf{2}$. The induced metric $\~g$ has the unit matrix in this basis. Any $C \in \Hom(\TM{2},T^{\ast}M)$ induced by a $(p+1)$-form $C$ has the matrix
\[ C = \begin{pmatrix} 
0 & 0 & c \\
0 & -c & 0 \\
c & 0 & 0 
\end{pmatrix},
\]
where $c \defeq C_{123}$. We have
\[ G = \begin{pmatrix}
1+c^{2} & 0 & 0 \\
0 & 1+c^{2} & 0 \\
0 & 0 & 1+ c^{2}
\end{pmatrix}, \;
\~G = \begin{pmatrix}
1+c^{2} & 0 & 0 \\
0 & 1+c^{2} & 0 \\
0 & 0 & 1+ c^{2}
\end{pmatrix}.
\]
\end{example}
This shows that $\~G$ is not of the form (\ref{def_tensorofg}) because such $\~G$ must be quadratic in the elements of $G$, for example $\~G_{(12)(12)} = \delta_{12}^{kl} G_{k1} G_{l2} = G_{12}^{2} = (1 + c^{2})^{2} \neq 1 + c^{2}$. The vector bundle morphism $\Pi_{N}$ has the matrix
\[
\Pi_{N} = \begin{pmatrix}
0 & 0 & -(1+c^{2})^{-1}c \\
0 & (1+c^{2})^{-1}c & 0 \\
-(1+c^{2})^{-1}c & 0 & 0
\end{pmatrix},
\]
which shows that $\Pi_{N}$ in this case \emph{is induced} by a $3$-vector $\Pi_{N} \in \vf{3}$. 

\begin{example}
Finding a case when $\Pi_{N}$ is not induced by $(p+1)$-vector is also not difficult, but one has to go to higher dimensions. Consider $n = 5$, $p = 2$, $M = \R^{5}$ and $g$ the Euclidean metric. There are $10$ strictly ordered $2$-indices, let us order them lexicographically. Define a $3$-form $C$ as $C = dx^{1} \^ dx^{2} \^ dx^{3} + dx^{3} \^ dx^{4} \^ dx^{5}$. We get $G_{N} = 2 g + dx^{3} \otimes dx^{3}$, and thus 
\[ G_{N}^{-1} = \frac{1}{2} g^{-1} - \frac{1}{6} \partial_{3} \otimes \partial_{3}. \]
Then $(\Pi_{N})^{iJ} = -G_{N}^{ik} C_{kJ}$, because $\~g_{IJ} = \delta_{I}^{J}$. We can now simply calculate $\Pi_{N}$ explicitly. One obtains
\begin{equation} (\Pi_{N})^{1(23)} = - \frac{1}{2}, \; (\Pi_{N})^{2(13)} = \frac{1}{2}, \; (\Pi_{N})^{3(12)} = - \frac{1}{3}. \end{equation}
This proves that $\Pi_{N}$ is not induced by a $3$-vector.  
\end{example}
To conclude this section, we can briefly discuss the $p \geq 1$ analogue of the open-closed relations (\ref{eq_OCrelations}). Let $\Pi \in \Hom(\cTM{p},TM)$ be any vector bundle morphism. Define new generalized metric $\gm'$ as 
\begin{equation}
\gm' = (e^{\Pi})^{T} \gm e^{\Pi}.
\end{equation}
Recall that $e^{\Pi} \in \Aut(E)$ is defined as 
\begin{equation}
e^{\Pi} = \bm{1}{\Pi}{0}{1}. 
\end{equation}
We immediately see that this has a solution by rewriting (\ref{eq_gminnambufields}) as $\gm^{-1} = e^{\Pi_{N}} \G_{N}^{-1} (e^{\Pi_{N}})^{T}$, where $\G_{N} = \text{BDiag}(G,\~G^{-1})$. This proves that $\gm'^{-1} = e^{(\Pi_{N} - \Pi)} \G_{N}^{-1} (e^{(\Pi_{N} - \Pi)})^{T}$. We thus have
\begin{equation} \label{eq_hopenclosedNF} G'_{N} = G_{N}, \; \~G'_{N} = \~G_{N}, \; \Pi'_{N} = \Pi_{N} - \Pi. \end{equation}
We see that everything works as in the case of ordinary open-closed relations. We can also use (\ref{eq_hopenclosedNF}) to write down the explicit relations between $(g,\~g,C)$ and new fields, usually denoted as $(G,\~G,\Phi)$. We get
\begin{align}
\label{eq_hoc1} g + C\~g^{-1}C^{T} & = G + \Phi \~G^{-1} \Phi^{T}, \\
\label{eq_hoc2} \~g + C^{T}g^{-1}C & = \~G + \Phi^{T}g^{-1}\Phi, \\
\label{eq_hoc3} C\~g^{-1} & = \Phi \~G^{-1} - (G + \Phi \~G^{-1} \Phi^{T}) \Pi, \\
\label{eq_hoc4} g^{-1}C & = G^{-1}\Phi - \Pi(\~G + \Phi^{T}G^{-1}\Phi). 
\end{align}
Similarly to the above examples, $\~G$ is in general not in the form (\ref{def_tensorofg}), and $\Phi$ is not necessarily induced by a $(p+1)$-form $\Phi \in \df{p+1}$.  
\section{Doubled formalism} \label{sec_doubled}
This section will provide a more rigid framework for the generalized metric $\gm$ defined by (\ref{def_hgenmetric}). The main clue leading to this approach was the fact that open-closed relations (\ref{eq_hoc1} - \ref{eq_hoc4}) can be rewritten in the formal block matrix form 
\begin{equation} \label{eq_hocjinak}
\bm{g}{C}{-C^{T}}{\~g}^{-1} = \bm{G}{\Phi}{-\Phi^{T}}{\~G}^{-1} + \bm{0}{\Pi}{-\Pi^{T}}{0}. 
\end{equation}
Let $W$ be a vector bundle $W = TM \oplus \TM{p}$. Define the following vector bundle morphisms:
\begin{equation}
\G = \bm{g}{0}{0}{\~g}, \; \B = \bm{0}{C}{-C^{T}}{0}, \; \H = \bm{G}{0}{0}{\~G}, \; \Xi = \bm{0}{\Phi}{-\Phi^{T}}{0}, \; \Theta = \bm{0}{\Pi}{-\Pi^{T}}{0}. 
\end{equation}
We can now rewrite (\ref{eq_hocjinak}) in the form resembling the original open-closed relations (\ref{eq_OCrelations}): 
\begin{equation} (\G + \B)^{-1} = (\H + \Xi)^{-1} + \Theta. \end{equation}
See that $\G$ and $\H$ are positive definite fiber-wise metrics on $W$, $\B, \Xi \in \Omega^{2}(W)$, and $\Theta \in \mathfrak{X}^{2}(W)$. This suggests that we should focus on the generalized geometry of $W$. In particular, to consider the vector bundle $V = W \oplus W^{\ast}$. This vector bundle is equipped with a natural pairing $\<\cdot,\cdot\>_{V}$, and thus also with a natural orthogonal group $O(d,d)$, where $d = n + \binom{n}{p}$. This configuration allows one to define a generalized metric on $V$ using the formalism of Section \ref{sec_genmetric}. By the generalized metric we mean all forms equivalent to Definition \ref{def_genmetric}. Let us see how this allows one to  describe the generalized metric of Section \ref{sec_hgenmetric}. Note that we do not assume that $C \in \df{p+1}$, and $\~g$ is in general ont of the form (\ref{def_tensorofg}). 

\begin{definice}
Let $\gm_{V}$ be a generalized metric on $V = W \oplus W^{\ast}$, where $W = TM \oplus \TM{p}$. We can view $\gm_{V}$ as an element of $\Hom(V,V^{\ast})$. Note that $E$ and $E^{\ast}$ are subbundles of both $V$ and $V^{\ast}$.

We say that $\gm_{V}$ is a {\bfseries{relevant generalized metric}}, if $\gm_{V}(E) \subseteq E^{\ast}$. 
\end{definice}

Let us now show that the restriction of a relevant generalized metric $\gm_{V}$ to the subbundle $E$ is exactly the generalized metric $\gm$ defined by (\ref{def_hgenmetric}). First, note that every $\gm_{V}$ is uniquely determined by a positive definite metric $\G$ on $W$, $\B \in \Omega^{2}(W)$, and has the formal block form
\begin{equation} \label{eq_gmV}
\gm_{V} = \bm{\G - \B \G^{-1} \B}{\B \G^{-1}}{-\G^{-1} \B}{\G^{-1}}. 
\end{equation}
It is straightforward to show that $\gm_{V}$ is a relevant generalized metric, if and only if $\G$ and $\B$ have the form
\begin{equation} \label{eq_GaBgmV}
\G = \bm{g}{0}{0}{\~g}, \; \B = \bm{0}{C}{-C^{T}}{0}, 
\end{equation}
where $g$ is a Riemannian metric on $M$, $\~g$ is a positive definite fiber-wise metric on $\TM{p}$, and $C \in \Hom(\TM{p},T^{\ast}M)$. We have shown that $\gm_{V}$ is uniquely determined by a map $\A = \G + \B$. This map now reads
\begin{equation} \A = \bm{g}{C}{-C^{T}}{\~g}. \end{equation}

To see how $\gm_{V}$ and $\gm$ fit together, note that $V$ can also be written as $V = E \oplus E^{\ast}$. Moreover, $E$ and $E^{\ast}$ are complementary maximally isotropic subbundles of $V$, and $\<\cdot,\cdot\>_{V}$ coincides with the canonical pairing of $E$ and $E^{\ast}$. Because $\gm_{V}$ is a relevant generalized metric, we see that the involution $\T_{V}$ corresponding to $\gm_{V}$ satisfies $\T_{V}(E) \subseteq E^{\ast}$. We can write $\T_{V}$ as a block matrix with respect to the splitting $V = E \oplus E^{\ast}$ as 
\begin{equation}
\T_{V} = \bm{0}{\mathbf{H}}{\gm}{\mathbf{N}}. 
\end{equation}
Now $\T_{V}$ has to be symmetric with respect to $\<\cdot,\cdot\>_{V}$, and  $\T_{V}^{2} = 1$. These two properties give $\mathbf{N} = 0$, and $\mathbf{H} = \mathbf{G}^{-1}$. An examination of $\gm$ shows that it is exactly the generalized metric (\ref{def_hgenmetric}). Moreover, the corresponding eigenbundles $V_{\pm}$ have the form
\begin{equation} V_{\pm} = \{ e \pm \gm(e) \ | \ e \in E \}. \end{equation}
In the isotropic splitting $V = E \oplus E^{\ast}$, a relevant generalized metric $\gm_{V}$ is thus described by a pair $(\gm,\mathbf{0})$, where $\gm$ is a positive definite fiber-wise metric in $E$, and $\mathbf{0} \in \Omega^{2}(E)$.  Again, let us emphasize that this description does not single out $\gm$ where $\~g$ is an induced metric (\ref{def_tensorofg}), and $C \in \df{p+1}$. 

We can consider $O(d,d)$ transformations an their action on the generalized metric $\gm_{V}$, similarly to Section \ref{sec_Onngen}. Clearly, not for any $\O_{V} \in O(d,d)$, the new metric $\gm'_{V} \defeq \O_{V}^{T} \gm_{V} \O_{V}$ is again a relevant one.
\begin{example}
Let us show some examples of $O(d,d)$-transformations. We will follow the structure of Example \ref{ex_Onntransf}. We only have to discuss the conditions under which the new generalized metric $\gm'_{V}$ becomes relevant. We assume that $\gm_{V}$ is of the form (\ref{eq_gmV}), where $\G$ and $\B$ are parametrized by $(g,\~g,C)$ as in (\ref{eq_GaBgmV}). 
\begin{itemize}
\item Let $\mathcal{Z} \in \Omega^{2}(V)$, and choose $\mathcal{O}_{V} = e^{-\mathcal{Z}}$. The new generalized metric $\gm'_{V} = \O_{V}^{T} \gm_{V} \O_{V}$ is described by a pair $(\G',\B')$, and we get 
\begin{equation} \G' = \G, \; \B' = \B + \mathcal{Z}. \end{equation}
Clearly $(\G',\B')$ describes a relevant metric, if and only if $\B'$ is again block off-diagonal. This happens if and only if $\mathcal{Z}$ is block off-diagonal:
\begin{equation} \mathcal{Z} = \bm{0}{Z}{-Z^{T}}{0}, \end{equation}
where $Z \in \Hom(\TM{p},T^{\ast}M)$. The fiber-wise metric $\gm'$ corresponding to $\gm'_{V}$ is then described by a triplet $(g,\~g,C+Z)$. 
\item Let $\Theta \in \mathfrak{X}^{2}(V)$. Define $\O_{V} = e^{\Theta}$. The new generalized metric $\gm'_{V} = \O_{V}^{T} \gm_{V} \O_{V}$ is described by a pair $(\H,\Xi)$, and we get the relation 
\begin{equation} \label{eq_hocjestejednou} 
(\G + \B)^{-1} = (\H + \Xi)^{-1} + \Theta. 
\end{equation}
To see which $\Theta$ give a relevant $\gm'_{V}$ is similar to the previous example - one just has to switch to the dual fields describing $\gm_{V}$. In particular, if $\A = \G + \B$, let $\A^{-1} = \G_{N}^{-1} + \Theta_{N}$ for a positive definite fiber-wise metric $\G_{N}$ on $V$, and $\Theta_{N} \in \mathfrak{X}^{2}(V)$. One can show that for a relevant $\gm_{V}$ they have the form
\begin{equation}
\G_{N} = \bm{G_{N}}{0}{0}{\~G_{N}}, \; \Theta_{N} = \bm{0}{\Pi_{N}}{-\Pi_{N}^{T}}{0},
\end{equation}
where $(G_{N},\~G_{N},\Pi_{N})$ are the fields (\ref{eq_nf1} - \ref{eq_nf3}). Similarly to the $p=1$ case, we have $\Theta'_{N} = \Theta_{N} - \Theta$. This proves that $\gm'_{V}$ is a relevant generalized metric, if and only if $\Theta$ is block off-diagonal:
\begin{equation} \Theta = \bm{0}{\Pi}{-\Pi^{T}}{0}, \end{equation}
where $\Pi \in \Hom(\cTM{p},TM)$. To conclude, if $\gm'$ corresponding to $\gm'_{V}$ is described by a triplet $(G,\~G,\Phi)$, we get exactly the equation (\ref{eq_hocjinak}). This is in turn equivalent to the set of equations (\ref{eq_hoc1} - \ref{eq_hoc4}). 
\item Let $\mathcal{N} \in \End{V}$, and choose $\O_{V} = \O_{\mathcal{N}}$, where $\O_{\mathcal{N}}$ is defined as in (\ref{eq_ON}). The new generalized metric $\gm'_{V} = \O_{V}^{T} \gm_{V} \O_{V}$ is described by a pair $(\G',\B')$, where
\begin{equation}
\G' = \mathcal{N}^{T} \G \mathcal{N}, \; \B' = \mathcal{N}^{T} \B \mathcal{N}. 
\end{equation}
Criteria for $\mathcal{N}$ to give a relevant generalized metric $\gm'_{V}$ are in this case more intricate. Indeed, let $\mathcal{N}$ have the block form
\begin{equation} \mathcal{N} = \bm{N_{1}}{N_{2}}{N_{3}}{N_{4}}. \end{equation}
Then $\gm'_{V}$ defines a relevant generalized metric, if and only if
\begin{align}
N_{1}^{T}gN_{2} + N_{3}^{T}gN_{4} & = 0, \\
N_{2}^{T}\~g N_{1} + N_{4}^{T} \~g N_{3} & = 0, \\
N_{2}^{T} C
N_{4} - N_{4}^{T} C^{T} N_{2} & = 0, \\
N_{1}^{T} C N_{3} - N_{3}^{T} C^{T} N_{1} & = 0.
\end{align}
Let $(g',\~g',C')$ be the fields corresponding to the fiber-wise metric $\gm'$. There are two simple solutions to this set of equations. First consider $N_{2} = N_{3} = 0$. This gives 
\begin{equation} g' = N_{1}^{T}gN_{1}, \; \~g' = N_{4}^{T} \~g N_{4}, \; C' = N_{1}^{T}CN_{4}. \end{equation}
The second example is $N_{1} = N_{4} = 0$. In this case, one obtains
\begin{equation}
g' = N_{3}^{T}\~g N_{3}, \; \~g' = N_{2}^{T}gN_{2}, \; C' = -N_{3}^{T} C^{T} N_{2}. 
\end{equation}
This example shows that criteria to give a relevant generalized metric are not universal - they can depend on the original generalized metric $\gm_{V}$. 
\end{itemize}
\end{example}
\section{Leibniz algebroid for doubled formalism} \label{sec_dfLeibniz}
We have just shown that the vector bundle $V = W \oplus W^{\ast}$ proves to be a natural way to describe the generalized geometry on the vector bundle $E$. To complete this discussion, we have to introduce a suitable bracket structure  on $V$. In particular, we would like to define a bracket $[\cdot,\cdot]_{V}$, which will restrict to the higher Dorfman bracket on (\ref{def_dorfman2}) on $E$. To do so, we will use the following lemma:
\begin{lemma} \label{lem_doubledLeibniz}
Let $(E,\rho,[\cdot,\cdot]_{E})$ be a Leibniz algebroid. Then there is always a Leibniz algebroid structure on $V = E \oplus E^{\ast}$, restricting to $[\cdot,\cdot]_{E}$ on $E$. In particular, define 
\begin{equation} \label{def_Vbracket} [e + \alpha, e' + \alpha']_{V} = [e,e']_{E} + \Li{e}^{E}\alpha', \end{equation}
for all $e,e' \in \Gamma(E)$, and $\alpha,\alpha' \in \Gamma(E^{\ast})$. The anchor $\rho_{V} \in \Hom(V,TM)$ is defined as $\rho_{V} = \rho \circ pr_{1}$. Then $(V,\rho_{V},[\cdot.\cdot]_{V})$ is a Leibniz algebroid. By $\Li{}^{E}$ we mean the induced Lie derivative (\ref{def_Lieon1form}). 
\end{lemma}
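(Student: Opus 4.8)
The plan is to verify directly that the triple $(V, \rho_V, [\cdot,\cdot]_V)$ with $[\cdot,\cdot]_V$ defined by (\ref{def_Vbracket}) satisfies the two axioms of Definition \ref{def_leibniz}, namely the Leibniz rule (\ref{def_bracketLeibniz}) and the Leibniz identity (\ref{def_bracketJI}), leaning throughout on the properties of the induced Lie derivative $\Li{}^{E}$ collected in the Lemma after Definition \ref{def_leibniz}. First I would record the obvious facts: $\rho_V = \rho \circ pr_1$ is a genuine vector bundle morphism $V \to TM$, and $[\cdot,\cdot]_V$ is $\R$-bilinear because both $[\cdot,\cdot]_E$ and $e \mapsto \Li{e}^{E}$ are. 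Note also that $[\cdot,\cdot]_V$ has no $\Gamma(E^\ast)$-component in its first slot contributing to the bracket — only $e = pr_1(e+\alpha)$ acts — which is exactly what makes $\rho_V$ the right anchor.

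\emph{Leibniz rule.} For $f \in \cif$ I would compute
\[
[e+\alpha, f(e'+\alpha')]_V = [e, fe']_E + \Li{e}^{E}(f\alpha')
= f[e,e']_E + (\rho(e).f)e' + f\,\Li{e}^{E}\alpha' + (\rho(e).f)\alpha',
\]
where the first bracket uses (\ref{def_bracketLeibniz}) for $[\cdot,\cdot]_E$ and the Lie-derivative term uses that $\Li{e}^{E}$ is a derivation satisfying $\Li{e}^{E}(f\alpha') = f\Li{e}^{E}\alpha' + (\Li{e}^{E}f)\alpha' = f\Li{e}^{E}\alpha' + (\rho(e).f)\alpha'$ (this is the $\T^0_0$ case of the Lie derivative, $\Li{e}^{E}f = \rho(e).f$). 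Collecting terms gives $f[e+\alpha,e'+\alpha']_V + (\rho_V(e+\alpha).f)(e'+\alpha')$, which is precisely (\ref{def_bracketLeibniz}) for $[\cdot,\cdot]_V$.

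\emph{Leibniz identity.} Here I would expand $[e+\alpha, [e'+\alpha', e''+\alpha'']_V]_V$ and sort the result into its $\Gamma(E)$-part and its $\Gamma(E^\ast)$-part. The $\Gamma(E)$-part is $[e,[e',e'']_E]_E$, and the Leibniz identity (\ref{def_bracketJI}) for $[\cdot,\cdot]_E$ handles it verbatim. The $\Gamma(E^\ast)$-part collects to
\[
\Li{e}^{E}\Li{e'}^{E}\alpha'' - \Li{e'}^{E}\Li{e}^{E}\alpha'' - \Li{[e,e']_E}^{E}\alpha'',
\]
after also using that $\Li{[e',e'']_E}^{E}\alpha = \Li{e'}^{E}\Li{e''}^{E}\alpha - \Li{e''}^{E}\Li{e'}^{E}\alpha$ (rearranged) together with the identification of the remaining mixed terms — but in fact the cleanest route is to observe that the whole thing is exactly the statement that $e \mapsto \Li{e}^{E}$, regarded as a map into operators on $\Gamma(E)\oplus\Gamma(E^\ast)$, respects the bracket, which is the content of (\ref{lem_LieEcommutator}): $\Li{e}^{E}\Li{e'}^{E} - \Li{e'}^{E}\Li{e}^{E} = \Li{[e,e']_E}^{E}$. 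So the $\Gamma(E^\ast)$-part vanishes identically by (\ref{lem_LieEcommutator}). Finally I would note that $[\cdot,\cdot]_V$ restricted to $\Gamma(E)$ is manifestly $[\cdot,\cdot]_E$, and $\rho_V|_{\Gamma(E)} = \rho$, establishing the restriction claim.

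\emph{Main obstacle.} The only place requiring genuine care is the bookkeeping in the Leibniz identity: one must keep straight which terms come from differentiating the bracket $[e',e'']_E$ (producing $[e,[e',e'']_E]_E$) versus differentiating $\Li{e'}^{E}\alpha''$ (producing $\Li{e}^{E}\Li{e'}^{E}\alpha''$), and then recognize that the cross-terms reorganize precisely into an application of (\ref{lem_LieEcommutator}). This is routine but is the step most prone to sign or index slips; everything else is immediate from the definitions and the already-established properties of $\Li{}^{E}$. I would therefore present the proof compactly as ``direct verification using (\ref{def_bracketLeibniz}), (\ref{def_bracketJI}) for $[\cdot,\cdot]_E$ and (\ref{lem_LieEleibniz}), (\ref{lem_LieEcommutator}) for $\Li{}^{E}$,'' spelling out only the Leibniz-identity reorganization.
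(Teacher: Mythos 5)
Your proposal is correct and follows essentially the same route as the paper: verify the Leibniz rule using the derivation property of $\Li{}^{E}$ on functions times sections of $E^{\ast}$, then expand the three terms of the Leibniz identity and observe that the $\Gamma(E)$-part is the Leibniz identity for $[\cdot,\cdot]_{E}$ while the $\Gamma(E^{\ast})$-part is exactly the commutator identity (\ref{lem_LieEcommutator}). The only difference is that you spell out the Leibniz-rule computation explicitly, which the paper dispatches in one line.
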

\begin{proof}
The Leibniz rule for $[\cdot,\cdot]_{V}$ follows from (\ref{lem_LieEderivation}). For the Leibniz identity, we have
\begin{align}
[e+\alpha,[e'+\alpha',e''+\alpha'']_{V}]_{V} & = [e,[e',e'']_{E}]_{E} + \Li{e}^{E} \Li{e'}^{E} e'', \\
[[e+\alpha,e'+\alpha']_{V},e''+\alpha'']_{V} & = [[e,e']_{E},e'']_{E} + \Li{[e,e']_{E}}^{E}e'', \\
[e' + \alpha', [e + \alpha, e'' + \alpha'']_{V}]_{V} & = [e',[e,e'']_{E}]_{E} + \Li{e'}^{E} \Li{e}^{E}e''. 
\end{align}
One now sees that the Leibniz identity for $[\cdot,\cdot]_{V}$ follows from the one of $[\cdot,\cdot]_{E}$ and its property (\ref{lem_LieEcommutator}). The bracket $[\cdot,\cdot]_{V}$ clearly restricts to $[\cdot,\cdot]_{E}$ on $E$. 
\end{proof}
Now consider $E = TM \oplus \cTM{p}$ with the higher Dorfman bracket (\ref{def_dorfman2}). We have already calculated $\Li{}^{E}$ for this bracket, see (\ref{eq_LieforDorfman}). We will denote the sections of $V$ as $(X,P,\alpha,\xi)$ for $X \in \vf{}$, $P \in \vf{p}$, $\alpha \in \df{1}$, and $\xi \in \df{p}$. Then,
\begin{equation} \label{eq_VLeibniz}
[(X,P,\alpha,\xi), (Y,Q,\beta,\eta)]_{V} =\big([X,Y], \Li{X}Q, \Li{X}\beta + (d\xi)(Q), \Li{X}\eta - \io_{Y}d\xi \big), 
\end{equation}
for all $(X,P,\alpha,\xi), (Y,Q,\beta,\eta) \in \Gamma(V)$. We claim that this is the bracket most suitable to describe the generalized geometry on $E \subseteq V$. The bracket (\ref{eq_VLeibniz}) was briefly mentioned by Hagiwara in \cite{hagiwara} to describe Nambu-Dirac structures. 

\begin{rem}
Note that with respect to the splitting $V = W \oplus W^{\ast}$, this bracket strongly resembles the usual Dorfman bracket. Indeed, see that there is a Leibniz algebroid bracket on $W$ defined as 
\begin{equation}
[(X,P),(Y,Q)]_{W} = ([X,Y], \Li{X}Q), 
\end{equation}
for all $(X,P), (Y,Q) \in \Gamma(W)$. Then $\Li{(X,P)}^{W}(\beta,\eta) = (\Li{X}\beta, \Li{X}\eta)$, and we have 
\begin{equation}
[(X,P,\alpha,\xi),(Y,Q,\beta,\eta)]_{V} = \big([(X,P),(Y,Q)]_{W}, \Li{(X,P)}^{W}(\beta,\eta) - \io_{(Y,Q)} d_{W}(\alpha,\xi) \big),
\end{equation}
where $\io_{(Y,Q)}d_{W}(\alpha,\xi)$ is a formal operation \emph{defined} as $\io_{(Y,Q)}d_{W}(\alpha,\xi) \defeq ( -(d\xi)(Q), \io_{Y}d\xi)$. There is no actual definition of the differential $d_{W}$. 
\end{rem}
We have introduced the bracket (\ref{eq_VLeibniz}) in order to be able to define Dirac structures of the vector bundle $V$. Since we have the fiber-wise metric $\<\cdot,\cdot\>_{V}$, we can study involutive maximally isotropic subbundles of $V$. One can roughly follow Section \ref{sec_Dirac}.  We will make use of the following simple technical Lemma
\begin{lemma} \label{lem_LieofTis0}
Let $T \in \T^{p}_{q}(M)$ be a tensor field on $M$, completely skew-symmetric in all upper indices, and in all lower indices (equivalently $T \in \Hom(\TM{p}, \TM{q}))$, such that $\Li{X}T = 0$ for all $X \in \vf{}$. Then 
\begin{enumerate} \item For $p = q$, $T = \lambda \cdot 1$, where $\lambda \in \Omega^{0}_{closed}(M)$.
\item For $p \neq q$, $T = 0$. 
\end{enumerate}
\end{lemma}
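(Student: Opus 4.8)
The plan is to exploit the interplay between the pointwise/algebraic constraints on a tensor that is skew in all upper indices and in all lower indices, and the differential constraint $\Li{X}T = 0$. The key observation is that $\Li{X}T = 0$ for \emph{all} vector fields $X$ is an extremely rigid condition: it says $T$ is invariant under the (local) flow of every vector field, hence under the pseudogroup of all local diffeomorphisms of $M$, which acts transitively on frames. I would first reduce to a pointwise statement by showing that the components of $T$ in any local coordinate chart must be constant, and then that these constant values are forced to be of the claimed form by invariance under linear changes of coordinates (i.e.\ under $\mathrm{GL}(n,\R)$ acting on $\Lambda^p\R^n \to \Lambda^q\R^n$).

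First I would work in a local chart $(y^1,\dots,y^n)$ and write $T = T^{J}_{I}\,\partial_J \otimes dy^{I}$ with $I$ a strictly ordered $q$-index and $J$ a strictly ordered $p$-index. Taking $X = \partial_k$ in $\Li{X}T = 0$ gives $\partial_k T^{J}_{I} = 0$ for every $k$, so each component function is locally constant; by restricting to a connected chart we may treat $T$ as a constant tensor on $\R^n$. Next, taking $X = y^a \partial_b$ (a linear vector field) in $\Li{X}T = 0$ and using that $T$ is constant, the Lie derivative reduces to the purely algebraic action $[\Li{y^a\partial_b},\cdot]$ of the matrix unit $E^b{}_a \in \mathfrak{gl}(n,\R)$ on the representation $\Lambda^p(\R^n)^{\ast}{}^{\ast}\otimes\Lambda^q(\R^n)^{\ast}$ — concretely the condition becomes, for all $a,b$,
\begin{equation}
\sum_{r} \delta^{b}_{j_r}\, T^{j_1\cdots a\cdots j_p}_{i_1\cdots i_q} \;=\; \sum_{s} \delta^{i_s}_{a}\, T^{j_1\cdots j_p}_{i_1\cdots b\cdots i_q},
\end{equation}
with $a$ inserted in the $r$-th upper slot on the left and $b$ in the $s$-th lower slot on the right. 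This says precisely that the constant tensor $T$ is a $\mathfrak{gl}(n,\R)$-module homomorphism from $\Lambda^p\R^n$ to $\Lambda^q\R^n$, equivalently an element of $\Hom_{\mathrm{GL}(n,\R)}(\Lambda^p\R^n, \Lambda^q\R^n)$.

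Then I would invoke Schur's lemma together with the fact that $\Lambda^p\R^n$ and $\Lambda^q\R^n$ are irreducible $\mathrm{GL}(n,\R)$-modules (for $0 \le p,q \le n$), and that they are non-isomorphic for $p \neq q$ (e.g.\ distinguished by dimension $\binom{n}{p} \neq \binom{n}{q}$, or by the action of scalar matrices). This immediately yields $T = 0$ when $p \neq q$, and $T = \lambda\cdot 1$ for a single scalar $\lambda$ when $p = q$; feeding $T = \lambda\cdot 1$ back into $\partial_k T = 0$ shows $\lambda$ is locally constant, i.e.\ $\lambda \in \Omega^0_{closed}(M)$. Finally I would note the argument is local but the conclusion is chart-independent, since $1 \in \Hom(\TM{p},\TM{p})$ and $0$ are globally defined, and two locally constant functions agreeing with patched transition data glue; so the global statement follows.

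The main obstacle I anticipate is not any single hard step but making the ``$\Lambda^p\R^n$ is an irreducible $\mathrm{GL}(n,\R)$-module, and these are pairwise non-isomorphic'' input self-contained at the level of this thesis — one either cites representation theory or gives a short hands-on argument: pick a nonzero component $T^{J}_{I}$ with, say, some index value appearing in $J$ but not in $I$, apply a suitable $E^b{}_a$ to produce a contradiction with skew-symmetry or to relate all components, ultimately showing the only invariant maps are scalar multiples of the identity. This elementary ``index chase'' version is a bit tedious but routine, and is really the crux; everything else (locally constant components, reduction to the algebraic condition) is immediate from $\Li{X}T = 0$ and the fact that commas commute with the flow.
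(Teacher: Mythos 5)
Your proposal is correct, and it reaches the result by a genuinely different route than the thesis. The thesis never establishes constancy of the components first: it compares $\Li{fX}T$ with $f\,\Li{X}T$ to extract a pointwise algebraic identity from the non-tensoriality of the Lie derivative in $X$, and then runs a hands-on index chase (choices such as $X=\partial_{i_a}$, $f=y^{i_a}$) which kills every component $T^{I}_{J}$ with $I\neq J$ — already covering $p\neq q$ — and forces $T^{I}_{J}=\lambda_{I}\delta^{I}_{J}$ with all $\lambda_{I}$ equal by chaining multi-indices differing in exactly one entry; only at the very end is the differential content of $\Li{X}T=0$ used, to conclude $\lambda\in\Omega^{0}_{closed}(M)$. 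You instead obtain constancy of the components from coordinate vector fields, then read the condition for linear fields $y^{a}\partial_{b}$ as $\mathfrak{gl}(n,\R)$-equivariance of a constant map between exterior powers, and finish with Schur's lemma. The two arguments hinge on the same algebraic constraint — your intertwiner equation is precisely the thesis's identity specialized to constant $X$ and linear $f$ — so your version is shorter and more conceptual at the cost of importing representation theory, while the thesis's index chase is exactly the self-contained elementary argument you yourself anticipate would be needed to avoid that import. Two small refinements if you keep your route: the dimension count $\binom{n}{p}\neq\binom{n}{q}$ does not separate $\Lambda^{p}\R^{n}$ from $\Lambda^{n-p}\R^{n}$, so you must indeed fall back on the scalar action as you suggest — cheapest is the Euler field $X=y^{m}\partial_{m}$, which on a constant $T$ gives $(q-p)T=0$ and settles $p\neq q$ in one line — and over $\R$ plain Schur only yields that the commutant is a division algebra, so to land on $T=\lambda\cdot 1$ you should note that $\Lambda^{p}\R^{n}$ is absolutely irreducible (the diagonal torus forces any commuting endomorphism to be diagonal in the basis $\partial_{I}$, and permutation matrices equalize the eigenvalues), which is again a miniature of the thesis's index chase. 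Neither point is a genuine gap, only where the elementary work hides.
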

\begin{proof}
See Appendix \ref{ap_proofs}. 
\end{proof}
\begin{example}
Let us bring up some important examples of Dirac structures of $V$. 
\begin{itemize}
\item By definition (Lemma \ref{lem_doubledLeibniz}), $E$ and $E^{\ast}$ define Dirac structures of $V$. 
\item Subbundles $W$ and $W^{\ast}$ define Dirac structures of $V$.
\item Let $\B \in \Omega^{2}(W)$ be an arbitrary $2$-form on $W$. We know that $e^{\B} \in O(d,d)$. This implies that $e^{\B}(W)$ is a maximally isotropic subbundle of $V$. Let $\B$ be of the block form
\begin{equation} \label{def_blockB} \B = \bm{B}{C}{-C^{T}}{\~B}, \end{equation} 
where $B \in \df{2}$, $\~B \in \Omega^{2}(\TM{p})$, and $C \in \Hom(\TM{p},T^{\ast}M)$. The subbundle $e^{B}(W)$ is a graph of $\B \in \Hom(W,W^{\ast})$, that is 
\begin{equation} G_{\B} = \{ (w, \B(w)) \; | \; w \in W \} \subseteq V. \end{equation}
The involutivity condition $[G_{\B},G_{\B}]_{V} \subseteq G_{\B}$ gives two conditions
\begin{align}
\label{eq_hdirac1} \Li{X}(B(Y) + C(Q)) + d(-C^{T}(X) + \~B(P))(Q) & = B([X,Y]) + C(\Li{X}Q), \\
\label{eq_hdirac2} \Li{X}(-C^{T}(Y) + \~B(Q)) - \io_{Y}d(-C^{T}(X) + \~B(P)) & = -C^{T}([X,Y]) + \~B(\Li{X}Q)). 
\end{align}
These two equations have to hold for all $X,Y \in \vf{}$ and for all $P,Q \in \vf{p}$. First note that there is only one term containing the	 pair $(P,Q)$ in the first condition. This implies $d(\~B(P)) = 0$ for all $P \in \vf{p}$. Hence $df \^ \~B(P) = 0$ for all $P \in \vf{p}$. For $p < n$, this gives $\~B = 0$. For $p = n$, there is no non-trivial $\~B$, because $\TM{p}$ has rank $1$. 

The part of (\ref{eq_hdirac1}) containing the pair $(X,Y)$ gives $\Li{X}(B(Y)) = B([X,Y])$. This is equivalent to $\Li{X}B = 0$. Using Lemma \ref{lem_LieofTis0}, we get $B = 0$. There remain only two non-trivial equations. First, consider the part of (\ref{eq_hdirac2}) containing the pair $(X,Y)$. One obtains
\begin{equation} \label{eq_hdirac3}
\Li{X}(C^{T}(Y)) - \io_{Y}d(C^{T}(X)) = C^{T}([X,Y]). 
\end{equation}
This condition is $\cif$-linear in $Y$, but in general not in $X$. This forces the following condition to hold for every $f \in \cif$: 
\begin{equation} 
df \^ [ \io_{X}C^{T}(Y) + \io_{Y}C^{T}(X) ] =0. 
\end{equation}
We see that necessarily $\io_{X}C^{T}(Y) + \io_{Y}C^{T}(X) = 0$. This proves that $C$ has to be induced by $C \in \df{p+1}$, $C^{T}(X) = \io_{X}C$. Using this in (\ref{eq_hdirac3}) gives $\io_{Y}\io_{X} dC = 0$, that is $C \in \Omega^{p+1}_{closed}(M)$. There still remains one condition for the terms containing the pair $(X,Q)$ in (\ref{eq_hdirac1}). One can show that it already follows from $dC = 0$. 
We have just proved that $G_{\B}$ is a Dirac structure of $V$, if and only if $\B$ is of the form
\begin{equation} 
\B = \bm{0}{C}{-C^{T}}{0}, \; C \in \Omega^{p+1}_{closed}(M).
\end{equation}
Now note that we have three more isotropic subbundles which we can produce using $e^{\B}$. In particular $e^{\B}(W^{\ast})$, $e^{\B}(E)$, and $e^{\B}(E^{\ast})$. None of these other choices give something interesting. 
\item Let $\Theta \in \mathfrak{X}^{2}(W)$. Then $e^{\Theta}(W^{\ast})$ is a maximally isotropic subbundle of $V$. Let $\Theta$ have the block form
\begin{equation}
\Theta = \bm{\pi}{\Pi}{-\Pi^{T}}{\~\pi}. 
\end{equation}
The subbundle $e^{\Theta}(W^{\ast})$ is in fact the graph of the map $\Theta$:
\begin{equation}
G_{\Theta} = \{ (\Theta(\mu), \mu ) \; | \; \mu \in \Gamma(W^{\ast}) \} \subseteq W \oplus W^{\ast}.
\end{equation}
Let us examine the consequences of the involutivity condition $[G_{\Theta},G_{\Theta}]_{V} \subseteq G_{\Theta}$. We get the set of two equations 
\begin{align}
[\pi(\alpha) + \Pi(\xi), \pi(\beta) + \Pi(\eta)] & = \pi \big( \Li{\pi(\alpha) + \Pi(\xi)}\beta + (d\xi)( -\Pi^{T}(\beta) + \~\pi(\eta)) \big) \\
& + \Pi \big( \Li{\pi(\alpha) + \Pi(\xi)}\eta - \io_{\pi(\beta) + \Pi(\eta)}d\xi \big), \nonumber \\
\Li{\pi(\alpha) + \Pi(\xi)}( -\Pi^{T}(\beta) + \~\pi(\eta)) & = -\Pi^{T} \big( \Li{\pi(\alpha) + \Pi(\xi)}\beta + (d\xi)( -\Pi^{T}(\beta) + \~\pi(\eta)) \big) \\
& + \~\pi \big( \Li{\pi(\alpha) + \Pi(\xi)}\eta - \io_{\pi(\beta) + \Pi(\eta)}d\xi \big). \nonumber
\end{align}
Since these two equations have to hold for all $\alpha,\beta \in \df{1}$ and $\xi,\eta \in \df{p}$, we can find these equivalent to the set of more simpler equations:
\begin{align}
[\pi(\alpha),\pi(\beta)] & = \pi( \Li{\pi(\alpha)} \beta), \\
[\pi(\alpha), \Pi(\eta)] & = \Pi( \Li{\pi(\alpha)} \eta), \\
[\Pi(\xi), \pi(\beta)] & = \pi\big( \Li{\Pi(\xi)}\beta - (d\xi)(\Pi^{T}(\beta)) \big) - \Pi( \io_{\pi(\beta)}d\xi), \\
[\Pi(\xi),\Pi(\eta)] & = \pi \big( (d\xi)(\~\pi(\eta))\big) + \Pi( \Li{\Pi(\xi)}\eta - \io_{\Pi(\eta)}d\xi), \\
\Li{\pi(\alpha)}(\Pi^{T}(\beta)) & = \Pi^{T}(\Li{\pi(\alpha)}\beta), \\
\Li{\pi(\alpha)}(\~\pi(\eta)) & = \~\pi( \Li{\pi(\alpha)}\eta), \\
\Li{\Pi(\xi)}( \Pi^{T}(\beta)) & = \Pi^{T}\big( \Li{\Pi(\xi)}\beta - (d\xi)(\Pi^{T}(\beta))\big) + \~\pi( \io_{\pi(\beta)}d\xi ). 
\end{align}
Let us focus on the very first equation. It can be rewritten as $\Li{\pi(\alpha)}\pi = 0$. It is not linear in $\alpha$, which forces $\pi(\alpha) \^ \pi(\beta) = 0$, for all $\alpha,\beta \in \df{1}$. This means that vector fields $\pi(\alpha)$ and $\pi(\beta)$ are linearly dependent for all $\alpha,\beta$. This would mean that the map $\pi$ has rank at most $1$ everywhere. But $\pi$ is skew-symmetric, and thus $\pi = 0$. This radically simplifies the rest of the equations. In particular, we are left with the two of them:
\begin{align}
[\Pi(\xi),\Pi(\eta)] & = \Pi( \Li{\Pi(\xi)}\eta - \io_{\Pi(\eta)}d\xi), \\
\Li{\Pi(\xi)}( \Pi^{T}(\beta)) & = \Pi^{T}\big( \Li{\Pi(\xi)}\beta - (d\xi)(\Pi^{T}(\beta)) \big). 
\end{align}
First note that the first equation can be rewritten as 
\begin{equation}
(\Li{\Pi(\xi)} \Pi)(\eta) = - \Pi( \io_{\Pi(\eta)}d\xi),
\end{equation}
where $\Pi$ on the left-hand side is viewed as a type $(0,p+1)$ tensor. We will show in the next chapter that this condition forces $\Pi$ to be induced by a $(p+1)$-vector $\Pi \in \vf{p+1}$, which is moreover a Nambu-Poisson tensor. Compare with (\ref{eq_NPfijinak2}) of Lemma \ref{lem_NPfijinak2} and use Lemma \ref{lem_NPnecskewsym} to prove the skew-symmetry of $\Pi$. 

The second condition equation can then easily be seen to be the transpose to the first one. Note that there is no condition on $\~\pi$ whatsoever. We conclude that $e^{\Theta}(W^{\ast}) = G_{\Theta}$ is a Dirac structure, if and only if 
\begin{equation}
\Theta = \bm{0}{\Theta}{-\Theta^{T}}{\~\pi}, \; \Theta \in \vf{p+1}, \; \~\pi \in \mathfrak{X}^{2}(\TM{p}), 
\end{equation}
and $\Theta$ is a Nambu-Poisson tensor. The bivector $\~\pi$ on $\TM{p}$ can be completely arbitrary. 
\item Let $L \subseteq E$ be an arbitrary subbundle of $E$. We can define a subbundle $\Delta$ of $V$ as 
\begin{equation}
\Delta = L \oplus L^{\perp},
\end{equation}
where $L^{\perp} \subseteq E^{\ast}$ is the annihilator subbundle of $L$. By definition, $\Delta$ is a maximally isotropic subbundle of $V$. Then $\Delta$ is a Dirac structure of $V$ iff $L$ is involutive under higher Dorfman bracket (\ref{def_dorfman2}). 
\end{itemize}
\end{example}
To conclude this section, let us consider the twisting of the bracket $[\cdot,\cdot]_{V}$. We follow the idea of (\ref{eq_hdorfmantwisting}) and below. Let $\B \in \Omega^{2}(W)$ be of the same	 block form as in (\ref{def_blockB}). We define a new bracket $[\cdot,\cdot]'_{V}$ as 
\begin{equation} 
[v,v']'_{V} = e^{-\B}[e^{\B}(v), e^{\B}(v')]_{V},
\end{equation}
for all $v,v' \in \Gamma(V)$. We expect to obtain the bracket in the form
\begin{equation} [v,v']'_{V} = [v,v']_{V} - d\B(v,v'), \end{equation}
where $d\B: \Gamma(V) \times \Gamma(V) \rightarrow \Gamma(V)$ is to be determined now. The calculation shows that for $v = (X,P,\alpha,\xi)$, $v' = (Y,Q,\beta,\eta)$, we have $pr_{W} d\B(v,v') = 0$, and 
\begin{align}
\label{eq_Vtwist1} pr_{T^{\ast}M} (d\B(v,v')) & = -\Li{X}(C(Q) + B(Y)) + d(C^{T}(X) - \~B(P))(Q)\\
&  + B[X,Y] + C(\Li{X}Q), \nonumber \\
\label{eq_Vtwist2} pr_{\cTM{p}}(d\B(v,v')) & = \Li{X}( C^{T}(Y) - \~B(Q)) - \io_{Y}d( C^{T}(X) - \~B(P)) \\
& - C^{T}([X,Y]) + \~B(\Li{X}Q). \nonumber
\end{align}
Now, note that $d\B(v,v')$ is $\cif$-linear in $v'$, but for a general $\B$ it is not $\cif$-linear in $v$. Let us now \emph{require} this property. This implies that for any $f \in \cif$ there must hold
\begin{align}
df \^ \io_{X}B(Y) + (Y.f) B(X) & = 0, \\
(df \^ \~B(P))(Q) & = 0. 
\end{align}
The first condition can be rewritten as $\io_{Y}(df \^ B(X)) = 0$. It then follows that these two conditions imply $B = \~B = 0$. Next, from (\ref{eq_Vtwist2}) we obtain the condition
\begin{equation} df \^ ( \io_{X}C^{T}(Y) + \io_{Y}C^{T}(X) ) = 0. \end{equation}
This proves that $C$ has to be induced by $C \in \df{p+1}$. Let us see how $d\B$ looks now. We obtain
\begin{equation} \label{eq_dBdoubledLeibniz}
d\B(v,v') = \big(0,0,-(\io_{X}dC)(Q), \io_{Y}\io_{X}dC \big). 
\end{equation}
One can conclude that the twist by $e^{\B}$ defines a $\cif$-bilinear map $d\B$, if and only if $B = \~B = 0$, and $C \in \Hom(\TM{p},T^{\ast}M)$ is induced by a $(p+1)$-form $C \in \df{p+1}$. 
\section{Killing sections of generalized metric} \label{sec_hkilling}
One can naturally generalize Section \ref{sec_Killing} to the $p \geq 1$ setting. In particular, we can define Killing sections as in (\ref{def_killing}). Also the derivation of the explicit form of Killing equations is very similar. Assume that $\~g$ is of the form (\ref{def_tensorofg}), and $C \in \df{p+1}$. One can see that $e \in \Gamma(E)$ is a Killing section of $\gm = (e^{-C})^{T} \G_{E} e^{-C}$, if and only if there holds
\begin{equation}
\rho(e^{-C}(e)). \G_{E}( f',f'') = \G_{E}( [e^{-C}(e),f']_{D}^{dC}, f'') + \G_{E}( f', [e^{-C}(e), f'']_{D}^{dC} ), 
\end{equation}
for all $f',f'' \in \Gamma(E)$. Let $f \defeq e^{-C}(e)$. We can thus study the Killing equation for the section $f$ and the metric $\G_{E}$, but now using the twisted bracket $[\cdot,\cdot]_{D}^{dC}$. Let $f = X' + \xi'$ for $X' \in \vf{}$ and $\xi' \in \df{p}$, and write $f' = Y+ \eta$, $f'' = Z + \zeta$. One gets
\begin{equation}
\begin{split}
X'.\{ g(Y,Z) + \~g^{-1}(\eta,\zeta) \} & = g( [X',Y], Z) + g(Y, [X',Z]) \\
& + \~g^{-1}\big( \Li{X'}\eta - \io_{Y}d\xi' - dC(X',Y,\cdot), \zeta\big) \\
& + \~g^{-1}\big( \eta, \Li{X'}\zeta - \io_{Z}d\xi' - dC(X',Z,\cdot)\big).
\end{split}
\end{equation}
This yields a set of four separate equations
\begin{align}
X'.g(Y,Z) &= g([X',Y],Z) + g(Y, [X',Z]), \\
X'.\~g^{-1}(\eta,\zeta) & = \~g^{-1}( \Li{X'}\eta, \zeta) + \~g^{-1}( \eta, \Li{X'}\zeta), \\
0 & = \~g^{-1}( -\io_{Y}d\xi' - dC(X',Y,\cdot), \zeta), \\
0 & = \~g^{-1}(\eta, -\io_{Z}d\xi' - dC(X',Z,\cdot)). 
\end{align}
The first equation is an ordinary Killing equation for the vector field $X'$, that is $\Li{X'}g = 0$. The second equation yields $\Li{X'}{\~g^{-1}} = 0$. This in turn yields $\Li{X'}\~g = 0$. For $\~g$ of the form (\ref{def_tensorofg}), Lemma \ref{lem_Ligtilde} shows that this already follows from $\Li{X'}g = 0$. Last two equations force
\begin{equation} d\xi' = - \io_{X'} dC. \end{equation}
Now let $e = X + \xi$. Then $X' = X$, and $\xi' = \xi + \io_{X}C$. Plugging into the above conditions gives:
\begin{tvrz}
Let $e = X + \xi$. Then $e$ satisfies a generalized Killing equation  (\ref{def_killing}) for $\gm = (e^{-C})^{T} \G_{E} e^{-C}$, if and only if the following conditions hold:
\begin{equation}
\Li{X}g = 0, \; d\xi = -\Li{X}C. 
\end{equation}
\end{tvrz}
Moreover, one can also restate Proposition \ref{tvrz_infisoint} for the $p > 1$ case. It has the completely same form and there is no reason to repeat it here explicitly. 
\section{Generalized Bismut connection II} \label{sec_hbismut}
We will now generalize the notion of generalized Bismut connection defined in Section \ref{sec_gbismut}. Let $\gm$ be a generalized metric (\ref{def_hgenmetric}) on $E = TM \oplus \cTM{p}$. We assume that $\~g$ is of the form (\ref{def_tensorofg}). Let $H = dC$. We are looking for an example of the vector bundle connection on $E$ compatible with $\gm$. The most straightforward way is to use the form (\ref{eq_gBismutblock}) and replace $g$ with $\~g$ where necessary to make it work. We define the connection $\cD$ as 
\begin{equation} \label{def_hgbismut}
\cD_{X} = \bm{1}{0}{-C^{T}}{1} \bm{\cDL_{X}}{\frac{1}{2}g^{-1}H(X,\cdot,\~g^{-1}(\star))}{-\frac{1}{2}H(X,\star,\cdot)}{\cDL_{X}} \bm{1}{0}{C^{T}}{1}. 
\end{equation}
By $\cDL$ we mean the Levi-Civita connection of $g$ acting on vector fields and on $p$-forms. Using Lemma \ref{lem_covginduced} it is easy to check that $\cD_{X}$ is metric compatible with $\gm$.

We can now examine this connection from a more conceptual viewpoint, using the doubled formalism on $V = W \oplus W^{\ast}$. 
Let $\gm_{V}$ be the generalized metric on $V = W \oplus W^{\ast}$ corresponding to $\gm$, and let $\fPsi_{\pm}: W \rightarrow V_{\pm}$ be the two isomorphisms induced by $\gm_{V}$. Let $\G$ and $\B$ be the fields corresponding to the generalized metric $\gm_{V}$. Let $\cD^{\ast}$ be the dual connection induced by $\cD$ on the vector bundle $E^{\ast}$. Explicitly
\begin{equation}
\cD^{\ast}_{X} = \bm{1}{C}{0}{1} \bm{\cDL_{X}}{\frac{1}{2}H(X,\cdot,\star)}{-\frac{1}{2} \~g^{-1}H(X,g^{-1}(\star),\cdot)}{\cDL_{X}} \bm{1}{-C}{0}{1}.
\end{equation}
Define a new connection $\cD^{V}$ on $V = E \oplus E^{\ast}$ as a block diagonal combination of $\cD$ and $\cD^{\ast}$: 
\begin{equation}
\cD^{V}_{X}(e + \alpha) = \cD_{X}e + \cD_{X}^{\ast} \alpha,
\end{equation}
for all $e \in \Gamma(E)$ and $\alpha \in \Gamma(E^{\ast})$. By construction, $\cD^{V}$ is compatible with the generalized metric $\gm_{V}$. Moreover, it can be written in a way resembling the original generalized Bismut connection. Define a bilinear map  $\H: \Gamma(W) \times \Gamma(W) \rightarrow \Gamma(W^{\ast})$ using the twisting map $d\B$ defined by (\ref{eq_dBdoubledLeibniz}). Let $w,w' \in \Gamma(W)$. We can view them as elements of $\Gamma(V)$. Define 
\begin{equation}
\H(w,w') = pr_{W^{\ast}} d\B(w,w'). 
\end{equation}
With respect to the splitting $V = W \oplus W^{\ast}$, the connection $\cD^{V}$ can be written in the block form 
\begin{equation}
\cD^{V}_{X} = \bm{1}{0}{\B}{1} \bm{\cDL_{X}}{-\frac{1}{2} \G^{-1} \H(X,\G^{-1}(\star))}{-\frac{1}{2} \H(X,\star)}{\cDL_{X}} \bm{1}{0}{-\B}{1}, 
\end{equation}
where $\cDL$ acts diagonally on $W = TM \oplus \TM{p}$ and on $W^{\ast} = T^{\ast}M \oplus \cTM{p}$. Note that this connection is also compatible with the natural pairing $\<\cdot,\cdot\>_{V}$ on $V$, that is 
\begin{equation}
X.\<v,v'\>_{V} = \< \cD^{V}_{X}v, v''\>_{V} + \<v, \cD^{V}_{X}v''\>_{V},
\end{equation}
for all $v,v' \in \Gamma(V)$. By definition, we obtain $\cD$ by restriction of $\cD^{V}$ onto the subbundle $E \subseteq V$. In accordance with Lemma \ref{lem_gbismutplusminus}, one can write $\cD^{V}$ using the isomorphisms $\fPsi_{\pm}$ and a pair of connections $\cD^{\pm}$ on the vector bundle $W$. One gets 
\begin{align}
\cD_{\fPsi_{\pm}(w)}(\fPsi_{+}(w')) = \fPsi_{+}( \cD^{+}_{w}w'), \\
\cD_{\fPsi_{\pm}(w')}(\fPsi_{-}(w')) = \fPsi_{-}( \cD^{-}_{w}w'), 
\end{align}
for all $w,w' \in \Gamma(W)$, and $\cD^{\pm}$ is a pair of connections on $W$ defined as 
\begin{equation}
\cD_{w}^{\pm}w' = \cDL_{pr_{1}w}w' \mp \frac{1}{2} \G^{-1} \H(w,w'). 
\end{equation}

Returning to the original connection $\cD$, we may calculate its torsion and curvature operators. Define a simpler connection $\hcD$ using the formula $\cD_{X} = e^{C} \hcD_{X} e^{-C}$. This is a connection compatible with $\G_{E}$, having the form of the middle block as in (\ref{def_hgbismut}). By definition, it will have the same scalar curvature as $\cD$. Let $X,Y \in \vf{}$ and $Z+ \zeta \in \Gamma(E)$. We get
\begin{align}
\widehat{R}_{1}(X,Y)(Z+\zeta) & = R^{LC}(X,Y)Z \\
& + \frac{1}{2}g^{-1}(\cDL_{X}H)(Y,\cdot,\~g^{-1}(\zeta)) - \frac{1}{2}g^{-1}(\cDL_{Y}H)(X,\cdot,\~g^{-1}(\zeta)) \nonumber \\
& - \frac{1}{4}g^{-1}H(X,\cdot,\~g^{-1}H(Y,Z,\cdot)) + \frac{1}{4}g^{-1}H(Y,\cdot,\~g^{-1}H(X,Z,\cdot)), \nonumber \\
\widehat{R}_{2}(X,Y)(Z+\zeta) &= R^{LC}(X,Y)\zeta \\
& - \frac{1}{2} (\cDL_{X}H)(Y,Z,\cdot) + \frac{1}{2} (\cDL_{Y}H)(X,Z,\cdot) \nonumber \\
& + \frac{1}{4} H(X,g^{-1}H(Y,\cdot,\~g^{-1}(\zeta)),\cdot) - \frac{1}{4}H(Y,g^{-1}H(X,\cdot,\~g^{-1}(\zeta)),\cdot) \nonumber
\end{align}
The Ricci tensor $\hRic$ has only two non-trivial components. Namely, we get 
\begin{align}
\hRic(X,Y) & = \Ric^{LC}(X,Y) + \frac{1}{4}H(Y,g^{-1}(dy^{k}), \~g^{-1}H(\partial_{k},X,\cdot)), \\
\hRic(\xi,Y) & = \frac{1}{2} (\cDL_{\partial_{k}}H)(Y, g^{-1}(dy^{k}), \~g^{-1}(\xi)).
\end{align}
Finally, define a scalar curvature $\widehat{\RS} = \hRic(\G_{E}^{-1}(e^{\lambda}),e_{\lambda})$. One obtains 
\begin{equation}
\widehat{\RS} = \RS(g) - \frac{1}{4} H_{ijK} H^{ijK}.
\end{equation}
The indices of $H$ are raised by metric $g$, and $\RS(g)$ is the scalar curvature of the Levi-Civita connection of $g$. Let $\RS$ be the scalar curvature of the original connection $\cD$ defined using the generalized metric $\gm = (e^{-C})^{T} \G_{E} e^{-C}$, that is $\RS = \Ric(\gm^{-1}(e^{\lambda}),e_{\lambda})$. By construction, the two connections have the same scalar curvature, that is we get $\RS = \widehat{\RS}$. 
\chapter{Nambu-Poisson structures} \label{ch_NP}
In this chapter, we will discuss in detail the definitions and structures induced by a Nambu bracket on a manifold. This $(p+1)$-ary bracket was for $p=2$ introduced in 1972 by Y. Nambu in \cite{1973PhRvD...7.2405N} as an attempt to generalize the classical Hamiltonian mechanics. Nambu defines a trinary bracket $\{f,g,h\}$ for a triplet of functions of three variables $(x,y,z)$ as 
\begin{equation} \label{eq_nambunambu}
\{f,g,h\} = \frac{\partial(f,g,h)}{\partial(x,y,z)}.
\end{equation}
He notes that such a bracket has some remarkable properties, in particular it is completely skew-symmetric and it satisfies the Leibniz rule.
\begin{equation}
\{ff',g,h\} = f \{f',g,h\} + \{f,g,h\} f'.
\end{equation}
Interestingly, he does not attempt to generalize the third usual property of Poisson bracket, the Jacobi identity. An axiomatic definition of Nambu brackets was introduced more than twenty years later in \cite{Takhtajan:1993vr}, where the term Nambu-Poisson manifold appears for the first time.  The author already suspects and emphasizes throughout his paper that Nambu-Poisson structures are much more rigid objects than Poisson structures. This was finally proved by several different people in 1996, for example in \cite{decomposability}. For the complete list of references and many more interesting remarks, see the survey \cite{vaisman1999survey} of I. Vaisman. 
\section{Nambu-Poisson manifolds} \label{sec_NPM}
\begin{definice}
Let $p \geq 1$ be a fixed integer, and let $\{ \cdot, \dots, \cdot \}: \cif \times \dots \times \cif \rightarrow \cif$ be an $\R$-$(p+1)$-linear map. We say that $\{\cdot,\dots,\cdot\}$ is a {\bfseries{Nambu bracket}}, if the following properties hold:
\begin{enumerate}
\item The map $\{\cdot,\dots,\cdot\}$ is completely skew-symmetric.
\item It satisfies the Leibniz rule:
\begin{equation} \label{def_nambuleibniz}
\{f_{1}, \dots, f_{p+1} \cdot g_{p+1} \} = \{f_{1}, \dots, f_{p+1}\} g_{p+1} +  f_{p+1} \{ f_{1}, \dots, g_{p+1} \}.
\end{equation}
\item It satisfies the {\bfseries{fundamental identity}}: 
\begin{equation} \label{def_nambufi}
\begin{split}
\{ f_{1}, \dots , f_{p}, \{g_{1}, \dots, g_{p+1} \} \} & = \{ \{f_{1}, \dots, f_{p},g_{1} \}, \dots, g_{p+1} \} \\
& + \dots + \{ g_{1}, \dots, \{f_{1}, \dots, f_{p},g_{p+1}\}\}. 
\end{split}
\end{equation}
\end{enumerate}
Both the Leibniz rule and the fundamental identity are assumed to hold for all involved smooth functions. 
\end{definice}
We see that for $p=1$, the definition reduces to the ordinary Poisson bracket on $M$. Next, note that for any $f \in \cif$, the bracket $\{g_{1}, \dots, g_{p}\}' \defeq \{f, g_{1}, \dots, g_{p}\}$ defines again a Nambu-Poisson bracket. Both axioms can be read as follows. To any $p$-tuple $(f_{1}, \dots, f_{p})$ of smooth functions, we may assign an operator
\begin{equation} X_{(f_{1}, \dots, f_{p})} \defeq \{f_{1}, \dots, f_{p}, \cdot \}. \end{equation}
Leibniz rule proves that $X_{(f_{1}, \dots, f_{p})}$ is a vector field on $M$, $X_{(f_{1}, \dots, f_{p})} \in \vf{}$. The fundamental identity then requires $X_{(f_{1}, \dots, f_{p})}$ to be a derivation of the bracket $\{\cdot, \dots, \cdot\}$. 

It follows from the Leibniz rule (\ref{def_nambuleibniz}) that $\{ \cdot, \dots, \cdot \}$ in fact depends only on differentials of incoming functions. It thus makes sense to define a {\bfseries{Nambu-Poisson tensor}} $\Pi$ by 
\begin{equation}
\Pi(df_{1}, \dots, df_{p+1}) \defeq \{ f_{1}, \dots, f_{p+1} \},
\end{equation}
for all $f_{1}, \dots, f_{p+1} \in \cif$. The complete skew-symmetry of the bracket $\{\cdot,\dots,\cdot\}$ is clearly equivalent to $\Pi$ being a $(p+1)$-vector, $\Pi \in \vf{p+1}$. The fundamental identity can be then rewritten simply as 
\begin{equation} \label{eq_fitensor}
\Li{X_{(f_{1}, \dots, f_{p})}} \Pi = 0. 
\end{equation}
Now, let us recall the fundamental theorem for the theory of Nambu-Poisson manifolds. The proof can be found for example in\cite{decomposability,vaisman1999survey,dufour2005poisson}, and we thus 
omit it here. 
\begin{theorem} \label{tvrz_NPfund}
Let $p \geq 2$, and $\Pi \in \vf{p+1}$. Then $\Pi$ is a Nambu-Poisson tensor, if and only if for every $x \in M$, such that $\Pi(x) \neq 0$, there is a neighborhood $U \ni x$, and a set of local coordinates ($x^{1}, \dots, x^{n})$ on $U$, such that locally
\begin{equation} \label{eq_Pidecomp}
\Pi = \frac{\partial}{\partial x^{1}} \^ \dots \^ \frac{\partial}{\partial x^{p+1}}.
\end{equation}
The components of $\Pi$ in this coordinates are thus $\Pi^{i_{1} \dots i_{p+1}} = \epsilon^{i_{1} \dots i_{p+1}}$, and the corresponding bracket $\{\cdot, \dots, \cdot \}$ has the local form 
\begin{equation}
\{ f_{1}, \dots, f_{p+1} \} = \frac{\partial( f_{1}, \dots, f_{p+1})}{\partial(x^{1}, \dots, x^{p+1})}
\end{equation}
We will call $(x^{1}, \dots, x^{n})$ the {\bfseries{Darboux coordinates}} for $\Pi$. 
\end{theorem}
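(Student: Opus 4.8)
The plan is to prove the two implications separately, the ``if'' direction being essentially trivial and the ``only if'' direction being the substantial content. For the ``if'' direction, suppose that near every point where $\Pi \neq 0$ there are coordinates in which $\Pi = \partial_{1} \^ \dots \^ \partial_{p+1}$. Then the associated bracket is the Jacobian determinant $\{f_{1}, \dots, f_{p+1}\} = \partial(f_{1},\dots,f_{p+1})/\partial(x^{1},\dots,x^{p+1})$, and one checks directly (this is exactly Nambu's original computation (\ref{eq_nambunambu})) that the Jacobian bracket is completely skew-symmetric, satisfies the Leibniz rule, and satisfies the fundamental identity (\ref{def_nambufi}); the latter is a short calculation using that mixed partials commute. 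At points where $\Pi(x) = 0$, the fundamental identity, Leibniz rule and skew-symmetry are closed conditions (they are polynomial in $\Pi$ and its first derivatives), so they extend by continuity from the open set $\{\Pi \neq 0\}$; more carefully, since the axioms are $C^\infty$-multilinear-differential identities in $\Pi$, checking them at each point where $\Pi \neq 0$ together with a density/closedness argument on $\{\Pi = 0\}$ suffices. Thus $\Pi$ is a Nambu-Poisson tensor.

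The ``only if'' direction is the hard part, and I would not reprove it from scratch: as the statement already says, the proof of this local normal form is exactly the decomposability-plus-straightening theorem of \cite{decomposability,vaisman1999survey,dufour2005poisson}, so the honest plan is to \emph{cite} it, but I will sketch the structure one would follow. Fix $x$ with $\Pi(x) \neq 0$. The first and deepest step is to show that $\Pi$ is \emph{decomposable} near $x$, i.e.\ $\Pi = X_{1} \^ \dots \^ X_{p+1}$ for some commuting-enough local vector fields; this is where the rigidity of Nambu-Poisson structures (as opposed to Poisson structures, valid only for $p \geq 2$) enters, and it is the main obstacle. One route: evaluate the fundamental identity (\ref{eq_fitensor}), $\Li{X_{(f_{1},\dots,f_{p})}}\Pi = 0$, on well-chosen functions to derive a purely algebraic constraint on $\Pi(x) \in \Lambda^{p+1}T_{x}M$ forcing it to be a pure wedge — this is the content of the ``algebraic part of the fundamental identity'' alluded to in Section \ref{sec_NPM} of the thesis. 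Equivalently one shows the image distribution $\Img(\Pi^{\sharp}) \subseteq TM$ has constant rank $p+1$ near $x$ and $\Pi$ restricts to a top multivector on it.

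Once decomposability is in hand, the second step is to show the rank-$(p+1)$ distribution $\Delta = \Img(\Pi^{\sharp})$ is \emph{involutive}: this follows from the fundamental identity, since the Hamiltonian vector fields $X_{(f_{1},\dots,f_{p})}$ span $\Delta$ and their brackets are again Hamiltonian by (\ref{def_nambufi}). By the Frobenius theorem choose coordinates $(x^{1},\dots,x^{n})$ with $\Delta = \mathrm{span}(\partial_{1},\dots,\partial_{p+1})$, so that $\Pi = h\,\partial_{1}\^\dots\^\partial_{p+1}$ for a nonvanishing function $h$. The final step is to absorb $h$: impose the fundamental identity on this ansatz to get differential equations on $h$ (it must be independent of $x^{1},\dots,x^{p+1}$ up to the usual argument, or can be straightened away by a coordinate change in the leaf directions), concluding $\Pi = \partial_{1}\^\dots\^\partial_{p+1}$ exactly. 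The component and Jacobian-bracket statements then follow immediately by unwinding the definitions $\Pi^{i_1\dots i_{p+1}} = \Pi(dx^{i_1},\dots,dx^{i_{p+1}})$ and $\{f_1,\dots,f_{p+1}\} = \Pi(df_1,\dots,df_{p+1})$. I expect the write-up to simply record the ``if'' direction in a line or two and defer the ``only if'' direction to the cited literature, exactly as the excerpt announces.
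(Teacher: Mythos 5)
Your proposal matches the paper's treatment: the thesis omits the proof entirely and defers to the cited references \cite{decomposability,vaisman1999survey,dufour2005poisson}, exactly as you propose to do for the hard ``only if'' direction, and your sketch (decomposability from the algebraic part of the fundamental identity, involutivity of $\Img(\Pi^{\sharp})$, Frobenius, absorbing the nonvanishing factor by a leafwise coordinate change) is the standard argument from those sources. Your handling of the easy ``if'' direction, including the closedness argument at points where $\Pi$ vanishes, is also sound, so no gap to report.
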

Note that the only if part of this theorem is not true for $p=1$. The simple counter-example is the canonical Poisson structure $\Pi$ on $\R^{2n}$, $n > 1$:
\begin{equation} \label{def_Picanon} \Pi = \sum_{j=1}^{n} \frac{\partial}{\partial q^{j}} \^ \frac{\partial}{\partial p_{j}}. \end{equation}
In these coordinates, the component matrix $\Pi^{ij}$ is invertible. On the other hand, the matrix in coordinates (\ref{eq_Pidecomp}) has always the rank $2$, hence it cannot be invertible. This proves that $\Pi$ defined by (\ref{def_Picanon}) is not decomposable. For Poisson manifolds, there holds a more subtle statement, called Darboux-Weinstein theorem \cite{weinstein1983}. We will now use Theorem \ref{tvrz_NPfund} to reformulate the fundamental identity in several ways more useful for our purposes. 

\begin{lemma}
Let $\Pi \in \vf{p+1}$, and $p>1$. Then $\Pi$ satisfies the fundamental identity (\ref{eq_fitensor}), if and only if there holds
\begin{equation} \label{eq_NPfijinak1}
\Li{\Pi(\xi)} \Pi = -\< \Pi, d\xi \> \Pi,
\end{equation}
for all $\xi \in \df{p}$. 
\end{lemma}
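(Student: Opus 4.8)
The plan is to relate the two formulations via a clean bookkeeping identity for Lie derivatives along Hamiltonian vector fields. First I would recall the elementary fact that for any $f_1,\dots,f_p \in \cif$ the Hamiltonian vector field is $X_{(f_1,\dots,f_p)} = \Pi^\sharp(df_1 \^ \dots \^ df_p)$, so the fundamental identity in the form \eqref{eq_fitensor}, namely $\Li{\Pi^\sharp(df_1 \^ \dots \^ df_p)}\Pi = 0$, is the special case of the claimed identity \eqref{eq_NPfijinak1} obtained by taking $\xi = df_1 \^ \dots \^ df_p$ and observing that $\<\Pi,d\xi\> = \<\Pi, d(df_1 \^ \dots \^ df_p)\> = 0$ because $d\xi = 0$ for an exact-factor $p$-form. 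So the ``if'' direction is immediate: \eqref{eq_NPfijinak1} for all $\xi \in \df{p}$ specializes to \eqref{eq_fitensor}.

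For the ``only if'' direction, the idea is to show that the map
\[
\xi \longmapsto \Li{\Pi^\sharp(\xi)}\Pi + \<\Pi,d\xi\>\,\Pi
\]
is a $\cif$-linear (indeed tensorial) map $\df{p} \to \vf{p+1}$, even though neither of the two terms is individually $\cif$-linear in $\xi$. Granting tensoriality, it suffices to check that it vanishes on forms of the special type $\xi = f\,df_1 \^ \dots \^ df_{p-1} \^ dg$ — or more economically on $\xi = f_0\,df_1 \^ \dots \^ df_p$ in a neighborhood of any point — since such forms span $\df{p}$ pointwise (by Theorem \ref{tvrz_NPfund}, at points where $\Pi \neq 0$ one may even use Darboux coordinates; at points where $\Pi = 0$ both sides vanish trivially, and the remaining points form a closed set across which tensoriality extends the identity by continuity). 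The $\cif$-linearity check is the computational core: one computes, for $\xi' = f\xi$,
\[
\Pi^\sharp(f\xi) = f\,\Pi^\sharp(\xi), \qquad
\Li{f\Pi^\sharp(\xi)}\Pi = f\,\Li{\Pi^\sharp(\xi)}\Pi - df \^ \io_{\Pi^\sharp(\xi)}\Pi,
\]
using the standard identity $\Li{fX} = f\Li{X} + df \^ \io_X$ on multivectors, while $\<\Pi, d(f\xi)\> = f\<\Pi,d\xi\> + \<\Pi, df \^ \xi\>$. The cross terms $-df \^ \io_{\Pi^\sharp(\xi)}\Pi$ and $\<\Pi, df\^\xi\>\,\Pi$ must cancel, which reduces to the algebraic identity $\io_{\Pi^\sharp(\xi)}\Pi = (-1)^{?}\<\Pi, df\^\xi\>\,\Pi$-type relation — more precisely, the fact that contracting the decomposable $(p+1)$-vector against a $1$-form and then against $\xi$ reproduces $\Pi$ up to the appropriate scalar, which is exactly where decomposability of $\Pi$ (Theorem \ref{tvrz_NPfund}) enters. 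This is the step I expect to be the main obstacle: getting the signs and the contraction combinatorics right, and in particular justifying that the needed algebraic identity $df \^ \io_{\Pi^\sharp(\xi)}\Pi = \<\Pi, df \^ \xi\>\,\Pi$ holds precisely because $\Pi$ is decomposable (for $p>1$ a Nambu-Poisson tensor is automatically decomposable, though here we want a statement valid before assuming the fundamental identity on all of $\df{p}$ — so one works pointwise where $\Pi(x)\neq 0$ and uses that a single decomposable $(p+1)$-vector satisfies this bracket-contraction identity purely algebraically).

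Once $\cif$-linearity is established, the argument closes quickly: the tensorial expression vanishes on the spanning set $\{f_0\,df_1\^\dots\^df_p\}$ by the hypothesis \eqref{eq_fitensor} (the $d\xi$-term drops since $d(f_0 df_1 \^ \dots \^ df_p) $ contracted with $\Pi$... here one must be slightly careful: $d\xi \neq 0$ now, but $\Li{\Pi^\sharp(f_0 df_1\^\dots\^df_p)}\Pi = \Li{f_0 X_{(f_1,\dots,f_p)}}\Pi = f_0 \Li{X_{(f_1,\dots,f_p)}}\Pi - df_0 \^ \io_{X_{(f_1,\dots,f_p)}}\Pi$, and the first term vanishes by \eqref{eq_fitensor} while the second must be matched by $-\<\Pi, d(f_0 df_1\^\dots\^df_p)\>\Pi = -\<\Pi, df_0 \^ df_1 \^ \dots \^ df_p\>\Pi$ — which is again the same algebraic contraction identity). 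Thus both sides of \eqref{eq_NPfijinak1} agree on a spanning set, hence everywhere by tensoriality, completing the proof. I would present the algebraic contraction identity as a short separate computation in Darboux coordinates, where $\Pi = \partial_1 \^ \dots \^ \partial_{p+1}$ makes everything transparent, and note that at points where $\Pi$ vanishes the identity is $0=0$.
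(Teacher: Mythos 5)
Your overall architecture is the paper's: the ``if'' direction by specializing to $\xi = df_{1} \^ \dots \^ df_{p}$, and the ``only if'' direction by working at points where $\Pi \neq 0$ in Darboux coordinates for the decomposable tensor (\ref{eq_Pidecomp}), with zeros of $\Pi$ handled trivially (there $\Pi(\xi)$ vanishes, so both sides of (\ref{eq_NPfijinak1}) vanish pointwise). Your detour through $\cif$-linearity of the defect map $\xi \mapsto \Li{\Pi(\xi)}\Pi + \<\Pi,d\xi\>\Pi$ is a cosmetic reorganization: the paper only needs additivity in $\xi$, writes $\xi$ locally as a sum of terms $g\, df_{1}\^ \dots \^ df_{p}$, and performs the same cancellation once; your version performs it twice (once for tensoriality, once on the spanning set).

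There is, however, a concrete error at the computational core, and followed literally that step fails. The identity $\Li{fX} = f\Li{X} + df \^ \io_{X}$ is the Cartan-type formula on \emph{differential forms}; on multivector fields the correct rule is $\Li{fX}Q = f \Li{X}Q - X \^ \io_{df}Q$, so the cross term produced by $\Li{f\Pi(\xi)}\Pi$ is $-\Pi(\xi) \^ \io_{df}\Pi$, a $(p+1)$-vector, and not ``$-df \^ \io_{\Pi(\xi)}\Pi$'', which does not even typecheck ($df$ is a $1$-form, and $\io$ of a vector field acts on forms). The algebraic identity you actually need is therefore
\[
\Pi(\xi) \^ \io_{df}\Pi \;=\; (\Pi(\xi).f)\,\Pi \;=\; \< \Pi, df \^ \xi \>\, \Pi ,
\]
which holds because $\Pi(\xi)$ lies in the span of the decomposable tensor $\Pi$ in Darboux coordinates; it is \emph{not} a formal identity for an arbitrary vector field in place of $\Pi(\xi)$, so the reduction to Theorem \ref{tvrz_NPfund} (available here since the fundamental identity is assumed in this direction — your worry about ``before assuming the fundamental identity'' is moot) is genuinely needed. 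This corrected identity is precisely the computation of $X_{(f_{1},\dots,f_{p})} \^ \io_{dg}\Pi$ that the paper carries out; with it, both your linearity check and the evaluation on the spanning set go through, and the proof closes as in the paper.
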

\begin{proof}
If part is simple, for any $p$-tuple $(f_{1}, \dots, f_{p})$ choose $\xi = df_{1} \^ \dots \^ df_{p}$. Then $\Pi(\xi) = (-1)^{p} X_{(f_{1}, \dots, f_{p})}$, and since $d\xi = 0$, we get $\Li{X_{(f_{1}, \dots, f_{p})}} \Pi = 0$. 

Conversely, assume that $\Pi$ is a Nambu-Poisson tensor. It suffices to prove (\ref{eq_NPfijinak1}) for $\xi$ in the form $\xi = g \cdot df_{1} \^ \dots \^ df_{p}$, where $g,f_{1},\dots,f_{p} \in \cif$. At points $x \in M$ where $\Pi(x) = 0$ the statement holds trivially. We can thus assume that we can work locally with $\Pi$ in the form (\ref{eq_Pidecomp}). We have
\[ \Li{\Pi(\xi)} \Pi = (-1)^{p} \Li{g X_{(f_{1}, \dots, f_{p})}} \Pi = (-1)^{p+1} X_{(f_{1}, \dots, f_{p})} \^ \io_{dg}\Pi. \]
We have used the fundamental identity (\ref{eq_fitensor}) to get rid of one term. We can write
\[ \io_{dg}\Pi = \sum_{r=1}^{p} (-1)^{r+1} \frac{\partial g}{\partial x^{r}} \frac{\partial}{\partial x^{1}} \^ \dots \^ \frac{\partial}{\partial x^{r-1}} \^ \frac{\partial}{\partial x^{r+1}} \^ \dots \^ \frac{\partial}{\partial x^{p}}, \]
and consequently, we obtain 
\[ 
\begin{split}
(-1)^{p+1} X_{(f_{1}, \dots, f_{p})} \^ \io_{dg}\Pi & = (-1)^{p+1} \sum_{r=1}^{p}  \frac{\partial}{\partial x^{1}} \^ \dots \^ X_{(f_{1}, \dots, f_{p})}^{r} \frac{\partial g}{\partial x^{r}} \frac{\partial}{\partial x^{r}} \^ \dots \^ \frac{\partial}{\partial x^{p}} \\
& = (-1)^{p+1} ( \sum_{r=1}^{p} X^{r}_{(f_{1}, \dots, f_{p})} \frac{\partial g}{\partial x^{r}} ) \Pi = (-1)^{p+1} (X_{(f_{1},\dots,f_{p})}.g) \Pi \\
& = (-1)^{p+1} \< \Pi, df_{1} \^ \dots \^ df_{p} \^ dg \> \Pi = - \< \Pi , d\xi \> \Pi. 
\end{split}
\]
This proves the assertion of the Lemma. 
\end{proof}
Note that (\ref{eq_NPfijinak1}) does not hold for $p=1$. To include this case, one must modify it as 
\begin{equation} \label{eq_NPfijinak1b}
\Li{\Pi(\xi)}\Pi = - (\<\Pi,d\xi\>\Pi - \frac{1}{p+1}\io_{d\xi}(\Pi \^ \Pi)).
\end{equation}
For $p>1$, the decomposability implies $\Pi \^ \Pi = 0$, and the proof is then just an easy alteration. For $p = 1$, this is not necessarily true, as illustrates the example (\ref{def_Picanon}). The proof of (\ref{eq_NPfijinak1b}) is in this case left for an interested reader. We will now derive a more conceptual reformulation of the fundamental identity, which will give an immediate geometrical description of Nambu-Poisson structures. 

\begin{lemma} \label{lem_NPfijinak2}
Let $p \geq 1$, and $\Pi \in \vf{p+1}$. Then $\Pi$ is a Nambu-Poisson tensor iff
\begin{equation} \label{eq_NPfijinak2}
[\Li{\Pi(\xi)}\Pi](\eta) = - \Pi( \io_{\Pi(\eta)}d\xi),
\end{equation}
for all $\xi,\eta \in \df{p}$. 
\end{lemma}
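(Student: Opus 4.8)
\emph{Proof plan.} This is an equivalence whose two halves are of very unequal difficulty, so I would treat them separately, and the bulk of the work is a bookkeeping computation for the harder half.

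\emph{Sufficiency} — assuming (\ref{eq_NPfijinak2}), conclude that $\Pi$ is Nambu-Poisson — is essentially immediate. Given $f_{1}, \dots, f_{p} \in \cif$, apply (\ref{eq_NPfijinak2}) with $\xi = df_{1} \^ \dots \^ df_{p}$. Then $d\xi = 0$, so the right-hand side vanishes identically, giving $[\Li{\Pi(\xi)}\Pi](\eta) = 0$ for every $\eta \in \df{p}$. Since a $(p+1)$-vector is zero as soon as all of its contractions against $p$-forms vanish, this forces $\Li{\Pi(\xi)}\Pi = 0$; and because $\Pi(df_{1} \^ \dots \^ df_{p}) = (-1)^{p} X_{(f_{1}, \dots, f_{p})}$, this is exactly the fundamental identity in the form (\ref{eq_fitensor}) for the $p$-tuple $(f_{1}, \dots, f_{p})$. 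As these functions are arbitrary, and as skew-symmetry and the Leibniz rule hold automatically for a $(p+1)$-vector, $\Pi$ is a Nambu-Poisson tensor.

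\emph{Necessity} is where the work lies, and I would not redo it from scratch but reduce to the preceding lemma. If $\Pi$ is Nambu-Poisson, then (\ref{eq_NPfijinak1b}) holds for every $\xi \in \df{p}$ — this is the content of the previous lemma for $p > 1$ (there $\Pi \^ \Pi = 0$ by local decomposability, Theorem \ref{tvrz_NPfund}) and of its stated extension for $p = 1$, and in particular the subtle behaviour at points where $\Pi$ vanishes is already absorbed there. Contracting (\ref{eq_NPfijinak1b}) against an arbitrary $\eta \in \df{p}$, the identity (\ref{eq_NPfijinak2}) becomes equivalent to the purely algebraic relation
\begin{equation} \label{eq_planstar}
\Pi\big( \io_{\Pi(\eta)} \omega \big) = \< \Pi, \omega \> \, \Pi(\eta) - \tfrac{1}{p+1} \big[ \io_{\omega}(\Pi \^ \Pi) \big](\eta) ,
\end{equation}
to be checked for an arbitrary $(p+1)$-vector $\Pi$, $p$-form $\eta$ and $(p+1)$-form $\omega$ (here $\omega = d\xi$); for $p > 1$ its last term drops out and it reduces to $\Pi(\io_{\Pi(\eta)}\omega) = \<\Pi,\omega\>\,\Pi(\eta)$.

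The remaining task, and the main obstacle, is to verify (\ref{eq_planstar}). I would do this in local coordinates: expanding $\io_{\omega}(\Pi \^ \Pi)$ by the Leibniz rule for the interior product of a $(p+1)$-form against a product of two multivectors amounts to distributing the $p+1$ slots of $\omega$ over the two copies of $\Pi$; the two extreme distributions each yield $\<\Pi,\omega\>\Pi$, while every mixed distribution, once contracted with the $p$-form $\eta$, collapses by the skew-symmetry of $\eta$ and of $\Pi$ to a multiple of $\Pi(\io_{\Pi(\eta)}\omega)$, and assembling the binomial coefficients gives (\ref{eq_planstar}). The only genuine difficulty is sign- and factor-bookkeeping through the strictly-ordered multi-index conventions of the text; there is no conceptual content beyond it. (Alternatively, for $p \ge 2$ one may bypass (\ref{eq_planstar}) entirely and check (\ref{eq_NPfijinak2}) directly: away from the zero set of $\Pi$ it is a one-line computation in the Darboux coordinates of Theorem \ref{tvrz_NPfund}, and at a zero of $\Pi$ both sides vanish since $\Pi(\xi)$, $\Pi(\eta)$ and $\Pi$ all vanish there.)
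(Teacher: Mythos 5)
Your sufficiency half is correct and coincides with the paper's, and the parenthetical alternative you offer at the end for $p\ge 2$ (Darboux coordinates of Theorem \ref{tvrz_NPfund} away from the zero set, both sides of (\ref{eq_NPfijinak2}) vanishing pointwise wherever $\Pi$ does) is essentially the paper's own proof. The gap is in the main route you propose for necessity: the ``purely algebraic relation''
\begin{equation*}
\Pi\big(\io_{\Pi(\eta)}\omega\big) \;=\; \<\Pi,\omega\>\,\Pi(\eta) \;-\; \tfrac{1}{p+1}\,\big[\io_{\omega}(\Pi\^\Pi)\big](\eta)
\end{equation*}
is \emph{not} an identity for an arbitrary $(p+1)$-vector once $p\ge 2$, and the claimed collapse of the mixed distributions onto multiples of $\Pi(\io_{\Pi(\eta)}\omega)$ does not happen. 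Take $p=2$ on $\R^{6}$ with $\Pi=\partial_{1}\^\partial_{2}\^\partial_{3}+\partial_{4}\^\partial_{5}\^\partial_{6}$, $\eta=dx^{2}\^ dx^{3}$, $\omega=dx^{1}\^ dx^{4}\^ dx^{5}$: then $\Pi(\eta)=\partial_{1}$ and $\io_{\Pi(\eta)}\omega=dx^{4}\^ dx^{5}$, so the left-hand side equals $\partial_{6}$, whereas $\<\Pi,\omega\>=0$; the reduced relation $\Pi(\io_{\Pi(\eta)}\omega)=\<\Pi,\omega\>\Pi(\eta)$, which is exactly what you say remains to be checked for $p>1$, fails outright, and the $\io_{\omega}(\Pi\^\Pi)$ term only contributes a multiple of $\partial_{6}$ (one finds $\mp\tfrac{2}{3}\,\partial_{6}$ in the strictly ordered conventions of the text), so the full relation fails as well. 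The relation is true only for \emph{decomposable} $\Pi$ --- precisely the information supplied by Theorem \ref{tvrz_NPfund}, equivalently by the algebraic part (\ref{eq_fialg}) of the fundamental identity. So decomposability cannot be used merely to discard the $\Pi\^\Pi$ term and then forgotten: it is what makes the remaining identity hold, and once you invoke it you are doing, in effect, the paper's pointwise Darboux-basis computation. The repaired version of your route --- previous lemma plus decomposability, then the one-line check of $\Pi(\io_{\Pi(\eta)}\omega)=\<\Pi,\omega\>\Pi(\eta)$ for $\Pi=\partial_{1}\^\dots\^\partial_{p+1}$, the zero set being harmless since both sides are quadratic in $\Pi$ --- is fine, but it is not the combinatorial argument you sketched.

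For $p=1$ your reduction is sound: there the displayed relation \emph{is} a universal identity for bivectors (expanding $\io_{\omega}(\Pi\^\Pi)$, both sides reduce to the matrix expression $\Pi\,\omega\,\Pi$ contracted with $\eta$), so the mechanism you describe genuinely works in that case. Note, however, that you are leaning on (\ref{eq_NPfijinak1b}), whose proof the paper explicitly leaves to the reader for $p=1$; the paper's own proof of the $p=1$ case avoids this and goes directly through the Schouten--Nijenhuis identity $(\Li{\Pi(\xi)}\Pi)(\eta)+\Pi(\io_{\Pi(\eta)}d\xi)=\tfrac{1}{2}\io_{\eta}\io_{\xi}[\Pi,\Pi]_{S}$.
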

\begin{proof}
For $p=1$, this follows from 
\begin{equation}
(\Li{\Pi(\xi)}\Pi)(\eta) + \Pi( \io_{\Pi(\eta)}d\xi) = \frac{1}{2} \io_{\eta} \io_{\xi} [\Pi,\Pi]_{S},
\end{equation}
for all $\xi,\eta \in \df{1}$. This can be verified directly in coordinates, or using several explicit forms of the Schouten-Nijenhuis bracket $[\cdot,\cdot]_{S}$. See for example \cite{kosmann1990poisson}. 

We will focus on the $p > 1$ case here. Assume that $\Pi$ is a Nambu-Poisson tensor. At  points where $\Pi(x) = 0$, the equation (\ref{eq_NPfijinak2}) holds trivially. We can thus again assume that $\Pi$ is of the form (\ref{eq_Pidecomp}). Note that (\ref{eq_NPfijinak2}) is $\cif$-linear in $\eta$. Moreover, $\Pi$ and $\Pi(\xi)$ only have components with indices ranging only in $\{1, \dots, p+1\}$. We thus have to check (\ref{eq_NPfijinak2}) only for $\eta$ in the form $\eta = dx^{[r]} \defeq dx^{1} \^ \dots \^ \widehat{dx^{r}} \^ \dots \^ dx^{p+1}$, where $r \in \{1, \dots, p+1\}$ and $\widehat{dx^{r}}$ denotes the omitted term. Choose one such $\eta$. Both sides of (\ref{eq_NPfijinak2}) are vector fields. If we examine the $k$-th component of the both sides for $k \neq r$, the left hand side vanishes. The right-hand side gives $(-1)^{r+1} \epsilon^{kJ} (d\xi)_{rJ}$. The only non-trivial contribution to the sum can come from $J = [k] \defeq (1, \dots, \hat{k}, \dots, p+1)$, but then $(d\xi)_{r[k]} = 0$ because $r \in [k]$. Thus also the right-hand side vanishes. For $k = r$, the left-hand side gives
\[ ( \Li{\Pi(\xi)}\Pi)(dx^{[r]})^{r} = (\Li{\Pi(\xi)}\Pi )^{r[r]} = (-1)^{r+1} (\Li{\Pi(\xi)}\Pi)^{1 \dots p+1} = (-1)^{r} \sum_{q=1}^{p+1} \xi_{J,q} \epsilon^{qJ} = (-1)^{r} (d\xi)_{1 \dots p}. \]
The right-hand side can be rewritten as 
\[ -[\Pi( \io_{\Pi(dx^{[r]})} d\xi)]^{r} = (-1)^{r} \Pi^{rJ} (d\xi)_{rJ} = (-1)^{r} (d\xi)_{1 \dots p}. \]
A comparison of both sides gives the result. Conversely, if $(\ref{eq_NPfijinak2})$ holds, we can plug in $\xi = df_{1} \^ \dots \^ df_{p}$ to obtain the fundamental identity (\ref{eq_fitensor}). 
\end{proof}

We can immediately use this lemma to prove some important observations. First note that the identity (\ref{eq_NPfijinak2}) has two parts, differential and algebraical. To see this, rewrite it in some local coordinates $(y^{1}, \dots, y^{n})$ . We write $\xi = \xi_{I} dy^{I}$, $\eta = dy^{J}$, and take the $k$-th component of the result. We get
\[
\begin{split}
[(\Li{\Pi(\xi)}\Pi)(\eta)]^{k} = (\Li{\Pi(\xi)} \Pi)^{kJ} & = \xi_{I} \big( \Pi^{nI} {\Pi^{kJ}}_{,n} - {\Pi^{kI}}_{,n} \Pi^{nJ} - \sum_{q=1}^{p} {\Pi^{j_{q}I}}_{,n} \Pi^{kj_{1} \dots n \dots j_{p}} \big) \\
& - \xi_{I,n} \big( \Pi^{kI} \Pi^{nJ} + \sum_{q=1}^{p} \Pi^{j_{q}I} \Pi^{kj_{1} \dots n \dots j_{p}} \big). 
\end{split}
\]
The right-hand side gives 
\[
-\Pi(\io_{\Pi(\eta)} d\xi)^{k} = - \Pi^{kL} (d\xi)_{mL} \Pi^{mJ} = - \xi_{I,n} \delta^{nI}_{mL} \Pi^{kL} \Pi^{mJ}.
\]
The terms proportional to $\xi_{I}$ give the \emph{differential part} of the fundamental identity:
\begin{equation} \label{eq_fidiff}
\Pi^{nI} {\Pi^{kJ}}_{,n} - {\Pi^{kI}}_{,n} \Pi^{nJ} - \sum_{q=1}^{p} {\Pi^{j_{q}I}}_{,n} \Pi^{kj_{1} \dots n \dots j_{p}} = 0.
\end{equation}
The terms proportional to $\xi_{I,n}$ give the quadratic equation for $\Pi$, the \emph{algebraical part} of the fundamental identity:
\begin{equation} \label{eq_fialg}
\Pi^{kI} \Pi^{nJ} + \sum_{q=1}^{p} \Pi^{j_{q}I} \Pi^{kj_{1} \dots n \dots j_{p}} = \delta^{nI}_{mL} \Pi^{kL} \Pi^{mJ}. 
\end{equation}
This one is trivially satisfied for $p=1$. For $p > 1$ it is in fact this part which forces $\Pi$ to be decomposable. We can use this to immediately prove the following:
\begin{lemma} \label{lem_ftimesNP}
Let $p > 1$. Let $\Pi$ be a Nambu-Poisson tensor, and $f \in \cif$. Then $\Pi' \defeq f \Pi$ is also a Nambu-Poisson tensor. In particular, every $\Pi \in \vf{n}, n = \dim{M}$, is a Nambu-Poisson tensor.
\end{lemma}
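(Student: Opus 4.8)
The plan is to prove the statement via the Darboux-type characterization of Theorem~\ref{tvrz_NPfund}. By the ``if'' direction of that theorem, to show that $\Pi' \defeq f\Pi$ is Nambu--Poisson it suffices to produce, around every point $x$ with $\Pi'(x)\neq 0$, a chart in which $\Pi'$ equals $\partial_{1}\wedge\cdots\wedge\partial_{p+1}$. At such an $x$ we automatically have $f(x)\neq 0$ and $\Pi(x)\neq 0$, so the ``only if'' direction of Theorem~\ref{tvrz_NPfund}, applied to the genuine Nambu--Poisson tensor $\Pi$, supplies coordinates $(x^{1},\dots,x^{n})$ on a neighbourhood $U\ni x$ with $\Pi|_{U}=\partial_{1}\wedge\cdots\wedge\partial_{p+1}$ (i.e. the form \eqref{eq_Pidecomp}); shrinking $U$ we may also assume $f$ is nowhere zero on $U$.

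The remaining step is to absorb the non-vanishing factor $f$ into one coordinate vector field. I would keep $y^{i}=x^{i}$ for $i\geq 2$ and take $y^{1}=\varphi(x^{1},\dots,x^{n})$ to be any primitive of $1/f$ in the $x^{1}$-direction, i.e. $\partial\varphi/\partial x^{1}=1/f$ (this exists on a coordinate box around $x$, e.g. by integrating in $x^{1}$, since $1/f$ is smooth there); because $\partial\varphi/\partial x^{1}\neq 0$ the transition $(x^{i})\mapsto(y^{i})$ is a genuine diffeomorphism near $x$. One then computes $\partial/\partial y^{1}=f\,\partial/\partial x^{1}$ and $\partial/\partial x^{j}=(\partial\varphi/\partial x^{j})\,\partial/\partial y^{1}+\partial/\partial y^{j}$ for $j\geq 2$, so that in $\Pi'=f\,\partial_{x^{1}}\wedge\cdots\wedge\partial_{x^{p+1}}$ every term containing $\partial/\partial y^{1}$ more than once vanishes and what survives is exactly $\partial/\partial y^{1}\wedge\cdots\wedge\partial/\partial y^{p+1}$. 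Thus $\Pi'$ is in Darboux form near $x$, and Theorem~\ref{tvrz_NPfund} yields that $\Pi'$ is Nambu--Poisson. The only real care needed here is the (routine) verification that $\varphi$ defines a legitimate coordinate and the bookkeeping that the zero set of $\Pi$ never enters, since the criterion of Theorem~\ref{tvrz_NPfund} constrains only the points where $\Pi'\neq 0$; this is the mild ``obstacle'' of the argument.

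For the final assertion, that every $\Pi\in\vf{n}$ is Nambu--Poisson when $p=n-1>1$, I would argue locally: in any chart $\Pi=h\,\partial_{1}\wedge\cdots\wedge\partial_{n}$, and the constant top-degree field $\partial_{1}\wedge\cdots\wedge\partial_{n}$ is trivially in the Darboux form \eqref{eq_Pidecomp}, hence Nambu--Poisson; the first part of the lemma then shows $\Pi=h\cdot(\partial_{1}\wedge\cdots\wedge\partial_{n})$ is Nambu--Poisson on that chart, and since the fundamental identity is a local condition --- it is equivalent to the pointwise component equations \eqref{eq_fidiff} and \eqref{eq_fialg} --- $\Pi$ is Nambu--Poisson on all of $M$. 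An alternative, purely computational route to the main statement is to check \eqref{eq_NPfijinak1} directly for $\Pi'$: the algebraic part \eqref{eq_fialg} is homogeneous of degree two in $\Pi$ and survives multiplication by $f$ at the cost of an overall $f^{2}$, while in the differential part \eqref{eq_fidiff} the only genuinely new contribution is proportional to $\Pi^{nI}\Pi^{kJ}-\delta^{nI}_{mL}\Pi^{kL}\Pi^{mJ}$, which reduces via \eqref{eq_fialg} to an identity for decomposable $(p+1)$-vectors. I find the coordinate argument cleaner and would present it as the main proof.
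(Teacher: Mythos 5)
Your proof is correct, but it takes a genuinely different route from the paper's. The paper proves the statement by verifying the fundamental identity for $f\Pi$ directly: it observes that the algebraic part (\ref{eq_fialg}) is quadratic in $\Pi$ and hence unaffected by the rescaling, and then checks the differential part in Darboux coordinates for $\Pi$, where the computation reduces to the explicit component identity $(\Pi(dx^{I}).f)\,\Pi = \Pi(dx^{I}) \wedge \io_{df}\Pi$, valid for decomposable tensors. You instead use Theorem \ref{tvrz_NPfund} in both directions: the ``only if'' part straightens $\Pi$ near any point where $f\Pi \neq 0$ (where necessarily $f\neq 0$), and the change of coordinates $y^{1}=\varphi$ with $\partial\varphi/\partial x^{1}=1/f$ absorbs the factor $f$, since $f\,\partial_{x^{1}}=\partial_{y^{1}}$ and the extra $\partial_{y^{1}}$-components of $\partial_{x^{j}}$, $j\geq 2$, die in the wedge; the ``if'' part then certifies $f\Pi$ as Nambu--Poisson, with points where $f\Pi=0$ imposing no condition. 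Your coordinate computation ($\partial_{y^{1}}=f\,\partial_{x^{1}}$, triangular Jacobian, local primitive of $1/f$) is sound, and your handling of the second assertion (local Darboux form of the constant top-degree field, part one applied chartwise, locality of the fundamental identity via (\ref{eq_fidiff}) and (\ref{eq_fialg})) matches the paper's in spirit. What each approach buys: yours eliminates all index bookkeeping and makes the geometric content transparent (rescaling a decomposable tensor by a nonvanishing function is just a coordinate change), at the price of invoking the hard direction of the Darboux theorem for the converse step as well; the paper's argument uses only the ``only if'' direction plus a short component check, and along the way exhibits the Pl\"ucker-type identity that your alternative computational sketch at the end gestures at but does not fully carry out (that sketch, as written, would still require the Darboux-coordinate verification of the residual term, which is exactly the computation the paper performs).
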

\begin{proof}
Multiplication of $\Pi$ by $f$ does not change the algebraic part (\ref{eq_fialg}). Because $\Pi$ is Nambu-Poisson, we can choose Darboux coordinates where $\Pi^{iJ} = \epsilon^{iJ}$. It suffices to prove the differential identity, choose $\xi = dx^{I}$. We thus have to show that
\begin{equation} \Li{(f\Pi)(dx^{I})} (f\Pi) = 0. \end{equation}
We have
\[ 
\begin{split}
\Li{(f\Pi)(dx^{I})}(f\Pi) & = ((f\Pi)(dx^{I}).f) \Pi + f \Li{(f\Pi)(dy^{I})} \Pi \\
& = f (\Pi(dx^{I}).f) \Pi + f^{2} \Li{\Pi(dy^{I})}\Pi - f (\Pi(dy^{I}) \^ \io_{df}\Pi) \\
& = f \big( (\Pi(dx^{I}).f) \Pi - \Pi(dy^{I}) \^ \io_{df}\Pi \big). 
\end{split}
\]
We have used the fundamental identity for $\Pi$ in the last step. It thus suffices to show that 
\begin{equation} (\Pi(dx^{I}).f) \Pi = \Pi(dy^{I}) \^ \io_{df}\Pi. \end{equation}
This is equation which has a single non-trivial component, that is $(1, \dots, p)$. We get
\[ \epsilon^{nI} \partial_{n}f = \epsilon^{kI} \partial_{n}f \epsilon^{nJ} \epsilon_{kJ}. \]
The only non-trivial contribution is possible for $I = [r]$ for $r \in \{1, \dots, p+1\}$. We then get
\[ (-1)^{r+1} \partial_{r}f = (-1)^{r+1} \partial_{n}f \epsilon^{nJ} \epsilon_{rJ} = (-1)^{r+1} \partial_{r}f. \]
This finishes the proof of the first part. The second part follows easily. Every $\Pi \in \vf{n}$ can be locally written as $\Pi = f \partial_{1} \^ \dots \^ \partial_{n}$. The $n$-vector $\~\Pi = \partial_{1} \^ \dots \^ \partial_{n}$ is (at least locally well defined) Nambu-Poisson tensor, and we can use the preceding proof to show that $\Pi = f \~\Pi$ satisfies the fundamental identity. 
\end{proof}

There is one very interesting observation, noted in \cite{2001JPhA...34.3803G}. If one assumes that Leibniz rule (\ref{def_nambuleibniz}) holds in every input (instead of in just one as we did), and that fundamental identity (\ref{def_nambufi}) holds, then the complete skew-symmetry of the bracket already follows. We reformulate this in the language of the corresponding tensors and vector bundle morphisms:
\begin{lemma} \label{lem_NPnecskewsym}
Let $\Pi \in \Hom(\cTM{p},TM)$ be an arbitrary vector bundle morphism satisfying (\ref{eq_fialg}), where now $\Pi^{iJ} \defeq \< dy^{i}, \Pi(dy^{J}) \> $. Then $\Pi$ is induced by a $(p+1)$-vector on $M$, that is $\Pi(\xi) = \Pi(\cdot,\xi)$ for $\Pi \in \vf{p+1}$. 
\end{lemma}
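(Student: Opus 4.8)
The plan is to show that the algebraic part of the fundamental identity, equation (\ref{eq_fialg}), is already strong enough to force the skew-symmetry of $\Pi$ in all of its indices, so that $\Pi$ descends to a genuine $(p+1)$-vector. Recall that by hypothesis $\Pi \in \Hom(\cTM{p},TM)$ is merely skew-symmetric in the lower ($p$-form) indices; we must produce skew-symmetry between the distinguished upper index $k$ and the remaining $p$ upper indices, and then argue full antisymmetry among $\{k, j_1, \dots, j_p\}$ follows.

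First I would work purely pointwise and algebraically: fix a point $m \in M$ and regard $\Pi^{kI}$ as a collection of real numbers, antisymmetric in $I$. The relation (\ref{eq_fialg}),
\[
\Pi^{kI} \Pi^{nJ} + \sum_{q=1}^{p} \Pi^{j_{q}I} \Pi^{kj_{1} \dots n \dots j_{p}} = \delta^{nI}_{mL} \Pi^{kL} \Pi^{mJ},
\]
holds for all choices of the strictly ordered multi-indices $I,J$ and free indices $k,n$. The strategy is to feed carefully chosen index patterns into this identity. The key move is to contract or specialize so that the right-hand side, which is manifestly a sum of products $\Pi^{kL}\Pi^{mJ}$, becomes controllable; symmetrizing the identity in a pair of upper indices (say $k$ and one of the $j_q$, or $k$ and $n$) should produce a relation whose only solution compatible with the lower-index antisymmetry is that $\Pi^{kI}$ is totally antisymmetric. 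Concretely, I expect that setting $J$ to overlap appropriately with $I$ and with $k$, and using the generalized Kronecker symbol $\delta^{nI}_{mL}$ to expand the right-hand side, will yield after a short manipulation an equation of the shape $\Pi^{(k|I|} \Pi^{n)J} = (\text{antisymmetric-in-}k,n\text{ stuff})$, forcing the symmetric part to vanish on a spanning set. One then invokes the standard fact (used implicitly around Theorem \ref{tvrz_NPfund} and in \cite{decomposability,2001JPhA...34.3803G}) that a bracket satisfying the Leibniz rule in every slot plus the fundamental identity is automatically skew, which is exactly the tensorial reformulation being asserted here.

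Once skew-symmetry is established pointwise, the remaining steps are routine: antisymmetry in the upper indices is a closed condition, so it holds on all of $M$ by continuity (or simply because the argument is pointwise everywhere); then the totally antisymmetric array $\Pi^{i_1 \dots i_{p+1}}$ with $\Pi^{iJ} = \langle dy^i, \Pi(dy^J)\rangle$ defines, in each coordinate patch, a $(p+1)$-vector, and the transformation rule inherited from the vector bundle morphism $\Pi$ guarantees these patch-wise $(p+1)$-vectors glue to a global $\Pi \in \vf{p+1}$ with $\Pi(\xi) = \Pi(\cdot,\xi)$.

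The main obstacle will be the combinatorial bookkeeping in the algebraic step: choosing the right specializations of $I$, $J$, $k$, $n$ so that (\ref{eq_fialg}) collapses to a statement about the symmetric part of $\Pi$ in two upper indices, while keeping track of signs coming from reordering strictly ordered multi-indices and from the generalized Kronecker symbol. I would handle this by first treating the lowest nontrivial case $p=2$ to pin down the correct index manipulation, and then lifting it to general $p$; alternatively, since this is precisely the content of \cite{2001JPhA...34.3803G} rephrased, I would cite that source for the algebraic implication and only spell out the (easy) passage from the skew tensor to a global $(p+1)$-vector. The positive-definiteness or nondegeneracy of any metric plays no role here — the statement is purely about the algebraic identity (\ref{eq_fialg}).
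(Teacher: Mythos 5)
Your overall strategy --- treat the statement pointwise, extract from (\ref{eq_fialg}) the vanishing of a symmetric combination in the upper indices, then glue the resulting skew array into a global $(p+1)$-vector --- is the same in spirit as the paper's proof. But the decisive step is missing: you only assert that ``symmetrizing in a pair of upper indices should produce'' the desired relation and that the right-hand side ``becomes controllable'', without exhibiting a specialization that actually makes this happen. The paper's argument hinges on one concrete choice: set $k=n=i$ and $I=J$ with $i\in J$, say $j_r=i$. Then in the sum on the left-hand side every term with $q\neq r$ vanishes, because the second factor $\Pi^{i\,j_1\dots i\dots j_p}$ carries the index $i$ twice among its lower entries and $\Pi$ is antisymmetric there; the surviving $q=r$ term equals $(\Pi^{iJ})^2$, so the left side collapses to $2(\Pi^{iJ})^2$. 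Meanwhile the right-hand side vanishes identically, because $\delta^{iJ}_{mL}$ has the index $i$ repeated among its (completely antisymmetrized) upper entries. Hence $\Pi^{iJ}=0$ whenever $i\in J$, in arbitrary coordinates; choosing coordinates with $dy^1|_x=\alpha$ for an arbitrary covector $\alpha$ then gives $\langle\alpha,\Pi(\alpha\wedge dy^{i_2}\wedge\dots\wedge dy^{i_p})\rangle=0$, and full skew-symmetry follows by polarization. A generic symmetrization of (\ref{eq_fialg}) in $k$ and $n$ does not obviously kill the right-hand side, so without producing this (or an equivalent) specialization your text remains a plan rather than a proof.

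Two further points. First, the fallback of citing \cite{2001JPhA...34.3803G} does not match the lemma as stated: that observation assumes the Leibniz rule in every slot together with the full fundamental identity, whereas the lemma hypothesizes only the algebraic identity (\ref{eq_fialg}) for an arbitrary bundle morphism --- and this weaker hypothesis is exactly what the paper exploits later (e.g.\ in the twisted case, where only the algebraic part is unchanged). Second, your remark that antisymmetry is a closed condition and ``holds by continuity'' addresses globality over $M$, which is not where the difficulty lies; the real step is passing from the coordinate statement $\Pi^{iJ}=0$ for $i\in J$ to coordinate-independent skew-symmetry, which requires the coordinate-freedom/polarization argument above, not continuity.
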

\begin{proof}
We will show that the algebraical identity (\ref{eq_fialg}) implies that $\Pi^{iJ} = 0$, whenever $i \in J$, in arbitrary coordinates. In particular, for any non-zero $\alpha \in T_{x}^{\ast}M$, we can choose the local coordinates in $M$, such that $dy^{1}|_{x} = \alpha$. This will prove that $\<\alpha, \Pi(\alpha \^ dy^{i_{2}} \^ \dots \^ dy^{i_{p}} \> = 0$, which implies the complete skew-symmetry of $\Pi$. Let us thus prove this assertion, let $i \in J$. Choose $ k = n = i$ and $J = I$ in (\ref{eq_fialg}). We obtain
\[ (\Pi^{iJ})^{2} + \sum_{q=1}^{p} \Pi^{j_{q}J} \Pi^{ij_{1} \dots i \dots j_{p}} = \delta^{iJ}_{mL} \Pi^{iL} \Pi^{mI}. \]
By assumption, we have $i = j_{r}$ for some $r \in \{1, \dots, p\}$. Thus only the $q = r$ term on the left-hand side contributes to the sum. Moreover, the right-hand side vanishes identically, because $\delta^{iJ}_{mL}$ is completely skew-symmetric in top indices. We thus get
\[ 2 (\Pi^{iJ})^{2} = 0, \]
which proves the assertion. 
\end{proof}
\section{Leibniz algebroids picture} \label{sec_NPLeibniz}
Recall now Example \ref{ex_liealg} and the Koszul bracket induced on cotangent bundle by a Poisson tensor. Can one construct such a bracket also for a Nambu-Poisson structure?

\begin{tvrz} \label{tvrz_Koszul}
Let $\Pi \in \Hom(\cTM{p},TM)$ be a vector bundle morphism.  Let $L = \cTM{p}$, and define the $\R$-bilinear bracket $[\cdot,\cdot]_{\Pi}: \Gamma(L) \times \Gamma(L) \rightarrow \Gamma(L)$ as 
\begin{equation} \label{def_Koszul}
[\xi,\eta]_{\Pi} \defeq \Li{\Pi(\xi)}\eta - \io_{\Pi(\eta)}d\xi,
\end{equation}
for all $\xi,\eta \in \df{p}$. Then $(L,\Pi,[\cdot,\cdot]_{\Pi})$ is a Leibniz algebroid, if and only if $\Pi$ is a Nambu-Poisson tensor. 
\end{tvrz}
\begin{proof}
The Leibniz rule (\ref{def_bracketLeibniz}) holds for any $\Pi$. We will show that the Leibniz identity (\ref{def_bracketJI}) for $[\cdot,\cdot]_{\Pi}$ is equivalent to the fundamental identity (\ref{eq_fitensor}) for $\Pi$. In particular, we will use its version (\ref{eq_NPfijinak2}). For $\xi,\eta \in \df{p}$, define the vector field $V(\xi,\eta)$ as 
\begin{equation}
V(\xi,\eta) \defeq [\Pi(\xi),\Pi(\eta)] - \Pi( \Li{\Pi(\xi)}\eta - \io_{\Pi(\eta)} d\xi). 
\end{equation}
We claim that $V(\xi,\eta) = 0$ if and only if $\Pi$ is a Nambu-Poisson tensor. This follows from the fact that $\Li{}$ commutes with contractions, and thus
\[ 
\begin{split}
[\Pi(\xi),\Pi(\eta)] & = \Li{\Pi(\xi)} (\Pi(\eta)) = (\Li{\Pi(\xi)}\Pi)(\eta) + \Pi( \Li{\Pi(\xi)} \eta) \\
& = \Pi( \Li{\Pi(\xi)}\eta - \io_{\Pi(\eta)}d\xi) + \{ (\Li{\Pi(\xi)}\Pi)(\eta) + \Pi(\io_{\Pi(\eta)}d\xi) \}. 
\end{split}
\]
We see that $\Pi$ satisfies (\ref{eq_NPfijinak2}) if and only if $V(\xi,\eta) = 0$. Combining this with Lemma \ref{lem_NPnecskewsym} proves that $V(\xi,\eta) = 0$ if and only if $\Pi$ is a Nambu-Poisson tensor. Moreover, note that we have
\begin{equation} \label{eq_LeibnizPiVfield}
V(\xi,\eta) = [\Pi(\xi),\Pi(\eta)] - \Pi( [\xi,\eta]_{\Pi}). 
\end{equation}
Let us now examine the Leibniz identity for $[\cdot,\cdot]_{\Pi}$:
\begin{align}
[\xi,[\eta,\zeta]_{\Pi}]_{\Pi} & = \Li{\Pi(\xi)}( \Li{\Pi(\eta)}\zeta - \io_{\Pi(\zeta)}d\eta) - \io_{\Pi( \Li{\Pi(\eta)}\zeta - \io_{\Pi(\zeta)}d\eta)} d\xi, \\
[[\xi,\eta]_{\Pi},\zeta]_{\Pi} &= \Li{\Pi(\Li{\Pi(\xi)} - \io_{\Pi(\eta)}d\xi)} \zeta - \io_{\Pi(\zeta)}d(\Li{\Pi(\xi)}\eta - \io_{\Pi(\eta)}d\xi), \\
[\eta,[\xi,\zeta]_{\Pi}]_{\Pi} &= \Li{\Pi(\eta)}( \Li{\Pi(\xi)}\zeta - \io_{\Pi(\zeta)}d\xi) - \io_{\Pi(\Li{\Pi(\xi)}\zeta - \io_{\Pi(\zeta)}d\xi)} d\eta. 
\end{align}
Arranging this into the Leibniz identity for $[\cdot,\cdot]_{\Pi}$ and using the Cartan formulas to rewrite several terms, one arrives to the condition 
\begin{equation} \label{eq_LebnizPiLeibniz}
\Li{V(\xi,\eta)}\zeta - \io_{V(\xi,\zeta)}d\eta + \io_{V(\eta,\zeta)}d\xi = 0.
\end{equation}

We are now ready to finish the proof. First, when $\Pi$ is a Nambu-Poisson tensor, then $V(\xi,\eta) = 0$, hence (\ref{eq_LebnizPiLeibniz}) holds. This proves that $(L,\Pi,[\cdot,\cdot]_{\Pi})$ is a Leibniz algebroid. Conversely, if $(L,\Pi,[\cdot,\cdot]_{\Pi})$ is a Leibniz algebroid, we know that the anchor $\Pi$ is a bracket homomorphism (\ref{eq_leibnizhom}). Glancing at (\ref{eq_LeibnizPiVfield}), we see that this is equivalent to $V(\xi,\eta) = 0$. Hence $\Pi$ is a Nambu-Poisson tensor. 
\end{proof}

There is a natural way how to explain the origin of the bracket (\ref{def_Koszul}). Consider now arbitrary $\Pi \in \Hom(\cTM{p},TM)$. We view its graph $G_{\Pi}$ as a subbunle of $E$:
\begin{equation} \label{def_Pigraph}
G_{\Pi} = \{ \Pi(\xi) + \xi \ | \ \xi \in \cTM{p} \}.
\end{equation} 
We can now study its involutivity. Note that $G_{\Pi}$ is not an isotropic subbundle with respect to the pairing (\ref{def_hpairing}). Instead, it forms an example of an almost Nambu-Dirac structure, defined and studied in detail in \cite{hagiwara}. 
\begin{tvrz} \label{tvrz_NPinvolutivedef}
The subbundle $G_{\Pi}$ is involutive under the higher Dorfman bracket (\ref{def_dorfman2}), if and only if $\Pi$ is a Nambu-Poisson tensor. 
\end{tvrz}
\begin{proof}
Let $\Pi(\xi) + \xi$ and $\Pi(\eta) + \eta$ be sections of $G_{\Pi}$. Then
\begin{equation}
[\Pi(\xi) + \xi, \Pi(\eta) + \eta]_{D} = [\Pi(\xi),\Pi(\eta)] + \Li{\Pi(\xi)} \eta - \io_{\Pi(\eta)}d\xi. 
\end{equation}
The right-hand side is again a section of $G_{\Pi}$ iff
\begin{equation}
[\Pi(\xi),\Pi(\eta)] = \Pi\big( \Li{\Pi(\xi)}\eta - \io_{\Pi(\eta)}d\xi \big).
\end{equation}
Using the properties of Lie derivative, this is equivalent to
\begin{equation}
(\Li{\Pi(\xi)}\Pi)(\eta) = - \Pi( \io_{\Pi(\eta)}d\xi). 
\end{equation}
This is exactly the fundamental identity for $\Pi$ written in the form (\ref{eq_NPfijinak2}). 
\end{proof}
We can use this to clarify the structure of the bracket (\ref{def_Koszul}). Indeed, define a vector bundle isomorphism $\fPsi \in \Hom(\cTM{p},G_{\Pi})$ as $\fPsi(\xi) = \Pi(\xi) + \xi$. Assume that $\Pi$ is a Nambu-Poisson tensor. The relation is then
\begin{equation}
\fPsi( [\xi,\eta]_{\Pi} ) = [\fPsi(\xi), \fPsi(\eta)]_{D}. 
\end{equation}
The anchor for $[\cdot,\cdot]_{\Pi}$ is then in fact a composition $\rho \circ \Psi$. Indeed, we have
\begin{equation} (\rho \circ \Psi)(\xi) = \Pi(\xi). \end{equation}
This gives an alternative proof of the "if part" of Proposition \ref{tvrz_Koszul}. 

To conclude this section, see that Proposition \ref{tvrz_NPinvolutivedef} allows one to easily define the twisted version of Nambu-Poisson structure. 
\begin{definice} \label{def_twistedNP}
Let $H \in \df{p+2}$ be a closed $(p+2)$-form, and let $[\cdot,\cdot]_{D}^{H}$ be a twisted Dorfman bracket (\ref{def_twistedhDorfman}). Let $\Pi \in \Hom(\cTM{p},TM)$, and let $G_{\Pi} \subseteq E$ be its graph (\ref{def_Pigraph}). We say that $\Pi$ is an {\bfseries{$H$-twisted Nambu-Poisson tensor}} if $G_{\Pi}$ defines a subbundle involutive under $[\cdot,\cdot]_{D}^{H}$. 
\end{definice}
This definition by itself does not point at all to the statement of the following proposition, which may seem somewhat surprising. It was first observed and proved in \cite{Bouwknegt:2011vn}. 
\begin{tvrz}
For $p = 1$, Definition \ref{def_twistedNP} gives a usual $H$-twisted Poisson manifold, where $\Pi \in \vf{2}$ satisfies the condition 
\begin{equation} \frac{1}{2}[\Pi,\Pi]_{S}(\xi,\eta,\zeta) = H(\Pi(\xi),\Pi(\eta),\Pi(\zeta)), \end{equation}
for all $\xi,\eta,\zeta \in \df{1}$. Recall that $[\cdot,\cdot]_{S}$ is the Schouten-Nijenhuis bracket of multivector fields. For $p>1$, Definition \ref{def_twistedNP} gives no new structure at all. 
\end{tvrz}
\begin{proof}
For $p=1$, the statement follows from the fact that 
\begin{equation}
\frac{1}{2}[\Pi,\Pi]_{S}(\xi,\eta,\cdot) = [\Pi(\xi),\Pi(\eta)] - \Pi( \Li{\Pi(\xi)}\eta - \io_{\Pi(\eta)}d\xi). 
\end{equation}
The involutivity of $G_{\Pi}$ under $[\cdot,\cdot]_{\Pi}^{H}$ gives the condition 
\begin{equation}
[\Pi(\xi),\Pi(\eta)] = \Pi( \Li{\Pi(\xi)}\eta - \io_{\Pi(\eta)}d\xi - H(\Pi(\xi),\Pi(\eta),\cdot)) 
\end{equation}
A combination of these two relations gives the assertion of the proposition. For $p > 1$, we can rewrite the involutivity of $G_{\Pi}$ under $[\cdot,\cdot]_{D}^{H}$ as 
\begin{equation} \label{eq_twistedFI}
(\Li{\Pi(\xi)}\Pi)(\eta) = -\Pi( \io_{\Pi(\eta)}d\xi + H(\Pi(\xi),\Pi(\eta),\cdot)).
\end{equation}
Now recall that the algebraic part (\ref{eq_fialg}) of the fundamental identity comes from the failure of (\ref{eq_NPfijinak2}) to be $\cif$-linear in $\xi$. But the addition of $H$ does not change this part! Hence $\Pi$ satisfies (\ref{eq_fialg}). It was shown in \cite{decomposability} that this in fact proves that there is a local frame $(e_{\lambda})_{\lambda=1}^{n}$, such that $\Pi = e_{1} \^ \dots \^ e_{p+1}$. Thus the only components of $H$ contributing to (\ref{eq_twistedFI}) are those corresponding to the first $p+1$ vectors of the frame. But $H$ is a $(p+2)$-form, and those components vanish due to skew-symmetry. Hence $H$ in no way contributes to (\ref{eq_twistedFI}) and $\Pi$ satisfies the untwisted fundamental identity (\ref{eq_NPfijinak2}). 
\end{proof}
\section{Seiberg-Witten map} \label{sec_SW}
We will now show that given a $p$-form $A$, one can use a Nambu-Poisson tensor $\Pi$ to define a diffeomorphism of $M$. It is a direct generalization of the Seiberg-Witten map \cite{Seiberg:1999vs}. In the presented form, it was introduced for $p=1$ in \cite{Jurco:2001my} as a dual analogue of Moser's lemma in symplectic geometry \cite{MR0182927}. Its generalization to $p > 1$ were presented in \cite{Jurco:2012yv} and \cite{Chen:2010br}. Let us first recall a few facts about time-dependent vector fields and their flows. 

\begin{definice}
Let $I \subseteq \R$ be an open interval, and let $V_{\bullet}: I \rightarrow \vf{}$ be a map. A value of this map at given $t \in I$ is a vector field denoted as $V_{t}$. We say that $V_{\bullet}$ is a {\bfseries{time-dependent vector field}}, if in every local coordinate set $(y^{1}, \dots, y^{n})$ the components of $V_{t}$ depend smoothly on $t$. General time-dependent tensor fields are defined analogously. 
\end{definice}
\begin{rem} \label{rem_timedep}
Equivalently, we may view $V_{\bullet}$ as a vector field on the extended manifold $M \times I$ in the form
\begin{equation} V_{\bullet}(x,t) = V_{t}(x) + \partial_{t}. \end{equation}
This interpretation is however not useful for higher tensor fields. 
\end{rem}
For a time-dependent vector field, a notion of integral curve still makes sense, except that one has to specify its {\emph{starting time}}. For $s \in I$, $\gamma_{s}: J \rightarrow M$ is an integral curve starting at $m \in M$ at the time $s$, if $\dot{\gamma_{s}}(t) = V_{t}(\gamma_{s}(t))$ for all $t \in J$, and $\gamma_{s}(s) = m$. Here $J \subseteq I$ is an open interval. The vector field local flow theorem generalizes as follows:
\begin{tvrz}
Let $V_{\bullet}$ be a time-dependent vector field defined on an open interval $I \subseteq \R$. Then there exists an open subset $\mathcal{E} \subseteq I \times I \times M$ and a smooth map $\psi: \mathcal{E} \rightarrow M$ called a {\bfseries{time-dependent local flow of $V_{\bullet}$}}, such that 
\begin{enumerate}
\item For each $s \in I$ and $m \in M$, the set $\mathcal{E}_{s,m} = \{ t \in I \; | \; (t,s,m) \in \mathcal{E} \}$ is an open subinterval of $I$ containing $s$. The map $\psi_{s,m} \defeq \psi(\cdot,s,m): \mathcal{E}_{s,m} \rightarrow M$ is a maximal integral curve of $V_{\bullet}$ starting at $m$ at the time $s$. 
\item For any $t \in \mathcal{E}_{s,m}$, and $q = \psi_{s,m}(t)$, there holds $\mathcal{E}_{t,q} = \mathcal{E}_{s,m}$, and $\psi_{t,q} = \psi_{s,p}$. 
\item For any $(t,s) \in I \times I$, the set $M_{t,s} = \{ m \in M \; | \; (t,s,m) \in \mathcal{E} \}$ is an open subset of $M$. The map $\psi_{t,s} \defeq \psi(t,s,\cdot): M_{t,s} \rightarrow M_{s,t}$ is a diffeomorphism, and $\psi_{t,s}^{-1} = \psi_{s,t}$. 
\item Let $m \in M_{t,s}$, ant $\psi_{t,s}(m) \in M_{v,t}$. Then $p \in M_{v,s}$ and 
\begin{equation} \label{eq_locflowcomp}
\psi_{v,t} \circ \psi_{t,s} = \psi_{v,s}. 
\end{equation}
\end{enumerate}
\end{tvrz}
\begin{proof}
The proof is in fact a careful application of the ordinary local flow theorem for a vector field $V_{\bullet}$ mentioned in Remark \ref{rem_timedep}. For details see \cite{lee2012introduction}. 
\end{proof}

Having a local flow, there is a well defined generalization of Lie derivative. Let $T_{\bullet}$ be a time-dependent tensor field, and $\psi_{t,s}$ be a local flow of a time-dependent vector field $V_{\bullet}$. Define a new time-dependent tensor field as 
\begin{equation}
( \Li{V_{\bullet}}^{\tau} T_{\bullet} )_{s} = \hspace{-2mm} \left. \begin{array}{c} \frac{d}{dt} \hspace{-2mm} \end{array} \right|_{t=s} \psi_{t,s}^{\ast}( T_{t}).
\end{equation}
A direct calculation similar to the one for ordinary tensor fields shows that
\begin{equation}
(\Li{V_{\bullet}}^{\tau} T_{\bullet})_{s} = \partial_{s}T_{s} + \Li{V_{s}} T_{s}.
\end{equation}
We have used the superscript $\tau$ to distinguish the generalization from the ordinary Lie derivative standing on the right-hand side (which is assumed to be given by usual algebraic formula). Lie derivative is a tool useful to describe the invariance of tensor fields with respect to flows. Let us show that for time-dependent tensor fields, $\Li{}^{\tau}$ plays the same role. 
\begin{lemma} \label{lem_tdtfinvariance}
Let $V_{\bullet}$ be a time-dependent vector field. Let $T_{\bullet}$ be a time-dependent tensor field satisfying $\Li{V_{\bullet}}^{\tau}T_{\bullet} = 0$. Then for any $(t,s) \in I \times I$, one has $\psi_{t,s}^{\ast}( T_{t} ) = T_{s}$ on $M_{t,s}$. 
\end{lemma}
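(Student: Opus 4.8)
The plan is to reduce the claim to a statement about a single integral curve and then use the already-established group property \eqref{eq_locflowcomp} of the time-dependent flow. Fix $s \in I$ and a point $m$ in the domain where everything below makes sense. The key observation is that the hypothesis $\Li{V_{\bullet}}^{\tau}T_{\bullet} = 0$, together with the formula $(\Li{V_{\bullet}}^{\tau}T_{\bullet})_{s} = \partial_{s}T_{s} + \Li{V_{s}}T_{s}$, says that the ``total'' time-derivative of $T$ along the flow vanishes. So I would introduce, for fixed $t$, the curve
\begin{equation}
c(u) \defeq \psi_{u,s}^{\ast}(T_{u})(m), \qquad u \text{ in a suitable subinterval of } I,
\end{equation}
a curve in the (finite-dimensional) tensor space over $m$, and show $c$ is constant; evaluating at $u=s$ and $u=t$ then gives $\psi_{t,s}^{\ast}(T_t)(m) = T_s(m)$, which is the assertion.

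First I would differentiate $c$ at an arbitrary point $u = u_0$. Using the reparametrization property (part 2 of the flow proposition, $\psi_{t,q} = \psi_{s,m}$ when $q = \psi_{s,m}(t)$) together with the composition law \eqref{eq_locflowcomp} in the form $\psi_{u,s} = \psi_{u,u_0} \circ \psi_{u_0,s}$, I can write $\psi_{u,s}^{\ast}(T_u) = \psi_{u_0,s}^{\ast}\big(\psi_{u,u_0}^{\ast}(T_u)\big)$. Since $\psi_{u_0,s}^{\ast}$ is a fixed (pointwise) linear map not depending on $u$, the derivative $\tfrac{d}{du}\big|_{u=u_0} c(u)$ equals $\psi_{u_0,s}^{\ast}$ applied to $\tfrac{d}{du}\big|_{u=u_0}\psi_{u,u_0}^{\ast}(T_u)$. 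But by the very definition of $\Li{}^{\tau}$, the latter is exactly $(\Li{V_{\bullet}}^{\tau}T_{\bullet})_{u_0}$, which vanishes by hypothesis. Hence $c'(u_0) = 0$ for every $u_0$, so $c$ is locally constant; since its domain is an interval (it is $\mathcal{E}_{s,m}$, an open subinterval of $I$ by part 1), $c$ is constant, and in particular $c(t) = c(s)$, i.e. $\psi_{t,s}^{\ast}(T_t)(m) = T_s(m)$.

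Finally I would note that $m$ was an arbitrary point of $M_{t,s}$ and $(t,s)$ an arbitrary pair for which the composition above is defined, which is precisely the set on which $\psi_{t,s}$ is a diffeomorphism; so $\psi_{t,s}^{\ast}(T_t) = T_s$ holds on all of $M_{t,s}$, as claimed. The only genuinely delicate point is the bookkeeping of domains — making sure that for $u$ between $s$ and $t$ the expressions $\psi_{u,s}$, $\psi_{u,u_0}$ and $\psi_{u_0,s}$ are simultaneously defined at the relevant points — but this is handled cleanly by parts 1--4 of the flow proposition (in particular the nesting $\mathcal{E}_{t,q} = \mathcal{E}_{s,m}$), so no serious obstacle remains beyond careful invocation of those statements.
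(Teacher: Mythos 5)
Your proof is correct and follows essentially the same route as the paper: differentiate $\psi_{u,s}^{\ast}(T_u)$ in the upper time variable, factor the pullback via the composition law (\ref{eq_locflowcomp}) so that the remaining derivative is by definition $(\Li{V_{\bullet}}^{\tau}T_{\bullet})_{u_0}=0$, and conclude constancy, evaluating at $u=s$. Your pointwise formulation at a fixed $m$ and the remarks on domain bookkeeping are just a more careful rendering of the same argument.
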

\begin{proof}
First let us show that the assumption in fact implies that $ \frac{d}{dt} \psi_{t,s}^{\ast}(T_{t}) = 0$, for all $t \in I$. We will now use the composition rule (\ref{eq_locflowcomp}). Indeed, one has
\begin{equation}
\frac{d}{dt} \psi_{t,s}^{\ast}(T_{t}) = \dda \psi_{t+a,s}^{\ast}(T_{t+a}) = \psi_{t,s}^{\ast} \dda \psi_{t+a,t}^{\ast}(T_{t+a}) = \psi_{t,s}^{\ast}( \Li{V_{\bullet}}^{\tau}(T_{\bullet})_{t}) = 0. 
\end{equation}
This proves that $\psi_{t,s}^{\ast}(T_{t}) = T'_{s}$ for some $T'_{\bullet}$ and all $t \in I$. Setting $t = s$ shows that $T'_{\bullet} = T_{\bullet}$. 
\end{proof} 

We now have all ingredients prepared to introduce the Seiberg-Witten map. Let $\Pi \in \vf{p+1}$ be a Nambu-Poisson tensor, and let $A \in \df{p}$. Denote $F = dA$. We can use $F$ to define a new Nambu-Poisson tensor $\Pi'$ as follows. Let $e^{F} \in \Aut(E)$ be the map (\ref{def_Ctransform}) induced by $F$. Let $G_{\Pi} \subseteq E$ be a graph of $\Pi$ which is by definition involutive under the Dorfman bracket. We have shown in Proposition \ref{tvrz_Autgrouphdorfman} that $e^{F}$ is an automorphism of the Dorfman bracket. This proves that the subbundle $e^{F}(G_{\Pi})$ is also involutive under the Dorfman bracket. If there is $\Pi' \in \Hom(\cTM{p},TM)$ such that $G_{\Pi'} = e^{F}(G_{\Pi})$, we know that $\Pi'$ is again a Nambu-Poisson tensor. Let  $\Pi(\xi) + \xi \in \Gamma(G_{\Pi})$. We have
\begin{equation}
e^{F}(\Pi(\xi) + \xi) = \Pi(\xi) + (1 - F^{T} \Pi)(\xi). 
\end{equation}
If $\Pi'$ exists, there must hold $\Pi(\xi) = \Pi'(1 - F^{T}\Pi)(\xi)$. Let us assume that $1 - F^{T}\Pi$ is an invertible map. Hence
\begin{equation} \Pi' = \Pi(1 - F^{T}\Pi)^{-1}. \end{equation}
Now define a time-dependent tensor field $\Pi_{\bullet}$ as 
\begin{equation}
\Pi_{t} = \Pi(1 - tF^{T}\Pi)^{-1}.
\end{equation}
Assume that it is well defined for $I = (-\epsilon, 1 + \epsilon)$ for some $\epsilon > 0$. Clearly $\Pi_{0} = \Pi$, $\Pi_{1} = \Pi'$. Using the same argument as above, $\Pi_{t}$ is a Nambu-Poisson tensor for every $t \in I$. In particular, it satisfies the fundamental identity (\ref{eq_NPfijinak2}). Plug $\xi = A$ into this condition.  It gives $\Li{\Pi_{t}(A)}\Pi_{t} = -\Pi_{t} F^{T} \Pi_{t}$. Next, examine the time derivative of $\Pi_{t}$. One obtains
\begin{equation}
\partial_{t}\Pi_{t} = \Pi(1 - tF^{T}\Pi)^{-1} F^{T} \Pi( 1 - F^{T}\Pi)^{-1} = \Pi_{t} F^{T} \Pi_{t}.  
\end{equation}
If we define a time-dependent vector field $A^{\#}_{\bullet}$ as $A^{\#}_{t} = \Pi_{t}(A)$, we have just proved that $\Li{A^{\#}_{\bullet}}^{\tau} \Pi_{\bullet} = 0$.
Using Lemma \ref{lem_tdtfinvariance} we see that $\psi_{t,s}^{\ast} \Pi_{t} = \Pi_{s}$. In particular, define a diffeomorphism $\rho_{A} \defeq \psi_{1,0}$. The map $\rho_{A} \in \Diff(M)$ is called the {\bfseries{Seiberg-Witten map}}. By construction, $\rho_{A}^{\ast}(\Pi') = \Pi$. Note that it is essential that $\Pi_{t}$ is a Nambu-Poisson tensor for every $t \in I$. 

To conclude this section, note that for $p > 1$, the form $F \in \df{p+1}$ used to define a time-dependent tensor field $\Pi_{\bullet}$ does not have to be closed to define a set of Nambu-Poisson tensors. This is true because in fact for any $F \in \df{p+1}$ there holds
\begin{equation}
\Pi_{t} = (1 - \frac{t}{p+1}\<\Pi,F\>)^{-1} \Pi.
\end{equation}
This can be proved easily in Darboux coordinates (\ref{eq_Pidecomp}) for $\Pi$. This shows that $\Pi_{t}$ is just a scalar multiple of $\Pi$ and the assertion follows from Lemma \ref{lem_ftimesNP}. 
\chapter{Conclusions and outlooks}
We have introduced an extension of the generalized geometry suitable for a description of membrane sigma models. Our intention was to follow the outline of the standard generalized geometry, in particular in the case of generalized metric. This was not possible until we have introduced a doubled formalism. To our delight, we were able to use it to significantly simplify the calculations required in particular to relate commutative and semi-classically non-commutative $p$-DBI actions. Moreover, it proved useful to discover the membrane analogue of background-independent gauge and the double scaling limit. Of course, this formalism is mathematically interesting in its own right. We should focus on the future prospects of the ideas presented in this thesis. We will now point out the sections which require further investigation. 

Let us start with the mathematical side of things. We have defined connections on local Leibniz algebroids in Section \ref{sec_algcon}. This direction is definitely worth of pursuing. For example, one can study a class of Courant algebroid connections which are compatible with the generalized metric and their torsion operator (\ref{def_torsionGualtieri}) vanishes. A set of such connections is larger then in the case of the ordinary Riemannian geometry. However, it turns out that they such connections have a quite nice form allowing for the calculation of their scalar curvature. Interestingly, this scalar function is exactly the one multiplying the integral density in string effective actions. One should relate this viewpoint to the generalized geometry treatment of string effective actions in \cite{Blumenhagen:2012nt,Blumenhagen:2013aia}. It would be also necessary to generalize such Courant algebroid connections to the Leibniz algebroid setting, in particular using the doubled formalism of Sections \ref{sec_doubled} and \ref{sec_dfLeibniz}. 

Killing sections of Section \ref{sec_Killing} have an interesting role in string theory, since one can construct generalized charges using such sections. Those charges are conserved in time evolution if and only if the respective sections satisfy the generalized Killing equations. Moreover, such sections are closely related to T-duality, see \cite{Grana:2008yw}. There has to exist some link between these observations. It would be also important to find a geometrical explanation to membrane duality rotations od Duff and Lu in \cite{dufflu}. Understanding T-duality analogues for membranes could possibly give an equivalent derivation od $p$-DBI actions. 

There are several directions where to proceed with the physics presented in the attached papers. A Nambu sigma model proposed using AKSZ construction in \cite{Bouwknegt:2011vn} is a little bit different from the one defined in \cite{Jurco:2012yv}. It would be interesting to analyze this disparity. In particular, the latter version used also in our paper \cite{Jurco:2012gc} is not invariant with respect to worldvolume reparametrizations. Is there a way to define an invariant and possibly more general Nambu sigma model action?

An important feature of topological Poisson sigma models is the existence of the gauge transformations, see for example \cite{Bojowald:2003pz}. It is in fact a direct consequence of the fact that topological Poisson sigma models are a theory with constraints, and constraints themselves are integrals of motion. Since the topological Nambu sigma model is also a theory with constraints, there should be a similar process leading to its gauge symmetries. Nambu-Poisson structures can be possibly an interesting object on its own. Usual Poisson structures and Poisson sigma models turned out to be a crucial element in the integration of Lie algebroids. Is there a similar use for Nambu-Poisson structures and Nambu sigma models?

By construction, standard generalized geometry (and its extended variant presented here) is not suitable to describe supersymmetric theories due to its lack of Grassmanian variables. There are its extensions used in supergravity \cite{Coimbra:2011nw, Coimbra:2012af} and M-theory \cite{Hull:2007zu}. It would be interesting to modify generalized geometry to work in a world of graded geometry, in particular supermanifolds in the sense of \cite{2011RvMaP..23..669C}. The proposed $p$-DBI action in \cite{Jurco:2012yv} is obviously only the bosonic part of a (yet unknown) full supersymmetric action. An understanding of generalized (super)geometry could give us answers necessary to derive it. 

The guiding principle of "doubling" and related construction of generalized metric can be easily generalized to more general vector bundles. There is an intriguing relation between Leibniz algebroids and Lie algebra representations, see \cite{2012JGP....62..903B}. This reference provides a huge class of interesting Leibniz algebroid examples, which can treated similarly as we did within our extended generalized geometry. This can be useful to understand better the spherical T-duality, \cite{Bouwknegt:2014oka}. 

The author hopes that this thesis and his research proved once more the importance of understanding the geometry underlying the theoretical physics. Not only it brings new ways how to understand and verify known things, but it could also provide the missing tools to push the knowledge of our world further. 
\begin{appendices}
\chapter{Proofs of technical Lemmas} \label{ap_proofs}
\begin{itemize}
\item {\bfseries Lemma 4.3.4}
\begin{proof}
Let us prove the formula (\ref{eq_detformula}). Both sides can be viewed as smooth functions of matrix elements ${A^{i}}_{j}$. We will restrict to the open subset $GL(n,\R) = \R^{n,n} \setminus \det^{-1}(\{0\})$. It is dense in $\R^{n,n}$, and the general result will follow by the continuity of both functions. Recall that there holds a formula
\begin{equation}
\frac{\partial \det{(A)}}{\partial {A^{i}}_{j}} = \det{(A)} {(A^{-1})^{j}}_{i},
\end{equation}
Hence we get
\[
\frac{\partial}{\partial {A^{i}}_{j}} [\det{A}]^{\binom{n-1}{p-1}} = \binom{n-1}{p-1} [\det{A}]^{\binom{n-1}{p-1}} {({A^{-1})}^{j}}_{i}. 
\]
This proves that for $F = [\det{A}]^{\binom{n-1}{p-1}}$, we have
\begin{equation}
\frac{\partial}{\partial {A^{i}}_{j}} \ln{|F|} = \binom{n-1}{p-1} {(A^{-1})^{j}}_{i}. 
\end{equation}
We will now show that the same equation holds for $B$, that is 
\begin{equation} \label{eq_lemdet1}
\frac{\partial}{\partial {A^{i}}_{j}} \ln|\det{B}| = \binom{n-1}{p-1} {(A^{-1})^{j}}_{i}. 
\end{equation}
Let us calculate this explicitly. We get
\[ 
\begin{split}
\frac{\partial}{\partial {A^{i}}_{j}} \det{(B)} & = \det{(B)} {(B^{-1})^{J}}_{I} \frac{\partial {B^{I}}_{J}}{\partial {A^{i}}_{j}} = \det{(B)} {(B^{-1})^{J}}_{I} \frac{\partial}{\partial {A^{i}}_{j}}[ \delta^{I}_{k_{1} \dots k_{p}} {A^{k_{1}}}_{j_{1}} \dots {A^{k_{p}}}_{j_{p}} ] \\
& = \det{(B)} {(B^{-1})^{J}}_{I} \sum_{r=1}^{p} \delta^{I}_{k_{1} \dots k_{p}} {A^{k_{1}}}_{j_{1}} \dots \delta^{k_{r}}_{i} \delta^{j_{r}}_{j} \dots {A^{k_{p}}}_{j_{p}}.
\end{split}
\]
One can now insert a unit matrix to get
\[
\begin{split}
\frac{\partial}{\partial {A^{i}}_{j}} \det{(B)} & = \det{(B)} {(B^{-1})^{J}}_{I} \sum_{r=1}^{p} \delta^{I}_{k_{1} \dots k_{p}} {A^{k_{1}}}_{j_{1}} \dots {A^{k_{r}}}_{m} \dots {A^{k_{p}}}_{j_{p}} \delta_{j}^{j_{r}} {(A^{-1})^{m}}_{i} \\
& = \det{(B)} {(B^{-1})^{J}}_{I} \sum_{r=1}^{p} {B^{I}}_{j_{1} \dots m \dots j_{p}} \delta^{j_{r}}_{j} {(A^{-1})^{m}}_{i} = \\
& = \det{(B)} \sum_{J, j \in J} {(A^{-1})^{j}}_{i} = \det{(B)} \binom{n-1}{p-1} {(A^{-1})^{j}}_{i}. 
\end{split}
\]
This proves the equation (\ref{eq_lemdet1}). We thus have 
\begin{equation} \det{(B)} = K [\det{(A)}]^{\binom{n-1}{p-1}}. \end{equation}
for some $K$ which is locally constant on $GL(n,\R)$. To finish the proof, we have to prove that $K = 1$ on both components of $GL(n,\R)$. For the group unit component, we can choose $A = 1$. In this case ${B^{I}}_{J} = \delta^{I}_{J}$, and thus $\det{(B)} = 1$. For the second component, choose $A$ to be Minkowskian metric of signature $(n-1,1)$. From the proof of Lemma \ref{lem_signature}, we see that $B$ is diagonal metric with $\pm 1$ on the diagonal, where the number of negative ones is $N(n-1,1,p) = \binom{n-1}{p-1}$. We have 
\[ \det{(A)} = -1, \; \det{(B)} = (-1)^{\binom{n-1}{p-1}}. \]
We see that again $K = 1$. This finishes the proof for $A \in GL(n,\R)$, and the assertion of the lemma follows by the continuity. 
\end{proof}
\item {\bfseries{Lemma 4.6.3}}
\begin{proof}
Let us work in fixed local coordinate system $(y^{1},\dots,y^{n})$ on $M$. Let $I = (i_{1} < \dots < i_{p})$ be a strictly ordered $p$-index, and $J = (j_{1} < \dots < j_{q})$ a strictly ordered $q$-index. By assumption, we have
\begin{equation}
0 = (\Li{X}T)^{I}_{J} = X^{m} T^{I}_{J,m} + \sum_{r=1}^{q} {X^{m}}_{,j_{q}} T^{I}_{j_{1} \dots m \dots j_{q}} - \sum_{l=1}^{p} {X^{i_{l}}}_{,m} T^{i_{1} \dots m \dots i_{p}}_{J},
\end{equation}
for all $X \in \vf{}$. In particular, it must hold also for $fX$, where $f \in \cif$. This gives a necessary condition 
\begin{equation} \label{eq_lemapp463}
\sum_{r=1}^{q} f_{,j_{r}} X^{m} T^{I}_{j_{1} \dots m \dots j_{q}} = \sum_{l=1}^{p} X^{i_{l}} f_{,m} T^{i_{1} \dots m \dots i_{p}}_{J}. 
\end{equation}
First assume that $I \neq J$. In particular, this includes the case $p \neq q$. With no loss of generality, we may assume that there is $i_{a} \in I$, such that $i_{a} \neq J$. Choose $X = \partial_{i_{a}}$, and $f = y^{i_{a}}$. This gives 
\begin{equation}
\sum_{r=1}^{q} \delta^{i_{a}}_{j_{r}} T_{j_{1} \dots i_{a} \dots j_{q}}^{I} = T^{I}_{J}.
\end{equation}
But because $i_{a} \neq J$, the Kronecker symbol in the left-hand side sum is always zero. Hence $T^{I}_{J} = 0$. We can now assume that $p = q$. Moreover, we have already proved that $T_{I}^{J} = \lambda_{I} \delta_{I}^{J}$, where $\lambda_{I} \in \cif$. We want to show that $\lambda_{I} = \lambda_{J}$ for all $(I,J)$. First consider $(I,J)$, such that both $p$-indices differ \emph{in exactly one index}. There is thus $i_{a} \in I$, and $j_{b} \in J$, such that $I \setminus \{i_{a} \} = J \setminus \{ j_{b} \}$. Choose $X = \partial_{i_{a}}$ and $f = y^{j_{b}}$ in (\ref{eq_lemapp463}). This gives $\lambda_{I} = \lambda_{J}$. 

Now let $I$ and $J$ be general $p$-indices. There is always a chain $[K_{0}, \dots, K_{m}]$ of $p$-indices, where $I = K_{0}$, $J = K_{m}$, and $(K_{i},K_{i+1})$ differ in exactly one index. This proves $\lambda_{I} = \lambda_{J}$. Hence $T^{I}_{J} = \lambda \cdot \delta^{I}_{J}$ for $\lambda \in \cif$. As a map $T$ thus has a form $T = \lambda \cdot 1$. By definition of Lie derivative, we have
\begin{equation}
\Li{X}(T(Q)) = (\Li{X}T)(Q) + T(\Li{X}Q), 
\end{equation}
for all $Q \in \vf{p}$. Using the assumption and the above form of $T$, we get
\begin{equation} \Li{X}(\lambda Q) = \lambda \Li{X}Q, \end{equation}
for all $Q \in \vf{}$. This is possible, if and only if $\lambda \in \Omega^{0}_{closed}(M)$. 
\end{proof}
\end{itemize}
\end{appendices}
\bibliography{bib}
\end{document}